\title{Isotopes of Octonion Algebras, $\mathbf{G}_2$-Torsors and Triality}
\author[S.\ Alsaody, P.\ Gille]{Seidon Alsaody and Philippe Gille}
\address{Univ Lyon, Universit\'e Claude Bernard Lyon 1, CNRS UMR 5208, Institut Camille Jordan, 
43 blvd. du 11 novembre 1918, F-69622 Villeurbanne cedex, France.}
\thanks{The first author is supported by the grant KAW 2015.0367 from the Knut and Alice Wallenberg Foundation. 
The second author is supported by the project ANR Geolie, ANR-15-CE40-0012, (The French National Research Agency).}
\newtheorem{Thm}{Theorem}[section]
\newtheorem{Prp}[Thm]{Proposition}
\newtheorem{Cor}[Thm]{Corollary}
\newtheorem{Lma}[Thm]{Lemma} 
\theoremstyle{definition}
\newtheorem{Rk}[Thm]{Remark}
\newtheorem{Ex}[Thm]{Example}
\numberwithin{equation}{section}
\def\ot{\otimes}
\newcommand\co{\colon}
\newcommand\chv{^{\scriptscriptstyle\vee}}
\newcommand{\rad}{\operatorname{rad}}
\newcommand{\Pic}{\operatorname{Pic}}
\newcommand{\Ralg}{R\mathchar45\mathbf{alg}}
\newcommand{\Hom}{\operatorname{Hom}}
\newcommand{\GL}{\mathrm{GL}}
\newcommand{\SO}{\mathbf{SO}}
\newcommand{\Cl}{\mathrm{Cl}}
\newcommand{\fppf}{\mathrm{fppf}}
\newcommand{\Ker}{\mathrm{Ker}}
\newcommand{\End}{\mathrm{End}}
\newcommand{\Z}{\mathbb{Z}}
\newcommand{\Q}{\mathbb{Q}}
\newcommand{\F}{\mathbb{F}}
\newcommand{\bS}{\mathbf{S}}
\newcommand{\ubS}{\underline{\mathbf{S}}}
\newcommand{\Id}{\mathrm{Id}}
\newcommand{\Lie}{\mathrm{Lie}}
\newcommand{\bAut}{\mathbf{Aut}}
\newcommand{\bGL}{\mathbf{GL}}
\newcommand{\RT}{\mathbf{RT}}
\newcommand{\Spin}{\mathbf{Spin}}
\newcommand{\Spec}{\mathrm{Spec}}
\newcommand{\Stab}{\mathrm{Stab}}
\newcommand{\simlgr}{\xrightarrow{\sim}}
\newcommand{\gX}{\mathfrak{X}}
\newcommand{\bH}{\mathbf{H}}
\newcommand{\bG}{\mathbf{G}}
\newcommand{\bY}{\mathbf{Y}}
\newcommand{\bE}{\mathbf{E}}
\newcommand{\calL}{\mathcal{L}}
\newcommand{\bmu}{\bm{\mu}}
\newcommand{\Disc}{\mathbf{Disc}}
\newcommand{\calM}{\mathcal{M}}
\begin{document}

\begin{abstract} Octonion algebras over rings are, in contrast to those over 
fields, not determined by their norm forms. 
Octonion algebras
whose norm is isometric to the norm $q$ of a given algebra $C$ are twisted forms 
of $C$ by means of the $\bAut(C)$--torsor 
$\mathbf{O}(C)\to\mathbf{O}(q)/\bAut(C)$.

We show that, over any commutative unital ring, these twisted forms are 
precisely the isotopes $C^{a,b}$ of $C$, with multiplication given by 
$x*y=(xa)(by)$, for unit norm octonions $a,b\in C$. The link is provided by the 
triality phenomenon, which we study from new and classical perspectives. We 
then study these twisted forms using the interplay, thus obtained, between 
torsor geometry and isotope computations, thus obtaining new results on octonion algebras over e.g.\ rings of (Laurent) polynomials. 
\end{abstract}

\maketitle

\bigskip

\noindent{\bf Keywords:} Octonion algebras, isotopes, triality, homogeneous spaces,
torsors. 

\medskip

\noindent{\bf MSC: 17D05, 14L30, 20G10}.

\date{\today}

\section{Introduction}
The aim of this paper is to give a concrete construction, over unital 
commutative rings, of all pairwise 
non-isomorphic octonion algebras having isometric norm forms. To this end we develop the framework of triality 
over rings. We begin by outlining 
our philosophy and intuition.

Let $C$ be an octonion algebra over a unital, commutative ring $R$. In 
particular $C$ is equipped with
a multiplicative, regular quadratic form $q$. Then every automorphism of $C$ is 
an isometry with respect to $q$,
but the converse is far from being true. More precisely, the automorphism group
of $C$ is a 14-dimensional exceptional group scheme of type $\mathrm{G}_2$, 
while the orthogonal group of $C$
has dimension 28. Despite this, if $R$ is a field or, more generally, a local 
ring, then 
every octonion algebra whose quadratic form is isometric to $q$ is isomorphic, 
as an algebra, to $C$. 
This was proved false by the second author in \cite{G2} over more general rings, 
using torsors
and homotopy theory to study the homogeneous space $\SO(q)/\bAut(C)$.

This space measures, in a sense, the gap between the (special) orthogonal group 
of $q$ and the automorphism group. 
Our initial observation in the current work is that any
element of the spin group of $q$ induces an isomorphism 
from $C$ to a certain isotope $C^{a,b}$ with $a,b\in \bS_C(R)$,
the 7-dimensional unit sphere of $C$, that is the sphere for the octonionic 
norm on $C$. The algebra $C^{a,b}$ is simply the module 
$C$ with multiplication $x*y=(xa)(by)$.
As $a$ and $b$ run through $\bS_C(R)$, we exhaust the special orthogonal group 
modulo the automorphism group. As the match in
dimensions ($28-14=7+7$) roughly indicates, these isotopes essentially account 
for the full gap between isometries and automorphisms.

The key to this observation is the phenomenon of \emph{triality}. Well known 
over fields, it was extended to rings in \cite{KPS}. We give another formulation 
of
this phenomenon over rings, in a way more streamlined for our arguments. With 
this tool we are able to show that the isotopes above arise naturally as twists 
of $C$ by torsors
of the type considered in \cite{G2}. Moreover we show that these isotopes 
account for \emph{all} octonion algebras with isometric norms.

The paper is organized as follows. In Section \ref{sec_background}, we introduce 
the isotopes of a given octonion $R$--algebra and recall background information 
on octonions and their isotopes. In Section \ref{sec_triality}, we prove using 
scheme theoretic arguments that the spin
group of an octonion algebra over any unital commutative ring can be described 
in terms of triality and so called related triples. This enables us to give a 
precise description of the $\bAut(C)$--torsor 
$\Pi:\mathbf{Spin}(q)\to\mathbf{Spin}(q)/\bAut(C)$. For the sake of completeness 
we also give a concrete construction of the spin-triality correspondence by 
extending arguments valid over fields in a slightly different way than was done 
in \cite{KPS}. 

Our main results are found in Sections \ref{sec_twist} and \ref{sec_variants}. 
One the one hand, we show that the $\Pi$--twists of $C$ correspond canonically 
to the isotopes $C^{a,b}$ (Theorem \ref{thm_main}). On the other hand, in 
Theorem \ref{thm_kernel}, we show that all octonion algebras with norm isometric 
to that of $C$ are obtained in this way. This is done by showing that the torsor 
$\Pi$ gives the same objects as the $\mathbf{O}(q)\to\mathbf{O}(q)/\bAut(C)$ 
(i.e.\ the corresponding cohomology kernels coincide). In the same theorem we 
obtain similar coincidences for a number of other torsors. We then relate our 
results to that of so-called compositions of quadratic forms.

Finally in Section \ref{sec_particular} we discuss interesting special cases, 
such as rings over which all isotopes are isomorphic, and isotopes which are 
isomorphic over all rings.

We fix, once and for all, a unital commutative ring $R$. By an \emph{$R$--ring} 
we mean a unital, commutative and associative $R$--algebra. 

\subsection{Acknowledgements}
We are grateful to E.\ Neher and K.\ Zainoulline for fruitful discussions. To E.\ Neher we are also indebted for his thorough reading of an earlier version and his remarks that helped improve the paper.
We would also like to thank A.\ Asok, M.\ Hoyois and M.\ Wendt
for communicating their preprint \cite{AHW} to us.

\section{Background}\label{sec_background}
\subsection{Octonion algebras and isotopy}
Isotopy of algebras is a very general notion, which offers a way of deforming 
the multiplicative structure of an algebra. The notion goes back to Albert 
\cite{Al}. We will outline it here in order to derive, from its generality, the 
proper context needed for our purposes in 
a self contained manner. The contents of this subsection can otherwise be found 
in \cite{McC}.

An \emph{algebra} $A=(A,*)=(A,*_A)$ over $R$ is an $R$--module $A$ endowed with 
an $R$--bilinear multiplication $*$. In this generality, neither associativity 
nor commutativity nor the existence of
a unity is required. For each $a\in A$ we write $L_a$ or $L_a^*$ for the linear 
map $x\mapsto a*x$ on $A$, and $R_a$ or $R_a^*$ for the linear map $x\mapsto 
x*a$. If $A$ is alternative, then $L_aR_a=R_aL_a$ for any $a\in A$,
and we denote this map by $B_a$. 

An \emph{octonion algebra} is an $R$-algebra whose underlying module is projective of constant rank 8, and which is endowed with a regular multiplicative quadratic form. (See 
also \cite[\S 4]{LPR} for further discussion.) Equivalently, it is a composition algebra of constant rank 8. (An algebra $A$ is a \emph{composition algebra} 
if the underlying module is finitely generated, faithfully projective, and if 
$A$ is endowed with a non-singular multiplicative quadratic form $q=q_A$, referred to as the 
\emph{norm} of $A$. \emph{We moreover always require that composition algebras 
be unital.}) It is known that locally, composition algebras have rank 1, 2, 4 or 
8. 

Two arbitrary $R$--algebras $(A,*_A)$ and $(B,*_B)$ said to be \emph{isotopic} 
or \emph{isotopes of one another} if there exist invertible linear maps 
$f_i:A\to B$, $i=1,2,3$, such that
\[f_1(x*_Ay)=f_2(x)*_Bf_3(y)\]
for all $x,y\in A$. The triple $(f_1,f_2,f_3)$ is then called an \emph{isotopy 
from $A$ to $B$}. It is an easy known fact that isotopy is an equivalence 
relation. Then $A$ and $B$ are isomorphic as $R$--modules, but not necessarily 
as $R$--algebras. However, the map $f_1: A\to B$ is an isomorphism of algebras 
between 
$A$ and $(B,*_B')$, where the new multiplication on $B$ is defined by
\[x*_B'y=f_2f_1^{-1}(x)*_Bf_3f_1^{-1}(y).\]

An algebra $(B,*')$ is said to be a \emph{principal isotope} of $(B,*)$ if there 
exist invertible linear maps $g,h:B\to B$ such that $x*'y=g(x)*h(y)$ holds for 
all $x,y\in B$. Then we denote $(B,*')$ by $B_{g,h}$ and note that
$B$ and $B_{g,h}$ are isotopic. Conversely, the previous paragraph implies that 
two algebras $A$ and $B$ is isotopic if and only if $A$ is isomorphic to a 
principal isotope of $B$. Thus
restricting to principal isotopes induces no loss of generality

In this paper, we are mainly concerned with unital algebras. Let $(A,*)$ be an 
$R$--algebra and let $g,h$ be invertible maps on $A$.
Then $A_{g,h}$ is unital precisely when there exists $e\in A$ such that
\[g(x)*h(e)=g(e)*h(x)=x\]
for all $x\in A$. This is equivalent to the condition
\[\begin{array}{lll}g^{-1}=R_{h(e)}^*&\text{and}&h^{-1}=L_{g(e)}^*.\end{array}\]
If $A$ is moreover a unital alternative algebra, then by \cite[Proposition 
2]{McC} the above implies that 
$g(e)$ and $h(e)$ are invertible, and denoting the inverses by $b$ and $a$, 
respectively, the above
condition becomes
\[\begin{array}{lll}g=R_{a}^*&\text{and}&h=L_{b}^*.\end{array}\]
Conversely, for any $a,b\in A^*$, the algebra $A_{R_a,L_b}$ is unital with unity 
$(ab)^{-1}$. 
In fact \cite{McC} shows the following result.

\begin{Prp} Let $C$ be a unital alternative algebra over $R$ and let $C'$ be 
isotopic to $C$. Then $C'$ is unital 
if and only if $C'\simeq C_{R_a,L_b}$ for some $a,b\in C^*$.
\end{Prp}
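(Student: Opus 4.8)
My plan is to assemble the facts established in the preceding discussion into the two implications, and to flag the single place where the alternative hypothesis is genuinely used.

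For the implication $(\Leftarrow)$, I would assume $C'\simeq C_{R_a,L_b}$ for some $a,b\in C^*$. Since it was observed above that $C_{R_a,L_b}$ is unital with unity $(ab)^{-1}$, and since unitality is preserved under an isomorphism of $R$--algebras, $C'$ is unital, and this direction is complete.

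For the implication $(\Rightarrow)$, I would assume $C'$ is unital and isotopic to $C$. By the characterization of isotopy recalled above, $C'$ is isomorphic to a principal isotope of $C$, say $C'\simeq C_{g,h}$ with $g,h$ invertible linear maps on $C$. Transporting the unity of $C'$ along this algebra isomorphism shows that $C_{g,h}$ is itself unital; writing $e$ for its unity, one has $g(x)*h(e)=g(e)*h(x)=x$ for all $x\in C$, equivalently $g^{-1}=R_{h(e)}^*$ and $h^{-1}=L_{g(e)}^*$. Then I would invoke \cite[Proposition 2]{McC}, which applies since $C$ is unital alternative: it gives that $g(e)$ and $h(e)$ are invertible in $C$, and, setting $b:=g(e)^{-1}$ and $a:=h(e)^{-1}$, that $g=R_a^*$ and $h=L_b^*$. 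Hence $C'\simeq C_{g,h}=C_{R_a,L_b}$ with $a,b\in C^*$, as required.

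The one step that is not pure bookkeeping is the passage from ``$g^{-1}=R_{h(e)}^*$ with $h(e)$ invertible'' to ``$g=R_a^*$ with $a=h(e)^{-1}$'' (and its left-handed analogue for $h$); this relies on the alternative-algebra identity $(R_c^*)^{-1}=R_{c^{-1}}^*$ for invertible $c$, which is precisely what \cite[Proposition 2]{McC} supplies. So I do not expect a genuine obstacle here: the only things to be careful about are to use the $R$--module isomorphism underlying the isotopy to transport the identity element, and to recall --- as established above --- that restricting to principal isotopes up to isomorphism loses no generality.
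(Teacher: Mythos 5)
Your argument is correct and is essentially the paper's own: the paper proves this proposition by exactly the preceding discussion (unitality of $C_{g,h}$ forces $g^{-1}=R_{h(e)}^*$, $h^{-1}=L_{g(e)}^*$, then \cite[Proposition 2]{McC} gives invertibility of $g(e),h(e)$ and the identities $g=R_a^*$, $h=L_b^*$), citing McCrimmon for the result. Your identification of the single genuinely alternative-algebra step, namely $(R_c^*)^{-1}=R_{c^{-1}}^*$ for invertible $c$, matches the paper's reliance on that same reference.
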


Here $C^*$ denotes the set of invertible elements of $C$. If $C$ is a composition algebra, then we have the equality $C^*=\{x\in C|q_C(x)=0\}$.

To lighten notation, we will henceforth write $A^{a,b}$ instead of 
$A_{R_a,L_b}$.
\begin{Rk}\label{Rbase} This construction is stable under base change, i.e.\ if 
$\phi:R\to S$ 
is a map of $R$--rings and $a,b\in\bS_C(R)$, then 
$(C^{a,b})_S=(C_S)^{\phi(a),\phi(b)}$ follows from the 
bilinearity of the multiplication. Therefore,
we may write $C_S^{a,b}$ without ambiguity.
\end{Rk}

\begin{Rk}\label{Rcomp} If $C$ is a composition algebra with quadratic form $q$, 
then $C'=C^{a,b}$ is a composition algebra 
as well, with norm $q'=\lambda q$, where $\lambda=q(a*_C b)$.  Since $q \circ 
L_{a*_C b}=\lambda q=q'$, 
it follows that  $(C, q_C)$ and $(C',q_{C'})$ are isometric quadratic spaces. 
Note that this isometry 
maps $1_C$ to $1_{C'}$. 
\end{Rk}

\subsection{Relations between isotopes}\label{subsec_relation}
As the previous subsection shows, if $C$ is an octonion algebra, then any unital 
algebra isotopic to $C$
is again an octonion algebra and is isomorphic to $C^{a,b}$ for some $a,b\in 
C^*$. We will simplify the presentation
further by exhibiting some basic isomorphisms between certain isotopes. This is 
the content of the following result.

\begin{Prp}\label{prop_formulae} Let $C$ be a unital alternative $R$--algebra 
and let $a,b\in C^*$.
\begin{enumerate}
\item In the commutative diagram
\[\xymatrix@1{
&C^{a,b}\\
C^{1,aba} \ar[ur]^{L_a}\ar[rr]_{R_{b^{-1}}L_a}&& C^{bab,1}\ar[ul]_{R_b} 
}\]
each arrow is an isomorphism of algebras.

\item In the commutative diagram
\[\xymatrix@1{
&C^{a,a^{-1}}\ar[dr]^{R_a}\\
C^{1,a}\ar[ur]^{L_a}\ar[rr]_{B_a}&&C^{a^{-1},1}}\]
each arrow is an isomorphism of algebras.

\item The maps $B_a:C^{a,b}\to C^{1,b a^{-1}}$ and $B_b: C^{a,b}\to 
C^{b^{-1}a,1}$ are isomorphisms of algebras. 
In particular $C^{a,a}\simeq C$.

\item The maps $B_{ba^{-1}}B_a:C^{a,b}\to C^{a b^{-1},1}$ and 
$B_{b^{-1}a}B_b:C^{a,b}\to C^{1,a^{-1}b}$ are isomorphisms of algebras.
\end{enumerate}
\end{Prp}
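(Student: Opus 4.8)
The plan is to establish each isomorphism by a direct verification using the defining identity of a principal isotope together with the elementary alternative-algebra identities (Moufang identities and their consequences), exactly as the preceding unnumbered computations in the paper suggest. Concretely, to check that a linear map $f\colon C^{a,b}\to C^{c,d}$ is an algebra homomorphism one must verify $f(x *_{a,b} y) = f(x) *_{c,d} f(y)$, i.e. $f\bigl((xa)(by)\bigr) = (f(x)c)(d f(y))$ for all $x,y\in C$, and since all the maps involved ($L_a$, $R_b$, $B_a$, and their composites) are invertible, being a homomorphism automatically upgrades to being an isomorphism.

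First I would dispatch parts (1) and (2), which concern the ``balanced'' isotopes where the two sides match up nicely. For (1), the arrow $L_a\colon C^{1,aba}\to C^{a,b}$ requires $a\bigl(x\,(aba\, y)\bigr) = (ax\cdot a)(b\cdot ay)$, which follows from the left Moufang identity $a(u(av)) = (aua)v$ applied with a suitable substitution, after rewriting $b\cdot ay$ via associativity of powers and the Moufang identities; the arrow $R_{b^{-1}}$ out of $C^{bab,1}$ is handled symmetrically using the right Moufang identity, and commutativity of the triangle is the algebraic identity $R_{b^{-1}}L_a = R_b^{-1}L_a$ read against the two descriptions. Part (2) is the special case $b = a^{-1}$ of (1) combined with the observation that $B_a = L_aR_a = R_aL_a$, so the middle arrow $B_a\colon C^{1,a}\to C^{a^{-1},1}$ is literally the composite of the other two and no new computation is needed beyond recording $aba = a\cdot a^{-1}\cdot a = a$ and $bab = a^{-1}$.

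Next, part (3) is the heart of the matter: the claim that $B_a\colon C^{a,b}\to C^{1,ba^{-1}}$ is an isomorphism unwinds to $B_a\bigl((xa)(by)\bigr) = B_a(x)\bigl((ba^{-1})\,B_a(y)\bigr)$, i.e. $a\bigl((xa)(by)\,a\bigr) = (axa)\bigl(ba^{-1}\cdot aya\bigr)$. The main obstacle will be exactly this identity: it is a genuine three-variable alternative-algebra computation that does not reduce to a single Moufang identity but needs the middle Moufang identity $(au)(va) = a(uv)a$ together with the linearized forms and the fact (from \cite{McC}, recalled above) that $a^{-1}$ behaves well under $B_a$, namely $B_a(a^{-1}) = a$ and $B_{a^{-1}} = B_a^{-1}$. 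I would treat $B_b$ symmetrically, and then ``$C^{a,a}\simeq C$'' is the instance $b=a$ of either statement since $C^{1,1} = C$. Finally part (4) is formally a corollary of (3): apply the first isomorphism of (3) to pass $C^{a,b}\to C^{1,ba^{-1}}$, then apply the \emph{second} isomorphism of (3) with the roles of the parameters as they now stand to reach $C^{a b^{-1},1}$, and compose; the only thing to check is that the composite of the two $B$-maps is the asserted $B_{ba^{-1}}B_a$, which is immediate from functoriality of the construction under the already-proved cases. The symmetric composite gives the other half of (4). Throughout, I expect every step except the core identity in (3) to be bookkeeping; that one identity is where all the alternative-algebra content is concentrated, and I would prove it once carefully and then cite it.
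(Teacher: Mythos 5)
Your proposal matches the paper's proof in structure and substance: the paper cites McCrimmon for (1), obtains (2) as the special case $b=a^{-1}$, verifies (3) by exactly the computation you isolate --- the middle Moufang law followed by a single application of a one-sided Moufang law, with no linearized identities or facts about $B_a(a^{-1})$ actually needed --- and deduces (4) by composing (3) with (2), which is equivalent to your composition of the two halves of (3). The one step you leave as a promise, the core identity in (3), is indeed the only real computation, and it goes through as you predict.
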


\begin{proof} All map  are clearly invertible and (1) was established by 
McCrimmon 
\cite[prop. 6]{McC}. Assertion (2) is the special case $b=a^{-1}$ of (1). For, 
(3) We check the first fact, the second being similar. Given $x, y \in C$, we 
have
\begin{eqnarray} \nonumber
 B_a( x *_{a,b} y)&= & B_a( (xa)(by)) \\   \nonumber
 &= & (a(xa)) \, ((by)a) \quad \hbox{[middle  Moufang law]} \\   \nonumber
 &= & (a(xa))  \, \bigl( ((ba^{-1})a)y) a  \bigr) \\   \nonumber
 &= & (a(xa)) \, \bigl(  (ba^{-1}) (a (ya)) \bigr) \quad \hbox{[one-sided  
Moufang law]}  \\ 
 &=& B_a(x) *_{1,ba^{-1}} B_a(y).  \nonumber
\end{eqnarray}

Finally, (4) follows by combining (2) and (3).
\vskip-6mm
\end{proof}

\begin{Rk}\label{rem_KPS} Knus--Parimala--Sridharan introduced another kind of 
isotope \cite[Rem. 4.7]{KPS}.
For an $R$--alternative algebra $C$ and an element $u \in C^*$, they defined the 
new multiplication
$x \star_{u} y = ( x (y u)) u^{-1}$ on $C$. We leave it to the reader to check 
that the 
map $R_{u}: C\to C$ induces an isomorphism of $R$--algebras $R_u: (C,\star_{u}) 
\simlgr (C ,*_{u^{-1},1})= C^{u^{-1},1} \simeq 
C^{1,u}$.
\end{Rk}

\begin{Rk}\label{Rreduce} The above proposition in fact implies that if $C$ is a 
composition algebra, then any 
composition algebra $C'$ isotopic to $C$ is isomorphic to e.g.\  $C^{1,a}$ with 
$q(a)=1$. Indeed, combining items (2) and (3) of the proposition, we have
\[C'\simeq C^{c^{-1},c} \simeq C^{1,c^2}.\]
As multiplication by
$q(c)$ is an isomorphism $C^{1,c^2}\to C^{1,q(c)^{-1}c^2}$, the claim follows. 
\end{Rk}

These relations will be explained partly by triality in Section 
\ref{subsec_rel_trial}.

\subsection{Direct summands of octonion algebras}\label{subsec_summand}

Let $C$ be an octonion algebra over $R$. As for Azumaya algebras, we recall the following fact, which is of course 
obvious if $2$ belongs to $R^*$. The \emph{trace map} $\mathrm{tr}:C\to R$ is defined by $\mathrm{tr}(x)=b_{q_C}(x,1)$.

\begin{Lma}\label{lem_trace} The trace map $\mathrm{tr} : C \to R$  admits a 
section.
\end{Lma}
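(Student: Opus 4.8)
The plan is to reduce the statement to a local problem and then to exhibit a section explicitly. Recall that $C$ is projective of constant rank $8$, hence $\mathrm{tr}\colon C\to R$ is a map of projective $R$-modules. The claim that it admits a section is equivalent to the claim that $\ker(\mathrm{tr})$ is a direct summand of $C$ as an $R$-module, i.e.\ that the (rank one) cokernel—which is $R$ itself once we know $\mathrm{tr}$ is surjective—is projective, which it is. So the real content is that $\mathrm{tr}$ is \emph{surjective} (equivalently, a split epimorphism); once surjectivity is known, a section exists automatically because $R$ is free of rank one, hence projective. Thus the proof has two steps: (i) show $\mathrm{tr}$ is surjective; (ii) invoke projectivity of $R$ to split it.

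For step (i), I would argue that surjectivity of a map of finitely presented modules can be checked locally, so it suffices to treat the case where $R$ is local (or even a field, via Nakayama on the cokernel). Over a local ring, $C$ is a split or a division composition algebra; in either case $1\in C$ and $\mathrm{tr}(1)=b_{q_C}(1,1)=2q_C(1)=2\cdot 1=2$. If $2\in R^\times$ this already gives surjectivity directly, as remarked. If $2$ is not a unit—so $R$ is local of residue characteristic $2$—one instead exploits the regularity/non-singularity of $q_C$: the bilinear form $b_{q_C}$ is non-degenerate (unimodular) on $C$, so the linear functional $x\mapsto b_{q_C}(x,1)=\mathrm{tr}(x)$ is, up to the unimodular identification $C\cong C^\vee$, "evaluation pairing against $1$"; since $1$ is a unimodular vector (it extends to a basis, being part of the decomposition $C=R\cdot 1\oplus C_0$ locally), the functional $\mathrm{tr}$ is itself part of a dual basis and in particular surjective. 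Concretely: pick $x_0\in C$ with $b_{q_C}(x_0,1)\in R^\times$, which exists because $1$ is unimodular and $b_{q_C}$ identifies $C$ with its dual; then $\mathrm{tr}(x_0)$ is a unit, so $\mathrm{tr}$ is onto.

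With $\mathrm{tr}$ surjective, step (ii) is immediate: the short exact sequence $0\to\ker(\mathrm{tr})\to C\xrightarrow{\mathrm{tr}}R\to 0$ splits because $R$ is a projective $R$-module, giving the desired section $s\colon R\to C$. Equivalently, if $x_0\in C$ satisfies $\mathrm{tr}(x_0)=u\in R^\times$ globally (which one can arrange when such a global $x_0$ exists, e.g.\ $x_0=1$ when $2\in R^\times$), then $r\mapsto u^{-1}r\,x_0$ is an explicit section; in general one glues local sections using projectivity.

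The main obstacle is the characteristic-$2$ case: one cannot simply use $\mathrm{tr}(1)=2$, and the point where care is needed is arguing that $1\in C$ is a unimodular element and that non-singularity of $q_C$ forces the functional $b_{q_C}(-,1)$ to be surjective. This is exactly the analogue of the familiar fact for Azumaya algebras (that the reduced trace is split surjective even in characteristic $2$), alluded to in the statement, and the argument mirrors that one.
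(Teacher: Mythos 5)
Your proposal is correct and follows essentially the same route as the paper: reduce to the local case, use the regularity (unimodularity) of $b_{q}$ together with the fact that $1$ is a unimodular element of $C$ to produce $x_0$ with $\mathrm{tr}(x_0)\in R^*$, and then split the resulting surjection onto the projective module $R$. The paper phrases the key step by passing to the residue field and lifting, but the content is identical to your dual-basis argument.
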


\begin{proof} By means of partition of unity, we can assume that $R$ is local 
with maximal ideal 
$\mathfrak{M}$.  We know that $\mathrm{tr}(c)=b_{q}(1,c)$ for each $c \in C$. 
Since $b_{q}$ is a non-degenerate bilinear form,  there exists $c_K \in K=R/ 
\mathfrak{M}$ such that
$\mathrm{tr}(c_K) \not =0$. Let $c$ be a lift of $c_K$ in $C_{R/\mathfrak{M}}$. 
Then $\mathrm{tr}(c) \in R^*$ so that 
$\mathrm{tr}: C \to R$ is surjective. Thus  $\mathrm{tr} : C \to R$  admits a 
section.
\end{proof}

Let us recall also the next fact.

\begin{Lma}\label{lem_centre} We have
$$R1= \Bigl\{ x \in C \, \mid xy=yx \enskip \forall y \in C \Bigr\}
= \Bigl\{ x \in C \, \mid \, y(xz)=(yx)z \enskip \forall y,z \in C \Bigr\}.
$$
\end{Lma}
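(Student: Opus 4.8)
The plan is to prove the two equalities by a standard local-global reduction combined with the known structure theory of split octonion algebras. Concretely, since the statement concerns the equality of three sub-$R$-modules of $C$, and since each of the three displayed sets is stable under base change (they are defined by $R$-linear equations in $x$ once $y$, $z$ are fixed, and the conditions are closed under specialization), it suffices by a partition-of-unity argument — exactly as in the proof of Lemma \ref{lem_trace} — to verify the identities after passing to the local rings of $R$, and then, by a further faithfully flat descent along the \'etale cover splitting $C$, to assume that $R$ is local and $C=\mathrm{Zorn}(R)$ is the split octonion algebra of Zorn vector matrices. Here I should be slightly careful: the three sets in question are submodules, and to check that two submodules $M\subseteq N$ of a projective module coincide it suffices to check it locally, so the reduction is legitimate.

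Write $Z_1(C)=\{x\mid xy=yx\ \forall y\}$ (the commutative centre) and $Z_2(C)=\{x\mid y(xz)=(yx)z\ \forall y,z\}$ (the associative nucleus-type condition, symmetric in the sense that $x$ sits in the middle). The inclusions $R1\subseteq Z_1(C)$ and $R1\subseteq Z_2(C)$ are immediate since $C$ is unital and $1$ commutes and associates with everything. For the reverse inclusions, I would work in $\mathrm{Zorn}(R)$ with its standard basis consisting of the two idempotents $e_1,e_2$ and the six "off-diagonal" elements $u_i,v_i$, $i=1,2,3$, and simply test an arbitrary $x=\sum \lambda_i(\text{basis})$ against well-chosen $y$ (and $z$). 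For $Z_1$: commuting with $e_1$ forces all off-diagonal coefficients to vanish (since $e_1u_i=u_i$ but $u_ie_1=0$, etc.), leaving $x=\lambda e_1+\mu e_2$; commuting with some $u_i$ then forces $\lambda=\mu$, giving $x\in R1$. For $Z_2$: take $y=z=e_1$ or suitable $u_i,v_j$ to again kill off-diagonal terms and equate the diagonal coefficients; the key point is that in $\mathrm{Zorn}(R)$ the off-diagonal products are genuinely non-associative, so the middle-associativity condition $y(xz)=(yx)z$ for all $y,z$ is very restrictive and pins $x$ down to $R1$.

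Alternatively — and this is probably cleaner to write — I would invoke the classical fact (valid over fields, hence over the residue field $K$) that for a split octonion algebra over a field both the commutative centre and the set of "middle-associative" elements equal $K1$; then lift. But lifting a scalar-module equality from $K$ to a local ring $R$ is not automatic for the off-diagonal coefficients unless one argues with a basis, so in practice the basis computation in $\mathrm{Zorn}(R)$ is unavoidable and I would just do it directly over the local ring, skipping the residue field.

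The main obstacle I anticipate is purely bookkeeping: carrying out the multiplication-table computation in $\mathrm{Zorn}(R)$ carefully enough that the middle-associativity condition — which involves three elements and hence many triple products — is seen to force all seven "non-scalar" coordinates of $x$ to vanish. There is also a minor subtlety in the reduction step, namely ensuring that the descent along the splitting cover is legitimate for a condition that quantifies over all $y$ (resp. $y,z$): this is fine because $\{x\mid \phi(x,y)=0\ \forall y\in C\}$ commutes with flat base change when $C$ is finitely presented and the cover is faithfully flat, but it should be stated. Once past these points the proof is routine, and the cleanest exposition may well be to reduce to $\mathrm{Zorn}(R)$ and then cite or re-derive the two identities there, the second being the more laborious of the two.
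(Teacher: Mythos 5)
Your strategy is sound, but it is not the one the paper uses, and the difference is worth spelling out. The paper never splits $C$ and never touches a multiplication table: writing $M$ for the right-hand set, it first reduces to noetherian $R$ (by writing $R$ as a filtered limit of finitely generated $\Z$-algebras, so that $M$ is finitely generated), then for each maximal ideal $\mathfrak{M}$ invokes the known description of the centre and middle nucleus over the residue field $R/\mathfrak{M}$ (\cite[Prop.\ 1.9.2]{SV}) to see that $M/\mathfrak{M}M$ is spanned by the image of $1$, and concludes $M_{\mathfrak{M}}=R_{\mathfrak{M}}1$ by Nakayama. You instead pass to a faithfully flat cover $S/R$ splitting $C$ (which exists by \cite[Cor.\ 4.11]{LPR}, quoted elsewhere in the paper) and compute in $\mathrm{Zorn}(S)$. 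Both routes are legitimate, and your descent step is arguably the cleaner of the two reductions: since $M$ is the kernel of an $R$-linear map $C\to C^{n^2}$ built from a finite generating family, its formation commutes with \emph{flat} base change, so $M\otimes_R S=M_S$ on the nose and $M_S=S1$ gives $M=R1$ by faithful flatness — whereas the paper's passage to the residue field is a non-flat base change that has to be repaired by Nakayama (hence the preliminary noetherian reduction). What your approach costs is the explicit Zorn-matrix verification, in particular the more laborious middle-associativity case, which you leave as "bookkeeping"; the paper avoids all computation by outsourcing the fibre-wise statement to \cite{SV}. Two small remarks: once $C$ is split over $S$ your basis computation works verbatim over \emph{any} ring, so the preliminary localization is dispensable; and if you prefer to avoid the computation entirely you could graft the paper's device onto your setup, i.e.\ quote the field case and use Nakayama, which is exactly the "lifting from $K$ to $R$" step you (correctly) flagged as non-automatic but which Nakayama makes rigorous.
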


\begin{proof}
We prove it for  the right hand side (denoted  by $M$) and the middle term can be  done analogously.
Let $(c_i)_{i=1, \dots, n}$ be a  generating family of the $R$--module $C$. 
By bilinearity of the product, we have
$$
M=  \Bigl\{ x \in C \, \mid \, c_i(x c_j)=(c_i x) c_j \enskip \forall i,j =1, \dots, n  \Bigr\}.
$$
By using the classical trick to write $R$ as the limit of its finitely generated $\Z$-algebras, 
we may assume that $R$ is noetherian, so that $M$ is finitely presented.

Let  $\mathfrak{M}$ be a maximal ideal of $R$. 
By the corresponding result over fields \cite[Proposition 1.9.2]{SV}, we have 
$R/\mathfrak{M}= M \otimes_R (R/\mathfrak{M}) \simeq M/\mathfrak{M}M $.
Nakayama's lemma  shows that $R_\mathfrak{M}= M \otimes_R R_\mathfrak{M}$. 
Since it holds for all maximal ideals,  we conclude that 
$R=M$ as desired.
\end{proof}

For the next lemma we recall the identity $b_q(x,ay)=b_q(\overline{a}x,y)$, 
which implies that
\[\{y\in C \, | \, b_{q}(a y, x)=0\}=(\overline{a}x)^\bot\]
for all $a,x\in C$. Here $\bot$ denotes the orthogonal complement with respect 
to $b_q$.

\begin{Lma}\label{lem_summand} Let $x \in C$ be an invertible element and let $a 
\in C$ be an element of trace $1$.\footnote{Such an element exists by Lemma 
\ref{lem_trace}.} Then
\begin{enumerate}
 \item $C= Rx \oplus (\overline{a}x)^\bot$, and
 \item $x^\bot$ is a direct summand of $C$ which is locally free of rank $7$. 
Furthermore the restriction $q_x$ 
of $q$ to $x^\bot$ is a non-singular quadratic form. 
\end{enumerate}
\end{Lma}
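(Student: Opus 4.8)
The plan is to deduce both statements from Lemma \ref{lem_summand}'s first identity together with the section of the trace map provided by Lemma \ref{lem_trace}, and to reduce non-singularity of $q_x$ to the regularity of $q$ by a standard discriminant/localization argument.

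First I would fix an element $a\in C$ of trace $1$, which exists by Lemma \ref{lem_trace}. Since $x$ is invertible, so is $\overline{x}$ (its norm is a unit), hence $\overline{a}x$ is invertible and in particular unimodular: the map $C\to R$, $y\mapsto b_q(\overline{a}x,y)$ is surjective because $b_q$ is non-degenerate and $\overline{a}x$ is a unit times a vector pairing to a unit with something. Concretely, $b_q(\overline{a}x,x)=b_q(x,ax)=\tr(\text{something})$; more cleanly, since $q$ is regular the map $y\mapsto b_q(\overline{a}x,y)$ has as image an ideal containing $q(\overline{a}x)\in R^*$, so it is surjective and splits. Its kernel is exactly $(\overline{a}x)^\bot$, giving the direct sum decomposition $C=Rx\oplus(\overline{a}x)^\bot$ of part (1) once one checks $Rx\cap(\overline{a}x)^\bot=0$, which again follows since $b_q(\overline{a}x,x)\in R^*$ (using $b_q(x,ax) = \tr(x\overline{a x})$ or a direct computation with $\tr(a)=1$). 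This also shows $Rx$ is free of rank $1$ and a direct summand.

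For part (2), I would apply part (1) with $a$ replaced in the role needed to get $x^\bot$ itself: note that the orthogonal complement $x^\bot$ is the kernel of $y\mapsto b_q(x,y)$, and the same surjectivity argument (the image contains $b_q(x,x)=2q(x)\in$ the image, and more to the point contains $q(x)\in R^*$ after composing with multiplication, or simply run the $(\overline{a}x)^\bot$ argument with a suitable $a'$ so that $\overline{a'}x = x$, e.g.\ picking things so the roles match) shows $b_q(x,-):C\to R$ is split surjective, so $C=Rx\oplus x^\bot$ with $x^\bot$ a direct summand, locally free, of rank $8-1=7$. Finally, for non-singularity of $q_x=q|_{x^\bot}$: by partition of unity reduce to $R$ local, so $C$ is free and $x^\bot$ is free of rank $7$; the bilinear form $b_q$ restricted to $Rx$ is non-degenerate (Gram determinant $b_q(x,x)$ up to the right normalization is a unit — here one must be slightly careful in characteristic $2$, but regularity of $q$ is exactly the condition that handles rank $1$ summands), hence $b_q$ restricts non-degenerately to the complementary summand $x^\bot$, and one checks that regularity (the half-discriminant / Arf-type invariant) is inherited so that $q_x$ is again regular, a fortiori non-singular.

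The main obstacle I expect is the characteristic $2$ bookkeeping: "non-singular" for the rank $1$ summand $Rx$ and the passage from regularity of $q$ to regularity of $q_x$ are automatic when $2\in R^*$ but need the correct notion (regular = the associated bilinear form is non-degenerate on the rank-$1$ orthogonal factor and the form is "separable" on the rank-$7$ factor) in general; the clean way is to localize, use that over a local ring $b_q(x,x)$ or the relevant Gram datum is a unit since $x$ is anisotropic and $q$ regular, split off $Rx$ orthogonally, and invoke that an orthogonal summand of a regular form of smaller rank is regular. Everything else — surjectivity of the trace, the direct sum decompositions, local freeness and the rank count — is routine once Lemma \ref{lem_trace} and the non-degeneracy of $b_q$ are in hand.
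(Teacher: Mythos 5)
Your part (1) is essentially the paper's argument: the key point is that $b_q(\overline{a}x,x)=\mathrm{tr}(a)\,q(x)=q(x)\in R^*$, so the functional $y\mapsto b_q(\overline{a}x,y)$ is split surjective with kernel $(\overline{a}x)^\bot$ meeting $Rx$ trivially. Part (2), however, has a genuine gap, and it is precisely the characteristic-$2$ issue you flag but do not resolve. The decomposition $C=Rx\oplus x^\bot$ is false when $2$ is not invertible: $b_q(x,x)=2q(x)$, so over a residue field of characteristic $2$ one has $x\in x^\bot$. Your proposed fix --- run the part (1) argument with $a'$ such that $\overline{a'}x=x$ --- forces $a'=1$, whose trace is $b_q(1,1)=2q(1)=2$, not $1$, so part (1) cannot be applied in that form. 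The paper instead localizes, lifts from the residue field an element $y$ with $b_q(x,y)\in R^*$, and splits $C=x^\bot\oplus Ry$: the complement of $x^\bot$ is $Ry$ for a suitable $y$, not $Rx$.

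The non-singularity of $q_x$ also cannot be obtained via ``regularity is inherited'': $x^\bot$ has odd rank $7$, and in characteristic $2$ the polar form of any quadratic form is alternating, so an odd-rank form is never regular --- which is exactly why the lemma asserts only non-singularity. Likewise the restriction of $q$ to $Rx$ is non-singular but not regular (cf.\ the appendix: $\langle a\rangle$ is regular iff $2a\in R^*$), so there is no orthogonal splitting into regular summands to invoke. The paper's proof reduces to an algebraically closed field $k$ and computes the radical directly: $\rad(q_x)\subseteq (x^\bot)^\bot=kx$ by regularity of $b_q$, and $q(x)\neq 0$ then forces $\rad(q_x)=0$. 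You need some such fibrewise argument in place of your last step.
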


\begin{proof} (1) For each $z\in C$, the element
\[z - q(x)^{-1} b_q(az,x)x\]
is orthogonal to $\overline{a}x$, whence $C=Rx+(\overline{a}x)^\bot$. The sum is moreover
direct since $b_q(x,\overline{a}x)=q(x)\in R^*$, so that $\lambda x \in 
(\overline{a}x)^\bot$ only holds with $\lambda=0$.

\smallskip

\noindent (2) Again we can assume that $R$ is local with maximal ideal 
$\mathfrak{M}$. Since $b_q$ is non-degenerate, there exists $y_K \in 
K=C_{R/\mathfrak{M}}$ such that 
$b_q( x_K, y_K) \not =0 \in K$. We lift $y_K$ to an element 
$y \in C$ and we have $b_q( x,  y) \in R^*$. It follows that 
$C= x^{\bot} \oplus Ry$. 

To show that $q_x$ is a non-singular quadratic form it suffices to consider the 
case when $R=k$ is an algebraically closed field.
We consider the radical 
\[\rad(q_x)= \bigl\{   z \in x^\bot  \, \mid \,  q(z)=b_q(z,x^\bot)=0 \bigr\}.\]
Since $b_q$ is regular, $(x^\bot)^\bot$ is of dimension $1$, whence it is equal 
to $kx$.
But $q(x) \neq 0$, whence  $\rad(q_x)=\{0\}$ in any characteristic, and $q_x$ is 
non-singular. 
\end{proof}

\begin{Rk} If $2 \in R^*$, then $a=\frac{1}{2}$ works, and $C=Rx\oplus 
(\frac{1}{2}x)^\bot=Rx\oplus x^\bot$.
\end{Rk}

We denote by $\mathbb{P}(C^\vee)$ the projective space attached to 
$C^\vee=\mathrm{Hom}_R(C,R)$
using EGA's conventions. For each $R$--ring $S$,  the set 
$\mathbb{P}(C^\vee)(S)$ is that of 
invertible $S$--submodules of $C_S$ which are locally direct summands of $C_S$.
Each point $a$ of $\bS_C(S)$ defines the free $S$--submodule $Sa$ of $C_S$. 
Since $a$ is an invertible octonion, $Sa$ is a direct summand of $C$ 
by Lemma \ref{lem_summand}. We have thus defined an $R$--map $u: \bS_C \to 
\mathbb{P}(C^\vee)$. It is 
$\bm{\mu}_2$--invariant, thus it induces an $R$--map $\underline{u}:\ubS_C   \to 
\mathbb{P}(C^\vee)$.\footnote{The quotient $\ubS_C$ is defined and discussed in 
the appendix.}

\begin{Lma} \label{lem_quadric} The $R$--map $\underline{u} : \underline{\bS}_C   \to 
\mathbb{P}(C^\vee)$ is an open $R$--immersion.
The projective $R$--quadric $Q_C= \{ q_C=0 \} \subset \mathbb{P}(C^\vee)$ is a 
closed complement of the image of
$\underline{u}$.
\end{Lma}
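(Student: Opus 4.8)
The plan is to check that $\underline{u}$ is a monomorphism of finite type onto a locally closed subscheme, that its image is exactly the open complement $\mathbb{P}(C^\vee) \setminus Q_C$, and then conclude it is an open immersion by a standard criterion (a monomorphism that is flat and of finite presentation, or that is radicial onto an open subscheme and étale). Since everything in sight is compatible with base change, the essential content is local on $R$, and one first reduces to the case $R$ local, or even $R$ an algebraically closed field where the picture is classical: the quadric $Q_C = \{q_C = 0\}$ is the projectivized cone of isotropic lines, and its complement is exactly the set of lines $[\ell]$ on which $q_C$ is anisotropic, i.e. generated by an invertible octonion. So over a field the map $\underline{u}$ identifies $\underline{\bS}_C$ with $\mathbb{P}(C^\vee) \setminus Q_C$ bijectively on points; the scheme-theoretic statement is what needs care.

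The key steps I would carry out, in order. First, construct on the open subscheme $V := \mathbb{P}(C^\vee) \setminus Q_C$ an inverse (or at least a section on points) of $\underline{u}$: a locally free rank-one summand $L \subseteq C_S$ on which $q_C$ restricts to a regular (rank one) form determines, after trivializing $L = Sa$ locally, the class of $a$ in $\underline{\bS}_C(S)$ up to the $\bmu_2$-ambiguity $a \mapsto -a$ and up to the unit rescaling absorbed by projectivization; this is well defined precisely because $q_C(a) \in S^*$ on $V$. Concretely, on the standard affine chart where $L$ is the image of a splitting, one has $a$ well-defined up to sign and scalar, hence a well-defined $S$-point of $\underline{\bS}_C$. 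This gives a morphism $V \to \underline{\bS}_C$, and one checks it is a two-sided inverse to $\underline{u}$ on the level of functors of points, which by Yoneda gives an isomorphism $\underline{\bS}_C \simlgr V$. Second, verify that $V$ is indeed open with closed complement $Q_C$: this is immediate since $q_C$ is a regular quadratic form (Lemma \ref{lem_summand} guarantees regularity of the relevant restrictions), so $\{q_C = 0\}$ is a closed subscheme of $\mathbb{P}(C^\vee)$ cut out by a single homogeneous equation, and its complement is open. Third, assemble: $\underline{u}$ factors as the isomorphism $\underline{\bS}_C \simlgr V$ followed by the open immersion $V \hookrightarrow \mathbb{P}(C^\vee)$, hence is an open immersion with the asserted image.

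The main obstacle is the second step of the first point: showing that the assignment ``locally free rank-one summand $L$ with $q_C|_L$ regular $\mapsto$ point of $\underline{\bS}_C$'' is an honest morphism of schemes and genuinely inverse to $\underline{u}$, rather than just a bijection on $S$-points for each $S$. The subtlety is the $\bmu_2$-quotient: a point of $\underline{\bS}_C(S)$ is, a priori, not an element $a$ but a $\bmu_2$-torsor together with a compatible map (as discussed in the appendix), so one must track the descent data carefully and use that the quadric $Q_C$ carries no such ambiguity (it is a genuine closed subscheme of $\mathbb{P}(C^\vee)$). I would handle this by working fppf-locally on $S$ where the $\bmu_2$-torsor trivializes, producing $a$ there, and then checking the two local choices of $a$ differ by a section of $\bmu_2$, so that the resulting point of $\underline{\bS}_C$ is independent of the choice and the construction descends. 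Once this gluing is in place, the identity $\underline{u} \circ (\text{section}) = \mathrm{id}_V$ and $(\text{section}) \circ \underline{u} = \mathrm{id}_{\underline{\bS}_C}$ are both verified after such a local trivialization, where they reduce to the tautology ``$Sa \mapsto [a] \mapsto Sa$''.
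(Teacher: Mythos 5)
Your proposal is essentially correct but follows a genuinely different route from the paper. The paper does not construct an inverse to $\underline{u}$: it first checks that $\underline{u}$ is a monomorphism (two unit octonions spanning the same line differ by a unit $\lambda$ with $\lambda^2=1$ after taking norms), and then invokes the criterion of \cite[$_4$.18.10.5]{EGA4} — a monomorphism from a flat $R$--scheme to a smooth $R$--scheme with fibres of the same dimension $7$ is an open immersion. The identification of the image is then done separately, by checking $Q_C(k)\sqcup\underline{\bS}_C(k)=\mathbb{P}(C^\vee)(k)$ on geometric points only, where one writes $q_C(x)=a^{-2}$ and rescales. Your approach buys an explicit inverse morphism $V=\mathbb{P}(C^\vee)\setminus Q_C\to\underline{\bS}_C$ and hence establishes the open immersion and the image statement in one stroke, at the price of a more delicate descent construction; the paper's approach buys a very short argument at the price of relying on a nontrivial EGA criterion and of treating the image only pointwise.

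One point in your construction is misstated and is in fact the only real content of the inverse: a generator $a$ of $L\in V(S)$ satisfies $q_C(a)\in S^*$ but not $q_C(a)=1$, and the ``unit rescaling'' is \emph{not} absorbed by projectivization, since $\underline{\bS}_C$ is the quotient of $\bS_C$ by $\bmu_2$ only, not by $\mathbb{G}_m$. To land in $\underline{\bS}_C(S)$ you must normalize $a$ by a square root of $q_C(a)^{-1}$, which exists only after the fppf cover $S'=S[\lambda]/(\lambda^2-q_C(a)^{-1})$; the resulting point $[\lambda a]\in\underline{\bS}_C(S')$ is independent of the choice of $\lambda$ and of the generator $a$ up to sections of $\bmu_2$, hence descends. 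Equivalently, the fibre of the section over $L$ is the $\bmu_2$--torsor $\{m\in L : q_C(m)=1\}$ with its tautological equivariant map to $\bS_C$, in the sense of the appendix. With this correction your argument goes through; without it, the claim that one gets ``a well-defined $S$--point of $\underline{\bS}_C$'' directly from a local trivialization of $L$ is not justified.
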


\begin{proof} Let us first show that $\underline{u}$ is a monomorphism.
We are given an $R$--ring $S$ and $\underline{x}, \underline{y} \in 
\underline{\bS}_C(S)$ such that
$\underline{u}(\underline{x})= \underline{u}(\underline{y})$ in 
$\mathbb{P}(C^\vee)(S)$.
Let $S'$ be a flat $R$--cover of $S$ such that $\underline{x}_{S'}$ (resp.\ 
$\underline{y}_{S'}$) lifts to 
an $S'$--point $x' \in \underline{\bS}_C(S')$ (resp.\ $y' \in 
\underline{\bS}_C(S')$.
We then have  $S'x'=S' y'$.
Then $x'= \lambda y'$ for $\lambda \in (S')^*$. By taking the norms we get 
$1= \lambda^2$, i.e.\  $\lambda \in \bm{\mu}_2(S')$. It follows that 
 $\underline{x}_{S'}=  \underline{y}_{S'} \in \underline{\bS}_C(S')$ and thus
 $\underline{x}=  \underline{y} \in \underline{\bS}_C(S)$.
 
The $R$--morphism $\underline{u}$ is thus a monomorphism from a flat 
$R$--scheme 
to a smooth $R$--scheme and such that fibres over $S$ are 
of the same dimension  $7$. According to \cite[$_4$.18.10.5]{EGA4}, $\underline{u}$ 
is an open immersion.

For the last fact it is enough to check that $Q_C(k) \sqcup  
\underline{\bS}_C(k) = \mathbb{P}(C^\vee)(k)$ for each 
$R$--field $k$ which is algebraically closed. In this case 
$\underline{\bS}_C(k)= \bS_C(k)/\bm{\mu}_2(k)$
and it is clear that  $\emptyset = Q_C(k) \cap  \underline{\bS}_C(k)$. 
Further, a point of $\mathbb{P}(C^\vee)(k)$ is a line $L=k x$ in $C \otimes_R 
k$.
If $q_C(x)=0$, it follows that $[kx] \in Q_C(k)$.
If $q_C(x)\not =0$, we may write $q_C(x)=a^{-2}$ with $a \in k^*$, and then
$L= [k (ax)] \in \underline{\bS}_C(k)$.
\end{proof}

\subsection{A result of Knus--Parimala--Sridharan}

As in Remark \ref{Rcomp}, given a composition algebra $C'$ whose norm is 
isometric to $\lambda q_C$,
we can always find an isometry $(C, q_C) \simeq (C',q_{C'})$ mapping $1_C$ to 
$1_{C'}$. 
In the following special case,  Knus, Parimala and Sridharan noticed that 
another kind of isotopes describes 
such objects.

\begin{Prp}\label{prop_KPS} \cite[Remark 4.7]{KPS} Assume that $R$ is a connected ring
and that $\mathrm{Pic}(R)=0$.
If $C$, $C'$ are  octonion $R$--algebras such that there exists an isometry 
$f:(C, q_C) \simlgr (C',q_{C'})$ 
with $f(1_C)=1_{C'}$, then there exists $u \in C^*$ such 
that $f(x) *_{C'} f(y) =  f \Bigl( ( x (y u)) u^{-1} \Bigr)$.
\end{Prp}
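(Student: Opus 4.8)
The plan is to reduce the statement to a computation about the isometry $f$ and a clever choice of the element $u \in C^*$. First I would observe that, since $f$ is an isometry of composition algebras sending $1_C$ to $1_{C'}$, the map $g := f^{-1}$ transports the multiplication of $C'$ back to $C$: that is, the new bilinear product $x \bullet y := g\bigl(f(x) *_{C'} f(y)\bigr)$ makes $(C, \bullet)$ a composition algebra whose norm is $q_C$ (up to the scalar $\lambda$) and whose unit is $1_C$. So the content of the proposition is that \emph{any} composition algebra structure on the module $C$ with norm a scalar multiple of $q_C$ and with unit $1_C$ arises as $x \bullet y = (x(yu))u^{-1}$ for some $u \in C^*$; equivalently, by Remark \ref{rem_KPS}, it is $C^{1,u}$ up to the isomorphism $R_u$, and one reads off $u$ from the unit. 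The hypotheses $R$ connected and $\mathrm{Pic}(R)=0$ are what let us pin down $u$ globally rather than merely locally.

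Next I would make the choice of $u$ explicit. In $(C, \bullet)$ let $e'$ denote the unit, so $e' = g(1_{C'}) = 1_C$ by hypothesis — wait, rather the point is subtler: $f(1_C) = 1_{C'}$ forces the unit of $(C,\bullet)$ to be $1_C$, but $(C,\bullet)$ need not equal $(C, *_C)$. The element $u$ should be recovered from how the two products differ on the unit. Concretely, set $u := 1_C \bullet 1_C$ computed in... no: the right recipe, suggested by the target formula $x \bullet y = (x(yu))u^{-1}$, is that $1_C \bullet 1_C = (1_C(1_C u))u^{-1} = 1_C$, which gives no information, so instead one uses $L^\bullet_{1_C}$ or $R^\bullet_{1_C}$. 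Since $1_C$ is the unit of $\bullet$ we have $1_C \bullet x = x = x \bullet 1_C$, again vacuous. The genuine invariant is obtained by comparing $(C, \bullet)$ with $(C, *_C)$ via Proposition \ref{prop_KPS}'s field/local analogue: over a local ring, or over a field, the classification of composition-algebra structures with given norm and unit on a fixed module shows $(C,\bullet) \cong C^{1,u}$ locally, and $u$ is determined up to the ambiguity $u \mapsto \mu u$ with $\mu \in R^*$, $\mu^2 = 1$ — i.e.\ up to $\bmu_2$. So I would first establish the local statement (this is essentially in \cite{SV} or \cite{KPS} over fields, extended to local rings by Nakayama-type arguments as in Lemmas \ref{lem_trace}–\ref{lem_summand}), then glue.

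For the gluing step: the assignment $S \mapsto \{u \in C^*_S : \text{the formula holds over } S\}$ is, by the local result, a torsor under $\bmu_2$ over $\Spec R$ — locally nonempty and with the fibre a $\bmu_2(S)$-coset. Such a torsor is classified by $H^1_{\fppf}(R, \bmu_2)$, which by the Kummer sequence sits in $0 \to R^*/(R^*)^2 \to H^1_{\fppf}(R,\bmu_2) \to \Pic(R)[2] \to 0$. Since $\Pic(R) = 0$ the obstruction to a global section lies in $R^*/(R^*)^2$; and here is where I expect the main obstacle: one must check that this class is actually trivial, not merely that it lies in $R^*/(R^*)^2$. I would argue this by exhibiting the class concretely — it should be represented by the discriminant-type scalar relating the two norm forms, or by $\lambda = q_C(\cdot)$ of an appropriate element, which is a square (indeed the norms are \emph{isometric}, not merely similar, once we track $f$), forcing triviality. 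Alternatively, and perhaps more cleanly, connectedness of $R$ plus $\Pic(R)=0$ lets one run the rank-$1$/projective-module bookkeeping directly: the "candidate $u$" is a generator of a rank-$1$ projective submodule which, being a summand of the free-ish module $C$ with trivial Picard group, is itself free, yielding a global $u$. Once $u$ is produced globally, substituting back and using the Moufang identities (exactly as in the proof of Proposition \ref{prop_formulae}(3)) verifies $f(x)*_{C'}f(y) = f\bigl((x(yu))u^{-1}\bigr)$ on the nose, completing the proof.

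The step I regard as the real crux is thus the descent/triviality argument: showing that the locally defined $u$ — a priori only a section of a $\bmu_2$-torsor — patches to a global element under the hypotheses $\Pic(R)=0$ and $R$ connected, and in particular that the resulting cohomological obstruction in $R^*/(R^*)^2$ vanishes. Everything else (the local classification, and the final Moufang-law verification) is either quotable from \cite{KPS, SV} or a routine alternative-algebra computation.
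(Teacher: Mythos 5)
First, a point of reference: the paper gives no proof of this Proposition at all — it is quoted verbatim from \cite[Remark 4.7]{KPS} — so your proposal can only be judged against correctness and against the known argument. Judged that way, it has a fatal reduction and a wrong descent group. The reduction: you fix the given $f$ and reduce to the claim that \emph{every} composition-algebra structure $\bullet$ on the module $C$ with norm $q_C$ and unit $1_C$ is of the form $x\star_u y=(x(yu))u^{-1}$. That claim is false. Take $C'=C$ and $f=\kappa$ the conjugation (an isometry fixing $1$); the transported product is the opposite multiplication $x\bullet y=y*_Cx$, and the identity $yx=(x(yu))u^{-1}$, evaluated at $y=u^{-1}$, forces $u^{-1}$ to be central, hence $u\in R^*1$ by Lemma \ref{lem_centre}, hence $C$ commutative — impossible. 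So the conclusion can only hold for \emph{proper} $f$ (equivalently, one must be free to replace $f$ by $f\circ\kappa$); this proper/improper dichotomy, whose constancy over $\Spec(R)$ is precisely where connectedness of $R$ enters, is invisible in your write-up. Relatedly, the actual production of $u$ — the heart of the local statement — is never carried out: you visibly circle around it (``no: \dots again vacuous'') and then defer wholesale to \cite{SV} and \cite{KPS}. The missing mechanism is triality: comparing the two isomorphisms $\Cl_0(C,q)\simeq\End(C)\times\End(C)$ of Proposition \ref{ptrial} built from $*$ and from $\bullet$ gives an automorphism of $\End(C)\times\End(C)$ which, once the factor-swap is excluded (properness plus connectedness) and Skolem--Noether applies (this uses $\Pic(R)=0$), is inner by a pair $(t_3,t_2)$; one then reads $u$ off from $t_2(1)$ exactly as in Proposition \ref{Ptrialiso}.

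The descent step also uses the wrong structure group. The set of $u\in C^*_S$ with $\bullet_S=\star_u$ is, when nonempty, a coset under $S^*=\mathbb{G}_m(S)$, not under $\bmu_2(S)$: one has $\star_{\mu u}=\star_u$ for \emph{every} unit $\mu$, and conversely $\star_u=\star_{u'}$ means $R_{u'}R_u^{-1}$ commutes with all $L_x$ (since $L^{\star_u}_x=R_u^{-1}L_xR_u$), which forces $u^{-1}u'\in R^*1$ by the nucleus computation of Lemma \ref{lem_centre}. (The $\bmu_2$-ambiguity you invoke pertains to the normalized representative $u\in\bS_C(R)$, which the Proposition does not require.) With the correct group the gluing obstruction lies in $H^1_\fppf(R,\mathbb{G}_m)=\Pic(R)=0$ and vanishes outright — equivalently, the invertible submodule of $C$ locally generated by the valid $u$'s is free; this is exactly your ``alternative, perhaps more cleanly'' argument, which should have been the main one. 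Your primary route instead strands you with a phantom class in $R^*/(R^*)^2$ that you concede you cannot kill; that dead end is an artifact of the wrong group, not a feature of the problem.
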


According to Remark \ref{rem_KPS}, this algebra  $C'$ is isomorphic to $C^{u^{-1},u} \simeq C^{1,u}$, hence is an isotope.
Their work, as ours, involves the triality phenomenon, and this statement is in some sense the starting point of the present paper. However, as will be seen below, 
our approach is independent of theirs, and our point of view via torsors is of a different flavour. Moreover, our take on triality differs as well. A consequence is the avoidance of discriminant modules, which
lightens the burden of necessary technicalities and enables us to formulate all results over general rings and without reference to the Picard group. 

\section{Triality}\label{sec_triality}
\subsection{Framework and properties}\label{subsec_frame}

Let henceforth $C$ always denote an octonion $R$--algebra. In particular, $C$ is 
endowed with a regular, 
multiplicative quadratic form $q=q_C$ and a natural involution $\kappa:x \mapsto 
\overline x$. We denote by $\bAut(C)$ its automorphism group scheme, which is a 
semisimple
$R$--group scheme of type $\mathrm{G}_2$ and is a closed $R$--subgroup scheme of 
 the linear $R$--group scheme  $\bGL(C)$. Let further $\SO(q_C)$ be the special 
orthogonal group scheme \cite[\S 4.3]{CF} associated to $q_C$. This is a 
semisimple $R$--group scheme of type $\mathrm{D}_4$, and the embedding $\bAut(C) 
\to \bGL(C)$ induces 
a closed immersion $\bAut(C) \to \SO(q_C)$.

We denote by $\RT(C)$ the closed $R$--subscheme of $\SO(q_C)^3$ such that for 
each $R$--algebra
$S$, we have
\[\begin{split}
\RT(C)(S)=\left\{(t_1,t_2,t_3)\in\mathbf{SO}(q)(S)^3 \, | \, 
{t_1}_{T}(xy)=\overline{{t_2}_{T}(\overline{x})}\ 
\overline{{t_3}_{T}(\overline{y})}\right.
\\ \left.\text{for any $S$--ring $T$ and any $x,y\in 
C_T$}\vphantom{\overline{{t_2}_{T}(\overline{x})}}\right\}.   
  \end{split}\]

\begin{Rk} Since the multiplication in $C_T$ is $T$--bilinear, it follows that
\[\RT(C)(S)=\left\{(t_1,t_2,t_3)\in\mathbf{SO}(q)(S)^3 \, | \, \forall x,y\in 
C_S: t_1(xy)=\overline{t_2(\overline{x})}\ 
\overline{t_3(\overline{y})}\right\}.\]
Moreover, if $(t_1,t_2,t_3)\in\mathbf{O}(q)(S)^3$ satisfies the above identity, 
then one can show that $(t_1,t_2,t_3)\in\mathbf{SO}(q)(S)^3$ and thus 
$(t_1,t_2,t_3)\in\RT(q)(S)$. We will use and show this later on.
\end{Rk}

An element of $\RT(C)(S)$ for an $R$--ring $S$ is called a \emph{related 
triple}. In fact, $\RT(C)$ is a closed $R$--subgroup scheme of $\SO(q_C)^3$ 
(i.e.\ related triples can be multiplied component-wise). 

\begin{Rk}\label{rsv}
The definition of related triples here differs from that of \cite{SV} and 
\cite{KPS}, where instead the condition is
\[t_1(xy)=t_2(x)t_3(y).\]
(I.e.\ $(t_1,t_2,t_3)$ is an isotopy from $C_S$ to itself, in the literature 
sometimes called an \emph{autotopy}.) As defined here, a triple is related if 
and only if it satisfies
\[t_1(x\bullet y)=t_2(x)\bullet t_3(y),\]
i.e.\ it is an isotopy with respect to the \emph{para-octonion} multiplication 
$\bullet$ on $C$, defined by $x\bullet y=\overline{x}\ \overline{y}$. 
The (non-unital) composition algebra $(C,\bullet)$ is an instance of what is 
over fields known as a \emph{symmetric composition algebra}. This is the 
approach taken in \cite{KMRT}. Note that if $(t_1,t_2,t_3)$ is related in this sense, then $(t_1,\kappa t_2\kappa,\kappa t_3\kappa)$ is related
in the sense of \cite{SV} and \cite{KPS}. It follows that the two definitions give isomorphic groups.
\end{Rk}

The following lemma, extended from \cite{Eld}, 
shows that the subgroup $A_3$ of $S_3$ acts on $\RT(C)$ by permuting the 
$t_i$'s. The fact that the action of $A_3$ takes this
simple form is the main advantage of using this version of the definition.

\begin{Lma}\label{Lequiv} Let $S$ be an $R$--ring. For 
$(t_1,t_2,t_3)\in\mathbf{SO}(q_C)(S)^3$, the following are equivalent.
\begin{enumerate}
\item $(t_1,t_2,t_3)\in\RT(C)(S)$.
\item $(t_2,t_3,t_1)\in\RT(C)(S)$. 
\item $(t_3,t_1,t_2)\in\RT(C)(S)$.
\item The trilinear form $\Delta_q$ defined on $C_S$ by 
$\Delta_q(w,x,y)=b_q(w,\overline{x}\ \overline{y})$ satisfies 
\[\Delta_q(t_1(w),t_2(x),t_3(y))=\Delta_q(w,x,y)\]
for all $w,x,y\in C_S$.
\end{enumerate}
\end{Lma}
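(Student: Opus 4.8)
The plan is to prove the cyclic equivalence $(1)\Leftrightarrow(2)\Leftrightarrow(3)$ by reducing everything to the symmetric trilinear form $\Delta_q$, which is manifestly cyclically invariant, so it suffices to establish $(1)\Leftrightarrow(4)$; then $(2)\Leftrightarrow(4)$ and $(3)\Leftrightarrow(4)$ follow by the same argument applied to the cyclic shifts, once we know $\Delta_q(w,x,y)$ behaves symmetrically enough. First I would record the elementary identity that, for the para-octonion product $x\bullet y=\overline x\,\overline y$, one has $b_q(w, x\bullet y)=\Delta_q(w,x,y)$, and then observe via the adjoint relations $b_q(x,ay)=b_q(\overline a x,y)$ (recalled just before Lemma~\ref{lem_summand}) together with $b_q(\overline z,\overline z')=b_q(z,z')$ that $\Delta_q$ is in fact totally symmetric under the full symmetric group up to the substitutions forced by $\kappa$; more precisely, the key computation is that $b_q(w,\overline x\,\overline y)=b_q(x,\overline y\,\overline w)=b_q(y,\overline w\,\overline x)$, i.e. $\Delta_q(w,x,y)=\Delta_q(x,y,w)=\Delta_q(y,w,x)$. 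This cyclic symmetry of $\Delta_q$ is really the crux: it is what makes the $A_3$-action take the clean form asserted.

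Granting that, the equivalence $(1)\Leftrightarrow(4)$ goes as follows. Assume $(t_1,t_2,t_3)\in\RT(C)(S)$, so $t_1(xy)=\overline{t_2(\overline x)}\;\overline{t_3(\overline y)}$ for all $x,y\in C_S$. Rewriting with $x\mapsto\overline x$, $y\mapsto\overline y$ this says $t_1(\overline x\,\overline y)=\overline{t_2(x)}\;\overline{t_3(y)}=t_2(x)\bullet t_3(y)$, i.e. $t_1(x\bullet y)=t_2(x)\bullet t_3(y)$, the reformulation noted in Remark~\ref{rsv}. Then for any $w$,
\[
\Delta_q(t_1(w),t_2(x),t_3(y)) = b_q\bigl(t_1(w),\,t_2(x)\bullet t_3(y)\bigr) = b_q\bigl(t_1(w),\,t_1(x\bullet y)\bigr) = b_q(w,x\bullet y) = \Delta_q(w,x,y),
\]
where the third equality uses $t_1\in\SO(q_C)(S)\subset\mathbf{O}(q_C)(S)$, hence $t_1$ preserves $b_q$. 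Conversely, assume $(4)$. Then for all $w,x,y$ we have $b_q(t_1(w),\,t_2(x)\bullet t_3(y))=b_q(w,x\bullet y)=b_q(t_1(w),\,t_1(x\bullet y))$, using again that $t_1$ is an isometry; since $t_1$ is surjective, $b_q(v,\,t_2(x)\bullet t_3(y))=b_q(v,\,t_1(x\bullet y))$ for all $v\in C_S$, and as $b_q$ is regular (and $q_C$ is so after any base change, all modules being faithfully projective) we conclude $t_2(x)\bullet t_3(y)=t_1(x\bullet y)$, which is $(1)$ in the para-octonion form, hence $(1)$.

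For $(2)\Leftrightarrow(4)$ and $(3)\Leftrightarrow(4)$ I would simply repeat the displayed computation with the roles of the indices cyclically permuted: e.g. $(t_2,t_3,t_1)$ being related means $t_2(x\bullet y)=t_3(x)\bullet t_1(y)$, and pairing against $w$ and using the cyclic symmetry $\Delta_q(w,x,y)=\Delta_q(y,w,x)$ established above converts this into exactly the invariance statement $(4)$ — no new input is needed beyond the cyclic symmetry of $\Delta_q$ and the fact that each $t_i$ is an isometry. The main obstacle is therefore the bookkeeping in verifying the cyclic symmetry of $\Delta_q$ cleanly over an arbitrary $R$-ring $S$: one must be slightly careful that the adjunction identity $b_q(x,ay)=b_q(\overline a x,y)$ and the relation $b_q(\overline x,\overline y)=b_q(x,y)$ hold after base change (they do, being polynomial identities in the structure constants, or by the very definition of an octonion algebra over $R$), and that "regular" quadratic form means $b_q$ induces an isomorphism $C_S\to C_S^\vee$ so that the cancellation in the converse direction is legitimate. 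Everything else is formal.
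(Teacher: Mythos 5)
Your proposal is correct and follows essentially the same route as the paper: both reduce everything to the equivalence $(1)\Leftrightarrow(4)$ via the cyclic invariance of $\Delta_q$, and both prove that equivalence by pairing the defining (para-octonion) relation against $t_1(w)$, using that $t_1$ is an isometry and that $b_q$ is regular. Your write-up merely fills in details the paper leaves as "readily verified" (the adjunction identities giving the cyclic symmetry, and the surjectivity of $t_1$ in the cancellation step).
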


\begin{proof} It is readily verified that 
$\Delta_q(w,x,y)=\Delta_q(x,y,w)=\Delta_q(y,w,x)$. Thus $\Delta_q$ is invariant 
under cyclic permutations, and it suffices to prove the equivalence of the 
first and last statement. Since $t_1$ is orthogonal, the last statement is 
equivalent to
\[\forall w,x,y\in C: b_q(t_1(w),\overline{t_2(x)}\ 
\overline{t_3(y)})=b_q(t_1(w),t_1(\overline{x}\ \overline{y})),\]
which by regularity of $q$ is equivalent to the first statement.
\end{proof}

\begin{Ex}\label{extriple} If $c\in C$ satisfies $q(c)=1$, then the middle Moufang identity $c(xy)c=(cx)(yc)$ implies that $(B_c,R_{\overline{c}},L_{\overline{c}})\in\RT(C)(R)$. By Lemma \ref{Lequiv}, the same holds for $(R_{\overline{c}},L_{\overline{c}},B_c)$ and $(L_{\overline{c}},B_c,R_{\overline{c}})$. These, in a sense, basic triples will be useful in the sequel.
\end{Ex}

The following lemma provides other examples of related triples.

\begin{Lma}\label{laut} Let $S$ be an $R$--ring and $t\in\SO(q_C)(S)$. Then 
$(t,t,t)\in\RT(C)(S)$ if and only if $t\in\bAut(C)(S)$.
\end{Lma}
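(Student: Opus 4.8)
The statement to prove is Lemma~\ref{laut}: for $t\in\SO(q_C)(S)$, we have $(t,t,t)\in\RT(C)(S)$ if and only if $t\in\bAut(C)(S)$. One direction is essentially immediate: if $t$ is an algebra automorphism, then $t(xy)=t(x)t(y)$, and since every automorphism commutes with the involution $\kappa$ (as $\kappa$ is defined via $q$ and the unit, both preserved by $t$), we get $t(xy)=\overline{t(\overline x)}\,\overline{t(\overline y)}$, which is exactly the related-triple condition with $t_1=t_2=t_3=t$. So the content is the converse.

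For the converse, suppose $t(xy)=\overline{t(\overline x)}\,\overline{t(\overline y)}$ for all $x,y\in C_S$. First I would extract information by specializing $x$ or $y$ to the unit $1$. Setting $x=y=1$ gives $t(1)=\overline{t(1)}^{\,2}$; combined with the fact that $t$ preserves $q$ and hence $q(t(1))=q(1)=1$, and that $t$ preserves the trace form $b_q(-,1)$ in a controlled way, one deduces $t(1)=1$. (Alternatively: $\SO(q_C)$ fixes $1$ up to the relevant normalization once we also know $t(1)\in\bS_C(S)$; the cleanest route is to show $\overline{t(1)}=t(1)$ and $q(t(1))=1$, forcing $t(1)=1$ by the structure of the quadric, using that $b_q(t(1),t(1))=2q(t(1))$ and $t(1)=t(1)^2$.) Once $t(1)=1$, we have $\overline{t(\overline 1)}=1$, and then setting $y=1$ in the identity yields $t(x)=\overline{t(\overline x)}$, i.e.\ $t$ commutes with $\kappa$. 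Substituting this back into the related-triple identity turns it into $t(xy)=t(x)\,t(y)$, so $t$ is an algebra homomorphism; being invertible (it lies in $\SO(q_C)(S)$), it is an automorphism, hence $t\in\bAut(C)(S)$.

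The main obstacle is the step $t(1)=1$: over a general ring $S$ where $2$ need not be invertible, one cannot simply divide, so I would argue as follows. From $x=1$, $y=1$ we get $t(1)=\overline{t(1)}\cdot\overline{t(1)}$. Writing $z=t(1)$, recall $\overline z=b_q(z,1)1-z=\tr(z)1-z$ and $z\overline z=q(z)1=1$ (since $t\in\SO(q_C)(S)$ preserves $q$). Then $\overline z\,\overline z = \overline{z}(\tr(z)1-z)=\tr(z)\overline z-\overline z z=\tr(z)\overline z-1$, using $z\overline z=\overline z z=q(z)1=1$. So the identity reads $z=\tr(z)\overline z-1$, i.e.\ $z+1=\tr(z)\overline z$. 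Applying $b_q(-,1)$ and using $b_q(\overline z,1)=\tr(\overline z)=\tr(z)$ gives $\tr(z)+2=\tr(z)^2$; meanwhile applying $\kappa$ to $z+1=\tr(z)\overline z$ gives $\overline z+1=\tr(z)z$, and adding the two relations yields $(z+\overline z)+2=\tr(z)(z+\overline z)$, i.e.\ $\tr(z)1+2\cdot 1=\tr(z)^2 1$, consistent but not yet conclusive. To finish I would reduce to the local case via partition of unity (as the paper does repeatedly, e.g.\ in Lemma~\ref{lem_trace}), indeed to $S$ a field, where the equation $z+1=\tr(z)\overline z$ together with $q(z)=1$ forces $z=1$ by a short direct check (the subalgebra $R[z]$ is a rank~$\le 2$ composition algebra and the relation pins down $z$); then conclude over general $S$ by the local-global principle. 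An alternative, possibly slicker, route avoiding this case analysis: use Example~\ref{extriple} and the group structure on $\RT(C)$ — if $(t,t,t)$ is related then so is $(t,t,t)\cdot(B_c,R_{\overline c},L_{\overline c})^{\pm 1}$ for suitable unit-norm $c$, and choosing $c$ cleverly one can normalize $t(1)=1$ first; but the direct computation above is the more elementary path and is the step I expect to require the most care.
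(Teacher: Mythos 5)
The forward direction and the final steps of your converse (once $t(1)=1$ is known, specialize $y=1$ to get $t\kappa=\kappa t$ and then read off multiplicativity) match the paper. The gap is in your key step, the claim that $t(1)=1$ follows from the single specialization $x=y=1$. Setting $z=t(1)$, the relation $z=\overline{z}\,\overline{z}$ together with $q(z)=1$ does \emph{not} pin down $z$: writing $\overline{z}=\tr(z)1-z$ one finds $\overline{z}\,\overline{z}=\tr(z)\overline{z}-q(z)1$, so \emph{every} $z$ with $q(z)=1$ and $\tr(z)=-1$ satisfies $z=\overline{z}\,\overline{z}$ (indeed $z^2+z+1=0$ then gives $\overline z^2=(1+z)^2=1+2z+z^2=z$). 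Such elements exist already in rank-two subalgebras over many fields (e.g.\ $\diag(2,4)$ in the split algebra over $\F_7$, or any root of $X^2+X+1$ over $\C$), so your ``short direct check over a field'' cannot succeed, and localization will not rescue it. Your fallback via Example~\ref{extriple} is also not viable as stated: multiplying $(t,t,t)$ by $(B_c,R_{\overline c},L_{\overline c})$ destroys the diagonal form, so it does not normalize $t(1)$ within the hypothesis of the lemma.

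The missing idea is to use \emph{both} one-sided specializations rather than only $x=y=1$. Setting $y=1$ gives $t(\overline x)=\overline{t(x)}\,\overline{t(1)}$ and setting $x=1$ gives $t(\overline y)=\overline{t(1)}\,\overline{t(y)}$; comparing and using surjectivity of $t$ shows $\overline{t(1)}$ commutes with all of $C_S$, hence lies in the centre $S1$ by Lemma~\ref{lem_centre}, so $\overline{t(1)}=\alpha 1=t(1)$ with $\alpha^2=1$ since $t$ is an isometry. Only then does the $x=y=1$ relation $t(1)=\overline{t(1)}\,\overline{t(1)}=\alpha^2 1$ force $\alpha=1$. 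This centrality argument is the essential content your proposal omits; without it the trace bookkeeping you carry out is, as you yourself observe, ``consistent but not conclusive.''
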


\begin{proof} If $t\in\bAut(C)(S)$, then $t$ commutes with the involution of 
$C_S$ and satisfies $t(xy)=t(x)t(y)$ for all $x,y\in C_S$. From this it follows 
that $(t,t,t)\in\RT(C)(S)$.

Conversely, assume that $(t,t,t)\in\RT(C)(S)$. To show that $t\in\bAut(C)(S)$ it 
suffices to show that $t$ commutes with the involution. By definition of a 
related triple we have
\[t(\overline{x})=\overline{t(x)}\ \overline{t(1)}=\overline{t(1)}\ 
\overline{t(x)}\]
for all $x\in C_S$. Surjectivity of $t$ implies that $\overline{t(1)}$ belongs 
to the centre $S1$ of $C_S$, whereby $\overline{t(1)}=\alpha1=t(1)$ for some 
$\alpha\in S$. Since
$t$ is an isometry we get $\alpha^2=1$. But then
\[\alpha=t(1)=t(1)t(1)=\alpha^2=1.\]
Thus $t$ fixes $1$ and by the above we get $t(\overline{x})=\overline{t(x)}$ for 
all $x\in C_S$, as required.
\end{proof}

Thus $\bAut(C)$ embeds in $\RT(C)$ by means of the diagonal mapping, and we 
denote by $i: \bAut(C) \to \RT(C)$ that closed embedding, which we will 
henceforth view as an inclusion. We also define $f_i:\RT(C)\to\SO(C)$ as the 
projection
on the $i$th component, for $i=1,2,3$.

\begin{Prp}
 $\bAut(C) \simeq  \RT(C)^{A_3}\simeq \RT(C)^{S_3}$.
\end{Prp}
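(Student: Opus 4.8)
The plan is to show the two non-obvious inclusions, namely that a triple fixed by $A_3$ (equivalently by $S_3$) must be diagonal and must lie in $\bAut(C)$; the reverse inclusions are immediate since by Lemma~\ref{laut} the diagonal image of $\bAut(C)$ consists of related triples, and each such diagonal triple is manifestly fixed by the full $S_3$ acting by permutation of components. Since $A_3\subset S_3$, we have $\RT(C)^{S_3}\subseteq\RT(C)^{A_3}$, so it suffices to prove $\bAut(C)\supseteq\RT(C)^{A_3}$, and then observe that the element produced is automatically $S_3$-fixed.

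First I would take an $R$--ring $S$ and a triple $(t_1,t_2,t_3)\in\RT(C)(S)$ fixed by $A_3$. By definition of the $A_3$--action (justified by Lemma~\ref{Lequiv}, which shows $A_3$ really does act on $\RT(C)$), being fixed means $(t_1,t_2,t_3)=(t_2,t_3,t_1)$, hence $t_1=t_2=t_3=:t$. So the $A_3$--fixed points are precisely the diagonal related triples $(t,t,t)\in\RT(C)(S)$. Now Lemma~\ref{laut} applies verbatim: $(t,t,t)\in\RT(C)(S)$ if and only if $t\in\bAut(C)(S)$. This gives $\RT(C)^{A_3}(S)=\bAut(C)(S)$ functorially in $S$, hence $\RT(C)^{A_3}\simeq\bAut(C)$ as $R$--schemes (indeed as $R$--group schemes, since the isomorphism is the restriction of the closed embedding $i$). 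Finally, a diagonal triple is obviously invariant under the transpositions generating $S_3$ as well, so $\RT(C)^{A_3}\subseteq\RT(C)^{S_3}$; combined with the trivial reverse inclusion $\RT(C)^{S_3}\subseteq\RT(C)^{A_3}$, all three group schemes coincide.

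There is essentially no hard step here: the only thing to be slightly careful about is the scheme-theoretic meaning of the fixed-point subscheme $\RT(C)^{A_3}$, i.e.\ that the functor $S\mapsto\RT(C)(S)^{A_3}$ is representable by a closed subscheme of $\RT(C)$ (which follows since $\RT(C)$ is separated over $R$ and the fixed locus of a finite group acting on a separated scheme is closed) and that checking the isomorphism on $S$--points for all $R$--rings $S$ suffices by Yoneda. With that understood, the proof is just the combination of the permutation-action description from Lemma~\ref{Lequiv} with the automorphism criterion of Lemma~\ref{laut}, so I would expect the written proof to be only two or three lines long.
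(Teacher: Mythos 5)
Your proposal is correct and follows essentially the same route as the paper: both arguments reduce to observing that an $A_3$-fixed triple must be diagonal, then invoke Lemma~\ref{laut} to identify diagonal related triples with automorphisms, with the paper phrasing this as the mutually inverse functorial maps $i$ and $f_1$ between $\bAut(C)$ and $\RT(C)^{A_3}$. Your additional remark on representability of the fixed-point subfunctor is a reasonable precision that the paper leaves implicit.
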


\begin{proof} By the Lemma \ref{laut}, the embedding $i$ maps $\bAut(C)$ into 
$\RT(C)^{A_3}$, and the projection $f_1$ maps $\RT(C)^{A_3}$ into $\bAut(C)$ and 
satisfies
$f_1\circ i=\Id_{\bAut(C)}$ and $i\circ f_1=\Id_{\RT(C)^{A_3}}$. The first 
isomorphism follows as both these maps are functorial, and the second holds as 
all elements of $S_3$ act trivially on the image of $i$. 
\end{proof}

The centre of $\SO(q_C)^3$ is $\bm{\mu}_2^3$ and its intersection with $\RT(C)$ 
is
\[\bm{\mu}:= \ker( \bm{\mu}_2^3 \xrightarrow{\mathrm{mult}} \bm{\mu}_2).\] 
As the following proposition shows, related triples are determined, modulo 
$\bm{\mu}_2$, by any of their components.

\begin{Lma}\label{lker}
\noindent (1) Let $(t_1,t_2,t_3)\in\RT(C)(R)$ with $t_1=\Id$. Then 
$t_2=t_3=\eta \Id$ for $\eta \in \bm{\mu}_2(R)$.

\smallskip

\noindent (2)  The kernel of $f_1$ is isomorphic to $\bm{\mu}_2$ embedded 
in $\bm{\mu}$ by $\eta \mapsto ( 1, \eta,\eta)$ and similarly for $f_2$ and 
$f_3$.
\end{Lma}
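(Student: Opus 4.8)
The plan is to prove (1) first, since (2) is essentially a reformulation of it together with the analogous statements for $f_2$ and $f_3$, which follow by the $A_3$-symmetry of Lemma \ref{Lequiv}. So suppose $(t_1,t_2,t_3)\in\RT(C)(R)$ with $t_1=\Id$. The defining identity then reads $xy=\overline{t_2(\overline x)}\;\overline{t_3(\overline y)}$ for all $x,y\in C$. First I would substitute $x=y=1$ to get $1=\overline{t_2(1)}\;\overline{t_3(1)}$; then setting $y=1$ gives $x=\overline{t_2(\overline x)}\;\overline{t_3(1)}$ and setting $x=1$ gives $y=\overline{t_2(1)}\;\overline{t_3(\overline y)}$. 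Writing $a=\overline{t_3(1)}$ and $b=\overline{t_2(1)}$, these say $R_a\circ(\kappa t_2\kappa)=\Id$ and $L_b\circ(\kappa t_3\kappa)=\Id$, so $\kappa t_2\kappa$ and $\kappa t_3\kappa$ are invertible with $ba=1$, and the original identity becomes $xy = (x b^{-1})(a^{-1} y)$ after substituting $\overline{t_2(\overline x)} = x a^{-1}$ — more precisely, $\kappa t_2\kappa = R_a^{-1}$ and $\kappa t_3\kappa = L_b^{-1}$ in the notation where one must be careful that $R_a$ need not be invertible unless $a\in C^*$; but $q(a)=q(t_3(1))=q(1)=1$ since $t_3$ is an isometry, so $a\in C^*$, and likewise $b\in C^*$, and then $R_a^{-1}=R_{a^{-1}}$, $L_b^{-1}=L_{b^{-1}}$ need not hold in a general alternative algebra, so instead I keep $R_a^{-1}$, $L_b^{-1}$ abstractly.

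The heart of the matter is then to show $a=b=\eta\in\bmu_2(R)$. From $xy=(\kappa t_2\kappa)^{-1}(x)\cdot(\kappa t_3\kappa)^{-1}(y)$ with $(\kappa t_2\kappa)^{-1}=R_a$ and $(\kappa t_3\kappa)^{-1}=L_b$ — wait, let me restate cleanly: the relation $x = \overline{t_2(\bar x)}\,\bar a$ for all $x$, i.e. $\overline{t_2(\bar x)} = x a^{-1}$ where here $a^{-1}$ denotes the inverse octonion and one uses the Moufang-type identity $(xa^{-1})a = x$ valid in alternative algebras. Combining, $xy = ((x) \text{-something})\cdots$; concretely the cleanest route is: the triple $(t_1,t_2,t_3)=(\Id,t_2,t_3)$ being related means $(\Id, t_2, t_3)$ is an autotopy of the para-octonion product up to $\kappa$, equivalently $(\Id,\kappa t_2\kappa,\kappa t_3\kappa)$ is an autotopy of $C$ itself (Remark \ref{rsv}), i.e. $xy = g(x)h(y)$ with $g=\kappa t_2\kappa$, $h=\kappa t_3\kappa$ invertible. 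Setting $y=1$: $x=g(x)h(1)$, so $g(x)=R_{h(1)}^{-1}(x)$; setting $x=1$: $y=g(1)h(y)$, so $h(y)=L_{g(1)}^{-1}(y)$. Then $q(g(1))q(h(1)) = q(g(1)h(1)) = q(1\cdot 1)=1$; but also $g(1)=R_{h(1)}^{-1}(1)$ and since $g$ is (conjugate to) an isometry, $q(g(1))=1$, hence $q(h(1))=1$, so $c:=h(1)\in C^*$ and $d:=g(1)\in C^*$ with $dc=g(1)h(1)=1$, so $d=c^{-1}$. Now $g=R_c^{-1}$, $h=L_{c^{-1}}^{-1}$, and the autotopy condition $xy = (R_c^{-1}x)(L_{c^{-1}}^{-1}y)$ for all $x,y$. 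Substituting $x\mapsto R_c x = xc$ and $y\mapsto L_{c^{-1}}y = c^{-1}y$ yields $(xc)(c^{-1}y) = xy$ for all $x,y$. Taking $x=1$: $c(c^{-1}y)=y$, fine; the real constraint is $(xc)(c^{-1}y)=xy$ for all $x,y$.

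The final step, the main obstacle, is to deduce from $(xc)(c^{-1}y)=xy$ for all $x,y\in C$ that $c\in R1$ (hence $c=\pm1$, i.e. $c=\eta\in\bmu_2(R)$, using $q(c)=1$ and that $R$ may have nontrivial idempotents so "$\pm1$" means $\bmu_2(R)$). To handle this I would first pass to $R$ noetherian by the standard limit argument (as in Lemma \ref{lem_centre}), then to the local case by Nakayama, then to an algebraically closed field, where one can invoke the known field case from \cite{SV} or \cite{KMRT} that the middle nucleus / the relevant stabilizer forces $c$ central; alternatively, differentiate the identity: set $y=c$ to get $(xc)(c^{-1}c) = xc$ trivially, not helpful, so instead expand $(xc)(c^{-1}y)$ using the linearized Moufang identities. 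A cleaner self-contained argument: put $x=1$ and replace $y$ by $cy$ in the reciprocal identity, or better, use that $(xc)(c^{-1}y)=xy$ says $L_{xc}L_{c^{-1}} = L_x$ as operators after fixing $x$... actually the slickest finish is: from $(xc)(c^{-1}y)=xy$, polarize in the alternative setting and use Lemma \ref{lem_centre}'s characterization $R1=\{z : y(xz)=(yx)z\ \forall x,y\}$; indeed setting $z = c^{-1}y$ appropriately and using associativity of powers of $c$ one shows $c$ lies in the middle nucleus, hence in $R1$ by Lemma \ref{lem_centre}. Then $c=\alpha 1$ with $\alpha^2=q(c)^{\pm}=1$ — more precisely $c\bar c = q(c)1 = 1$ and $c=\alpha 1$ gives $\alpha^2=1$ — so $c=\eta\in\bmu_2(R)$, whence $g=h=R_c^{-1}=\eta^{-1}\Id=\eta\Id$, and therefore $t_2 = \kappa g\kappa = \eta\Id$, $t_3=\eta\Id$. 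This proves (1); part (2) is immediate since an element of $\ker f_1(R)$ (functorially, over every $S$) is exactly a related triple $(\Id,t_2,t_3)$, which by (1) applied over each $S$ equals $(1,\eta,\eta)$ with $\eta\in\bmu_2(S)$, giving the claimed isomorphism $\ker f_1\simeq\bmu_2$; the statements for $f_2,f_3$ follow by applying the cyclic permutation of Lemma \ref{Lequiv}.
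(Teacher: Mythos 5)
Your overall strategy is the same as the paper's: reduce $t_2$ and $t_3$ to (inverses of) translation operators, extract an associativity relation, and invoke Lemma \ref{lem_centre} to force the relevant octonion into $R1$; part (2) then follows formally, with the cyclic symmetry of Lemma \ref{Lequiv} handling $f_2,f_3$. The one place where you do not actually close the argument is the step you yourself flag as ``the main obstacle'': deducing $c\in R1$ from $(xc)(c^{-1}y)=xy$. You offer a menu of possible finishes (limit/Nakayama/algebraically closed field, ``differentiate the identity'', ``polarize'') without executing any of them. The reductions to fields are unnecessary; the correct finish is the substitution you gesture at: replace $y$ by $cy$. Since $c^{-1}(cy)=y$ (Artin's theorem, or left alternativity applied to the subalgebra generated by $c$ and $y$), the identity becomes $(xc)y=x(cy)$ for all $x,y\in C$, which is verbatim the right-hand characterization of $R1$ in Lemma \ref{lem_centre}. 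With that line written out, your proof is complete and correct: $c=\eta 1$, $\eta^2=q(c)=1$, hence $g=h=\eta\Id$ and $t_2=t_3=\eta\Id$.

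For comparison, the paper sidesteps this extra substitution by first taking conjugates of the defining identity: $\overline{t_2(\overline x)}\ \overline{t_3(\overline y)}=xy$ becomes $t_3(y)t_2(x)=yx$ (the involution reverses the order of multiplication), hence $zt_2(x)=t_3^{-1}(z)x$; setting $z=1$ and $x=1$ gives $t_2=L_c$ and $t_3^{-1}=R_d$, so $z(cx)=(zd)x$, and $x=z=1$ yields $c=d$, landing directly on the condition of Lemma \ref{lem_centre} with no inverses or Moufang identities needed. Two small further remarks: for the scheme-theoretic statement in (2) you correctly note that (1) must be applied over every $R$--ring $S$, and you should also record (as the paper does) the trivial converse that $(\Id,\eta\Id,\eta\Id)$ is indeed a related triple, so that the kernel is all of $\bm{\mu}_2$ and not merely contained in it.
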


\begin{proof} (1) 
The proof is inspired by \cite{Eld}. Let  $(t_1,t_2,t_3)\in\RT(R)(S)$ with 
$t_1=\Id$. Then
\[\overline{t_2(x)}\ \overline{t_3(y)}=\overline{x}\ \overline{y},\]
or equivalently 
\[t_3(y)t_2(x)=yx\]
for any $x,y\in C_S$, whereby
\[zt_2(x)=t_3^{-1}(z)x\]
for any $x,z\in C_S$. Setting $z=1$ gives $t_2=L_c$ with $c=t_3^{-1}(1)$, and 
setting $x=1$ gives $t_3^{-1}=R_d$ with $d=t_2(1)$. Thus in general
\[z(cx)=(zd)x\]
for any $x,z\in C$. In particular with $x=z=1$ we conclude that $d=c$, and then 
$z(cx)=(zc)x$ for all $x,z\in C$. 
But such an associativity relation only holds if $c=\eta$ for some 
$\eta\in R1$ (Lemma \ref{lem_centre}). Thus $t_2=t_3^{-1}=\eta \Id$. Since e.g.\ 
$t_2$ is an isometry, this gives
\[1=q(t_2(1))=q(\eta 1)=\eta^2,\]
whence the claim.

\smallskip

\noindent (2) The first statement 
 shows that the kernel is contained in $\bm{\mu}_2$.
Conversely, if $t_2=\eta \Id$ and $t_3=\eta \Id$ with $\eta^2=1$, then for all 
$x\in C$,
\[t_1(x)=t_1(x1)=\overline{t_2(\overline{x})}\ \overline{t_3(1)}=\eta^2x1=x,\]
whereby the proof is complete.
\end{proof}

Given this, we observe the following addition to the above discussion on 
automorphisms.

\begin{Prp}\label{pfix} A triple $(t_1,t_2,t_3)\in\RT(C)(S)$ satisfies 
$t_3(1)=t_2(1)\in\bm{\mu}_2(S)1$ if and only if 
$t_1\in\bAut(C_S)$. Moreover, a triple  $(t_1,t_2,t_3)\in\RT(C)(S)$ satisfies 
$t_3(1)=t_2(1)=1$ if and only if $(t_1,t_2,t_3)\in i_S(\bAut(C)(S))$.
\end{Prp}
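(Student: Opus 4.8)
The strategy is to reduce both equivalences to the results already in hand, principally Lemma \ref{laut}, Lemma \ref{lker}, and Example \ref{extriple}. The key device is to \emph{correct} an arbitrary related triple by a suitable ``basic'' related triple so that its second and third components fix $1$, after which Lemma \ref{laut} applies. Concretely, given $(t_1,t_2,t_3)\in\RT(C)(S)$, set $c=t_2(1)$ and $d=t_3(1)$. Evaluating the defining identity at $x=y=1$ gives $t_1(1)=\overline{c}\ \overline{d}$. The plan is first to treat the second (stronger) statement and then deduce the first.

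\emph{Second equivalence.} One direction is immediate: if $(t_1,t_2,t_3)=i_S(g)$ for $g\in\bAut(C)(S)$, then $t_2=t_3=g$ fixes $1$. For the converse, suppose $t_2(1)=t_3(1)=1$. Then $t_1(1)=\overline{1}\ \overline{1}=1$ as well. I would then show $t_1\in\bAut(C_S)$: since $t_1$ fixes $1$, the identity $t_1(xy)=\overline{t_2(\overline x)}\ \overline{t_3(\overline y)}$ together with $\overline{t_i(\overline{\cdot})}=\kappa t_i\kappa$ shows $(t_1,\kappa t_2\kappa,\kappa t_3\kappa)$ is an autotopy fixing the unit. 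Rather than reprove the autotopy theory, the cleaner route is: from Lemma \ref{lker}(1) applied after composing with the inverse of a suitable triple. Precisely, consider $(t_1,t_2,t_3)\cdot i_S(t_1)^{-1}=(t_1t_1^{-1},t_2t_1^{-1},t_3t_1^{-1})=(\Id,t_2t_1^{-1},t_3t_1^{-1})$ — wait, this requires $t_1\in\bAut$, which is what we want. So instead I argue directly: put $u=t_1(xy)=\overline{t_2(\overline x)}\ \overline{t_3(\overline y)}$; setting $y=1$ gives $t_1(x)=\overline{t_2(\overline x)}\,\overline{t_3(1)}=\overline{t_2(\overline x)}$, and setting $x=1$ gives $t_1(y)=\overline{t_2(1)}\ \overline{t_3(\overline y)}=\overline{t_3(\overline y)}$. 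Hence $\overline{t_2(\overline x)}=\overline{t_3(\overline x)}=t_1(x)$ for all $x$, so $t_2=t_3=\kappa t_1\kappa$, and the defining relation becomes $t_1(xy)=t_1(x)t_1(y)$; since $t_1(1)=1$ it follows $t_1\in\bAut(C_S)$. Then $\kappa t_1\kappa=t_1$ (automorphisms commute with $\kappa$), so $t_2=t_3=t_1$ and $(t_1,t_2,t_3)=i_S(t_1)$.

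\emph{First equivalence.} Again one direction is clear from Lemma \ref{laut}: if $t_1\in\bAut(C_S)$ then $(t_1,t_1,t_1)\in\RT(C)(S)$, and by Lemma \ref{lker}(1)–(2) any related triple with first component $t_1$ differs from $(t_1,t_1,t_1)$ by an element of $\ker f_1\cong\bm\mu_2$, i.e.\ $(t_2,t_3)=(\eta t_1,\eta t_1)$ with $\eta\in\bm\mu_2(S)$, so $t_2(1)=t_3(1)=\eta 1\in\bm\mu_2(S)1$. For the converse, suppose $t_2(1)=t_3(1)=\eta 1$ with $\eta^2=1$. Multiplying the triple $(t_1,t_2,t_3)$ by the scalar related triple $(\eta^{-2},\eta^{-1},\eta^{-1})=(1,\eta,\eta)\in\bm\mu(S)$ (note $\eta^{-1}=\eta$) yields $(t_1,\eta t_2,\eta t_3)\in\RT(C)(S)$ whose second and third components send $1$ to $\eta^2 1=1$. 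By the second equivalence, $t_1\in\bAut(C_S)$.

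\emph{Main obstacle.} The only genuinely delicate point is the converse of the second equivalence, and specifically making sure the substitutions $x=1$, $y=1$ in the defining identity are legitimate (they are, since the identity holds for all $x,y\in C_S$ by the Remark following the definition of $\RT(C)$) and that one correctly tracks the bars: the conclusion $t_2=t_3=\kappa t_1\kappa$ and then $t_1(xy)=t_1(x)t_1(y)$ must be derived cleanly, after which invoking that an isometry fixing $1$ and multiplicative is an automorphism (equivalently, Lemma \ref{laut} once we know $t_2=t_3=t_1$) closes the argument. Everything else is bookkeeping with the group structure of $\RT(C)$ and the subgroup $\bm\mu$.
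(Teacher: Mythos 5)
Your proof is correct and rests on the same core computation as the paper's: substitute $1$ into the defining identity of a related triple to get $t_1=\eta\,\kappa t_2\kappa=\eta\,\kappa t_3\kappa$, deduce multiplicativity of $t_1$, and invoke Lemmas \ref{laut} and \ref{lker} for the easy directions. The only difference is organizational — you prove the $\eta=1$ case first and reduce the general case to it by multiplying by the central triple $(1,\eta,\eta)$, whereas the paper carries $\eta$ through directly and obtains the second statement as the specialization $\eta=1$ — which changes nothing of substance.
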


\begin{proof} Assume that $t_1$ is an automorphism. Then by Lemma \ref{laut}, 
$(t_1,t_1,t_1)$ is a related triple, and 
by Lemma \ref{lker} we have $t_2=t_3=\eta t_1$ for some $\eta\in\bm{\mu}_2(S)$, 
whence
$t_3(1)=t_2(1)\in\bm{\mu}_2(S)1$. Conversely assume that $t_3(1)=t_2(1)=\eta1$ 
with $\eta\in\bm{\mu}_2(S)$. Then for all $x\in C_S$,
\[t_1(x)=t_1(x1)=\overline{t_2(\overline{x})}\ 
\overline{t_3(1)}=\eta\overline{t_2(\overline{x})},\]
whence $t_1=\eta\kappa t_2\kappa$. Similarly one concludes that $t_1=\eta\kappa 
t_3\kappa$. Thus for all $x,y\in C_S$,
\[t_1(x)t_1(y)=\eta^2\kappa t_2\kappa(x)\kappa 
t_3\kappa(y)=\overline{t_2(\overline{x})}\ 
\overline{t_3(\overline{y})}=t_1(xy),\]
Thus $t_1$ is an automorphism and $t_2=t_3=\eta t_1$. This proves the first 
statement and, with $\eta=1$, the
only if-direction of the second. The other direction is clear.
\end{proof}

\begin{Thm}\label{tcover} The $R$--group scheme $\RT(C)$ is semisimple simply 
connected of type $\mathrm{D}_4$. Moreover,
for any $i=1,2,3$, the mapping
$f_i: \RT(C) \to \SO(q_C)$ is the universal cover of $\SO(q_C)$. 
\end{Thm}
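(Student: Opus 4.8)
The plan is to identify $\RT(C)$ with a known simply connected $\mathrm{D}_4$-group, namely $\Spin(q_C)$, and to recognize the three projections $f_i$ as the three non-isomorphic order-two isogenies $\Spin(q_C) \to \SO(q_C)$ that exist in type $\mathrm{D}_4$. First I would treat the split case and the case where $R = k$ is an algebraically closed field: here the statement is the classical triality description of $\Spin_8$ (see, e.g., \cite{SV}, \cite{KMRT}), once one accounts for the cosmetic change in the definition of related triples recorded in Remark \ref{rsv} (conjugating the second and third components by $\kappa$ turns our related triples into autotopies, which form $\Spin(q_C)$). So over every geometric point of $\Spec R$, the fibre of $\RT(C)$ is $\Spin_8$ and each $f_i$ is a degree-$2$ central isogeny onto $\SO_8$.

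Next I would globalize. The scheme $\RT(C)$ is a closed subscheme of $\SO(q_C)^3$ cut out by bilinear (hence closed, finitely presented) conditions, and by Lemma \ref{lker}(2) the map $f_1 : \RT(C) \to \SO(q_C)$ has kernel $\bmu_2$, which is finite flat of order $2$. To see that $f_1$ is faithfully flat (in fact an fppf cover), I would argue as follows: $\SO(q_C)$ is smooth over $R$, $\RT(C) \to \Spec R$ has all fibres smooth of dimension $28$ by the geometric-fibre analysis (they are all copies of $\Spin_8$), and the fibrewise maps $f_{1,s}$ are surjective isogenies; so $f_1$ is quasi-finite, and since source and target are flat over $R$ with $f_1$ surjective on fibres and kernel finite flat of the expected order, the fibrewise criterion for flatness (EGA IV, $11.3.10$, as used elsewhere in the paper) shows $f_1$ is flat, hence finite locally free of rank $2$, hence faithfully flat. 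In particular $\RT(C)$ is smooth over $R$ with connected fibres, so it is a semisimple $R$-group scheme, and having geometric fibres of type $\mathrm{D}_4$ it is of type $\mathrm{D}_4$; being an fppf cover of $\SO(q_C)$ with kernel $\bmu_2$ contained in the centre $\bmu_2^{?}$, and since the fibres are simply connected, $\RT(C)$ is semisimple simply connected. (Simple connectedness can be checked on geometric fibres once one knows the group is semisimple over $R$; alternatively one invokes that a central fppf cover of $\SO(q_C)$ by $\bmu_2$ whose geometric fibres are $\Spin_8$ must be $\RT(C)$ itself, and $\Spin$ is simply connected by definition \cite[\S 4.3]{CF}.)

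It remains to identify $f_1$ with the universal cover, i.e.\ to show $\Ker f_1 = \bmu_2$ is the full fundamental group $\bmu_2 \times \bmu_2$ — no: in type $\mathrm{D}_4$ the fundamental group of $\SO_8$ is $\bmu_2 \times \bmu_2$ and $\Spin_8 \to \SO_8$ has kernel $\bmu_2 \times \bmu_2$. But $\Ker f_1 \cong \bmu_2$ by Lemma \ref{lker}(2), which has order $2$, not $4$. So $f_1$ is \emph{not} the map $\Spin_8 \to \SO_8$; rather it is one of the two intermediate isogenies, and $\RT(C)$ is $\Spin(q_C)$ exactly because $\SO(q_C)$ here denotes $\SO_8$ whose Dynkin-diagram automorphism group permutes the three weight-lattice quotients. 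Concretely: $\RT(C)$ has centre $\bmu = \ker(\bmu_2^3 \to \bmu_2) \cong \bmu_2 \times \bmu_2$ (it sits inside the centre $\bmu_2^3$ of $\SO(q_C)^3$), which is exactly the centre of a simply connected $\mathrm{D}_4$-group, and the three kernels $\Ker f_i \subset \bmu$ are the three order-$2$ subgroups swapped by the $S_3$-action of Lemma \ref{Lequiv}; each quotient $\RT(C)/\Ker f_i \cong \SO(q_C)$ is the corresponding half-spin/$\SO$ form, and $f_i$ is the universal (i.e.\ simply connected) cover of that target. Thus I would phrase the conclusion as: $\RT(C)$ is semisimple simply connected of type $\mathrm{D}_4$ with centre $\bmu$, and for each $i$ the isogeny $f_i$ with kernel $\Ker f_i \cong \bmu_2 \subset \bmu$ realizes $\RT(C)$ as the simply connected cover of $\SO(q_C)$.

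The main obstacle I anticipate is the flatness and fibre-connectedness of $f_1$ over a general base: the closed subscheme $\RT(C) \subset \SO(q_C)^3$ is defined by equations whose flatness over $R$ is not visible by inspection, so one genuinely needs the fibrewise flatness criterion together with the geometric-fibre computation, plus a descent/deformation argument (or citation of the structure theory of reductive group schemes, e.g.\ SGA 3 or \cite{CF}) to upgrade "smooth with connected semisimple geometric fibres all of type $\mathrm{D}_4$" to "semisimple simply connected $R$-group scheme of type $\mathrm{D}_4$". Everything else — the identification on geometric fibres, the kernel computation, the $S_3$-symmetry — is either classical or already in hand from Lemmas \ref{Lequiv}, \ref{laut}, \ref{lker} and Remark \ref{rsv}.
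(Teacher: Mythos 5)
Your overall shape is right (triality on geometric fibres, the kernel computation from Lemma \ref{lker}, and ``central isogeny from a simply connected group $=$ universal cover''), but there is a genuine gap in the globalization step: you never actually establish that $\RT(C)$ is flat over a general base $R$, and every fibrewise criterion you invoke needs that as an \emph{input}. The fibrewise flatness criterion for $f_1\colon \RT(C)\to\SO(q_C)$ says that $f_1$ is flat if and only if $\RT(C)$ is flat over $R$ and $f_{1,s}$ is flat on each fibre; so it cannot bootstrap flatness of $\RT(C)$ out of the geometric-fibre computation alone. Knowing that all geometric fibres of the closed subscheme $\RT(C)\subset\SO(q_C)^3$ are copies of $\mathrm{Spin}_8$ of dimension $28$ does not, over an arbitrary commutative ring, imply flatness. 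The paper closes this gap in two steps that your proposal is missing: (i) since any octonion algebra is split by a flat cover $S/R$ and the formation of $\RT(C)$ commutes with base change, one reduces by fppf descent to the Zorn algebra $C_0$ over $\Z$; (ii) over the Dedekind base $\Z$ one applies Lemma \ref{lem_smooth}, which uses the schematic closure of the generic fibre and semicontinuity of fibre dimensions to deduce smoothness from ``finitely presented with smooth connected equidimensional fibres.'' Only after that does the fibrewise isomorphism criterion apply to show $\RT(C)/\bmu_2\simlgr\SO(q_C)$, whence $f_1$ is a central isogeny from a simply connected group and hence the universal cover. Your parenthetical appeal to ``a descent/deformation argument or the structure theory of reductive group schemes'' gestures at this but does not supply it, and it is precisely the nontrivial point.

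Separately, your digression on the kernel contains a factual slip: over an $8$-dimensional quadratic space the universal cover $\Spin(q_C)\to\SO(q_C)$ has kernel $\bmu_2$ (it is the adjoint quotient $\mathbf{PSO}(q_C)$ whose fundamental group is $\bmu_2\times\bmu_2$), so the fact that $\Ker f_1\simeq\bmu_2$ is exactly consistent with $f_1$ \emph{being} the map $\Spin_8\to\SO_8$ up to unique isomorphism --- which is what the paper asserts immediately after the theorem via the isomorphism $F$ with $f_1\circ F=\chi$. Your final sentence lands on the correct conclusion, but the intermediate claim that $f_1$ is ``not the map $\Spin_8\to\SO_8$'' and is instead an ``intermediate isogeny'' is wrong and should be removed.
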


The term \emph{universal cover} is understood in the sense of SGA3; see 
\cite[6.5.2]{Co1}.

\begin{proof}
We start with the first statement. Let $C_0$ be the Zorn octonion algebra over 
$\Z$.
 We know that there exists a flat
cover  (that is a finitely presented faithfully flat $R$-ring $S$) 
such that   $C \otimes_R S \simeq C_0 \otimes_\Z S$ \cite[Cor. 4.11]{LPR}.
 It follows that $\RT(C) \times_R S \simeq  \RT(C_0) \times_\Z S$ and 
 the statement boils down to the case of $C_0$ over $\Z$.
 
According to \cite[Prop. 3.6.3]{SV} and Remark \ref{rsv} above,
$\RT(C_0) \times_\Z \Q$
and $\RT(C_0) \times_\Z \F_p$ ($p$ prime) are semisimple simply connected 
algebraic groups of type $\mathrm{D}_4$, in particular 
smooth and connected. Lemma \ref{lem_smooth} then shows that $\RT(C_0)$ is 
smooth over $\Z$.
The affine smooth $\Z$--group scheme $\RT(C_0)$ thus has geometric fibres which 
are
semisimple simply connected, whence by definition, $\RT(C_0)$ is a semisimple 
simply connected
$\Z$--group scheme. 

\smallskip 

For the second statement, it is enough to deal with $f_1$.
According to Lemma \ref{lker}, $\bm{\mu}_2=\ker(f_1)$ and is a central subgroup 
of $\RT(C)$.
We denote by $\underline{\RT}(C)= \RT(C)/ \bm{\mu}_2$ the quotient 
\cite[XXII.4.3.1]{SGA3}; it is a semisimple
$R$--group scheme as well.
Then the induced map $\underline{f_1}: \underline{\RT}(C) \to \SO(q_C)$ has 
trivial kernel.
For each point $s \in \Spec(R)$,  $\underline{f_1}_{s}: \underline{\RT}(C)_{s} 
\to \SO(q_C)_{ s}$ 
is a closed immersion between smooth connected algebraic $\kappa(s)$--groups 
of the same relative dimension $28$. Hence   $\underline{f_1}_{s}$ is an 
isomorphism.
Since $\underline{\RT}(C)$ is flat over $R$, the fibre-wise isomorphism 
criterion \cite[$_4$.17.9.5]{EGA4} enables us to conclude that 
$\underline{f_1}$ is an isomorphism. Then $f_1$ is a central isogeny and since 
$\RT(C)$ is semisimple simply connected, it follows that it
is the simply connected cover of the semisimple $R$--group scheme $\SO(q_C)$.
 \end{proof}

On the other hand, we have the universal $R$--cover $\chi: \Spin(q_C) \to \SO(q_C)$ 
\cite[8.4.0.63]{CF}.
It follows that there is a unique  $R$--isomorphism $F: \Spin(q_C) \simlgr 
\RT(C)$
such  that $f_1 \circ F=\chi$. A construction of this isomorphism will be given 
in the next subsection.
 
\subsection{Explicit construction} \label{subsec_explicit}
For coherence, and in order to relate our work to its predecessors, we will give a constructive proof that the spin group of $(C,q)$ is isomorphic to the group of related triples on $C$. A closely related version can be found in \cite{KPS}. We closely follow the approach of \cite{KMRT} (who work over fields with $2\neq 0$), and
\cite{Eld} (who considers fields of any characteristic). It is independent of the proof given in the previous section. (Conversely, the remainder of the paper is independent 
of this section.)

To this end, we write $\Cl(C,q)$ for the Clifford algebra of the quadratic space 
$(C,q)$, and $\Cl_0(C,q)$ for its even part. Multiplication in $\Cl(C,q)$ will 
always be denoted
by $\cdot$. Consider the $R$--group scheme $\mathbf{Spin}(q)$ defined, for each 
$R$--ring $S$, by\footnote{The condition $u\cdot C_S\cdot u^{-1}=C_S$ implies, 
as an easy calculation shows, that the map
$x\mapsto u\cdot x\cdot u^{-1}$ is an orthogonal operator on $C_S$. In \cite{Kn} 
this latter property is put into the definition.}
\[\mathbf{Spin}(q)(S)=\{u\in\Cl_0(C,q)_S^* \, | \, u\cdot C_S\cdot u^{-1}=C_S 
\text{\ and\ } u\sigma(u)=1\}.\]
We also introduce the $R$--group scheme $\RT'(C)$, defined by
\[\RT'(C)(S)=\left\{(t_1,t_2,t_3)\in\mathbf{O}(q)(S)^3 \, | \, \forall x,y\in 
C_S: t_1(xy)=\overline{t_2(\overline{x})}\ 
\overline{t_3(\overline{y})}\right\}.\]
Similarly to $\RT(C)(S)$, this is a subgroup scheme of $\mathbf{O}(q)^3$. The 
reason for introducing $\RT'(C)$ is merely for technical convenience; we will 
later show that in
fact $\RT'(C)=\RT(C)$.

Recall from \cite[IV.1.2.3]{Kn} that for any $R$--ring $S$, the map 
$\mathrm{incl}\otimes\Id: C\otimes S\to \Cl(C,q)\otimes S$ induces an 
isomorphism of $S$--algebras
\[\Cl(C\otimes S,q\otimes 1)\simeq \Cl(C,q)\otimes S,\]
which obviously preserves the grading and which we shall view as an 
identification.

\begin{Prp}\label{ptrial} Let $S$ be a unital commutative ring and $C$ be an 
octonion algebra over $S$ with quadratic form $q$. The map $\alpha:C\to 
\End_S(C\oplus C)$ defined by
\[x\mapsto\left(\begin{smallmatrix} 
0&L_{\overline{x}}\kappa\\ \kappa L_x&0\end{smallmatrix}\right)\]
induces isomorphisms
\[\alpha':(\Cl(C,q),\sigma)\to(\End_S(C\oplus C),\sigma_{q\bot q})\]
and
\[\alpha'':(\Cl_0(C,q),\sigma)\to(\End_S(C),\sigma_q)\times(\End_S(C),\sigma_q)\]
of algebras with involution. Moreover $\alpha'$ is a morphism of $\mathbb 
Z/2\mathbb Z$-graded algebras with respect to the Clifford and chequerboard 
gradings.
\end{Prp}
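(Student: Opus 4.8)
The plan is to verify directly that the map $\alpha$ satisfies the universal property defining the Clifford algebra, namely that $\alpha(x)^2 = q(x)\,\mathrm{Id}_{C\oplus C}$ for all $x\in C$, and then to identify the resulting homomorphism, check it is an isomorphism, and finally trace through the compatibilities with involutions and gradings.

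First I would compute $\alpha(x)^2$. By block multiplication,
\[\alpha(x)^2=\begin{pmatrix} L_{\overline{x}}\kappa\kappa L_x&0\\ 0&\kappa L_x L_{\overline{x}}\kappa\end{pmatrix}=\begin{pmatrix} L_{\overline{x}}L_x&0\\ 0&\kappa L_x L_{\overline{x}}\kappa\end{pmatrix}.\]
Now in an octonion algebra one has the identity $\overline{x}(xy)=q(x)y$ (a consequence of the fact that $x$ satisfies its quadratic equation together with alternativity), so $L_{\overline{x}}L_x=q(x)\,\mathrm{Id}_C$; likewise $L_x L_{\overline{x}}=q(x)\,\mathrm{Id}_C$, and conjugating by $\kappa$ preserves this. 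Hence $\alpha(x)^2=q(x)\,\mathrm{Id}_{C\oplus C}$. By the universal property of the Clifford algebra (valid over any commutative ring, since $q$ is the associated quadratic form), $\alpha$ extends uniquely to an $S$-algebra homomorphism $\alpha'\co\Cl(C,q)\to\End_S(C\oplus C)$. It sends odd elements to the off-diagonal (the "black" squares of the chequerboard grading) and even elements to the diagonal, so it is graded; restricting to $\Cl_0$ gives a homomorphism into $\End_S(C)\times\End_S(C)$, call it $\alpha''$.

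Next I would check that $\alpha'$ is an isomorphism. Both source and target are projective $S$-modules of rank $2^8=256$ (the Clifford algebra of a rank-$8$ quadratic space has rank $256$, and $\End_S(C\oplus C)$ has rank $16^2$), so by the usual local-global and Nakayama argument it suffices to prove surjectivity (or bijectivity) after base change to an algebraically closed field, or even just to check it is surjective fibrewise; a clean route is to invoke that over a field $\alpha'$ is the classical isomorphism $\Cl(C,q)\simeq\End(C\oplus C)$ used in the description of triality (this is exactly the content of \cite{KMRT} for $2\neq 0$ and \cite{Eld} in general), or to exhibit enough elements in the image directly. Since $\Cl_0(C,q)$ has rank $128=16^2/2\cdot 2$ and decomposes as a product of two rank-$64$... more precisely $\End_S(C)\times\End_S(C)$ has rank $64+64=128$, the restriction $\alpha''$ is an isomorphism as well once $\alpha'$ is, by the grading compatibility. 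One must also recall that $\Cl_0(C,q)$ is a separable algebra whose two "halves" are swapped or fixed according to the discriminant, and here $q$ has trivial discriminant (the norm of an octonion algebra), which is why the even Clifford algebra splits as a product of two endomorphism algebras rather than a single one over a quadratic extension.

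Finally I would handle the involutions. The involution $\sigma$ on $\Cl(C,q)$ is the canonical one fixing $C$; so $\sigma(\alpha'(x))$ should correspond to $\alpha(x)$ itself in the sense that $\alpha'\circ\sigma=\sigma_{q\bot q}\circ\alpha'$, where $\sigma_{q\bot q}$ is the adjoint involution of the form $q\bot q$ on $C\oplus C$. It suffices to check this on generators $x\in C$: one computes the adjoint of the block matrix $\left(\begin{smallmatrix}0&L_{\overline{x}}\kappa\\ \kappa L_x&0\end{smallmatrix}\right)$ with respect to $q\bot q$ and verifies it equals itself, using that the adjoint of $L_x$ with respect to $b_q$ is $L_{\overline{x}}$ (equivalently $b_q(xy,z)=b_q(y,\overline{x}z)$) and that $\kappa$ is self-adjoint. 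Both sides being algebra homomorphisms (resp.\ anti-homomorphisms) that agree on generators, they agree everywhere; restricting to the even part gives the statement for $\alpha''$, with $\sigma_q\times\sigma_q$ on the product.

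The main obstacle I expect is not any single computation but the rank/bijectivity bookkeeping for $\alpha''$: one must correctly account for why the even Clifford algebra of the octonion norm form splits as a \emph{product} of two endomorphism algebras (this uses triviality of the discriminant, i.e.\ that $q$ is the norm form of a composition algebra, and is precisely where the "triality" structure enters), and ensure the two factors are matched up with the two copies of $\End_S(C)$ in a way consistent with the grading — getting the order of the factors right is exactly what will matter when this proposition is later used to build the isomorphism $F\co\Spin(q)\simlgr\RT(C)$.
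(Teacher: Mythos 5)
Your proposal is correct and follows essentially the same outline as the paper: compute $\alpha(x)^2=q(x)\Id_{C\oplus C}$ to invoke the universal property, match ranks to get bijectivity, observe the grading is respected by construction, and verify compatibility with the involutions on the degree-one generators using $b_q(xy,z)=b_q(y,\overline{x}z)$ and the self-adjointness of $\kappa$. The one place where your route differs is the bijectivity of $\alpha'$: the paper notes that both sides are Azumaya algebras of the same constant rank $2^8$ (citing Knus for the Clifford algebra) and concludes directly from the fact that an algebra homomorphism between Azumaya algebras of equal rank is an isomorphism, whereas you reduce fibrewise to the classical statement over fields; both work, but the Azumaya argument avoids having to import the field-level result (including characteristic $2$) as a black box. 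Finally, the ``main obstacle'' you anticipate --- accounting for why $\Cl_0$ splits as a product and matching the two factors --- is not actually an issue in this setup: once $\alpha'$ is a graded isomorphism, $\alpha''$ is literally its restriction to the even parts, and the even part of the chequerboard grading on $\End_S(C\oplus C)$ \emph{is} the product $\End_S(C)\times\End_S(C)$ with a fixed labelling of the two diagonal blocks, so no separate discriminant or matching argument is needed.
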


Here $\sigma$ is the main involution on $\Cl(C,q)$, $\sigma_{q}$ is the involution on $\End_S(C)$ associated to $b_q$, and 
$\sigma_{q\bot q}$ is the involution on $\End_S(C\oplus C)$ induced by $\sigma_q$.

\begin{Rk}
Note that the entry $L_{\overline{x}}\kappa$ (resp.\ $\kappa L_x$) in the matrix of $\alpha$ is the map of left (resp.\ right) multiplication by $x$ with respect to the 
para-octonion algebra structure on $C$ of Remark \ref{rsv}. While there are other possible ways to construct the map $\alpha$, this choice leads to a cleaner presentation. 
(See also \cite[\S 35]{KMRT}.) We are grateful to A.\ Qu\'eguiner-Mathieu for pointing this out. Note moreover that
\[\alpha(x)=\left(\begin{smallmatrix} 
0&\kappa R_x\\ \kappa L_x&0\end{smallmatrix}\right)=\left(\begin{smallmatrix} 
0&L_{\overline{x}}\kappa\\ R_{\overline{x}}\kappa&0\end{smallmatrix}\right).\]
\end{Rk}

\begin{proof}[Proof of Proposition \ref{ptrial}] The map $\alpha$ is linear and satisfies
\[\alpha(x)^2=\left(\begin{smallmatrix} 
L_{\overline{x}}L_x&0\\0&\kappa L_xL_{\overline{x}}\kappa\end{smallmatrix}\right)=\left(\begin{smallmatrix} 
q(x)\Id_C&0\\0&q(x)\kappa^2\end{smallmatrix}\right)=q(x)\Id_{C\oplus C},\] 
whereby it induces a homomorphism $\alpha'$ as above by the universal property 
of Clifford algebras. The source of $\alpha'$ is, by 
\cite[IV.2.2.3]{Kn} and [IV.1.5.2], an Azumaya algebra of constant rank $2^8$. 
Since $C$ is faithfully projective, the target is an Azumaya algebra of constant 
rank $16^2=2^8$. Thus by 
\cite[III.5.1.18]{Kn}, the map $\alpha'$ is an isomorphism of algebras. It 
respects the grading by construction, from which follows that $\alpha''$ is an 
isomorphism as well. To show that $\alpha'$ and $\alpha''$ are isomorphisms of 
algebras with involution, it suffices
to show that $\alpha'(\sigma(x))=\sigma_{q\bot q}(\alpha'(x))$ for all $x\in C$. 
But then $\sigma(x)=x$, and, setting $A=\alpha'(x)$, we have, for any 
$a,b,c,d\in C$,
\[b_{q\bot q}\left(A\left(\begin{smallmatrix} 
a\\b\end{smallmatrix}\right),\left(\begin{smallmatrix} 
c\\d\end{smallmatrix}\right)\right)=
b_q(\overline{x}\ \overline{b},c)+b_q(\overline{(xa)},d)=
b_q(bx,\overline{c})+b_q(xa,\overline{d})\]
as $\kappa$ is an involutory isometry. By the identities $b_q(zv,w)=b_q(v,\overline{z}w)=b_q(z,w\overline{v})$, this equals
\[b_q(b,\overline{(xc)})+b_q(a,\overline{x}\ \overline{d})=
b_{q\bot q}\left(\left(\begin{smallmatrix} 
a\\b\end{smallmatrix}\right),A\left(\begin{smallmatrix} 
c\\d\end{smallmatrix}\right)\right).\]
Thus $A=\sigma_{q\bot q}(A)$ as required.
\end{proof}

Let now $C$ be an octonion algebra over $R$. The morphism
\[\chi':\Spin(q_C)\to\mathbf{O}(q_C)\]
given, for any $R$--ring $S$ and $u\in\Spin(q_C)(S)$, by $u\mapsto u_1$, where 
for all $x\in C_S$, $u_1:x\mapsto u\cdot x\cdot u^{-1}$, maps $\Spin(q_C)$ 
inside $\SO(q_C)$ \cite[IV.6]{Kn}. We moreover define $u_2,u_3$ by 
\[\alpha'(u)=\left(\begin{smallmatrix} u_3&0\\0&u_2\end{smallmatrix}\right),\] 
noting that they are invertible linear operators on $C_S$ as well. We will write 
$u_i(S)$ for $u_i$, $i=1,2,3$, whenever the ring $S$ needs to be emphasized.

\begin{Thm}\label{Tri} Let $C$ be an octonion algebra over $R$. The map 
\[F:\mathbf{Spin}(q_C)\to\RT'(C),\]
defined, for each $R$--ring $S$ and for each $u\in\mathbf{Spin}(q_C)(S)$, by
\[F_S(u)=(u_1(S),u_2(S),u_3(S)),\]
is an isomorphism of affine group schemes.
\end{Thm}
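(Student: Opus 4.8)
The plan is to show that $F$ is a well-defined morphism of affine $R$-group schemes with an inverse, by working functorially over each $R$-ring $S$ and exploiting the isomorphism $\alpha'$ of algebras with involution from Proposition \ref{ptrial}. First I would check that $F_S$ lands in $\RT'(C)(S)$. Given $u\in\Spin(q_C)(S)$, the operator $u_1$ lies in $\SO(q_C)(S)$ by \cite[IV.6]{Kn}, and $u_2,u_3$ lie in $\GL(C_S)$ since $\alpha'(u)$ is invertible and block-diagonal. To get orthogonality of $u_2$ and $u_3$ and the related-triple identity, I would translate the defining relations of $\Spin$ into statements about $\alpha'(u)$. The condition $u\sigma(u)=1$ becomes $\alpha'(u)\sigma_{q\bot q}(\alpha'(u))=\Id$, i.e. $\mathrm{diag}(u_3,u_2)$ is an isometry of $q\bot q$, forcing $u_2,u_3\in\mathbf{O}(q_C)(S)$. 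For the triality identity, the key computation is that conjugation by $u$ inside $\Cl(C,q)_S$ intertwines the left- and right-para-multiplication maps: since $\alpha(x)=\left(\begin{smallmatrix}0&L_{\overline{x}}\kappa\\ \kappa L_x&0\end{smallmatrix}\right)$ and $\alpha'(u\cdot x\cdot u^{-1})=\alpha'(u)\alpha(x)\alpha'(u)^{-1}$, comparing the off-diagonal blocks of
\[\left(\begin{smallmatrix}u_3&0\\0&u_2\end{smallmatrix}\right)\left(\begin{smallmatrix}0&L_{\overline{x}}\kappa\\ \kappa L_x&0\end{smallmatrix}\right)\left(\begin{smallmatrix}u_3^{-1}&0\\0&u_2^{-1}\end{smallmatrix}\right)=\left(\begin{smallmatrix}0&L_{\overline{u_1(x)}}\kappa\\ \kappa L_{u_1(x)}&0\end{smallmatrix}\right)\]
yields $u_3 L_{\overline{x}}\kappa u_2^{-1}=L_{\overline{u_1(x)}}\kappa$ and $u_2\kappa L_x u_3^{-1}=\kappa L_{u_1(x)}$. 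Evaluating the first relation on $\overline{y}$ and rearranging gives exactly $u_1(x)u_1(\text{-})=\ldots$; more precisely one extracts $u_1(xy)=\overline{u_3(\overline{x})}\,\overline{u_2(\overline{y})}$ or its $A_3$-rotation, and by Lemma \ref{Lequiv} the precise index assignment is immaterial up to cyclic permutation --- so I would just fix the bookkeeping so that $(u_1,u_2,u_3)\in\RT'(C)(S)$.

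Next I would construct the inverse. Given $(t_1,t_2,t_3)\in\RT'(C)(S)$, define $T=\left(\begin{smallmatrix}t_3&0\\0&t_2\end{smallmatrix}\right)\in\End_S(C\oplus C)$; the related-triple identity, read through the block computation above in reverse, says precisely that $T\alpha(x)T^{-1}=\alpha(t_1(x))$ for all $x\in C_S$, hence conjugation by $T$ preserves the image of $\alpha$ and therefore (since $\alpha'$ is an isomorphism onto $\End_S(C\oplus C)$) normalizes $\alpha'(\Cl(C,q)_S)$, acting on the degree-one part $C_S$ as $t_1$. Since $T$ is block-diagonal it lies in $\alpha''(\Cl_0(C,q)_S^*)$, so $u:=(\alpha'')^{-1}(t_3,t_2)\in\Cl_0(C,q)_S^*$ satisfies $u\cdot C_S\cdot u^{-1}=C_S$; and $\sigma_{q\bot q}$-compatibility of $\alpha'$ turns the orthogonality of $t_2,t_3$ into $u\sigma(u)\in\mathbf{G}_m$, which one normalizes (using connectedness of fibres or the spinor-norm one condition, exactly as in \cite{KMRT} or \cite{Eld}) to get $u\sigma(u)=1$, i.e. $u\in\Spin(q_C)(S)$. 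Then $F_S(u)=(t_1,t_2,t_3)$ and this assignment is inverse to $F_S$; functoriality in $S$ is automatic since everything is built from $\alpha$, which is defined integrally and commutes with base change. Being a natural bijection on $S$-points that is visibly given by morphisms of schemes (polynomial in matrix entries, using that $\alpha'$ and $\alpha''$ are scheme isomorphisms), $F$ is an isomorphism of affine $R$-group schemes; the group-homomorphism property is clear from $\alpha'$ being an algebra map.

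The main obstacle I anticipate is the characteristic-$2$ bookkeeping around the condition $u\sigma(u)=1$: over a ring where $2$ is not invertible one cannot simply take square roots to normalize $u\sigma(u)$, and one must argue that $u\sigma(u)$ is a central unit lying in the appropriate subgroup (scheme-theoretically, that the relevant spinor-norm map is what it should be) to conclude it equals $1$ after the construction. The cleanest route is to observe that $u\sigma(u)\in\Cl_0(C,q)_S$ is fixed by $\sigma$ and central, hence lies in $S$ (using the structure of $\Cl_0$ via $\alpha''$, where it corresponds to a scalar matrix in each block), is a unit, and equals $N(t_2)=N(t_3)$ type data coming from $t_2,t_3\in\mathbf{O}(q_C)(S)$; one then either restricts to the identity component or carries the normalization through exactly as Elduque does in \cite{Eld}. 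A secondary, purely organizational point is getting the cyclic index assignment $(u_1,u_2,u_3)$ versus $(u_1,u_3,u_2)$ right so that it matches the convention $\alpha'(u)=\mathrm{diag}(u_3,u_2)$ fixed before the theorem; invoking Lemma \ref{Lequiv} keeps this harmless but it must be stated carefully. Everything else --- injectivity, surjectivity, functoriality, the homomorphism property --- falls out formally from Proposition \ref{ptrial} once these two points are handled.
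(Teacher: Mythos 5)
Your overall strategy is the same as the paper's: push everything through the isomorphisms $\alpha'$, $\alpha''$ of Proposition \ref{ptrial}, read off the related-triple identity from the off-diagonal blocks of $\alpha'(u)\alpha(x)\alpha'(u)^{-1}=\alpha'(u_1(x))$, and invert $F_S$ by setting $u=(\alpha'')^{-1}(t_3,t_2)$. However, there is one genuine gap in your forward direction. You claim that $u\sigma(u)=1$, translated as $\sigma_{q\perp q}(\alpha'(u))\,\alpha'(u)=\Id$, ``forces $u_2,u_3\in\mathbf{O}(q_C)(S)$.'' It does not: that identity only says that $u_2$ and $u_3$ preserve the polar \emph{bilinear} form $b_q$, i.e.\ $b_q(u_i(x),u_i(y))=b_q(x,y)$. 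Over a general ring (in particular where $2$ is not invertible) preserving $b_q$ is strictly weaker than preserving the quadratic form $q$ itself --- the group of $b_q$ is of symplectic type on the characteristic-$2$ fibres --- so membership in $\mathbf{O}(q_C)(S)$ does not follow. The paper closes this by a second computation: evaluating the block relations at $1\in C$ gives $u_2(x)=\overline{u_3(1)}\,\overline{u_1(\overline{x})}$ and $u_3(x)=\overline{u_1(\overline{x})}\,\overline{u_2(1)}$, whence $q(u_2(x))=q(u_3(1))q(x)$ and $q(u_3(x))=q(u_2(1))q(x)$ by multiplicativity of $q$; then a short lemma (stated just before the theorem) shows that a similitude which also preserves $b_q$ must have multiplier $1$. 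You need this, or an equivalent argument, to make $F_S$ land in $\mathbf{O}(q)(S)^3$.

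By contrast, the obstacle you flag as the main difficulty --- normalizing $u\sigma(u)$ in the surjectivity step --- is not actually there. Since $t_2,t_3\in\mathbf{O}(q_C)(S)$ by the definition of $\RT'(C)$, regularity of $b_q$ gives $\sigma_q(t_i)=t_i^{-1}$ on the nose, so $\alpha''(u\sigma(u))=(t_3t_3^{-1},t_2t_2^{-1})=(\Id,\Id)$ and $u\sigma(u)=1$ exactly, with no square roots, connectedness, or spinor-norm condition needed. Your remaining points (injectivity from injectivity of $\alpha'$, the homomorphism property, functoriality, and the harmlessness of the cyclic index bookkeeping via Lemma \ref{Lequiv}) match the paper and are fine.
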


The following basic lemma, extended from \cite{Eld}, will be useful.

\begin{Lma} Let $(M,q)$ be a regular quadratic module over a ring $S$, and 
assume 
that $f$ is a similitude with multiplier $\lambda\in R$, i.e.\ $q(f(x))=\lambda 
q(x)$ 
for all $x\in M$, and moreover satisfies $b_q(f(x),f(y))=b_q(x,y)$ for all 
$x,y\in M$. Then $\lambda=1$. 
\end{Lma}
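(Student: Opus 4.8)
The plan is to exploit the two defining properties of $f$ — the multiplicativity $q(f(x))=\lambda q(x)$ and the bilinear-form identity $b_q(f(x),f(y))=b_q(x,y)$ — and play them off against each other. First I would polarize the first property: for all $x,y\in M$ one has $b_q(f(x),f(y))=q(f(x+y))-q(f(x))-q(f(y))=\lambda\big(q(x+y)-q(x)-q(y)\big)=\lambda\, b_q(x,y)$. Comparing this with the given hypothesis $b_q(f(x),f(y))=b_q(x,y)$, we obtain $(\lambda-1)\,b_q(x,y)=0$ for all $x,y\in M$. Since $q$ is a regular quadratic module, its polar bilinear form $b_q$ is nondegenerate, so the ideal generated by all values $b_q(x,y)$ is all of $S$ (the associated map $M\to M^\vee$ is an isomorphism, hence surjective, so the $b_q(x,y)$ generate $S$ as an $S$-module, in particular $1$ lies in that ideal). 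Therefore $\lambda-1=0$, i.e.\ $\lambda=1$.

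The one point requiring a little care is the passage from "$(\lambda-1)b_q(x,y)=0$ for all $x,y$" to "$\lambda=1$", since over a general ring $b_q$ need not take the value $1$. Here I would use regularity in the form that the canonical $S$-linear map $M\to\Hom_S(M,S)$, $x\mapsto b_q(x,-)$, is an isomorphism. In particular it is surjective, so for any $\varphi\in\Hom_S(M,S)$ there is $x$ with $b_q(x,-)=\varphi$; applying $(\lambda-1)b_q(x,y)=0$ gives $(\lambda-1)\varphi(y)=0$ for all $\varphi$ and all $y$. Choosing $\varphi$ and $y$ so that $\varphi(y)$ runs over a generating set of $S$ (which is possible as $M$ is finitely generated projective and faithful, being regular of positive rank), or simply localizing so that $M$ is free with a hyperbolic-type pairing in which some $b_q(x,y)$ is a unit, forces $\lambda-1=0$.

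Alternatively, and perhaps more cleanly for a self-contained one-line argument, I would reduce to the local case: by localizing at a prime (or working stalk-wise) we may assume $S$ is local and $(M,q)$ is free with $b_q$ nondegenerate over $S$; then there exist $x,y\in M$ with $b_q(x,y)\in S^\times$ — indeed if $b_q(x,y)\in\mathfrak m$ for all $x,y$ then $b_q$ would be degenerate mod $\mathfrak m$, contradicting regularity of the reduced quadratic space — and from $(\lambda-1)b_q(x,y)=0$ with $b_q(x,y)$ invertible we get $\lambda=1$ in $S_{\mathfrak m}$. As this holds at every prime, $\lambda=1$ in $S$.

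The main obstacle is purely the ring-theoretic subtlety of not being able to normalize $b_q$ to take the value $1$; once regularity is invoked correctly (nondegeneracy of the polar form, hence the values $b_q(x,y)$ generate the unit ideal, or equivalently are a unit after suitable localization), the statement is immediate from polarizing the multiplier condition. I expect the author's proof to be essentially the two-line computation above together with the remark that $b_q$ being nondegenerate (part of "regular quadratic module") forces $\lambda-1$ to annihilate a unit.
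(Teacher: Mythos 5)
Your proof is correct and follows essentially the same route as the paper: polarize the multiplier condition to get $b_q(f(x),f(y))=\lambda b_q(x,y)$, compare with the hypothesis to obtain $(\lambda-1)b_q\equiv 0$, and invoke regularity to conclude. The paper phrases the last step slightly more compactly — from $b_q\chv=\lambda b_q\chv$ with $b_q\chv$ invertible it deduces $\Id_{M\chv}=\lambda\,\Id_{M\chv}$, hence $\lambda=1$ — which is the same use of nondegeneracy that you spell out via the trace ideal or by localization.
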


\begin{proof} We have
\[b_q(x,y)=b_q(f(x),f(y))=q(f(x)+f(y))-q(f(x))-q(f(y))\]
which, by the hypothesis on $f$, equals
\[\lambda q(x+y)-\lambda q(x)-\lambda q(y)=\lambda b_q(x,y)\]
for all $x,y\in M$. Thus $b_q^\vee(x)=\lambda b_q^\vee(x)\in M^*$ for all $x\in 
M$,
whereby $b_q^\vee=\lambda b_q^\vee$, and, being invertible
\[\Id_{M^*}=\lambda b_q^\vee (b_q^\vee)^{-1}=\lambda \Id,\]
whereby $\lambda=1$.
\end{proof}

\begin{proof}[Proof of Theorem \ref{Tri}] Fix an $R$--ring $S$. We begin by 
showing that the map $F_S$ is well defined. To begin with, 
$u_i(S)\in\mathbf{O}(q)(S)$ for each $i$. For $i=1$, 
this holds by construction. For $i=2,3$, the fact that $\alpha''$ is a morphism 
of 
algebras of involution, together with $\sigma(u)=u^{-1}$, implies that
\begin{equation}\label{bilinear}
b_q(u_i(x),u_i(y))=b_q({\sigma(u_i)}u_i(x),y)=b_q((u_i)^{-1}u_i(x),y)=b_q(x,y). 
\end{equation}
Arguing as in \cite{Eld}, we note that for any $x\in C$,
\[\alpha'(u)\alpha'(x)=\alpha'(u\cdot x\cdot u^{-1}\cdot 
u)=\alpha'(u_1(x))\alpha'(u).\]
Inserting the definition of $\alpha'$ into this equality yields
\begin{equation}\label{insert}
\begin{array}{lll} u_2\kappa L_x=\kappa L_{u_1(x)}u_3&\text{and}& u_3L_{\overline{x}}\kappa=L_{\overline{u_1(x)}}\kappa u_2,
  \end{array} 
\end{equation}
and applying both sides of both equations to $1\in C$ one obtains (substituting 
$x$ for $\overline{x}$)
\[\begin{array}{lll} u_2(x)=\overline{u_3(1)}\ \overline{u_1(\overline{x})}&\text{and}& 
u_3(x)=\overline{u_1(\overline{x})}\ \overline{u_2(1)}.
  \end{array}
\]
Now using the fact that $u_1$ and $y\mapsto \overline{y}$ are isometries of the 
multiplicative quadratic form $q$ on $C$, this gives
\[\begin{array}{lll} q(u_2(x))=q(u_3(1))q(x)&\text{and}& 
q(u_3(x))=q(u_2(1))q(x).
  \end{array}
\]
With \eqref{bilinear} and the above lemma we conclude that $u_2$ and $u_3$ are 
isometries, whence
$F_S(u)\in\mathbf{O}(q)(S)^3$. To show that 
$F_S(u)\in\RT'(C)(S)$, we apply the second equation of \eqref{insert} to an arbitrary $y\in C$. This gives
\[u_3(\overline{x}\ 
\overline{y})=\overline{u_1(x)}\ \overline{u_2(y)},\]
whereby $(u_3,u_1,u_2)$ is a related triple, and, by Lemma 
\ref{Lequiv}, so is $F_S(u)$.
Thus the map $F_S$ is well-defined; it is a group homomorphism since $\chi'$ and 
$\alpha'$ preserve the multiplication, and injective since so is $\alpha'$. 

To show that it is onto, let $(t_1,t_2,t_3)\in\RT'(C)(S)$. Then by Proposition 
\ref{ptrial}, $(t_3,t_2)$ is the image under $\alpha''$ of a 
unique $u\in\Cl_0(q)$. The element $u$ 
satisfies, for all $x\in C_S$,
\[\alpha'(u\cdot x\cdot u^{-1})=\left(\begin{smallmatrix} t_3& 0\\ 
0& 
t_2\end{smallmatrix}\right)\left(\begin{smallmatrix}0&L_{\overline{x}}\kappa\\ \kappa L_x
&0\end{smallmatrix}\right)\left(\begin{smallmatrix} t_3^{-1}& 0\\ 
0& t_2^{-1}\end{smallmatrix}\right)
=\left(\begin{smallmatrix} 0& \kappa t_3L_{\overline{x}}\kappa t_2^{-1}\\ 
t_2\kappa L_x t_3^{-1}\kappa&0\end{smallmatrix}\right).\]
Using the fact that $(t_1,t_2,t_3)$ is related together with Lemma \ref{Lequiv}, 
and the definition of a related triple, the right hand side simplifies to give
\[\alpha'(u\cdot x\cdot 
u^{-1})=\left(\begin{smallmatrix}0&L_{\overline{t_1(x)}}\kappa\\R_{\overline{t_1(x)}}\kappa&0\end{smallmatrix}\right)=\alpha'(t_1(x)).\]
Thus by injectivity, $u\cdot x\cdot u^{-1}=t_1(x)\in C_S$. Moreover, since 
$\alpha''$ is a morphism of algebras with involution, and since $t_2$ 
and $t_3$  are 
isometries, we have
\[\alpha''(\sigma(u))=(\sigma_q(t_3), \sigma_q(t_2))=(t_3^{-1},t_2^{-1}),\]
whereby
\[\alpha''(u\sigma(u))=\alpha''(u)\alpha''(\sigma(u))=(\Id,\Id),\]
whence by injectivity $u\sigma(u)=1$. Thus $u\in\mathbf{Spin}(q)(S)$ and  
$F_S(u)=(t_1,t_2,t_3)$. This proves that $F_S$ is surjective, hence a group 
isomorphism. Functoriality is clear, and the theorem is proved.
\end{proof}

\begin{Cor} \label{cor_RT'} The group schemes $\RT'(C)$ and $\RT(C)$ coincide.
\end{Cor}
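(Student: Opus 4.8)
The plan is to prove the inclusion $\RT'(C)\subseteq\RT(C)$; the reverse inclusion is immediate since $\SO(q_C)\subseteq\mathbf{O}(q_C)$, so $\RT(C)$ is by definition already a subscheme of $\RT'(C)$. By definition, $\RT'(C)$ and $\RT(C)$ have the same defining bilinear identity $t_1(xy)=\overline{t_2(\overline x)}\,\overline{t_3(\overline y)}$; the only difference is that $\RT'(C)$ is cut out inside $\mathbf{O}(q_C)^3$ whereas $\RT(C)$ is cut out inside $\SO(q_C)^3$. So it suffices to show that any related triple lying in $\mathbf{O}(q_C)^3$ automatically has all three components in $\SO(q_C)$, i.e.\ the three relevant components of the Dickson/determinant invariant vanish.

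First I would invoke Theorem \ref{Tri}: the isomorphism $F\colon\mathbf{Spin}(q_C)\simlgr\RT'(C)$ shows that $\RT'(C)$ is the image of $\mathbf{Spin}(q_C)$, and in particular $\RT'(C)$ is a smooth, connected affine $R$-group scheme (being isomorphic to $\mathbf{Spin}(q_C)$). Now consider the first projection $f_1\colon\RT'(C)\to\mathbf{O}(q_C)$, $(t_1,t_2,t_3)\mapsto t_1$; by construction $f_1\circ F=\chi'$, and $\chi'$ factors through $\SO(q_C)$ by \cite[IV.6]{Kn} (as already noted in the paragraph defining $\chi'$). Hence $f_1$ lands in $\SO(q_C)$. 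By Lemma \ref{Lequiv}, the cyclic permutation of the components preserves $\RT'(C)$, so the same argument applied to the permuted triple shows that $f_2$ and $f_3$ also land in $\SO(q_C)$. Therefore every point of $\RT'(C)$ lies in $\SO(q_C)^3$, which gives $\RT'(C)\subseteq\RT(C)$ and hence equality.

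The main obstacle, such as it is, is making sure the chain of reductions is clean: one must be careful that Lemma \ref{Lequiv} was stated for triples in $\SO(q_C)^3$, so to apply it to $\RT'(C)$ one should re-examine its proof (which only uses the bilinear identity and regularity of $q$, not the determinant condition) and observe that the cyclic-permutation equivalence holds verbatim for triples in $\mathbf{O}(q_C)^3$ as well. Alternatively, and perhaps more transparently, one can bypass Lemma \ref{Lequiv} entirely by recalling that in the proof of Theorem \ref{Tri} it was shown directly that for $u\in\mathbf{Spin}(q_C)(S)$ the triple $(u_3,u_1,u_2)$ is related via equation \eqref{insert}; tracking $u_2$ and $u_3$ there already exhibits them (via $\alpha''$ and $\sigma(u)=u^{-1}$) as isometries of $q$, and the spinor norm argument forces them into $\SO(q_C)$. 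Either route closes the argument in a few lines.
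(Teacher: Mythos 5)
Your proposal is correct and follows essentially the same route as the paper: both reduce to showing one component lies in $\mathbf{SO}(q_C)$ via the cyclic-permutation Lemma \ref{Lequiv} (with the same observation that its proof goes through verbatim for triples in $\mathbf{O}(q_C)^3$), and both then use the surjectivity of $F$ from Theorem \ref{Tri} together with the fact that $\chi'$ lands in $\mathbf{SO}(q_C)$.
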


\begin{proof} It suffices to show that $\RT'(C)\subseteq \SO(q)^3$, i.e.\ for 
each $R$--ring $S$ and each $(t_1,t_2,t_3)\in\RT'(C)(S)$ we have 
$t_i\in\SO(C)(S)$. 
Now Lemma \ref{Lequiv} holds, with the same proof, if $\RT(C)$ is replaced by 
$\RT'(C)$ and $\SO(q_C)$ by $\mathbf{O}(q_C)$. Thus it suffices to show that 
$t_1\in\mathbf{SO}(q)(S)$. But the theorem provides 
$u\in\mathbf{Spin}(q)(S)$ such that $t_1=u_1$, whereby 
$t_1\in\mathbf{SO}(q)(S)$. 
\end{proof}

\begin{Prp}\label{prop_chi} In the following diagram of affine group schemes, the rows are 
exact, the vertical arrows are natural isomorphisms, and the squares commute. 
\[\xymatrix@1{
\mathbf{1}\ar[r]&\bm{\mu}_2\ar[r]^{\iota}\ar@{=}[d]&\mathbf{Spin}(q) 
\ar[r]^{\chi'}\ar[d]_{F} & \mathbf{SO}(q) \ar@{=}[d]\ar[r] & \mathbf{1}\\
\mathbf{1}\ar[r]&\bm{\mu}_2\ar[r]^{j}&\RT(C) \ar[r]^{f_1}& \mathbf{SO}(q)\ar[r] 
& \mathbf{1}}.
\]
Here, $i$ and $j$ are defined by setting, for each $R$--ring $S$,
\begin{itemize}
\item $\iota_S(\eta)=\eta1$ and
\item $j_S(\eta)=(\Id,\eta\Id,\eta\Id)$,
\end{itemize}
while $F$ and $\chi'$ are defined above.
\end{Prp}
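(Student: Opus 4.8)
The plan is to reduce the statement to two verifications — that the right-hand square commutes on the nose, and that the left-hand square commutes after a one-line computation in the Clifford algebra — and then to read off exactness of the rows from results already established. I would first dispose of the right square. By the very definition of $F$ in Theorem~\ref{Tri} one has $F_S(u)=(u_1(S),u_2(S),u_3(S))$ where $u_1(S)$ is the operator $x\mapsto u\cdot x\cdot u^{-1}$, i.e.\ $u_1(S)=\chi'_S(u)$; since $f_1$ is the first projection, $f_1\circ F=\chi'$ with nothing to compute. As a byproduct this identifies the explicit isomorphism $F$ of Theorem~\ref{Tri} with the abstract one produced right after Theorem~\ref{tcover}.

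For the left square I must check $F\circ\iota=j$. Fix an $R$-ring $S$ and $\eta\in\bm{\mu}_2(S)$, and put $u=\iota_S(\eta)=\eta1\in\Cl_0(C,q)_S$, the central scalar $\eta\cdot1$. It lies in $\Spin(q)(S)$: it is a unit with $u^{-1}=\eta1$, it normalises $C_S$ (indeed fixes it pointwise under conjugation), and $u\sigma(u)=\eta^2 1=1$ since $\sigma$ fixes $1$. Being central, $u$ conjugates trivially, so $u_1(S)=\Id$; and since $\alpha'$ is an $S$-algebra homomorphism, $\alpha'(u)=\eta\,\Id_{C\oplus C}$, so comparison with $\alpha'(u)=\left(\begin{smallmatrix}u_3&0\\0&u_2\end{smallmatrix}\right)$ gives $u_2(S)=u_3(S)=\eta\,\Id$. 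Hence $F_S(\iota_S(\eta))=(\Id,\eta\Id,\eta\Id)=j_S(\eta)$, so the left square commutes.

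Finally I would obtain exactness from the bottom row: $j$ identifies $\bm{\mu}_2$ with $\ker(f_1)$ by Lemma~\ref{lker}(2), while $f_1$ is the universal cover of $\SO(q_C)$ by Theorem~\ref{tcover}, hence a faithfully flat finite central isogeny and in particular an fppf epimorphism; so the bottom row is exact. Since the vertical arrows are isomorphisms and both squares commute, the top row is exact as well (equivalently, it is the standard presentation of $\Spin(q)$ as the simply connected cover of $\SO(q)$, cf.\ \cite[IV.6]{Kn}, \cite[8.4.0.63]{CF}). I do not expect a genuine obstacle: the proof is nothing but unwinding the definition of $F$ together with Theorem~\ref{tcover} and Lemma~\ref{lker}(2); the only point needing a moment's care is confirming that $\eta1$ genuinely lies in $\Spin(q)(S)$ and that such scalars exhaust $\ker(\chi')$, but this last fact is exactly $\bm{\mu}_2=\ker(f_1)$ transported along $F$, i.e.\ Lemma~\ref{lker}(2).
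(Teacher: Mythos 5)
Your proof is correct, and the two verifications of commutativity (the right square directly from the definition of $F$ in Theorem \ref{Tri}, the left square by computing $\alpha'(\eta 1)=\eta\,\Id$ and using centrality of $\eta 1$) coincide with the paper's; you are in fact slightly more careful in checking that $\eta 1$ genuinely lies in $\Spin(q)(S)$. The one genuine difference is the direction of the exactness argument. The paper takes exactness of the \emph{top} row as input, citing Knus IV.6.2.6 for the sequence $1\to\bm{\mu}_2\to\Spin(q)(S)\to\SO(q)(S)\xrightarrow{\mathrm{SN}}\mathbf{Disc}(S)$ and then using that $\mathbf{Disc}(S)$ is trivial over a faithfully flat cover to get fppf-surjectivity of $\chi'$; exactness of the bottom row is then transported through $F$. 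You instead establish exactness of the \emph{bottom} row from Theorem \ref{tcover} (which shows $\RT(C)/\bm{\mu}_2\simlgr\SO(q_C)$, so $f_1$ is a central isogeny and an fppf epimorphism) together with Lemma \ref{lker}(2), and transport upward. Both routes are valid; yours has the mild advantage of staying internal to results already proved in the paper rather than invoking the external description of the spinorial norm, while the paper's route makes the identification with the standard Clifford-algebra presentation of $\Spin(q)$ explicit.
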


\begin{proof} Let $S$ be an $R$--ring and consider the diagram
\[\xymatrix@1{
\mathbf{1}\ar[r]&\bm{\mu}_2(S)\ar[r]^{\iota}\ar@{=}[d]&\mathbf{Spin}(q)(S) 
\ar[r]^{\chi'}\ar[d]_{F} & \mathbf{SO}(q)(S) \ar[r]^{\mathrm{SN}}\ar@{=}[d] & 
\mathbf{Disc}(S)\ar@{=}[d] \\
\mathbf{1}\ar[r]&\bm{\mu}_2(S)\ar[r]^{j}&\RT(C)(S) \ar[r]^{f_1}& 
\mathbf{SO}(q)(S)\ar[r]^{\mathrm{SN}} & \mathbf{Disc}(S) },
\]
where $SN$ denotes the spinorial norm and $\mathbf{Disc}(S)$ the discriminant 
module (see the appendix). The upper row is exact by \cite[IV.6.2.6]{Kn}. If 
$u\in\mathbf{Spin}(q)(S)$, then by the above theorem
\[f_1\circ F_S(u)=f_1(u_1,u_2 , u_3)=u_1=\chi'(u),\]
whereby the middle square commutes. If $\eta\in\bm{\mu}_2(S)$, then
\[F_S\circ \iota(\eta)=F_S(\eta1)=((\eta1)_1, (\eta1)_2, 
(\eta1)_3)=(\Id,(\eta1)_2, (\eta1)_3).\]
The left square commutes, since
\[((\eta1)_2, (\eta1)_3))=\alpha''(\eta\Id)=(\eta\Id,\eta\Id)=\eta(\Id,\Id),\]
as $\alpha''$ is a morphism of algebras. The statement follows since all maps 
are functoriality is clear, and since $\mathbf{Disc}(S)$ is trivial for 
a faithfully flat $R$--ring $S$.
\end{proof}

\begin{Rk} The map $\chi': \mathbf{Spin}(q) \to \mathbf{SO}(q)$ of Proposition \ref{prop_chi} is a central isogeny.
Since  $\mathbf{Spin}(q)$ is a semisimple simply connected $R$--group scheme, $\chi'$ is then 
a universal cover in the SGA3 sense \cite[6.5.2]{Co1}. Since $\bAut(\bmu_2)=1$,
the map $\chi'$ coincide then with the universal cover $\chi$ considered at the end of  \S \ref{subsec_frame}.
\end{Rk}

\section{Twisting}\label{sec_twist}
\subsection{Torsors}
We consider the usual action of $\SO(q_C)$ on the octonionic unit sphere $\bS_C=\bS_{q_C}$. Recall 
that
\[\bS_C(S)= \{c \in C_S \, | \ q_{C_S}(c)=1\}\]
for any $R$--ring $S$. An important point is  that $\bS_C$ is smooth over $R$ 
(Lemma \ref{lem_sphere}).
We have three actions of $\RT(C)$ on $\bS_C$ and will, throughout,
consider the action  of $\RT(C)$ on $\bS_C \times \bS_C$ defined by
\[{\bf t}.(u,v)= (t_3(u), t_2(v))\]
for any ${\bf t}=(t_1,t_2,t_3)\in\RT(C)(S)$ and $u,v\in\bS_C(S)$.

We establish now in the ring setting a result by Jacobson over fields
of nice characteristic \cite[page 93]{J}, see also \cite[th. 14.69]{Ha} in the real case. Our proof is self-contained. 

\begin{Thm}\label{tstab}
\begin{enumerate}
 \item The stabilizer $\Stab_{\RT(C)}(1, 1)$ is $i(\bAut(C))$.
\item The fppf quotient sheaf $\RT(C)/ \bAut(C)$ is representable by an affine 
$R$--scheme and the induced map $\RT(C)/ \bAut(C) \simlgr \bS_C^2$ is an 
$R$--isomorphism. 
\end{enumerate}
\end{Thm}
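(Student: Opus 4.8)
The plan is as follows. Statement (1) is immediate from Proposition \ref{pfix}: by the definition of the action of $\RT(C)$ on $\bS_C^2$, the group $\Stab_{\RT(C)}(1,1)(S)$ consists of the triples $(t_1,t_2,t_3)\in\RT(C)(S)$ with $t_3(1)=t_2(1)=1$, and by Proposition \ref{pfix} (with $\eta=1$) this set is exactly $i_S(\bAut(C)(S))$. Hence $\Stab_{\RT(C)}(1,1)=i(\bAut(C))$ as $R$--subgroup schemes of $\RT(C)$.

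For (2) I would realize the orbit morphism of $(1,1)$ as an fppf torsor under $\bAut(C)$. Let $\theta\colon\RT(C)\to\bS_C^2$ be the $R$--morphism ${\bf t}\mapsto{\bf t}.(1,1)=(t_3(1),t_2(1))$. Since every element of $\bAut(C)$ fixes $1$, the morphism $\theta$ is invariant under right translation by $i(\bAut(C))$, so it factors through a morphism $\bar\theta\colon\RT(C)/\bAut(C)\to\bS_C^2$ of fppf sheaves, where $\RT(C)/\bAut(C)$ denotes the fppf sheafification of $S\mapsto\RT(C)(S)/\bAut(C)(S)$. Using (1), the canonical map $\RT(C)\times_{\bS_C^2}\RT(C)\to\RT(C)\times\bAut(C)$, $({\bf t},{\bf s})\mapsto\bigl({\bf t},\,i^{-1}({\bf t}^{-1}{\bf s})\bigr)$, is well defined — indeed $\theta({\bf t})=\theta({\bf s})$ forces ${\bf t}^{-1}{\bf s}\in\Stab_{\RT(C)}(1,1)=i(\bAut(C))$ — and is visibly an isomorphism, so $\theta$ is a pseudo-torsor under $\bAut(C)$. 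Once we also know that $\theta$ is an fppf cover (faithfully flat, of finite presentation), it follows that $\theta$ is an $\bAut(C)$--torsor over $\bS_C^2$; hence $\bS_C^2$ represents the quotient sheaf $\RT(C)/\bAut(C)$ and $\bar\theta$ is an isomorphism, and representability by an affine scheme is automatic.

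So the real content is to prove that $\theta$ is faithfully flat of finite presentation. Finite presentation is clear. For flatness, $\RT(C)$ is smooth over $R$ by Theorem \ref{tcover}, and since $\bS_C$ is smooth over $R$ by Lemma \ref{lem_sphere} so is $\bS_C^2$; thus by the fibrewise flatness criterion \cite[$_4$.11.3.10]{EGA4} it suffices to treat the geometric fibres, that is, to take $R=k$ algebraically closed. Over such $k$, the smooth group $\RT(C)$ of dimension $28$ acts on $\bS_C^2$ with $\Stab_{\RT(C)}(1,1)=\bAut(C)$ by (1), and $\bAut(C)$ is smooth of dimension $14$; hence the orbit $O$ of $(1,1)$ is a smooth, irreducible, locally closed subvariety of dimension $28-14=14$ of the irreducible $14$--dimensional variety $\bS_C^2$, so $O$ is open, $\theta$ is the composite of the faithfully flat orbit map $\RT(C)\to O$ with the open immersion $O\hookrightarrow\bS_C^2$, and $\theta$ is flat. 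The one remaining point — and the main obstacle — is that $\theta$ is \emph{surjective} over $k$, i.e.\ that the $\RT(C)(k)$--action on $\bS_C(k)^2$ is transitive (equivalently $O=\bS_C^2$).

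For this last step I would use the basic related triples $(B_c,R_{\overline{c}},L_{\overline{c}})$ of Example \ref{extriple} and their cyclic permutations (Lemma \ref{Lequiv}). Applying $(B_{\overline{u}},R_u,L_u)$ to $(1,1)$ yields $(u,u)$, and composing with further such triples (using $c\,\overline{c}=1$ for $c\in\bS_C$ and the bijectivity of $R_{\overline{u}}$ on $\bS_C$) reduces the transitivity claim to the following: for every $w\in\bS_C(k)$ there is a related triple $(t_1,t_2,t_3)$ with $t_3(1)=1$ and $t_2(1)=w$ — equivalently, $\Stab_{\RT(C)}(1)$, the stabilizer of $1$ in the third component, acts transitively on $\bS_C$ via the second component. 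Over fields this is essentially Jacobson's classical transitivity theorem \cite[page 93]{J}. I expect producing a self-contained, characteristic-free proof of this reduced statement — that is, actually exhibiting the required related triples (or analysing directly the orbit of $1$ under $\Stab_{\RT(C)}(1)$) — to be the technically hardest part of the argument; once it is in hand, the assembly described above completes the proof.
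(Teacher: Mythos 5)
Your part (1) and the formal skeleton of part (2) are sound and run parallel to the paper's own argument: (1) is indeed immediate from Proposition \ref{pfix}; for (2) the paper likewise first gets a monomorphism $h:\RT(C)/\bAut(C)\to\bS_C^2$ (via flatness of $\bAut(C)$ and SGA3) and then an open immersion from the dimension count $28-14=14$ between smooth $R$--schemes, which is the same mechanism as your "orbit is open on geometric fibres" step. The problem is the step you explicitly leave open: transitivity of $\RT(C)(k)$ on $\bS_C(k)^2$ for $k$ algebraically closed. That is precisely the non-formal content of the theorem (the text introducing it stresses that the proof is self-contained and extends Jacobson's field result to rings), so deferring it to "Jacobson's classical transitivity theorem" and conceding that a characteristic-free proof is missing leaves a genuine gap; everything else in your write-up is torsor yoga that both you and the paper dispatch routinely.

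Here is how the paper closes that gap, so you can see what is actually needed. By faithfully flat descent one reduces to $R=\Z$ and the split (Zorn) algebra $C_0$. For each prime power $q=p^n$ one counts: the $\RT(C_0)(\F_q)$--orbit of $(1,1)$ has cardinality $\sharp\RT(C_0)(\F_q)/\sharp\bAut(C_0)(\F_q)=\bigl(q^3(q^4-1)\bigr)^2$ by the known orders of the finite groups of type $\mathrm{D}_4^{\mathrm{sc}}$ and $\mathrm{G}_2$, while $\bS_{C_0}(\F_q)$, the set of unit-norm vectors for the hyperbolic form on $\F_q^8$, has exactly $q^3(q^4-1)$ elements; comparing cardinalities gives transitivity over every $\F_q$, hence over $\overline{\F}_p$. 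Characteristic zero is then not treated separately: $\bS_{C_0}^2$ is a Jacobson $\Z$--scheme whose closed points all lie over closed points of $\Spec(\Z)$, so the open image of $h$ contains all closed points, forcing $h$ to be surjective and therefore an isomorphism. To repair your proposal you must supply an argument of this kind (or an explicit generation result for related triples, e.g.\ by products of the triples of Example \ref{extriple}, which is essentially Elduque's theorem and no easier); as written, the hardest fourteen dimensions of the statement are asserted rather than proved.
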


\begin{proof}
(1) This follows from Proposition \ref{pfix}.
\smallskip

\noindent (2) Since $\bAut(C)$ is flat over $R$, it follows that the fppf 
quotient $\RT(C)/ \bAut(C)$  is representable
by an  $R$--scheme and the induced map $h: \RT(C)/ \bAut(C) \to \bS_C^2$ is a 
monomorphism \cite[XVI.2.2]{SGA3} .
Furthermore $\RT(C)/ \bAut(C)$ is smooth  since $\RT(C)$ is smooth 
\cite[VI$_B$.9.2.(xii)]{SGA3}.
Since  $\RT(C)/ \bAut(C)$ and $\bS_C^2$ are smooth over $R$ of the same relative 
dimension 14,
 we know that $h$ is an open immersion by \cite[$_4$.18.10.5]{EGA4}.
The proof that $h$ is an isomorphism boils down, by the trick used in the proof 
of Theorem \ref{tcover}, to the 
case of $R = \Z$ and $C=C_0$. 

\medskip

\noindent{\bf Claim.} For each prime $p$, $h_p= h \times_\Z \F_p$ is an 
isomorphism.

\medskip

Since $h_p$ is an open immersion, it is enough to show that $\RT(C_0)(\overline 
\F_p)$ acts transitively on
$\bS_{C_0}^2(\overline \F_p)$ and a fortiori
  to show that for each $q=p^n$, $\RT(C_0)(\F_q)$ acts transitively on
 $\bS_{C_0}^2(\F_q)$.
The coset $\Sigma:=\RT(C_0)(\F_q) / \bAut(C_0)(\F_q)$ is the 
$\RT(C_0)(\F_q)$--orbit of
$(1,1)$ in $\bS_{C_0}^2(\F_q)$. Using \cite[Table page 6]{Hu}, we have
$$
\sharp \Sigma=
\frac{q^{12} (q^2-1) (q^4-1)^2 (q^6-1)}{q^6 (q^2-1) (q^6-1)}= \bigl( q^3 (q^4-1) 
\bigr)^2.
$$
On the other hand, since $C_0$ is hyperbolic of dimension 8, we set $V=\F_q^4$ 
and view the  $\F_q$--points of $\bS_{C_0}$ as the points of $V\oplus V^\vee$ 
having hyperbolic norm
1, i.e.\ the pairs $(v, \phi)$ where $v \in V $ and $\phi \in V^\vee$ satisfy 
$\phi(v)=1$. It follows that $\bS_{C_0}(\F_q)$ has cardinality $\sharp(V 
\setminus \{0\})
\times \sharp(\F_q^3)=(q^4-1)q^3 $. Thus $\sharp \Sigma= \sharp\bigl( 
\bS_{C_0}^2(\F_q) \bigr)$, implying that
 $\Sigma=\bS_{C_0}^2(\F_q)$, and a fortiori that
 $\RT(C_0)(\F_q)$ acts transitively on $\bS_{C_0}^2(\F_q)$.

\smallskip

The Claim is proved. Now $\bS_{C_0}^2$ is a Jacobson scheme, i.e.\ its closed points are dense \cite[$_3$.10.4.6]{EGA4}. Furthermore the 
mapping $\bS_{C_0}^2 \to \Spec(\Z)$ 
maps closed points 
to closed points ({\it ibid}, $_3$.10.4.7). 
The Claim implies that the image of $h$ contains all closed points of 
$\bS_{C_0}^2$; hence $h$ is surjective. 
Thus $h$ is a surjective open immersion and thereby an isomorphism.
\end{proof}

By the above, the map $\Pi:\RT(C)\to\bS_C\times\bS_C$ given, for each $R$--ring 
$S$ and each ${\bf t}\in\RT(C)(S)$, by
\[\Pi_S:{\bf t}=(t_1,t_2,t_3)\mapsto (t_3(1),t_2(1)),\]
is a $\bG$--torsor in the fppf-topology \cite[III.4.1.8]{DG}, where 
$\bG=\bAut(C)$.
Since $\bG$ is smooth over $R$, it is additionally a $\bG$--torsor for the 
\'etale topology \cite[XXIV.8.1]{SGA3}. 
We can actually prove a stronger result.

\begin{Thm}\label{th_zar} (1) There is a natural bijection
$$
 \RT(C)(R)  \backslash \Bigl(\bS_C(R) \times \bS_C(R) \Bigr) \simlgr \ker\Bigl( 
H^1_{\mathrm{Zar}}\bigl(R, \bAut(C)\bigr) \to 
 H^1_{\mathrm{Zar}}\bigl(R, \RT(C)\bigr) \Bigr).
$$

\noindent (2) The $\bAut(C)$--torsor $\Pi:\RT(C)\to\bS_C\times\bS_C$ is trivial 
for the Zariski topology.

\end{Thm}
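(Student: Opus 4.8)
The plan is to deduce both statements from Theorem \ref{tstab} together with the smoothness of $\bS_C$ and the structure of $\RT(C)$ as a universal cover. For part (1), the key observation is that the $\bAut(C)$-torsor $\Pi:\RT(C)\to\bS_C\times\bS_C$ is, by Theorem \ref{tstab}(2), precisely the quotient map, so its isomorphism class corresponds to the trivial class in $H^1(R,\bAut(C))$ pushed forward to $\RT(C)$. More precisely, for a point $(a,b)\in(\bS_C\times\bS_C)(R)=\bS_C(R)\times\bS_C(R)$, the fibre $\Pi^{-1}(a,b)$ is a $\bAut(C)$-torsor over $R$ whose class in $H^1(R,\bAut(C))$ maps to the trivial class in $H^1(R,\RT(C))$ (since it is the fibre of a point of the base of a $\RT(C)$-homogeneous space, or directly: pushing the $\bAut(C)$-torsor into $\RT(C)$ gives $\RT(C)\times^{\bAut(C)}\bAut(C)$-torsor structure trivialized by the section coming from $\RT(C)$ itself). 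Conversely any $\bAut(C)$-torsor that becomes trivial over $\RT(C)$ arises this way. First I would make this correspondence precise in the fppf or \'etale topology following the standard exact sequence of pointed sets associated to $\mathbf{1}\to\bAut(C)\to\RT(C)\to\bS_C\times\bS_C\to\mathbf{1}$, namely
\[
\RT(C)(R)\to(\bS_C\times\bS_C)(R)\to H^1(R,\bAut(C))\to H^1(R,\RT(C)),
\]
which identifies the kernel on the right with the orbit set $\RT(C)(R)\backslash\bigl(\bS_C(R)\times\bS_C(R)\bigr)$ on the left. The subtlety is that we want this for the \emph{Zariski} topology, so I would invoke that $\bAut(C)$ and $\RT(C)$ are smooth affine, so that Zariski, \'etale and fppf torsors under them that are locally trivial for the respective topologies are compared via the standard comparison; but more to the point, part (2) will show the fibres are Zariski-locally trivial, so $H^1_{\mathrm{Zar}}$ is the right group to use in the exact sequence.

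For part (2), which I expect to be the main obstacle, the goal is to show that the torsor $\Pi:\RT(C)\to\bS_C\times\bS_C$ admits a section Zariski-locally on $\bS_C\times\bS_C$; equivalently, that the smooth surjective $R$-morphism $\Pi$ has sections over a Zariski open cover. The plan is to use Example \ref{extriple}: for $c\in\bS_C(S)$ the triple $(B_c,R_{\overline c},L_{\overline c})$ is a related triple, with image under $\Pi$ equal to $(R_{\overline c}(1),L_{\overline c}(1))$; let me compute $R_{\overline c}(1)=\overline c$ and $L_{\overline c}(1)=\overline c$, so this only hits the diagonal point $(\overline c,\overline c)$. That is not enough by itself, but combined with the $A_3$-symmetry and the group structure of $\RT(C)$, one can build explicit related triples hitting a neighbourhood of a given point. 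Concretely, given $(a,b)\in\bS_C(R)\times\bS_C(R)$, I would produce, over the open locus where $b_q(a,x)$ or $b_q(b,x)$ is a unit (for suitable fixed $x$, using Lemma \ref{lem_summand}), an explicit related triple mapping to $(a,b)$ built from products of the basic triples of Example \ref{extriple} and their $A_3$-permutes, thereby exhibiting a section of $\Pi$ over that open set. Alternatively, and perhaps more cleanly, I would argue as follows: the composite $\bS_C\xrightarrow{c\mapsto(B_c,R_{\overline c},L_{\overline c})}\RT(C)$ followed by $\Pi$ lands in the diagonal; and translating by suitable elements of $\RT(C)(R)$ — which act transitively on $\bS_C(R)\times\bS_C(R)$ Zariski-locally by the orbit computation underlying Theorem \ref{tstab} — moves the diagonal around to cover everything.

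The cleanest route, which I would actually pursue, is to avoid explicit triples and instead note: since $\RT(C)$ is smooth over $R$ and $\Pi$ is a torsor under the smooth affine group $\bAut(C)$, the scheme $\RT(C)$ is smooth over $\bS_C\times\bS_C$; hence to trivialize $\Pi$ Zariski-locally it suffices to find a section of $\Pi$ through each point of $\bS_C\times\bS_C$ after a Zariski localization of the base, and by smoothness such a section exists as soon as $\Pi$ has a section through each \emph{closed} point of each fibre, which is automatic. But the genuinely load-bearing input is that $\bS_C$ itself is Zariski-locally trivial as a quadric bundle section — equivalently, that $\bS_C(R)\neq\emptyset$ Zariski-locally, which holds because $1\in\bS_C(R)$ globally. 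So I would: (i) exhibit the global section $1$ of $\bS_C$; (ii) use the basic triples of Example \ref{extriple} to get a section of $\Pi$ over the open subscheme of $\bS_C\times\bS_C$ of pairs $(u,v)$ for which there exists (locally) $c$ with $R_{\overline c}(1)=u$, $L_{\overline c}(1)=v$, i.e.\ $u=v=\overline c$, then (iii) translate by the $\RT(C)(R)$-action to cover an arbitrary point, using that for any $(a,b)$ there is Zariski-locally an element of $\RT(C)$ taking $(1,1)$ to $(a,b)$ — this last transitivity statement being exactly the Zariski-local surjectivity of $\Pi$, which we get from Theorem \ref{tstab}(2) together with smoothness of $\RT(C)$ via the infinitesimal lifting criterion. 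The hard part will be organizing step (iii) so as not to argue in a circle: the honest argument is that $\Pi$ smooth surjective with $\RT(C)$ smooth implies $\Pi$ admits sections \'etale-locally, and then one must upgrade \'etale-local to Zariski-local, which is where one genuinely needs the explicit diagonal section of step (ii) plus translation, or alternatively a direct verification that the fibres $\Pi^{-1}(a,b)$, being $\bAut(C)$-torsors with a rational point after the explicit construction, are Zariski-locally trivial.
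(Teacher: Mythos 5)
Your part (1) correctly identifies the fppf exact sequence of pointed sets identifying the orbit set with $\ker\bigl(H^1_\fppf(R,\bAut(C))\to H^1_\fppf(R,\RT(C))\bigr)$; this matches the paper's use of the characteristic map. But the heart of the theorem is the refinement from fppf to Zariski, and here your proposal has a genuine gap. The paper's argument is: any class $[\bE]$ in this kernel lies a fortiori in $\ker\bigl(H^1_\fppf(R,\bAut(C))\to H^1_\fppf(R,\mathbf{O}(q_C))\bigr)$, so the twisted algebra $C'={}^{\bE}C$ has norm isometric to $q_C$; then \emph{Bix's theorem} (octonion algebras over a local ring with isometric norms are isomorphic) gives $\bE(R_{\mathfrak P})\neq\emptyset$ for every prime $\mathfrak P$, and finite presentation of $\bE$ spreads this out to $\bE(R_f)\neq\emptyset$ for some $f\notin\mathfrak P$, yielding a Zariski partition of unity trivializing $\bE$. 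Part (2) then follows by applying (1) to the coordinate ring $R[\bS_C^2]$ and the universal point. Your proposal never identifies this local-ring input, and the substitutes you offer are both false.

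Concretely: (a) you claim that smoothness of $\Pi$ forces sections Zariski-locally on the base ("by smoothness such a section exists as soon as $\Pi$ has a section through each closed point of each fibre, which is automatic") — smoothness gives sections \'etale-locally, or over henselian local rings, never Zariski-locally; a nontrivial torsor under a smooth group is a smooth surjection with no Zariski-local sections, and Example \ref{ex_mimura} and Proposition \ref{prop_marlin} show $\Pi$ itself is such a torsor. (b) You invoke that "$\RT(C)(R)$ acts transitively on $\bS_C(R)\times\bS_C(R)$ Zariski-locally by the orbit computation underlying Theorem \ref{tstab}" — but Theorem \ref{tstab}(2) is an isomorphism of schemes, which gives transitivity on $\overline k$-points of fibres, not on $S$-points for local $S$; transitivity of $\RT(C)(S)$ on $\bS_C(S)^2$ for $S$ local is \emph{equivalent} to the Zariski-local triviality you are trying to prove, so this is circular exactly where you feared it would be. The explicit triples $(B_c,R_{\overline c},L_{\overline c})$ only reach the diagonal and cannot be translated to a general $(a,b)$ without already knowing the transitivity. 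To repair the proof you must insert the detour through $\mathbf{O}(q_C)$ and the local isomorphism theorem of Bix \cite[lemma 1.1]{B} (or an equivalent statement that octonion algebras over local rings are determined by their norms), plus the limit argument passing from $R_{\mathfrak P}$ to $R_f$.
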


The statement shows that the defect in transitivity for the  action of  
$\RT(C)(R) $ over 
$\bS_C(R) \times \bS_C(R)$ is encoded in a nice subset of $H^1_{\mathrm{Zar}}(R, 
\bAut(C))$, namely a 
subset of isomorphism classes of Zariski twists of $C$.

\medskip

We write $\mathbf{W}(C)$ for the vector $R$--group scheme of $C$, so that 
\[\mathbf{W}(C)(S)=C_S\ (=C\otimes_RS)\]
for any $R$--ring $S$.
Let $\bE$ be a $\bG= \bAut(C)$--torsor and let  $\bE\wedge^{\bG} \mathbf{W}(C)$ be the 
fppf-twist of $\mathbf{W}(C)$ by $\bE$.
This is the sheaf associated to the presheaf $\bE(C)$ defined, for each 
$R$--ring $S$, by
\[\bE(C)(S)=(\bE(S) \times C_S)/\sim,\]
where 
\[(u,x)\sim(u',x')\Longleftrightarrow \exists g\in \bG(S): (u g, g^{-1} 
x)=(u',x').\]
This is the  twisted octonion 
$R$--algebra  written $^{\bE}C$ for short. We can proceed to the proof of Theorem \ref{th_zar}.

\begin{proof} (1)  According to cohomological properties \cite[prop. 2.4.3]{G3}, 
there is a natural bijection
$$
 \RT(C)(R)  \backslash \Bigl(\bS_C(R) \times \bS_C(R) \Bigr) \simlgr \ker\Bigl( 
H^1_\fppf\bigl(R, \bAut(C)\bigr) \xrightarrow{i^*} 
 H^1_\fppf\bigl(R, \RT(C)\bigr) \Bigr)
$$
 induced  by the characteristic map which to a point $(a,b) \in 
{\bS_C(R) \times \bS_C(R)}$ associates
 the class of the $\bG$--torsor $\bE^{a,b}=\Pi^{-1}(a,b)$. The main point is 
that the orthogonal representation  
 $\bAut(C) \to \mathbf{O}(q_C)$ factors through  $\RT(C)$. 
 It follows that we have the inclusion
 $$
 \ker(i^*)
 \subset 
 \ker\Bigl( H^1_\fppf\bigl(R, \bAut(C)\bigr) \to 
 H^1_\fppf\bigl(R, \mathbf{O}(q_C)\bigr) \Bigr).
 $$
 To refine the statement for the Zariski topology, 
 we consider an $\bAut(C)$--torsor $\bE/R$ such that its class belongs to 
 the second kernel and want to show that it is trivial locally for the Zariski 
topology.
 Let $C'= {^\bE}C$ be the twisted octonion $R$--algebra. 
 The map $H^1_\fppf\bigl(R, \bAut(C)\bigr) \to 
 H^1_\fppf\bigl(R, \mathbf{O}(q_C)\bigr)$ sends $[C']$ to the isometry class of 
the quadratic form $q_{C'}$
 (which is a twisted form of $q_C$). Since  $[C']$ belongs to the second 
kernel, 
 it follows that $q_{C'}$ is isometric to $q_C$. According to  Bix's theorem 
\cite[lemma 1.1]{B}, we have that 
 $C' \otimes_R R_{\mathfrak{P}} \simeq  C \otimes_R R_{\mathfrak{P}}$
 for each prime ideal $\mathfrak{P}$ of $R$. In other words, we have
 $\bE(R_{\mathfrak{P}}) \not=\emptyset$. But $\bE$ is of finite presentation 
over $R$, 
so that 
\[\bE(R_{\mathfrak{P}}) = \varinjlim_{f \not \in \mathfrak{P}} \bE(R_f).\]
 For each prime ideal $\mathfrak{P}$ there thus exists  an element 
$f_{\mathfrak{P}} \not \in \mathfrak{P}$ such that 
 $\bE(R_{f_{\mathfrak{P}}}) \not = \emptyset$. The  $f_{\mathfrak{P}}$'s 
generate $R$ where  $\mathfrak{P}$ runs over 
 the maximal ideals of $R$.  In particular there exists
 a partition of unity $1_R= f_1 +\dots +f_r$ such that $\bE(R_{f_i})\not = 
\emptyset$ for $i=1,...,n$.
 Thus $\bE$ is a $\bG$--torsor which is Zariski locally trivial.
 We have shown (1).
 
 \smallskip
 
 \noindent (2) We apply (1) to the coordinate ring $R[\bS_C^2]$ and to the 
universal point \linebreak
 $\eta \in  \bS_C^2\bigl(R[\bS_C^2]\bigr)$. Its image by the characteristic map 
is nothing but the class of the 
 $\bG$--torsor $\Pi: \RT(C) \to \bS_C^2$. Then $(1)$ yields that $\Pi$ admits 
sections locally with respect 
 to the Zariski topology.
\end{proof}

A natural question to ask at this stage is whether this construction is trivial 
or not.
We provide here a variant of \cite{G2}.

\begin{Ex}\label{ex_mimura} Assume that $R= \mathbb{R}$ is the field of real numbers and that $C$ 
is the division algebra of Cayley--Graves octonions.
In this case, $\bG=\bAut(C)$ is the real semisimple anisotropic algebraic group 
of type $G_2$ and 
$S_C$ is the standard real sphere with equation $\sum_{i=1}^8 \, x_i^2=1$. 
We claim that the $\bG$--torsor $\Pi: \RT(C) \to \bS_C^2$ is non-trivial. 
If it is trivial, there is a $\bG$--isomorphism  $\RT(C) \simlgr  {\bS_C^2 
\times_{\mathbb{R}} \bG}$ and in particular 
the morphism $\bG \to \RT(C) \simeq \Spin_8$ admits an algebraic retraction.
When taking the real points, we have that the inclusion of topological groups 
$\bG(\mathbb{R}) \to \Spin_8(\mathbb{R})$ admits a 
continuous retraction. Therefore for each $j \geq 1$, the homotopy group 
$\pi_j(\bG(\mathbb{R}),1)$ 
is a direct summand of $ \pi_j( \Spin_8(\mathbb{R}))$. According to the tables  
\cite[p. 970]{M},
we have $\pi_6(\bG(\mathbb{R}),1)= \Z/3\Z$
and $\pi_6(\Spin_8(\mathbb{R}),1)= 0$ whence a contradiction. Thus the 
$\bG$--torsor $\Pi: \RT(C) \to \bS_C^2$ is non-trivial.
Note also that the same fibration extended to the field $\mathbb{C}$ of complex 
numbers is still non-trivial. Proposition \ref{prop_marlin} provides an algebraic proof of this fact.
\end{Ex}

\subsection{Isotopes as twisted algebras}

We shall now see how the isotopes of Section \ref{sec_background} appear 
naturally as the twists above. 
We are given $a,b\in \bS_C(R)$ and consider the $\bG$--torsor
$\bE^{a,b}= \Pi^{-1}(a,b)$. It gives rise to the sheaf of twisted octonion 
$R$--algebras
$\bE^{a,b}(C)$. This is the sheaf associated to the presheaf $\bE^{a,b}(C)$ 
defined, for each $R$--ring $S$, by
\[\bE^{a,b}(C)(S)=(\Pi_S^{-1}(a_S,b_S)\times C_S)/\sim,\]
where 
\[({\bf t},x)\sim({\bf t}',x')\Longleftrightarrow \exists g\in \bG(S): ({\bf 
t}g,x)=({\bf t}',g(x')).\]

\begin{Rk}\label{Raut} Note that if $\Pi_S({\bf t})=\Pi_S({\bf t}')$, then ${\bf 
t}^{-1}{\bf t}'\in i_S(\bG(S))$.
 \end{Rk}
 
The linear structure on $\bE^{a,b}(C)(S)$ is induced by (cf.\ 
\cite[III.4.3.3]{DG})
\[({\bf t},x)+({\bf t}',x')=({\bf t},x+t_1^{-1}t_1'(x'))\]
and, for $s\in S$,
\[s({\bf t},x)=({\bf t},sx),\]
and the multiplication by 
\[({\bf t},x)\cdot({\bf t}',x')=({\bf t},xt_1^{-1}t_1'(x')).\]
The unity is the class of $({\bf t},1)$, which is independent of the choice of ${\bf t}\in \Pi_S^{-1}(a_S,b_S)$ by Remark \ref{Raut}. Using this and the fact that $q(c)1=c\overline{c}$ for any element $c$ of an octonion algebra, one finds that the quadratic form of $\bE^{a,b}(C)(S)$ coincides with that of $C_S$.

The following observation will be useful.

\begin{Prp}\label{Ptrialiso}  Let $a,b\in \bS_C(R)$ and 
$(t_1,t_2,t_3)\in\RT(C)(S)$ for an $R$--ring $S$. Then $t_1$ is an isomorphism 
$C_S\to C_S^{a,b}$
if and only if $t_2(1)=\eta b_S$ and $t_3(1)=\eta a_S$ for some 
$\eta\in\bm{\mu}_2(S)$.
\end{Prp}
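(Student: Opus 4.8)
The plan is to translate the condition ``$t_1 : C_S \to C_S^{a,b}$ is an isomorphism'' into a statement about the other two components of the related triple, using the explicit basic triples from Example \ref{extriple} and the rigidity result of Lemma \ref{lker}. The key observation is that for a related triple $(t_1, t_2, t_3)$, the identity $t_1(xy) = \overline{t_2(\overline x)}\ \overline{t_3(\overline y)}$ can be rewritten, using $\overline{u}\ \overline{v} = $ (para-octonion product) and the involution, in a form that directly compares $t_1$ with the multiplication $*_{a,b}$ once we know $t_2(1)$ and $t_3(1)$. So the strategy is: first reduce to computing what $t_1(xy)$ looks like in terms of $t_1$ alone plus the ``boundary data'' $t_2(1), t_3(1)$, and then match with $x *_{a,b} y = (xa)(by)$.

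\textbf{Key steps.} First I would handle the ``if'' direction. Suppose $t_2(1) = \eta b_S$ and $t_3(1) = \eta a_S$ with $\eta \in \bm{\mu}_2(S)$. Consider a reference related triple realizing the pair $(a,b)$: by Example \ref{extriple} and Lemma \ref{Lequiv}, for $c$ with $q(c)=1$ the triple $(R_{\overline c}, L_{\overline c}, B_c)$ — and its cyclic variants — are related, and more relevantly, composing the basic triples attached to $a$ and $b$ one produces an explicit related triple ${\bf s} = (s_1, s_2, s_3) \in \RT(C)(R)$ with $s_2(1) = b$, $s_3(1) = a$ and with $s_1$ an explicit isomorphism $C \to C^{a,b}$ (this is essentially the content of Proposition \ref{prop_formulae}, e.g.\ $s_1 = R_b L_a$ composed appropriately, matching the diagram with vertices $C^{1,aba}$, $C^{a,b}$, $C^{bab,1}$). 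Then ${\bf s}^{-1}{\bf t}$ is a related triple whose second and third components send $1$ to $\eta 1$; by Lemma \ref{lker}(1) applied to the appropriate cyclic rotation (via Lemma \ref{Lequiv}), ${\bf s}^{-1}{\bf t} = (g, \eta g, \eta g)$-type data forces, after using Proposition \ref{pfix}, that $s_1^{-1} t_1$ is (up to the sign $\eta$, which is an automorphism-irrelevant scalar since it squares to $1$ and multiplication by $\eta$ is an algebra automorphism of any isotope) an automorphism of $C$. Hence $t_1 = s_1 \circ (\text{automorphism})$ up to sign, so $t_1$ is an isomorphism $C_S \to C_S^{a,b}$.

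\textbf{The converse.} For the ``only if'' direction, suppose $t_1 : C_S \to C_S^{a,b}$ is an isomorphism. Using the same explicit ${\bf s}$ as above (base-changed to $S$), the composite $s_1^{-1} t_1 : C_S \to C_S$ is an isomorphism $C_S \to C_S$, i.e.\ an element of $\bAut(C_S)(S)$ — wait, more carefully it is an isomorphism onto $C_S^{a,b}$ precomposed with the inverse of another such isomorphism, hence an automorphism of $C_S$. Then ${\bf s}^{-1}{\bf t}$ is a related triple whose first component is an automorphism, so by Proposition \ref{pfix} its second and third components send $1$ into $\bm{\mu}_2(S)1$, say both to $\eta 1$. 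Pushing this through ${\bf s}$, since $s_2(1) = b$, $s_3(1) = a$ and the group law on $\RT(C)$ is componentwise composition, we get $t_2(1) = s_2(\eta 1) = \eta\, s_2(1) = \eta b$ and similarly $t_3(1) = \eta a$, as claimed.

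\textbf{Main obstacle.} The main technical point I expect to need care is producing the explicit reference triple ${\bf s}$ with $s_1$ a \emph{named} isomorphism $C \to C^{a,b}$ and controlled values $s_i(1)$ — one must check that the composition of two basic triples from Example \ref{extriple} (one attached to $a$, one to $b$, in the correct slots after cyclic permutation via Lemma \ref{Lequiv}) indeed lands in the slot where the first component is exactly (a scalar multiple of) the map $R_b L_a$ of Proposition \ref{prop_formulae}, and that this map is genuinely an algebra isomorphism $C \to C^{a,b}$ rather than merely onto some other isotope. Once ${\bf s}$ is in hand, everything else is a formal manipulation of the group structure on $\RT(C)$ together with Lemma \ref{lker} and Proposition \ref{pfix}; the only remaining subtlety is the harmless sign $\eta$, which is absorbed because scalar multiplication by an element of $\bm{\mu}_2$ is an algebra automorphism of every isotope $C^{a,b}$.
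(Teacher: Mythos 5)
Your argument in both directions hinges on the existence of a ``reference'' related triple ${\bf s}=(s_1,s_2,s_3)\in\RT(C)(S)$ with $s_2(1)=b$, $s_3(1)=a$ and $s_1$ an algebra isomorphism $C_S\to C_S^{a,b}$, to be obtained by composing the basic triples of Example \ref{extriple}. This is the fatal gap: such an ${\bf s}$ does not exist in general. Indeed, ${\bf s}$ would be a point of the fibre $\Pi^{-1}(a,b)$ over $S$, and the existence of such a point is equivalent (via the very proposition you are proving, or via Theorem \ref{thm_main}) to $C_S^{a,b}\simeq C_S$ --- which is exactly what fails in general; the non-triviality of the torsor $\Pi$ is the whole point of the paper (Example \ref{ex_mimura}, Proposition \ref{prop_marlin}). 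Concretely, products of the basic triples $(B_c,R_{\overline c},L_{\overline c})$ and their cyclic rotations only reach the pairs $(a,b)$ lying in the orbit of $(1,1)$ under the subgroup they generate, i.e.\ precisely the pairs whose isotope is trivial (this is the content of Section \ref{striv} and of Elduque's result quoted there). Moreover, the maps in the diagram of Proposition \ref{prop_formulae}(1) that you invoke are isomorphisms $C^{1,aba}\to C^{a,b}$ and $C^{bab,1}\to C^{a,b}$, not isomorphisms with source $C$ itself, so even the candidate for $s_1$ has the wrong domain.

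The statement cannot be reduced to a group-theoretic translation argument in $\RT(C)(S)$; it has to be proved by a direct computation with the defining identity of a related triple. For the ``if'' direction, specialize $t_1(xy)=\overline{t_2(\overline x)}\,\overline{t_3(\overline y)}$ at $x=1$ and at $y=1$ to obtain $\kappa t_3\kappa=L_b t_1$ and $\kappa t_2\kappa=R_a t_1$, then substitute these back into the identity to get $t_1(xy)=(t_1(x)a)(bt_1(y))=t_1(x)*_{a,b}t_1(y)$. For the converse, the reference triple is built \emph{from the hypothesis}: given that $t_1:C_S\to C_S^{a,b}$ is an isomorphism, one checks that $(t_1,\kappa R_a t_1\kappa,\kappa L_b t_1\kappa)$ is a related triple with second and third components sending $1$ to $b$ and $a$ respectively (using $t_1(1)=(ab)^{-1}$), and then Lemma \ref{lker} forces $t_2$ and $t_3$ to agree with these up to a common sign $\eta\in\bm{\mu}_2(S)$. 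Your instinct to use Lemma \ref{lker} and Proposition \ref{pfix} for the rigidity step is sound, but without the direct computation there is no way to produce the comparison triple you need.
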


Abusing notation, we write $a$ and $b$ for $a_S$ and $b_S$, respectively.

\begin{proof} If $t_2(1)=b$ and $t_2(1)=a$, then
\[t_1(x)=t_1(1x)=\overline{t_2(1)} \ \overline{t_3(\overline{x})}=\overline{b} \ 
\overline{t_3(\overline{x})},\]
whereby $L_{\overline{b}}\kappa t_3\kappa=t_1$, and thus $\kappa 
t_3\kappa=L_bt_1$. Likewise 
$\kappa t_2\kappa=R_at_1$. This implies, by the definition of a related triple, 
that for all $x,y\in C_S$,
\[t_1(xy)=\kappa t_2\kappa (x)\kappa t_3\kappa 
(y)=R_at_1(x)L_bt_1(y)=t_1(x)*_{a,b}t_1(y).\]
Since $C^{\eta a,\eta b}=C^{a,b}$ for any $\eta\in\bm{\mu}_2(S)$, this concludes 
the if-part.

Conversely, if $t_1:C_S\to C_S^{a,b}$ is an isomorphism, then $(t_1,\kappa R_a 
t_1\kappa,\kappa L_b t_1\kappa)$ is a related triple. Since $t_1(1)=(ab)^{-1}$ 
we get
$\kappa R_a t_1\kappa(1)=b$ and $\kappa L_b t_1\kappa(1)=a$. Lemma \ref{lker} 
then implies that $t_2(1)=\eta b$ and $t_3(1)=\eta a$ for some 
$\eta\in\bm{\mu}_2(S)$.
\end{proof} 

We can now establish the link between twisted forms and isotopes, and formulate the main theorem of this section.

\begin{Thm}\label{thm_main} Let $a,b\in\bS_C(R)$. Consider, for each $R$--ring 
$S$, the map
\[\begin{array}{ll}\Theta_S^{a,b}:\Pi_S^{-1}(a_S,b_S)\times C_S\to 
C_S^{a,b},&({\bf t},x)\mapsto t_1(x).\end{array}\]
Then these maps induce a natural isomorphism of fppf-sheaves of algebras
\[\widehat\Theta^{a,b}:\bE^{a,b}\wedge^\bG \mathbf{W}(C)\xrightarrow{\sim} \mathbf{W}(C^{a,b}).\]
\end{Thm}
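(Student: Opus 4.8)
The plan is to show that $\Theta^{a,b}$ descends to a morphism of presheaves of unital $R$--algebras $\bE^{a,b}(C)\to\mathbf{W}(C^{a,b})$, to obtain $\widehat\Theta^{a,b}$ from the universal property of sheafification (since $\mathbf{W}(C^{a,b})$ is already an fppf--sheaf), and finally to check that $\widehat\Theta^{a,b}$ is an isomorphism after a base change trivialising the torsor $\bE^{a,b}$.

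First I would check that $\Theta^{a,b}_S$ is constant on $\sim$--classes: if $(\mathbf{t},x)\sim(\mathbf{t}',x')$, then $\mathbf{t}'=\mathbf{t}\,i_S(g)$ and $x=g(x')$ for some $g\in\bG(S)$, and since $i$ is the diagonal embedding this says $t_1'=t_1\circ g$, whence $t_1'(x')=t_1(g(x'))=t_1(x)$. So $\Theta^{a,b}$ induces a map $\bar\Theta^{a,b}_S$ from $\bE^{a,b}(C)(S)$ to $C^{a,b}_S$, which is $S$--linear by the displayed formulas for the addition and scalar action on $\bE^{a,b}(C)(S)$. For multiplicativity, observe that every $\mathbf{t}\in\Pi^{-1}_S(a_S,b_S)$ satisfies $t_3(1)=a_S$ and $t_2(1)=b_S$, so by Proposition \ref{Ptrialiso} (with $\eta=1$) its first component $t_1$ is an algebra isomorphism $C_S\simlgr C^{a,b}_S$. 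The product of two classes is $[\mathbf{t},\,x\,(t_1^{-1}t_1'(x'))]$, where the inner multiplication is that of $C_S$, so applying $t_1$ and using that $t_1$ and $t_1'$ are both isomorphisms onto $C^{a,b}_S$ gives
\[t_1\bigl(x\,(t_1^{-1}t_1'(x'))\bigr)=t_1(x)*_{a,b}t_1\bigl(t_1^{-1}t_1'(x')\bigr)=t_1(x)*_{a,b}t_1'(x'),\]
which is $\bar\Theta^{a,b}_S[\mathbf{t},x]*_{a,b}\bar\Theta^{a,b}_S[\mathbf{t}',x']$; moreover the unit $[\mathbf{t},1]$ maps to $t_1(1)=(ab)^{-1}$, the unit of $C^{a,b}_S$. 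Thus $\bar\Theta^{a,b}$ is a morphism of presheaves of unital $R$--algebras into the sheaf $\mathbf{W}(C^{a,b})$, hence factors uniquely through the sheafification $\bE^{a,b}\wedge^\bG\mathbf{W}(C)$ of $\bE^{a,b}(C)$, yielding $\widehat\Theta^{a,b}$.

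It remains to see that $\widehat\Theta^{a,b}$ is an isomorphism, and this is local for the fppf topology, so we may base change to an $R$--ring $R'$ over which $\bE^{a,b}$ admits a section $\mathbf{t}_0\in\Pi^{-1}_{R'}(a_{R'},b_{R'})$ — by Theorem \ref{th_zar}(2) one may even take $R\to R'$ to be a Zariski cover. Over such $R'$ the torsor is trivial and $\Pi^{-1}_{S'}(a_{S'},b_{S'})$ is nonempty for every $R'$--ring $S'$; by Remark \ref{Raut} every class in $\bE^{a,b}(C)(S')$ has a unique representative $(\mathbf{t}_0,y)$ with $y\in C_{S'}$, so $[\mathbf{t}_0,y]\mapsto y$ identifies $(\bE^{a,b}\wedge^\bG\mathbf{W}(C))_{R'}$ with $\mathbf{W}(C_{R'})$. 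Under this identification $\widehat\Theta^{a,b}_{R'}$ becomes the first component of $\mathbf{t}_0$, which by Proposition \ref{Ptrialiso} is an algebra isomorphism $C_{R'}\simlgr C^{a,b}_{R'}$. Hence $\widehat\Theta^{a,b}_{R'}$ is bijective, and therefore so is $\widehat\Theta^{a,b}$, finishing the proof; naturality in $S$ is clear from the formulas.

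I expect the only real subtlety to lie in keeping the sheafification step honest: the presheaf $\bE^{a,b}(C)$ need not coincide with its sheafification (its value on $S$ is empty unless $\bE^{a,b}(S)\neq\emptyset$), so $\Theta^{a,b}$ is not literally a map out of the twist, and one must invoke that a presheaf morphism into the sheaf $\mathbf{W}(C^{a,b})$ descends canonically to the sheafification. The other point to watch is that the product formula on $\bE^{a,b}(C)(S)$ transports the \emph{original} octonion multiplication of $C$ through the chosen frame, so that multiplicativity of $\bar\Theta^{a,b}$ is exactly Proposition \ref{Ptrialiso}; the remaining computations are routine.
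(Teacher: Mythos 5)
Your proposal is correct and follows essentially the same route as the paper: the same well-definedness check on $\sim$--classes, linearity from the displayed formulas, multiplicativity via Proposition \ref{Ptrialiso}, and the reduction to rings over which the fibre $\Pi^{-1}(a,b)$ has a point. The paper compresses your sheafification and local-triviality steps into the single remark that, by universality of the associated sheaf, it suffices to show $\widehat\Theta^{a,b}_S$ is an isomorphism whenever $\Pi^{-1}_S(a_S,b_S)\neq\emptyset$ (proving injectivity and surjectivity there directly), but the content is identical.
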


Naturality means that for any morphism $\phi:S\to T$ of $R$--rings, the diagram
\[\xymatrixcolsep{3pc}\xymatrix@1{
&\bE^{a,b}(C)(S)\ar[r]^{\widehat\Theta_S^{a,b}}\ar[d]^{\widehat\phi}& C_S^{a,b} 
\ar[d]^{\widehat\phi} \\
&\bE^{a,b}(C)(T)\ar[r]^{\widehat\Theta_T^{a,b}}&C_T^{a,b}}
\]
commutes. Note that the maps are well-defined by Remark \ref{Rbase}.
 
\begin{proof} By the universality of the sheaf associated to a presheaf, it 
suffices to prove that for each $S$ such that 
$\Pi_S^{-1}(a_S,b_S)\neq\emptyset$, the map
$\widehat\Theta_S^{a,b}$ is a natural isomorphism of algebras. Consider such an 
$S$.

Firstly, the map $\widehat\Theta_S^{a,b}$ is well defined, since if $({\bf 
t},x)\sim({\bf t}',x')\in \Pi_S^{-1}(a_S,b_S)\times C_S$, then ${\bf t}'={\bf 
t}g$ and $x'=g^{-1}(x)$ for some $g\in \bG(S)$, whereby 
\[t_1'(x')=t_1gg^{-1}(x)=t_1(x).\]

Secondly, $\widehat\Theta_S^{a,b}$ is an algebra homomorphism. Indeed, it is 
linear, since
\[\Theta_S^{a,b}(s({\bf t},x))=\Theta_S^{a,b}({\bf 
t},sx)=t_1(sx)=st_1(x)=s\Theta_S^{a,b}(({\bf t},x))\]
for all $s\in S$, and
\[\Theta_S^{a,b}(({\bf t},x)+({\bf t}',x'))=\Theta_S^{a,b}({\bf 
t},x+t_1^{-1}t_1'(x'))=t_1(x+t_1^{-1}t_1'(x'))=t_1(x)+t_1'(x'),\]
which by definition is $\Theta_S^{a,b}(({\bf t},x))+\Theta_S^{a,b}(({\bf 
t}',x'))$. With respect to the multiplication,
\[\Theta_S^{a,b}(({\bf t},x)({\bf t}',x'))=\Theta_S^{a,b}({\bf 
t},xt_1^{-1}t_1'(x'))=t_1(xt_1^{-1}t_1'(y)),\]
and since ${\bf t}\in\Pi_S^{-1}(a,b)$, by Proposition \ref{Ptrialiso} this 
equals
\[t_1(x)*_{a,b}t_1t_1^{-1}t_1'(x')=t_1(x)*_{a,b}t_1'(y)=\Theta_S^{a,b}({\bf 
t},x)*_{a,b}\Theta_S^{a,b}({\bf t}',x').\]

Thirdly, $\widehat\Theta_S^{a,b}$ is injective, since if $\Theta_S^{a,b}(({\bf 
t},x))=t_1(x)=0$, then $x=0$ since $t_1$ is invertible, whereby the class of 
$({\bf t},x)$ is the zero element of $\bE^{a,b}(C)(S)$;
it is surjective since given $x\in C_S^{a,b}$, any ${\bf 
t}\in\Pi_S^{-1}(a,b)\neq\emptyset$ satisfies $\Theta_S^{a,b}({\bf 
t},t_1^{-1}(x))=x$. Thus $\widehat\Theta_S^{a,b}$ is an isomorphism. It is 
natural since the commutativity of the square above is the statement that, for 
each $(t_1,t_2,t_3)\in\RT(C)(S)$, we have the equality 
${t_{1}}_T\widehat\phi=\widehat\phi t_1$, which follows from the 
definition of ${t_{1}}_T$.
\end{proof}
 
In view of Remark \ref{Rreduce} and the discussion preceding it, this implies 
that any twisted form of the octonion algebra $C$, where the twist is effected by the torsor above, is an 
isotope. 
By that remark and the proposition preceding it, one may, without loss of 
generality (up to isomorphism),
assume that $b=1$ or, equivalently, $b=a^{-1}$. This and other variants are 
discussed in Section \ref{sec_variants}, where we will also show that the twists 
above account for all octonion algebras isometric to $C$.

\section{Related constructions}
\subsection{Twisted automorphism groups}
We will briefly discuss the automorphism groups of the twisted algebras. We set 
$\bG^{a,b}= \bAut(C^{a,b})$ for $a, b \in \bS_C(R)$. 
It is the twisted group scheme of $\bG$ by the torsor $\bE^{a,b}$.
According to Demarche's compatibility \cite[\S 2.4.1]{G3}, we have a natural 
identification
$$
\bG^{a,b} \simlgr \Bigl\{ t \in \RT(C)  \, \mid \, t.(a,b)=(a,b) \Bigr\}.
$$
In other words, $\bG^{a,b}$ is nothing but the stabilizer of $(a,b)$ for the 
action of
$\RT(C)$ on $\bS_C^2$. From another point of view, it consists of the fixed 
points of a twisted $A_3$-action on $\RT(C)$, which is given explicitly in the 
following proposition.

\begin{Prp} Let $a,b\in\bS_C(R)$. 
\begin{enumerate}
\item The map
\[T^{a,b}:\RT(C^{a,b})\to\RT(C)\]
defined, for each $R$--ring $S$ and each $(t_1,t_2,t_3)\in\RT(C^{a,b})(S)$, by
\[(t_1,t_2,t_3)\mapsto (t_1,B_bR_at_2R_{\overline{a}}B_{\overline{b}},B_aL_bt_3L_{\overline{b}}B_{\overline{a}}),\]
is an isomorphism of $R$--group schemes.
\item The transfer under $T^{a,b}$ of the automorphism $(t_1,t_2,t_3)\mapsto(t_2,t_3,t_1)$ of $\RT(C^{a,b})(S)$ is the automorphism
\[\widehat\sigma^{a,b}: \RT(C)\to \RT(C)\]
of order three defined on $\RT(C)(S)$ for each $R$--ring $S$ by
\[(t_1,t_2,t_3)\mapsto (R_{\overline{a}}B_{\overline{b}},L_{\overline{a}}R_b,B_aL_b)(t_2,t_3,t_1)(R_{\overline{a}}B_{\overline{b}},L_{\overline{a}}R_b,B_aL_b)^{-1}.\]
\item The group $\bG^{a,b}$ is isomorphic to the fixed locus of $\widehat\sigma^{a,b}$. 
\end{enumerate}
\end{Prp}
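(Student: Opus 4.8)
The plan is to establish each of the three assertions in turn, relying on the triality framework already in place. For part (1), I would start from the observation that $C^{a,b}$ and $C$ are isotopic via the triple $(\Id, R_a, L_b)$ (more precisely, $x*_{a,b}y = (xa)(by) = R_a(x)\,L_b(y)$), but since the group $\RT$ is defined with respect to the \emph{para}-octonion multiplication $x\bullet y = \overline{x}\,\overline{y}$, one must conjugate appropriately. The key computational fact I would isolate is that the para-octonion multiplication of $C^{a,b}$ relates to that of $C$ by a formula of the shape $x\bullet_{a,b}y = \varphi^{-1}\bigl(\psi_2(x)\bullet\psi_3(y)\bigr)$ for suitable products of the operators $B_a, B_b, R_a, L_b, R_{\overline a}, L_{\overline b}$ and $\kappa$; this is where the Moufang identities of Proposition \ref{prop_formulae} and Example \ref{extriple} enter. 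Once that identity is in hand, one checks directly that if $(t_1,t_2,t_3)$ preserves $\bullet_{a,b}$, then $T^{a,b}(t_1,t_2,t_3)$ preserves $\bullet$, so $T^{a,b}$ is a well-defined morphism; invertibility is clear since each component is a composite of invertible linear maps, and the inverse has the analogous closed form. Functoriality is automatic from the construction.

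For part (2), the point is purely formal once (1) is established: $T^{a,b}$ transports the canonical cyclic automorphism $c\colon(t_1,t_2,t_3)\mapsto(t_2,t_3,t_1)$ of $\RT(C^{a,b})$ (which is an automorphism by Lemma \ref{Lequiv}) to the automorphism $T^{a,b}\circ c\circ(T^{a,b})^{-1}$ of $\RT(C)$. I would simply compute this composite on an arbitrary $S$-point $(t_1,t_2,t_3)\in\RT(C)(S)$: apply $(T^{a,b})^{-1}$, then cycle, then apply $T^{a,b}$, and collect the conjugating operators into the single triple $g=(R_{\overline a}B_{\overline b},\,L_{\overline a}R_b,\,B_aL_b)$ appearing in the statement. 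That $\widehat\sigma^{a,b}$ has order three is inherited from $c$ having order three, since conjugation by an isomorphism preserves the order of an automorphism; alternatively one sees directly that $g$, $\widehat\sigma^{a,b}(g)$ (with its components suitably permuted) and its further image multiply to the identity, but the abstract argument is cleaner.

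For part (3), I would combine the identification $\bG^{a,b}\simeq\{t\in\RT(C)\mid t.(a,b)=(a,b)\}=\Stab_{\RT(C)}(a,b)$ recalled just before the proposition (Demarche's compatibility) with the description, via Proposition \ref{pfix} and Theorem \ref{tstab}(1) transported through $T^{a,b}$, of $\bAut(C^{a,b})\simeq\RT(C^{a,b})^{A_3}$, the fixed locus of $c$. Since $T^{a,b}$ is an isomorphism carrying $c$ to $\widehat\sigma^{a,b}$, it carries $\RT(C^{a,b})^{A_3}=\RT(C^{a,b})^{\langle c\rangle}$ isomorphically onto $\RT(C)^{\langle\widehat\sigma^{a,b}\rangle}$, the fixed locus of $\widehat\sigma^{a,b}$, yielding the claim. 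One should check for consistency that this fixed locus indeed coincides with $\Stab_{\RT(C)}(a,b)$, which amounts to unwinding that the $A_3$-fixed points of $\RT(C^{a,b})$ are exactly the triples $(t_1,t_2,t_3)$ with $t_2(1_{C^{a,b}})=t_3(1_{C^{a,b}})=1_{C^{a,b}}$ (Proposition \ref{pfix}) and translating the unity $1_{C^{a,b}}=(ab)^{-1}$ and the operators back through $T^{a,b}$ to the condition $t.(a,b)=(a,b)$.

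The main obstacle is clearly the explicit verification in part (1): one must pin down the precise conjugating operators so that $T^{a,b}$ really lands in $\RT(C)$, and this requires a careful, identity-by-identity manipulation with the middle and one-sided Moufang laws (exactly of the flavour carried out in the proof of Proposition \ref{prop_formulae}(3)), keeping track of bars and of the distinction between the octonion and para-octonion products. Once that bookkeeping is done, parts (2) and (3) are essentially formal consequences of $T^{a,b}$ being an isomorphism intertwining the two cyclic actions, together with results already proved in the paper.
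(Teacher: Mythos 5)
Your proposal is correct and follows essentially the same route as the paper's own (very terse) proof: part (1) is a direct Moufang-identity verification that the stated conjugating operators carry $\RT(C^{a,b})$ into $\RT(C)$, part (2) is the formal transport of the cyclic automorphism through $T^{a,b}$, and part (3) transfers the identification of $\bAut(C^{a,b})$ with the $A_3$-fixed locus of $\RT(C^{a,b})$ across the isomorphism $T^{a,b}$. The paper leaves the computation in (1) implicit just as you do, so there is no substantive difference in approach or in the level of detail supplied.
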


In other words, $\widehat\sigma^{a,b}=\mathrm{Int}(\mathbf{s}_{a,b})\widehat\sigma$ with $\mathbf{s}_{a,b}=(R_{\overline{a}}B_{\overline{b}},L_{\overline{a}}R_b,B_aL_b)\in\RT(C)(R)$, 
and $\widehat\sigma$ being the standard action of $A_3$, induced by the element $\sigma=(123)$. Note further that the map $T^{a,b}$ is, by construction, compatible with the covering maps from $\RT(C^{a,b})$ and $\RT(C)$ to $\SO(q_{C^{a,b}})=\SO(q_C)$ given by the 
projection on the first component of a related triple.

\begin{proof} The proof of (1) is a straight-forward verification; as is the proof that (2) follows from (1). For (3) we know that $\bG^{a,b}$
embeds in $\RT(C^{a,b})$ as the fixed locus of the automorphism in (2), the image of which in $\RT(C)$ is the fixed locus of the transfer $\widehat\sigma^{a,b}$.
\end{proof}

\subsection{Trialitarian action on the torsor}\label{subsec_rel_trial}
Since $A_3$ acts trivially on $\bG=\bAut(C)$, the trialitarian action of $A_3$ 
on 
$\RT(C)$ induces an action of $A_3$ on the $\bG$--torsor \break $\Pi: \RT(C) \to 
\bS_C^2$.

\begin{Lma} The action of the cycle $\sigma= (123)$ on  $\bS_C^2(R)$ maps
$(x,y)$ to $( \overline{y} \,  \overline{x}, x)$.
\end{Lma}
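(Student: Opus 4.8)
The statement concerns the action of the $3$-cycle $\sigma=(123)$ on $\bS_C^2(R)$ obtained by transporting, via the torsor $\Pi\colon\RT(C)\to\bS_C^2$, the trialitarian action $(t_1,t_2,t_3)\mapsto(t_2,t_3,t_1)$ on $\RT(C)$. Since $\Pi_S({\bf t})=(t_3(1),t_2(1))$, the induced action on $\bS_C^2$ is determined by computing the image under $\Pi$ of a related triple after cycling its components. So the plan is: given $(x,y)\in\bS_C^2(R)$, exhibit a concrete element ${\bf t}=(t_1,t_2,t_3)\in\RT(C)(R)$ with $\Pi_R({\bf t})=(x,y)$, i.e.\ $t_3(1)=x$ and $t_2(1)=y$; then the image of $(x,y)$ under $\sigma$ is $\Pi_R(t_2,t_3,t_1)=(t_1(1),t_3(1))=(t_1(1),x)$; so the whole problem reduces to identifying $t_1(1)$ in terms of $x$ and $y$.

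For the first step I would assemble ${\bf t}$ out of the basic triples of Example~\ref{extriple} together with automorphisms. Concretely, recall that for $c\in\bS_C(R)$ one has $(L_{\overline c},B_c,R_{\overline c})\in\RT(C)(R)$ and $(R_{\overline c},L_{\overline c},B_c)\in\RT(C)(R)$. Applying the first with $c=y$ gives a triple whose second component sends $1$ to $B_y(1)=\overline{y}\,\overline{y}\cdot\text{(unit)}$—so one must be careful and instead use the versions landing on the correct value. The cleanest route: note $(R_{\overline c},L_{\overline c},B_c)$ has third component $B_c$ with $B_c(1)=q(c)1=1$ and second component $L_{\overline c}$ with $L_{\overline c}(1)=\overline c$; and $(L_{\overline c},B_c,R_{\overline c})$ has third component $R_{\overline c}$ with $R_{\overline c}(1)=\overline c$. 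Multiplying two such basic triples componentwise (related triples form a group) and choosing the parameters $c$ so that the third component evaluates at $1$ to $x$ and the second to $y$, one obtains the desired ${\bf t}$; the remaining first component $t_1$ is then a product of the corresponding $B$-, $L$-, $R$-maps, and $t_1(1)$ is computed from the Moufang identities and the formula $c\overline c=q(c)1$.

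Carrying this out, the expected outcome is $t_1(1)=\overline{y}\,\overline{x}$, which matches the claim that $\sigma\cdot(x,y)=(\overline{y}\,\overline{x},x)$. As a sanity check one should verify that this formula does define an action of $A_3$, i.e.\ that applying it three times returns $(x,y)$: $(x,y)\mapsto(\overline{y}\,\overline{x},x)\mapsto(\overline{x}\,(y x),\overline{y}\,\overline{x})\mapsto(\dots,\overline{x}\,(yx))$, and using $\overline{x}(yx)=(\overline{x}y)x$ valid in an octonion algebra, together with $q(x)=q(y)=1$, the triple composite collapses to $(x,y)$; this also independently confirms the formula since the induced action must have order dividing $3$.

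\textbf{Main obstacle.} The delicate point is bookkeeping in the non-associative algebra $C$: one must pick the basic triples and their parameters so that the \emph{second and third} components hit the prescribed values $y$ and $x$ at $1$, and then evaluate the resulting \emph{first} component at $1$ without sign or order errors. This is purely a matter of applying the one-sided and middle Moufang laws and the conjugation identity $\overline c\,c=q(c)1$ carefully; there is no conceptual difficulty, but it is the step where mistakes naturally creep in, so I would double-check it against the order-three consistency computation above.
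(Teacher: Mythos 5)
Your framework is the right one --- transport the cyclic permutation through $\Pi$, lift $(x,y)$ to a related triple $\mathbf{t}=(t_1,t_2,t_3)$ with $t_3(1)=x$, $t_2(1)=y$, and reduce everything to identifying $t_1(1)$ --- but the proposal stops exactly at the point where the lemma's content lies: you write ``carrying this out, the expected outcome is $t_1(1)=\overline{y}\,\overline{x}$'' without carrying it out. Moreover, the route you propose for this step is both unnecessary and unlikely to go through. It is unnecessary because the defining identity of a related triple, $t_1(uv)=\overline{t_2(\overline{u})}\,\overline{t_3(\overline{v})}$, evaluated at $u=v=1$ gives $t_1(1)=\overline{t_2(1)}\,\overline{t_3(1)}=\overline{y}\,\overline{x}$ instantly, for \emph{any} lift $\mathbf{t}$, with no Moufang identities and no knowledge of what $t_1,t_2,t_3$ actually are; this is the paper's entire computation. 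It is unlikely to go through because componentwise products of the basic triples of Example~\ref{extriple} only reach rather special points of $\bS_C^2$: for instance the product $(L_{\overline a},B_a,R_{\overline a})\cdot(R_{\overline b},L_{\overline b},B_b)$ has $t_3(1)=b^2\overline{a}$ and $t_2(1)=a\overline{b}a$, so hitting a prescribed pair $(x,y)$ forces you to extract square roots in $C$, which is not possible in general.

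There is a second, related gap: over an arbitrary $R$ the fibre $\Pi^{-1}(x,y)(R)$ may be empty, so a lift $\mathbf{t}\in\RT(C)(R)$ need not exist at all. The paper handles this by first reducing to $R$ local (where the torsor $\Pi$ is trivial by Theorem~\ref{th_zar}, so a lift exists); since the claimed formula is functorial, checking it locally suffices. Your proposal silently assumes a global lift can be exhibited. Finally, a small slip in your order-three sanity check: the second iterate's first coordinate is $\overline{x}\,\overline{(\overline{y}\,\overline{x})}=\overline{x}(xy)=q(x)y=y$, not $\overline{x}(yx)$; with this correction the third iterate does return $(x,y)$ as expected.
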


\begin{proof} We write $(x',y')=\sigma(x,y)$.
We may assume that $R$ is local.  Then there exists $\mathbf{t}= (t_1,t_2,t_3) 
\in \RT(C)(R)$ such that
$t_3(1)=x$ and $t_2(1)=y$.  Since $\sigma({\bf t})= (t_2,t_3,t_1)$, we have 
$x'=t_1(1)$ and
$y'= t_3(1)=x$. We finish by using the relation
$x'=t_1(1)=\overline{t_2(\overline{1})}\,  \overline{t_3(\overline{1})}= 
\overline{y} \, \overline{x}$.
\end{proof}

This gives rise to the following formulae for isotopes.

\begin{Cor} Let $a,b\in\bS_C(R)$. 
Then the $R$--algebras $C^{a,b}$, $C^{b^{-1}a^{-1} , a}$ and
$C^{b, b^{-1}a^{-1}}$ are isomorphic.
\end{Cor}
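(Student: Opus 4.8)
The plan is to deduce this corollary directly from the preceding lemma describing the $A_3$-action on $\bS_C^2(R)$, combined with the fact (Theorem~\ref{thm_main}) that the twisted algebra $\bE^{x,y}(C)$ is isomorphic to $C^{x,y}$. The key observation is that the $A_3$-action on the torsor $\Pi\co\RT(C)\to\bS_C^2$ permutes the fibres according to the action on the base $\bS_C^2$, and since $A_3$ acts trivially on $\bG=\bAut(C)$, isomorphic fibres yield isomorphic twisted algebras. Concretely, if $\sigma=(123)$ sends $(x,y)\in\bS_C^2(R)$ to $\sigma(x,y)$, then the torsor automorphism induced by $\sigma$ restricts to a $\bG$-equivariant isomorphism $\bE^{x,y}\simlgr\bE^{\sigma(x,y)}$, whence $C^{x,y}\simeq{}^{\bE^{x,y}}C\simeq{}^{\bE^{\sigma(x,y)}}C\simeq C^{\sigma(x,y)}$.

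First I would compute $\sigma(a,b)$ and $\sigma^2(a,b)$ using the lemma. The lemma gives $\sigma(x,y)=(\overline{y}\,\overline{x},x)$. Applying this once to $(a,b)$ yields $\sigma(a,b)=(\overline{b}\,\overline{a},a)$. Since $a,b\in\bS_C(R)$ we have $\overline{a}=a^{-1}$ and $\overline{b}=b^{-1}$, and $\overline{b}\,\overline{a}=\overline{ab}=(ab)^{-1}=b^{-1}a^{-1}$ (using that $C$ is alternative, so $\overline{ab}=\overline{b}\,\overline{a}$). Thus $\sigma(a,b)=(b^{-1}a^{-1},a)$, giving $C^{a,b}\simeq C^{b^{-1}a^{-1},a}$. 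Applying $\sigma$ again: $\sigma(b^{-1}a^{-1},a)=(\overline{a}\,\overline{b^{-1}a^{-1}},b^{-1}a^{-1})$. Here $\overline{b^{-1}a^{-1}}=\overline{(ab)^{-1}}=\overline{\overline{ab}}=ab$ (the norm-one elements satisfy $\overline{c^{-1}}=\overline{\overline{c}}=c$ only when... more carefully: $\overline{(ab)^{-1}}=\overline{\overline{(ab)}}\cdot q(ab)^{-1}$; since $q(ab)=q(a)q(b)=1$, this is just $ab$). So $\overline{a}\cdot ab = a^{-1}(ab)=b$ by alternativity, and $\sigma^2(a,b)=(b,b^{-1}a^{-1})$, giving $C^{a,b}\simeq C^{b,b^{-1}a^{-1}}$.

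The main obstacle will be justifying cleanly that the torsor automorphism induced by $\sigma$ really does send the fibre $\bE^{a,b}=\Pi^{-1}(a,b)$ to $\bE^{\sigma(a,b)}$ $\bG$-equivariantly, and that this implies an isomorphism of the \emph{algebras} $C^{a,b}$ and $C^{\sigma(a,b)}$ — not merely of the underlying modules. This follows because twisting by isomorphic $\bG$-torsors produces isomorphic twisted objects, and $\bE^{a,b}(C)\simeq\mathbf{W}(C^{a,b})$ as sheaves of algebras by Theorem~\ref{thm_main}; the $A_3$-equivariance of $\Pi$ (used implicitly via the trialitarian action on $\RT(C)$ together with the lemma) supplies the needed torsor isomorphism. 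Alternatively — and this is perhaps the safest route to avoid subtleties about sheafification — one can argue locally: over a local ring $R$ pick $\mathbf{t}=(t_1,t_2,t_3)\in\RT(C)(R)$ with $t_3(1)=a$, $t_2(1)=b$; then $\sigma(\mathbf{t})=(t_2,t_3,t_1)\in\RT(C)(R)$ has second and third components evaluating at $1$ to $t_3(1)=a$ and $t_1(1)=\overline{b}\,\overline{a}=b^{-1}a^{-1}$, so by Proposition~\ref{Ptrialiso} the map $t_2$ is an algebra isomorphism $C\to C^{b^{-1}a^{-1},a}$ while $t_1$ (from $\mathbf{t}$) is an isomorphism $C\to C^{a,b}$; composing gives $C^{a,b}\simeq C^{b^{-1}a^{-1},a}$. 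Globalizing via a partition of unity (as in Lemma~\ref{lem_trace}) or invoking Theorem~\ref{th_zar}(2), which says $\Pi$ is Zariski-locally trivial, then upgrades this to an isomorphism over $R$. Repeating with $\sigma^2$ handles the third algebra.
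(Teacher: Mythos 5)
Your primary argument is correct and is exactly the paper's: the trialitarian $A_3$--action on the torsor $\Pi$ (which is trivial on $\bG=\bAut(C)$) gives $\bG$--equivariant isomorphisms $\bE^{a,b}\simeq\bE^{\sigma(a,b)}\simeq\bE^{\sigma^2(a,b)}$, your computations $\sigma(a,b)=(b^{-1}a^{-1},a)$ and $\sigma^2(a,b)=(b,b^{-1}a^{-1})$ agree with the lemma, and Theorem \ref{thm_main} converts the torsor isomorphisms into isomorphisms of the isotopes. One caveat: the ``alternative, safest route'' you sketch at the end is actually the shakier one --- a Zariski--local isomorphism of two octonion algebras does \emph{not} globalize (the obstruction is precisely a class in $H^1_{\mathrm{Zar}}(R,\bG)$, the very thing this paper studies), so the honest globalization \emph{is} the torsor argument you gave first; since that first argument stands on its own, the proof is fine.
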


It is reassuring that these relations can be derived from the known relations of 
Section \ref{subsec_relation}.
Indeed, McCrimmon's relation establishes an isomorphism $C^{b^{-1}a^{-1}, a} \simeq 
C^{a b^{-1},1}$
which is isomorphic to $C^{a,b}$ by Proposition \ref{prop_formulae}.(4).

\begin{Rk}\label{rk_rel} The above correspondence between the orbits of the action of $\RT(C)$ on $\bS_C\times\bS_C$ and the isomorphism
classes of the isotopes implies 
some of the other isomorphism relations between these. Consider, for instance, 
the relation $C^{a,a}\simeq C$. By Example \ref{extriple}, $\mathbf t= (B_{\overline{a}}, R_a, L_a)$ belongs to $\RT(C)(R)$. Then $\Pi(\mathbf t)=(a,a)$, whence $C^{a,a} 
\simeq C$. We will exploit this point of view further in Section 
\ref{sec_particular}.
\end{Rk}

\subsection{Compositions of quadratic forms}
We remind the reader of the notion of a composition of quadratic forms, which is 
essentially a generalization of that of a composition algebra.
Let $(M_1,q_1)$, $(M_2,q_2)$, $(M_3,q_3)$ be non-singular quadratic
$R$--forms of
common rank. A \emph{composition of quadratic forms} is a bilinear $R$--map $f: 
M_2 \times M_3 \to M_1$ such that
$q_1( f(m_2,m_3))= q_2(m_2) \, q_3(m_3)$ for each $m_2 \in M_2$, $m_3\in
M_3$. More precisely, it is the datum
$\calM= (M_1,q_1,M_2,q_2,M_3,q_3,f)$ for which the notion of isomorphism
is clear.

Our favourite example is the composition $\calM_C$  attached to the
octonion algebra $C$, i.e.\  $M_1=M_2=M_3=C$ and
$f$ is the octonionic multiplication.
In this case we have an isomorphism $\RT'(C)(S) \simlgr \bAut(\calM_C)(S)$ by mapping
a triple $(t_1, t_2,t_3)$ to $(t_1,\kappa t_2\kappa, \kappa t_3\kappa )$ where $\kappa$
stands for the octonionic involution on $C$.
It follows that the $R$--functor $\bAut(\calM_C)$ is representable by the 
$R$--group scheme $\RT'(C)$ which is nothing but $\RT(C)$ according to Corollary \ref{cor_RT'}.

It follows that the set $H^1_\fppf(R, \RT(C))$ classifies the compositions
of rank eight which are locally isomorphic for the flat topology 
(or even \'etale topology) to $\calM_C$.
The centre $\ker( \bmu_2^3 \to \bmu_2)$ of $\RT(C)$ gives an action of
the commutative group
$H^1_\fppf(R,  \ker( \bmu_2^3 \to \bmu_2))$ on $H^1_\fppf(R, \RT(C))$.
Now $H^1_\fppf(R,\bmu_2)\simeq\mathbf{Disc}(R)$ (see the appendix), and 
since the exact sequence $1 \to \bmu \to \bmu_2^3 \to \bmu_2 \to 1$ is split we have
\begin{equation}\label{eq_disc}
H^1_\fppf(R,  \ker( \bmu_2^3 \to \bmu_2))= \ker\bigl( \mathbf{Disc}(R)^3
\to \mathbf{Disc}(R) \bigr).
\end{equation}
We will use this to explain how that action  can be understood at the level of
$R$--forms of $\calM_C$.  Given non-singular quadratic forms  $(\calL_1, \theta_1),
(\calL_2, \theta_2), (\calL_3, \theta_3)$ of rank one and an isometry
$\epsilon : (\calL_2 \otimes_R \calL_3 ,
\theta_2 \otimes \theta_3) \simlgr   (\calL_1, \theta_1)$, we can modify an
$R$--composition
\[\calM=(M_1,q_1,M_2,q_2,M_3,q_3,f)\]
as
$$
\bigl(\calL_1 \otimes_R M_1,\theta_1 \otimes q_1, \calL_2 \otimes_R M_2,
\theta_2 \otimes q_2 , \calL_3 \otimes_R M_3,
\theta_3 \otimes q_3, \epsilon \otimes f \bigr).
$$
By \eqref{eq_disc}, this gives the action of
$H^1_\fppf\bigl(R,  \ker( \bmu_2^3 \to \bmu_2)\bigr)$ on $H^1_\fppf(R, \RT(C))$.

\begin{Rk} Let $C'$ be an $R$--form of $C$ such that the compositions
$\calM_C$ and $\calM_{C'}$ are isomorphic.
Then there exists a triple $(t_1,t_2,t_3)$ of isometries between $(C,q)$
and $(C',q')$ such that
$$
t_1( x *_C y) = t_2(x) *{_ {C'}} t_3(y)
$$
for all $x,y \in C$. In view of the introductory part of Section 
\ref{sec_background}, this implies that the kernel of $H^1_\fppf(R,\bG) \to
H^1_\fppf(R,\RT(C))$ consists of isotopes  of $C$. This confirms a consequence 
of the more precise construction given in Theorem \ref{thm_main}.
\end{Rk}

\section{Variants}\label{sec_variants}

One can modify the torsor defined above in several ways in order to describe octonion algebras under various similarity relations. The aim
of this section is to study these different variants.

\subsection{Codiagonal variant}

We consider the $\bAut(C)$--torsor $\Pi: \RT(C) \to \bS_C^2$ and the codiagonal 
map
$\nabla : \bS_C \to \bS_C^2$, $a \mapsto \nabla(a)= (a,a^{-1})$.
We denote by 
$\RT(C)^{\nabla} \to \bS_C$ the 
pull-back of the $\bAut(C)$--torsor $\Pi$ with respect to $\nabla$.
We observe that $\RT(C)^{\nabla}= \RT(C) \times_{\bS_C^2} \bS_C$ is a closed 
$R$--subscheme of $\RT(C)$. 
For each $R$--ring $S$, we have
$$
\RT(C)^{\nabla}(S)= \bigl\{ (t_1,t_2,t_3) \in  \RT(C)(S) \, \mid \,  
t_3(1)t_2(1)=1 \bigr\}
$$
$$
\qquad \qquad = \bigl\{ (t_1,t_2,t_3) \in  \RT(C)(S) \, \mid \,  t_1(1)=1 
\bigr\}.
$$
In terms of the covering  $f_1:  \RT(C) \to \SO(q_C)$, $\RT(C)^{\nabla}$ is the 
inverse image of 
the $R$--subgroup scheme  $\bigl\{ t_1 \in   \mathbf{SO}(q_C) , \mid \, t_1(1) 
=1\bigr\}$.
We put $C_1=1^{\perp_q} \subset C$. By Lemma \ref{lem_summand}, $C_1$ is a 
locally free $R$--submodule of rank $7$ of $C$
and the restriction $q_{C_1}$ of $q_C$ to $C_1$ is non-singular. 
We need to be careful  when dealing with the orthogonal  $R$--group scheme
$\mathbf{O}(q_{C_1})$; it fits in an exact sequence \cite[4.3.0.24]{CF}
$$
1 \to \mathbf{SO}(q_{C_1}) \to \mathbf{O}(q_{C_1}) \xrightarrow{det} \bmu_2 \to 1
$$
where  the special orthogonal $R$--group scheme $\mathbf{SO}(q_{C_1})$ is semisimple of type 
$B_3$.

\begin{Lma}\label{lem_nabla}

\begin{enumerate}
 \item The restriction map $\bigl\{ t_1 \in   \mathbf{O}(q_C)\mid \, t_1(1) =1\bigr\} \to \mathbf{O}(q_{C_1})$,
 $f \mapsto f_{\mid C_1}$  induces  an isomorphism  of group schemes   
 $$
 \mathbf{SO}(q_C)_1:=\bigl\{ t_1 \in   \mathbf{SO}(q_C) \mid \, t_1(1) 
=1\bigr\} \simlgr  \mathbf{SO}(q_{C_1}).$$
\item There is a unique $R$--homomorphism  $h: \Spin(q_{C_1}) \to \RT(C)$
such that the following exact diagram commutes 
\[\xymatrix@1{
\mathbf{1} \ar[r] &\bm{\mu}_2 \ar[r]^{} \ar[d]^{\simeq} & \mathbf{Spin}(q_{C_1})
 \ar[r] \ar[d]_{h} & \mathbf{SO}(q_{C_1}) \ar[r] \ar[d]  & \mathbf{1}  \\
 \mathbf{1} \ar[r]& \bm{\mu}_2 \ar[r] & \RT(C) \ar[r]^{f_1} & \mathbf{SO}(q_C) 
\ar[r] & \mathbf{1}.
}\]
Furthermore $h$ induces an $R$--isomorphism $\Spin(q_{C_1}) \simlgr 
\RT(C)^\nabla$.
\end{enumerate}
\end{Lma}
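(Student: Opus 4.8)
The plan is to prove (1) first, then build $h$ in (2) by functoriality of the Clifford construction, and finally identify $\Spin(q_{C_1})$ with $\RT(C)^\nabla$ via a fibre-wise dimension count. For (1), the restriction map is well-defined because an orthogonal operator fixing $1$ preserves $1^{\perp_q}=C_1$; it is clearly a group homomorphism. To see it is an isomorphism onto $\mathbf{O}(q_{C_1})$, note $C=R1\oplus C_1$ orthogonally (Remark after Lemma \ref{lem_summand}, since $q(1)=1\in R^*$), so any $g\in\mathbf{O}(q_{C_1})(S)$ extends uniquely to $\tilde g=\Id_{R1}\oplus g\in\mathbf{O}(q_C)(S)$ fixing $1$; this gives a two-sided inverse, so $\{t_1\in\mathbf{O}(q_C)\mid t_1(1)=1\}\simlgr\mathbf{O}(q_{C_1})$. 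It remains to check this restricts to an isomorphism on the $\mathbf{SO}$ subschemes. Since $C=R1\oplus C_1$ with $q|_{R1}$ the rank-one form $\langle1\rangle$, the Clifford/discriminant invariants multiply, and one checks that $\det(\tilde g)$ (equivalently the Dickson/spinor-norm-type invariant defining $\mathbf{SO}$ via \cite[4.3.0.24]{CF}) agrees with that of $g$; hence $\mathbf{SO}(q_C)_1\simlgr\mathbf{SO}(q_{C_1})$, proving (1). (One small point worth spelling out: over a base where $2$ is not invertible, ``$\mathbf{SO}$'' must be read in the Knus/SGA3 sense, so the claim is that the half-spinor/Dickson invariant is compatible with the orthogonal splitting $C=R1\perp C_1$; this is standard but should be cited.)

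For (2), I would construct $h$ as follows. Identify $\mathbf{SO}(q_{C_1})$ with $\mathbf{SO}(q_C)_1\subset\mathbf{SO}(q_C)$ via (1). The universal cover $f_1:\RT(C)\to\mathbf{SO}(q_C)$ (Theorem \ref{tcover}) has central kernel $\bm{\mu}_2$, and pulling it back along the closed immersion $\mathbf{SO}(q_{C_1})\hookrightarrow\mathbf{SO}(q_C)$ gives a central extension of $\mathbf{SO}(q_{C_1})$ by $\bm{\mu}_2$, namely $f_1^{-1}(\mathbf{SO}(q_C)_1)=\RT(C)^\nabla$ (this last equality is exactly the description of $\RT(C)^\nabla(S)$ recorded before the lemma: $t_1(1)=1$). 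So I get a canonical commutative diagram with exact rows
\[\xymatrix@1{
\mathbf{1}\ar[r]&\bm{\mu}_2\ar[r]\ar@{=}[d]&\RT(C)^\nabla\ar[r]\ar[d]&\mathbf{SO}(q_{C_1})\ar[r]\ar[d]&\mathbf{1}\\
\mathbf{1}\ar[r]&\bm{\mu}_2\ar[r]&\RT(C)\ar[r]^{f_1}&\mathbf{SO}(q_C)\ar[r]&\mathbf{1}.
}\]
To produce $h:\Spin(q_{C_1})\to\RT(C)$ with the required compatibility, it suffices to produce a map $\Spin(q_{C_1})\to\RT(C)^\nabla$ over $\mathbf{SO}(q_{C_1})$ lifting the identity on $\bm{\mu}_2$; uniqueness will then follow because $\mathbf{Spin}$ is simply connected and $\bAut(\bm{\mu}_2)=1$ (as used in the Remark after Proposition \ref{prop_chi}), so any two such lifts differ by a homomorphism $\Spin(q_{C_1})\to\bm{\mu}_2$, which is trivial since $\Spin(q_{C_1})$ is semisimple simply connected. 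For existence: $\RT(C)^\nabla$ is a central $\bm{\mu}_2$-extension of the semisimple group $\mathbf{SO}(q_{C_1})$ of type $B_3$, which is smooth and connected with smooth connected fibres (being the preimage under $f_1$ of a smooth closed subgroup), hence — because its kernel over $\mathbf{SO}(q_{C_1})$ is the full $\bm{\mu}_2$ — it is a central isogeny, and since it is nontrivial on each fibre (one exhibits a related triple of the form in Example \ref{extriple}, with $c\in C_1$ a trace-zero norm-one element, which lies in $\RT(C)^\nabla$ and is not in the kernel, showing $\RT(C)^\nabla$ is not the split extension), $\RT(C)^\nabla$ is in fact the simply connected cover of $\mathbf{SO}(q_{C_1})$; thus it is canonically isomorphic to $\Spin(q_{C_1})$ by the universal property, and $h$ is the composite $\Spin(q_{C_1})\simlgr\RT(C)^\nabla\hookrightarrow\RT(C)$. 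This simultaneously yields the last sentence, $\Spin(q_{C_1})\simlgr\RT(C)^\nabla$.

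The main obstacle I anticipate is the identification ``$\RT(C)^\nabla$ is the \emph{simply connected} cover of $\mathbf{SO}(q_{C_1})$,'' i.e. ruling out that the central $\bm{\mu}_2$-extension $\RT(C)^\nabla\to\mathbf{SO}(q_{C_1})$ is split (which would make $\RT(C)^\nabla\simeq\mathbf{SO}(q_{C_1})\times\bm{\mu}_2$, not $\Spin$). Over a field this is classical, but over a general base $R$ one cannot argue by connectedness of $R$-points; instead I would argue fibre-wise: by the trick used in the proof of Theorem \ref{tcover} it reduces to $R=\Z$, $C=C_0$, where on each geometric fibre $\RT(C_0)^\nabla\to\mathbf{SO}(q_{C_1,0})$ is known to be the simply connected cover $\Spin_7\to\mathbf{SO}_7$ (this is the classical statement that $\Spin_7$ is the stabilizer in $\Spin_8$ of a nonisotropic vector, via triality). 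One then invokes that $\RT(C_0)^\nabla$ is smooth over $\Z$ with semisimple simply connected geometric fibres (using Lemma \ref{lem_smooth} as in Theorem \ref{tcover}) to conclude it is a semisimple simply connected $\Z$-group, hence canonically $\Spin(q_{C_1})$; then base-change back to $R$. Writing the smoothness/flatness bookkeeping here cleanly — especially verifying that $\RT(C)^\nabla$, a priori only a closed subscheme of $\RT(C)$, is $R$-smooth (which follows since $f_1$ is smooth and $\mathbf{SO}(q_C)_1\hookrightarrow\mathbf{SO}(q_C)$ is a smooth closed immersion by (1)) — is the part requiring the most care, though each individual step is routine.
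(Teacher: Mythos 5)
There is a genuine gap in part (1). Your construction of the inverse map rests on the orthogonal splitting $C=R1\perp C_1$, but that splitting only exists when $2\in R^*$: the remark after Lemma \ref{lem_summand} says exactly this, and the reason is that $b_q(1,1)=2q(1)=2$. Over a field of characteristic $2$ one has $1\in C_1=1^{\perp}$, so $R1\subset C_1$ and $R1+C_1=C_1\neq C$; there is then no way to ``extend $g$ by the identity on $R1$'', and in fact even the injectivity of the restriction map $\{t_1\mid t_1(1)=1\}\to\mathbf{O}(q_{C_1})$ is no longer evident, since $t_1$ is only being prescribed on the proper submodule $C_1$. Your parenthetical about reading $\mathbf{SO}$ via the Dickson invariant does not repair this, because the issue is not which invariant cuts out $\mathbf{SO}$ but that the decomposition itself fails. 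The paper is working over arbitrary commutative rings, and its proof of (1) is devoted precisely to this case: it reduces to an algebraically closed field of characteristic $2$, works with the reduced subgroup $J=(\mathbf{SO}(q_C)_1)_{\mathrm{red}}$, uses an isogeny plus a Lie-algebra dimension count (dimension $21$) to show $\mathbf{SO}(q_C)_1$ is smooth and that the induced map to $\mathbf{SO}(q_{C_1})$ is an \'etale bijective isogeny, and then spreads out over $\Z$ with the fibre-wise isomorphism criterion. Your argument is complete only under the standing hypothesis $2\in R^*$.

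For part (2) your route is genuinely different from the paper's and is viable in outline, but two points need attention. The paper pulls the extension back along $\Spin(q_{C_1})\to\mathbf{SO}(q_{C_1})\to\mathbf{SO}(q_C)$ and splits it uniquely using simple connectedness (\cite[6.5.2.(iii)]{Co1}); the delicate step there is showing the induced map $h^{\sharp}:\bm{\mu}_2\to\bm{\mu}_2$ is the identity rather than trivial (checked by reduction to $\F_3$). Your version of that same delicate step is the claim that $\RT(C)^{\nabla}\to\mathbf{SO}(q_{C_1})$ is a \emph{non-split} $\bm{\mu}_2$-extension. Exhibiting one element outside the kernel proves nothing (a split extension $\mathbf{SO}\times\bm{\mu}_2$ also has such elements), and your proposed witness is not even in $\RT(C)^{\nabla}$: for $c$ of trace zero and norm one, $B_c(1)=c^2=-q(c)1=-1\neq 1$ in general, and the same failure occurs for the cyclic permutations of $(B_c,R_{\overline c},L_{\overline c})$. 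What does work is the fibre-wise identification with $\Spin_7\to\SO_7$ that you invoke at the end, combined with smoothness of $\RT(C)^{\nabla}$ over $\Z$; if you make that the actual argument (rather than a fallback) the existence and the final isomorphism $\Spin(q_{C_1})\simeq\RT(C)^{\nabla}$ follow, and your uniqueness argument (no nontrivial homomorphisms from a semisimple simply connected group to $\bm{\mu}_2$) is fine.
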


\begin{proof} (1) 
In the case of fields, the statement is Proposition 2.2.2 of \cite{SV} but some precaution
has to be taken in characteristic 2 since Springer--Veldkamp work with
the classical theory of algebraic groups which identifies $\mathbf{SO}(q_{C_1})$ and
$\mathbf{O}(q_{C_1})$. We treat that at first.

\smallskip

\noindent{\it Case of a field $k$ of characteristic $2$.} Without loss of generality, 
we may assume that $k$ is algebraically closed. The proof of
the quoted result shows that the map $v: \bigl\{ t_1 \in   \mathbf{O}(q_C)\mid \, t_1(1) =1\bigr\} \to \mathbf{O}(q_{C_1})$
  induces  an isomorphism  of abstract groups
 $\mathbf{SO}(q_C)_1(k) \simlgr  \mathbf{SO}(q_{C_1})(k)$. 
 We denote by $J= (\mathbf{SO}(q_C)_1)_{\mathrm{red}}$ the largest smooth subgroup of $\mathbf{SO}(q_C)_1$. We then have $J(k)= \mathbf{SO}(q_C)_1(k)$, and
 the  map $J \to \mathbf{O}(q_{C_1})$ factors trough a homomorphism   $v': J \to \mathbf{SO}(q_{C_1})$ such that the induced map
 $J(k)  \to \mathbf{SO}(q_{C_1})(k)$ is bijective. It follows that $v'$ is an isogeny 
 so that $J$ and $\mathbf{SO}(q_C)_1$  have the  same dimension as  $\mathbf{SO}(q_{C_1})$, i.e.\ $21$.
The same reference shows that the map $\Lie(v'): \Lie(\mathbf{SO}(q_C)_1) \to \Lie(\mathbf{SO}(q_{C_1}))$
is an isomorphism, so that $\Lie(\mathbf{SO}(q_C)_1)$ has dimension 21, whence $\mathbf{SO}(q_C)_1$ is smooth \cite[II.5.2.1]{DG}.
It follows that $J= \mathbf{SO}(q_C)_1$ and that $v': \mathbf{SO}(q_C)_1 \to \mathbf{SO}(q_{C_1})$ is an isogeny
such that $\Lie(v')$ is an isomorphism. Thus $v'$ is \'etale and the bijectivity of $v(k)$ implies that $v'$ is an 
isomorphism as desired.

\smallskip

\noindent{\it General case.}
By faithfully flat descent, this boils down as usual to the split case over $\Z$.
Using the result over fields, the $\Z$-group scheme $\mathbf{SO}(q_C)_1$ has smooth connected fibres of
common dimension 21. Lemma \ref{lem_smooth} shows that $\mathbf{SO}(q_C)_1$ is smooth, hence
flat. It follows that the schematic image of $\mathbf{SO}(q_{C_1}) \to \mathbf{O}(q_{C_1})$
is the schematic closure of $v\bigl(  \mathbf{SO}(q_C)_1 \times_\Z \Q \bigr)=  \mathbf{SO}(q_{C_1}) \times_\Z \Q$,
hence is  $\mathbf{SO}(q_{C_1})$.  The map  $v$ factors trough $\mathbf{SO}(q_{C_1})$
and we consider then $v':   \mathbf{SO}(q_C)_1  \to \mathbf{SO}(q_{C_1})$.
We conclude with the fibre-wise isomorphism criterion 
\cite[$_4$.17.9.5]{EGA4} that $v'$ is an isomorphism.

\smallskip

\noindent (2) We pull back the extension $\mathbf{1} \to \bm{\mu}_2 \to \RT(C) 
\xrightarrow{f_1} \mathbf{SO}(q_C)  \to \mathbf{1}$ (see Theorem \ref{tcover})
by the composite map $\mathbf{Spin}(q_{C_1}) \to \mathbf{SO}(q_{C_1})  \to 
\mathbf{SO}(q_C)$ and get a
central extension of
$R$--group schemes $\mathbf{1} \to \bm{\mu}_2 \to \bH  \to 
\mathbf{Spin}(q_{C_1}) \to \mathbf{1}$.
We observe that $\bH$ is a closed subgroup scheme of $\RT(C)$.
Since $\mathbf{Spin}(q_{C_1})$ is simply connected, this sequence is (uniquely) 
split \cite[6.5.2.(iii)]{Co1}, i.e. 
there exists a unique $R$--homomorphism  $h_0: \mathbf{Spin}(q_{C_1}) \to \bH$ 
splitting the above sequence. By construction 
the following diagram
\[\xymatrix@1{
\mathbf{Spin}(q_{C_1})
 \ar[r] \ar[d]_{h} & \mathbf{SO}(q_{C_1}) \ar[r] \ar[d]  & \mathbf{1}  \\
\RT(C) \ar[r]^{f_1} & \mathbf{SO}(q_C) \ar[r] & \mathbf{1}
}\]
commutes. As explained before, the homomorphism 
$h$ satisfying that commutativity is already unique.  
It remains to be shown that the induced map $h^\sharp: \bm{\mu}_2  \to 
\bm{\mu}_2$ is the identity.
Over a field of odd characteristic, this follows of \cite[Example 17.1]{Ga}.
To handle the general case, one  can assume again that $R=  \mathbb{Z}$ and that 
$C$ is the split octonion algebra.
Then  $h^\sharp$ is the identity or the trivial map. It cannot be trivial since 
it is not over $\mathbb{F}_3$.
We conclude that $h^\sharp$ is the identity as desired. 
It follows that the diagram 
\[\xymatrix@1{
 \mathbf{Spin}(q_{C_1})
 \ar[r] \ar[d]_{h} & \mathbf{SO}(q_{C_1}) \ar[d]  \\
  \RT(C) \ar[r]^{f_1} & \mathbf{SO}(q_C)  
}\]
is Cartesian so that $\mathbf{Spin}(q_{C_1})$ is $R$--isomorphic to 
$\RT(C)^\nabla$.
\end{proof}

\medskip

Since $\bAut(C)$ is an  $R$--subgroup scheme of $\RT(C)^\nabla$, we obtain an 
embedding
$\bAut(C) \to \Spin(q_{C_1})$. Furthermore we have a Cartesian diagram
\[\xymatrix@1{
\mathbf{Spin}(q_{C_1}) \ar[r]^{h} \ar[d]^{\Pi_1} & \RT(C) \ar[d]^{\Pi}   \\
 \bS_C \ar[r]^{\nabla} &  \bS_C \times_R \bS_C 
}\]
where the vertical maps are $\bAut(C)$--torsors.
Since the $\bAut(C)$-torsor $\Pi:\RT(C) \to \bS_C \times_R \bS_C$ is 
Zariski locally 
trivial according to Theorem \ref{th_zar}, it follows that
the $\bAut(C)$-torsor 
$\Pi_1 :\mathbf{Spin}(q_{C_1}) \to \bS_C$
is also Zariski locally trivial.

\begin{Rk} The map $\Pi_1$ is defined by the identification
$\mathbf{Spin}(q_{C_1}) \simlgr \RT(C)^\nabla$. As far as we know, there is no 
easier way to define it.
\end{Rk}

 The next result completes Theorem \ref{thm_main}.
 
\begin{Thm}\label{th_codiag}  There is a natural bijection
$$
 \mathbf{Spin}(q_{C_1})(R)  \backslash \bS_C(R)  \simlgr \ker\Bigl( 
H^1_{\mathrm{Zar}}\bigl(R, \bAut(C)\bigr) \to 
 H^1_{\mathrm{Zar}}\bigl(R, \mathbf{Spin}(q_{C_1})\bigr) \Bigr).
$$
It maps a point $a \in \bS_C(R)$ to the isomorphism class of the isotope 
$C^{a,a^{-1}}\simeq C^{1,a}$.
\end{Thm}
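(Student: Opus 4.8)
The argument follows the pattern of the proof of Theorem~\ref{th_zar}(1), with $\RT(C)$ acting on $\bS_C^2$ replaced by $\mathbf{Spin}(q_{C_1})\simeq\RT(C)^\nabla$ acting on $\bS_C$. First I would note that pulling the isomorphism $\RT(C)/\bAut(C)\simlgr\bS_C^2$ of Theorem~\ref{tstab}(2) back along $\nabla$, and using the identification of Lemma~\ref{lem_nabla}, gives an isomorphism $\mathbf{Spin}(q_{C_1})/\bAut(C)\simlgr\bS_C$ under which $\Pi_1$ is the quotient map; the stabilizer of the point $1\in\bS_C(R)$ is $\bAut(C)$ by Theorem~\ref{tstab}(1) together with $\bAut(C)\subseteq\RT(C)^\nabla$. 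Applying \cite[prop.~2.4.3]{G3} to this homogeneous space yields a natural bijection
$$
\mathbf{Spin}(q_{C_1})(R)\backslash\bS_C(R)\;\simlgr\;\ker\Bigl(H^1_\fppf\bigl(R,\bAut(C)\bigr)\to H^1_\fppf\bigl(R,\mathbf{Spin}(q_{C_1})\bigr)\Bigr)
$$
induced by the characteristic map which sends $a\in\bS_C(R)$ to the class of the $\bAut(C)$--torsor $\Pi_1^{-1}(a)$.

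Next I would identify the characteristic map. Since $\Pi_1$ is the pull-back of $\Pi$ along $\nabla$, we have $\Pi_1^{-1}(a)=\Pi^{-1}(a,a^{-1})=\bE^{a,a^{-1}}$ in the notation of Section~\ref{sec_twist}. By Theorem~\ref{thm_main} the twisted octonion algebra ${}^{\bE^{a,a^{-1}}}C$ is isomorphic to $C^{a,a^{-1}}$, which in turn is isomorphic to $C^{1,a}$ by Proposition~\ref{prop_formulae}(2) (see also Remark~\ref{Rreduce}). Hence the characteristic map sends $a$ to the isomorphism class of $C^{a,a^{-1}}\simeq C^{1,a}$, as asserted.

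Finally I would refine the bijection from the fppf to the Zariski topology, exactly as in the proof of Theorem~\ref{th_zar}(1). The standard embedding $\bAut(C)\hookrightarrow\mathbf{O}(q_C)$ factors through $\mathbf{Spin}(q_{C_1})$, namely as $\bAut(C)\hookrightarrow\RT(C)^\nabla=\mathbf{Spin}(q_{C_1})\xrightarrow{f_1\circ h}\SO(q_C)\hookrightarrow\mathbf{O}(q_C)$, so the kernel above is contained in $\ker\bigl(H^1(R,\bAut(C))\to H^1(R,\mathbf{O}(q_C))\bigr)$. For an $\bAut(C)$--torsor $\bE$ whose class lies in this last kernel, the twisted algebra $C'={}^{\bE}C$ has norm isometric to $q_C$, so Bix's theorem \cite[lemma~1.1]{B} gives $C'\otimes_RR_{\mathfrak P}\simeq C\otimes_RR_{\mathfrak P}$, i.e.\ $\bE(R_{\mathfrak P})\neq\emptyset$, for every prime $\mathfrak P$; as $\bE$ is of finite presentation over $R$, the usual partition-of-unity argument shows that $\bE$ is Zariski-locally trivial. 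Thus the fppf and Zariski kernels coincide, and the displayed bijection descends to the Zariski topology.

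Since Theorems~\ref{tstab}, \ref{th_zar} and \ref{thm_main} and Lemma~\ref{lem_nabla} carry out the substantive work, the only genuinely delicate point is the computation of the characteristic map: one must verify that $\Pi_1^{-1}(a)$ is exactly the torsor $\bE^{a,a^{-1}}$, so that Theorem~\ref{thm_main} applies and identifies the corresponding twisted form with the isotope $C^{a,a^{-1}}\simeq C^{1,a}$.
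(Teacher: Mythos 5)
Your proof is correct and follows essentially the same route as the paper: apply \cite[prop.~2.4.3]{G3} to the homogeneous space $\bS_C\simeq\mathbf{Spin}(q_{C_1})/\bAut(C)$, identify $\Pi_1^{-1}(a)=\Pi^{-1}(a,a^{-1})$ so that Theorem~\ref{thm_main} gives the isotope $C^{a,a^{-1}}\simeq C^{1,a}$, and then descend from fppf to Zariski cohomology. The only (immaterial) difference is in the last step: the paper simply invokes the Zariski-local triviality of the torsor $\Pi_1$, already established from Theorem~\ref{th_zar}(2) via the Cartesian pull-back diagram, whereas you re-run the Bix's-theorem argument from the proof of Theorem~\ref{th_zar}(1); both rest on the same facts and are equally valid.
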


\begin{proof} We proceed is a similar, but simpler, way in comparison to the 
proof of Theorem \ref{th_zar}. There is a natural bijection \cite[prop. 
2.4.3]{G3},
$$
 \mathbf{Spin}(q_{C_1})(R)  \backslash \bS_C(R)  \simlgr \ker\Bigl( 
H^1_\fppf\bigl(R, \bAut(C)\bigr) \to 
 H^1_\fppf\bigl(R, \mathbf{Spin}(q_{C_1})\bigr) \Bigr).
$$
It maps a point $a \in \bS_C(R)$ to the class of the $\bG$--torsor 
$\Pi_1^{-1}(a) = \Pi^{-1}(a,a^{-1})$
which represents the isomorphism class of the isotope $C^{a,a^{-1}}$ by Theorem 
\ref{thm_main}.
Since the torsor $\Pi_1$ is locally trivial for the Zariski topology, 
we can replace the flat cohomology by the Zariski topology on the right hand 
side.  
\end{proof}

\subsection{Orthogonal and adjoint variants}
We have seen that the  centre of $\RT(C)$ is isomorphic to $\bm{\mu}_2 \times 
\bm{\mu}_2$ by the map
$(\eta_1, \eta_2) \mapsto ( \eta_1 \eta_2, \eta_1 , \eta_2)$ (before  Lemma 
\ref{lker}).
We consider now one copy $\bm{\mu}_2$ inside $\RT(C)$, namely the image of $\eta 
\mapsto ( 1, \eta , \eta)$. We will refer to this as the
\emph{diagonal $\bm{\mu}_2$}, as it induces a diagonal embedding of $\bm{\mu}_2$ 
in $\bS_C^2$ via $\Pi$.

We mod out the mapping $\Pi: \RT(C) \to \bS_C \times_R \bS_C$ by the action
of the diagonal $\bm{\mu}_2$.
This gives rise to the orthogonal variant of $\Pi$, namely 
\[\Pi_{+}: \RT(C)_{+}= \RT(C)/ \bm{\mu}_2 \to \bigr({\bS}_C \times_R  {\bS}_C \bigr)/ \bmu_2.\] 
 The $R$--map $f_1$                                                               
induces an $R$--isomorphism $\RT(C)_{+} \simlgr  \mathbf{SO}(q_C)$.
We thus have a Cartesian diagram
\begin{equation}\label{**}
 \xymatrix@1{
 \RT(C)  \ar[r]^{f_1} \ar[d]^{\Pi} & \mathbf{SO}(q_C) \ar[d]^{\Pi_+}  \\
{\bS}_C^2 \ar[r] & \bigl( {\bS}_C \times_R  {\bS}_C \bigr)/ \bmu_2.
}\end{equation}
where the vertical maps are $\bG$--torsors. Let $\ubS_C=\bS_C/\bm{\mu}_2$ (see 
the appendix for further discussion on quotients of spheres). The above provides 
us with two projections $p_1, p_2 : \bigl( {\bS}_C \times_R  {\bS}_C \bigr)/ 
\bmu_2 \to 
{\ubS}_C$ and, correspondingly, two actions of
$\mathbf{SO}(q_C)$ on $\ubS_C$, which we denote by $g\bullet_i x$ for $i = 1, 
2$. An important step is the following.

\begin{Lma}\label{lem_section}
\begin{enumerate}
 \item For $i = 1, 2$,  the orbit map $\mathbf{SO}(q_C) \to  \ubS_C$,  $g 
\mapsto  g\bullet_i  [1]$ admits a
splitting.
\item Both actions of $\mathbf{SO}(q_C)(R)$ on $\ubS_C(R)$ are transitive.
\end{enumerate}
\end{Lma}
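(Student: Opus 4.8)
The plan is to reduce both assertions to the case $i=1$ by symmetry, and then to exploit the Cartesian diagram \eqref{**} together with the results already established for $\Pi$ and $\Pi^\nabla$. First I would observe that the two projections $p_1,p_2$ are interchanged by the flip automorphism of $\bigl(\bS_C\times_R\bS_C\bigr)/\bmu_2$ coming from $(x,y)\mapsto(y,x)$, and that this flip is covered by an automorphism of $\RT(C)_+=\SO(q_C)$ (induced, e.g., by the $A_3$-action of the transposition, or by the outer automorphism exchanging the relevant components); hence it suffices to treat $\bullet_1$. For (1), the orbit map $\SO(q_C)\to\ubS_C$, $g\mapsto g\bullet_1[1]$, is the composite of $f_1^{-1}$ composed with $\Pi$ composed with $p_1$, so producing a splitting amounts to producing, Zariski-locally or even globally over $R$, a related triple whose third (or second) component sends $1$ to a prescribed unit-norm octonion up to sign. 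Concretely, for $c\in\bS_C(R)$ the basic triple $(B_{\overline c},R_c,L_c)\in\RT(C)(R)$ of Example~\ref{extriple} has $\Pi$-image $(c,c)$, which maps to $([c],[c])\in\bigl(\bS_C\times_R\bS_C\bigr)/\bmu_2$; applying $f_1$ gives $B_{\overline c}\in\SO(q_C)(R)$ with $B_{\overline c}\bullet_1[1]=[c]$. This already defines a set-theoretic section on $R$-points; to get a section as a morphism of schemes one takes $c$ to be the universal point of $\bS_C$ and checks that $c\mapsto B_{\overline c}$ descends along $\bS_C\to\ubS_C=\bS_C/\bmu_2$, i.e.\ that $B_{\overline{\eta c}}$ and $B_{\overline c}$ have the same image in $\SO(q_C)$ modulo the diagonal $\bmu_2$; but $B_{\overline{\eta c}}=\eta^2 B_{\overline c}=B_{\overline c}$ for $\eta\in\bmu_2$, so in fact $c\mapsto B_{\overline c}$ is already $\bmu_2$-invariant and factors through $\ubS_C$, giving the desired splitting $\ubS_C\to\SO(q_C)$.

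For (2), transitivity of $\SO(q_C)(R)$ on $\ubS_C(R)$ follows formally from (1): given $[c]\in\ubS_C(R)$, the element $g=B_{\overline c}\in\SO(q_C)(R)$ satisfies $g\bullet_i[1]=[c]$, so every point of $\ubS_C(R)$ lies in the $\SO(q_C)(R)$-orbit of $[1]$, and transitivity is immediate. Equivalently, one may phrase this via the section of (1) directly: the orbit map admits a section of schemes, hence is surjective on $R$-points.

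The step I expect to require the most care is the bookkeeping in (1): one must pin down precisely which of $p_1,p_2$ corresponds to the second versus the third component of the related triple, verify that $f_1\colon\RT(C)_+\xrightarrow{\sim}\SO(q_C)$ is compatible with passing to the quotient by the diagonal $\bmu_2$ as asserted just before the lemma, and confirm that $c\mapsto B_{\overline c}$ indeed realizes the claimed orbit map after these identifications (rather than, say, $c\mapsto B_c$ or an orbit map for the other action). Once the identifications in and around \eqref{**} are fixed, the computation $B_{\overline c}\bullet_1[1]=[c]$ is a direct consequence of $\Pi(B_{\overline c},R_c,L_c)=(c,c)$ from Example~\ref{extriple}, and the $\bmu_2$-invariance needed to descend to $\ubS_C$ is the trivial identity $B_{\overline{\eta c}}=B_{\overline c}$. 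The flip symmetry reducing $i=2$ to $i=1$ should be stated carefully but causes no real difficulty.
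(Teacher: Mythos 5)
Your proof is correct and follows essentially the same route as the paper: both exploit the $\bmu_2$-invariance of $c\mapsto B_{\overline c}$ (the paper writes the section as $x\mapsto h(x)^{-1}$ with $h$ induced by $c\mapsto B_c$, which is the same map since $B_{\overline c}=B_c^{-1}$ for $q(c)=1$) and verify the section property via the basic related triple of Example~\ref{extriple}, with (2) immediate from (1). The only superfluous complication is your flip-symmetry reduction of $i=2$ to $i=1$ (which would require the transposition action, involving conjugation by $\kappa$, not just the $A_3$-action): the same triple $(B_{\overline c},R_c,L_c)$ handles $i=2$ directly, since $B_{\overline c}\bullet_2[1]=[R_c(1)]=[c]$.
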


\begin{proof}
 We will only do the case $i = 1$, the case $i = 2$ being similar.
 \smallskip
 
\noindent(1) We consider the $R$--map $\bS_C \to \mathbf{SO}(q_C)$ which maps an 
element  $a \in  \bS_C(R)$
to the orthogonal mapping $B_a$. Since this map is $\bmu_2$--invariant, it gives 
rise to an $R$--map
$h : \ubS_C \to   \mathbf{SO}(q_C)$ . We claim that $x\mapsto h(x)^{-1}$ is a 
section of the orbit map. 
Let $ x \in  \ubS_C(R)$ and let $S/R$ be a flat cover such that $x_S$ lifts to 
an element $a \in  \bS_C(S)$.
We consider $h(x) : C \to  C$. Then $h(x)_S = B_a : C_S \to C_S$ which lifts 
(with respect to $f_1$) to the element $\mathbf{t} = (B_a,R_{\overline 
a},L_{\overline a}) \in \RT(C)(S)$.
We compute in $\ubS_C(S)$
$$
h(x)\bullet_1 x_S = p_1(\mathbf{t}. x_S) = [L_{\overline{a}}(a)] = [1] 
$$
whence $h(x)^{-1}. [1] = x$. Thus $x\mapsto h(x)^{-1}$ is a section of the orbit 
map.

\smallskip

\noindent (2) This is immediate from (1).

\end{proof}

One can also go further and consider the quotient of $\RT(C)$ by the whole 
centre. We observe that the $\bAut(C)$--torsor $\Pi: \RT(C) \to \bS_C^2$ is 
$\bm{\mu}_2 \times \bm{\mu}_2$--equivariant 
for the antipodal action of $\bm{\mu}_2 \times \bm{\mu}_2$  on $\bS_C^2$. 
Taking the quotient with respect to this action, we get an $R$--morphism
$$
\underline{\Pi}:  \underline{\RT}(C) \to \underline{\bS}_C^2
$$
where $\underline{\RT}(C) = \RT(C) / (\bm{\mu}_2 \times \bm{\mu}_2)$ is the 
adjoint $R$--group scheme of
$\RT(C)$, which is isomorphic to $\mathbf{PSO}(q_C)$. We consider now the 
$R$--homomorphism
\[\underline{i}: \bAut(C) \xrightarrow{i} \RT(C)  \to \underline{\RT}(C).\]

\begin{Lma} 
\begin{enumerate}
\item The $R$--homomorphism $\underline{i}$ is a closed immersion and the 
fppf quotient
$\underline{\RT}(C)/ \underline{i}(\bAut(C))$ is representable by an affine 
$R$--scheme. 
\item The $R$--morphism $\underline{\Pi}:  \underline{\RT}(C) \to 
\underline{\bS}_C^2$ is an $\bAut(C)$--torsor
and the map $\underline{\RT}(C)/ \underline{i}(\bAut(C)) \to \underline{\bS}_C^2$ 
is an $R$--isomorphism.
\end{enumerate}
\end{Lma}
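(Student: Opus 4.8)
The plan is to deduce the whole statement from Theorem \ref{tstab} --- that $\Pi\colon \RT(C)\to\bS_C^2$ is an $\bAut(C)$--torsor --- by fppf descent along the quotient maps by the centre $Z:=\bm{\mu}_2\times\bm{\mu}_2$.

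First I would record the commutative square
\[\xymatrix@1{
\RT(C) \ar[r]^{\Pi}\ar[d]_{\pi} & \bS_C^2 \ar[d]^{\rho} \\
\underline{\RT}(C)\ar[r]^{\underline{\Pi}} & \underline{\bS}_C^2,
}\]
where $\pi$ is the quotient of $\RT(C)$ by its centre $Z$ and $\rho=(\bS_C\to\underline{\bS}_C)^{2}$. Both $\pi$ and $\rho$ are finite faithfully flat, hence fppf, and they are torsors: $\pi$ under $Z$ acting by translation, and $\rho$ under $\bm{\mu}_2\times\bm{\mu}_2$ acting antipodally on $\bS_C^2$ (freeness of these actions is immediate; e.g.\ $\eta a=a$ with $q(a)=1$ forces $\eta=1$). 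Identifying $Z$ with $\bm{\mu}_2\times\bm{\mu}_2$ via $(\eta_1,\eta_2)\mapsto(\eta_1\eta_2,\eta_1,\eta_2)$, a one-line computation from $\Pi(t_1,t_2,t_3)=(t_3(1),t_2(1))$ shows that $\Pi$ is equivariant, so $\underline{\Pi}$ is well defined. The key point is that this square is \emph{Cartesian}: the natural morphism $\RT(C)\to\underline{\RT}(C)\times_{\underline{\bS}_C^2}\bS_C^2$, $\mathbf{t}\mapsto(\pi(\mathbf{t}),\Pi(\mathbf{t}))$, is a $Z$--equivariant map over $\underline{\RT}(C)$ between two $Z$--torsors (the first being $\pi$, the second the pull-back of $\rho$), hence an isomorphism. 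Consequently, base-changing $\underline{\Pi}$ along the fppf cover $\rho$ recovers $\Pi$, compatibly with the $\bAut(C)$--actions (on $\underline{\RT}(C)$ via $\underline{i}$, on $\RT(C)$ via $i$). Since $\Pi$ is an $\bAut(C)$--torsor and the property of being a $G$--torsor is fppf--local on the base, $\underline{\Pi}$ is an $\bAut(C)$--torsor. This yields that the fppf quotient $\underline{\RT}(C)/\underline{i}(\bAut(C))$ is represented by the base $\underline{\bS}_C^2$ and that the induced map is an isomorphism, which is the bulk of (2).

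It then remains to handle the affineness and closed-immersion claims of (1). For affineness: $\bS_C$ is affine (a closed subscheme of $\mathbf{W}(C)$), and $\bS_C\to\underline{\bS}_C$ is finite (a $\bm{\mu}_2$--torsor) and surjective, so $\underline{\bS}_C$ --- and hence $\underline{\bS}_C^2$ --- is affine by Chevalley's theorem; thus so is $\underline{\RT}(C)/\underline{i}(\bAut(C))\cong\underline{\bS}_C^2$. For $\underline{i}$: a direct check gives $\ker(\underline{i})=\bAut(C)\cap Z=\mathbf{1}$ (if $(g,g,g)=(\eta_1\eta_2,\eta_1,\eta_2)$ then $g=\eta_1=\eta_2$ and $g=\eta_1\eta_2=g^2=1$). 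Since $\underline{\Pi}\circ\underline{i}$ is the constant morphism with value $[(1,1)]\in\underline{\bS}_C^2(R)$, the map $\underline{i}$ factors through the subscheme $\underline{\Pi}^{-1}([(1,1)])$, which is closed because the section $[(1,1)]\colon\Spec R\to\underline{\bS}_C^2$ is a closed immersion ($\underline{\bS}_C^2$ being separated). But $\underline{\Pi}^{-1}([(1,1)])$ is the fibre of the $\bAut(C)$--torsor $\underline{\Pi}$ over an $R$--point and carries the $R$--point $\underline{i}(1)$, so it is a trivial torsor, trivialised precisely by $g\mapsto\underline{i}(1)\cdot g=\underline{i}(g)$; hence $\underline{i}$ corestricts to an isomorphism onto this closed subscheme and is therefore a closed immersion.

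The step I expect to be the real content is verifying that the square above is Cartesian and that the torsor property descends with matching $\bAut(C)$--actions; the kernel computation, the appeal to Chevalley's theorem, and the triviality of a torsor with a rational point are routine. A small point to be careful about is pinning down the antipodal $\bm{\mu}_2\times\bm{\mu}_2$--action on $\bS_C^2$ so that $\Pi$ is genuinely equivariant --- this is the one-line computation mentioned above, using the explicit formula for $\Pi$.
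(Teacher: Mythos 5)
Your proof is correct and follows essentially the same route as the paper: establish that the square relating $\Pi$ and $\underline{\Pi}$ is Cartesian, descend the torsor property along the fppf cover $\bS_C^2\to\underline{\bS}_C^2$, and identify $\underline{i}(\bAut(C))$ with the closed fibre of $\underline{\Pi}$ over $[(1,1)]$. Your justification of the Cartesian square (a morphism of $Z$--torsors over $\underline{\RT}(C)$ is an isomorphism) and of the closed-immersion claim are just more explicit versions of the paper's terser assertions, so there is nothing to add.
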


\begin{proof} The $R$--map ${\bS}_C^2 \to \underline{\bS}_C^2$ is faithfully 
flat.
The main point is that taking the quotient by $\bm{\mu}_2 \times \bm{\mu}_2$ 
commutes with base change so that the left hand square of the
following diagram
\begin{equation}\label{***}
\xymatrix@1{
\RT(C)  \ar[r]^{P} \ar[d]^{\Pi} & \underline{\RT}(C) \ar[d]^{\underline{\Pi}} 
\ar[r]_{\sim}^{\underline{f_1}} & \mathbf{PSO}(q_C) \ar[dl]^{\underline{\Pi}'}\  
\\
{\bS}_C^2 \ar[r] & \underline{\bS}_C^2.
} 
\end{equation}
is Cartesian. The $R$--group scheme $\bAut(C)$ acts on the map $P$.
Since the property for that action to define an $\bAut(C)$--torsor is 
insensitive to a flat cover, $\Pi$ is an $\bAut(C)$-torsor if and only if 
$\underline{\Pi}$ is.
Since $\Pi$ is an $\bAut(C)$--torsor, so is $\underline{\Pi}$, and this implies 
that
$\underline{i}$ is an $R$--monomorphism and that  the fppf-quotient sheaf  
$\underline{\RT}(C)/ \underline{i}(\bAut(C)) $
is representable by the affine smooth $R$--scheme $\underline{\bS}_C^2$. Then 
$\bAut(C)$ occurs as the fibre of 
$\underline\Pi$ at $[(1,1)] \in \underline{\bS}_C^2(R)$ so $\underline{i}$ is a 
closed $R$--immersion.
\end{proof}

\subsection{Codiagonal adjoint variant}\label{scodiag}
Finally, one may combine the above and consider the quotient of 
$\RT(C)^\nabla\simeq \mathbf{Spin}(q_{C_1})$ by its centre. The centre of 
$\RT(C)^\nabla$ is $\bm{\mu}_2$, embedded
as $\eta\mapsto(1,\eta,\eta)$ for each $R$--ring $S$ and $\eta\in\bm{\mu}_2(S)$, 
and Lemma \ref{lem_nabla} implies that $\RT(C)^\nabla/\bm{\mu}_2\simeq 
\mathbf{SO}(q_{C_1})$. Reasoning as in the proof of as in the preceding cases, 
we take the quotient of the torsor 
$\Pi_1:\mathbf{Spin}(q_{C_1})\to\bS_C$ by $\bm{\mu}_2$, thus obtaining the 
Cartesian diagram
\[\xymatrix@1{
\mathbf{Spin}(q_{C_1}) \ar[r] \ar[d]^{\Pi_1} & \mathbf{SO}(q_{C_1}) 
\ar[d]^{\underline\Pi_1}   \\
 \bS_C \ar[r]&  \ubS_C
.}\]
The vertical maps are $\bAut(C)$--torsors; indeed this has already been seen for 
$\Pi_1$, and for $\underline{\Pi}_1$ it follows from the corresponding statement 
for $\underline{\Pi}$ in the above lemma. In fact we have the commutative diagram
\[\xymatrix@1{
\mathbf{Spin}(q_{C_1}) \ar[r] \ar[d]^{\Pi_1} & \mathbf{SO}(q_{C_1}) 
\ar[d]^{\underline\Pi_1} \ar[r] & \mathbf{SO}(q_{C}) \ar[d]^{\Pi_+}   \\
 \bS_C \ar[r]&  \ubS_C \ar[r]^{\underline{\nabla}\qquad}&\bigl( {\bS}_C \times_R  
{\bS}_C \bigr)/ \bmu_2
}\]
where $\underline{\nabla}$ is induced by $\nabla:\bS_C\to\bS_C\times_R\bS_C$ and 
is well-defined since $\nabla(\eta x)=\eta\nabla(x)$ for all $R$--rings $S$, 
$\eta\in\bm{\mu}_2(S)$ and $x\in\bS_C(S)$.

\subsection{Equivalence of all variants}
The following theorem shows that all variants considered above are equivalent.

\begin{Thm} \label{thm_kernel}
For $i=1,\ldots,8$, let
\[N_i=\Ker\bigl( H^1_\fppf(R, \bG) \to H^1_\fppf(R, \bH_i) \bigr),\]
where
\[\begin{array}{llll}
\bH_1=\mathbf{Spin}(q_{C_1}),& \bH_2=\mathbf{Spin}(q_{C}),& 
\bH_3=\mathbf{SO}(q_{C}),& \bH_4=\mathbf{O}(q_{C}),\\
\bH_5=\mathbf{SO}(q_{C_1}),& \bH_6=\mathbf{PSO}(q_{C}),& 
\bH_7=\mathbf{GO^+}(q_{C}),& \bH_8=\mathbf{GO}(q_{C}),
  \end{array}
\]
Then all the sets $N_i$ coincide and are subsets
of $H^1_{\mathrm{Zar}}(R,\bG)$.
\end{Thm}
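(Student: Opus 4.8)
## Proof Proposal

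The plan is to prove the equalities by a series of inclusions, exploiting the torsor maps already established in the preceding subsections. The overarching strategy is that each $\bH_i$ sits in a diagram (of the types constructed in \S\ref{sec_twist} and earlier in \S\ref{sec_variants}) through which the embedding $\bG=\bAut(C)\hookrightarrow\RT(C)$ factors, or which is obtained from $\RT(C)$ (or $\RT(C)^\nabla$) by a central quotient or by a pullback along a codiagonal. Since central isogenies between semisimple group schemes induce bijections (more precisely, we have the Cartesian squares \eqref{**} and \eqref{***}, the codiagonal Cartesian diagrams of Lemma \ref{lem_nabla} and \S\ref{scodiag}), the associated cohomology kernels are forced to agree. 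Concretely, I would first record the chain of group homomorphisms $\bG\to\bH_1\to\bH_5$ (from $\bAut(C)\to\Spin(q_{C_1})\to\SO(q_{C_1})$), $\bG\to\bH_2\to\bH_3\to\bH_4$ (diagonal into $\RT(C)=\Spin(q_C)$, then $\chi'$, then the inclusion $\SO\hookrightarrow\mathbf{O}$), and $\bG\to\bH_3\to\bH_6,\bH_7,\bH_8$ (the adjoint and similitude quotients). Functoriality of $H^1_\fppf$ then gives, essentially by $\ker(g\circ f)\supseteq\ker f$ read in the right direction, a web of inclusions among the $N_i$.

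The first batch of equalities I would establish: $N_2=N_3=N_1$. For $N_2=N_3$, the map $\chi':\Spin(q_C)\to\SO(q_C)$ is a central isogeny with kernel $\bmu_2$ and $\bG\hookrightarrow\Spin(q_C)$, so a $\bG$-torsor becoming trivial over $\SO(q_C)$ becomes trivial over $\Spin(q_C)$: this is precisely the content of Theorem \ref{tstab} and Theorem \ref{th_zar}, where the $\bAut(C)$-torsor $\Pi:\RT(C)\to\bS_C^2$ has class equal to the relevant universal class, and any torsor in $N_3$ is, by the argument of Theorem \ref{th_zar}(1) (Bix's theorem plus partition of unity), Zariski-locally trivial, hence in $N_2$. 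Conversely $N_2\subseteq N_3$ is immediate from functoriality along $\Spin(q_C)\to\SO(q_C)$. For $N_1=N_3$: the Cartesian diagram preceding Theorem \ref{th_codiag} (with vertical $\bAut(C)$-torsors $\Pi_1$ and $\Pi$) shows that a $\bG$-torsor trivial over $\Spin(q_{C_1})$ is trivial over $\RT(C)^\nabla$, hence pulls back to a trivial torsor, and conversely; combined with Theorem \ref{th_codiag} and Theorem \ref{th_zar}, both identify the kernel with the set of isotope classes $\{[C^{1,a}]:a\in\bS_C(R)\}$. Then $N_5=N_1$ follows because $\Spin(q_{C_1})\to\SO(q_{C_1})$ is again a central $\bmu_2$-isogeny and the same Zariski-local-triviality argument (now applied to $C_1$, whose torsor $\underline\Pi_1$ is Zariski-locally trivial by \S\ref{scodiag}) gives $N_5\subseteq N_1$; the reverse is functoriality.

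For the remaining ones: $N_4\subseteq N_3$ requires showing that a $\bG$-torsor $\bE$ with $q_{^\bE C}$ isometric to $q_C$ is already trivial in $H^1_\fppf(R,\SO(q_C))$ — but this is exactly the factorization $\bAut(C)\to\RT(C)\xrightarrow{f_1}\SO(q_C)\hookrightarrow\mathbf{O}(q_C)$ used in the proof of Theorem \ref{th_zar}(1), giving $\ker(i^*)\subseteq\ker\bigl(H^1_\fppf(R,\bG)\to H^1_\fppf(R,\mathbf{O}(q_C))\bigr)=N_4$; the reverse inclusion $N_4\subseteq N_2=N_3$ comes, as there, from Bix's theorem and the partition-of-unity argument, which also yields the containment in $H^1_{\mathrm{Zar}}(R,\bG)$. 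Finally $N_6,N_7,N_8$: each of $\mathbf{PSO}(q_C)$, $\mathbf{GO^+}(q_C)$, $\mathbf{GO}(q_C)$ receives the composite $\bG\to\SO(q_C)\to\bH_j$, giving $N_3\subseteq N_j$; for the reverse I would use that the centre of $\RT(C)$ (resp.\ of $\SO(q_C)$) is $\bmu_2$-type and the adjoint/similitude quotients differ from $\SO$ by central $\bmu$-extensions, together with the Cartesian squares \eqref{**}, \eqref{***}: a $\bG$-torsor trivial over $\bH_j$ is, since $\bG$ has trivial centre (type $G_2$ is adjoint and simply connected), trivial over the $\bmu$-cover, i.e.\ over $\SO(q_C)$ or $\RT(C)$. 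Here the clean input is that $\bG$ lifts through all the central isogenies because $\mathrm{G}_2$ has trivial fundamental group and trivial centre, so the obstruction classes in $H^2_\fppf(R,\bmu)$ vanish for $\bG$-torsors coming from $\bG$ itself. I expect the main obstacle to be the similitude cases $N_7,N_8$: unlike the isogeny cases, $\mathbf{GO^+}$ and $\mathbf{GO}$ are not isogenous to $\SO$ but rather quotients by $\mathbf{G}_m$-type subgroups, so I would need to check carefully that the multiplier does not introduce new triviality — concretely, that if $q_{^\bE C}$ is merely \emph{similar} to $q_C$ then in fact it is isometric. This last point should follow from the fact that the norm form of an octonion algebra, being multiplicative and representing $1$, determines its similarity class and isometry class interchangeably (a similitude fixing $1$ up to the multiplier forces the multiplier to be a square, in fact $1$, by the Lemma after Theorem \ref{Tri}), so that $N_7=N_3$ and $N_8=N_4$, closing the circle of equalities; the containment in $H^1_{\mathrm{Zar}}(R,\bG)$ then holds since it holds for the single set $N_4=N_3$.
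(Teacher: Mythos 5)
Your overall architecture --- a web of functorial inclusions closed up by a few reverse inclusions, with the similitude cases $N_7,N_8$ handled by reducing ``similar'' to ``isometric'' via the multiplicativity of $q_C$ and the fact that it represents $1$ --- matches the paper's proof, and that last reduction is essentially the argument the paper gives. However, two of the reverse inclusions, including the one that carries the substance of the theorem, are not actually established.

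The inclusion $N_3\subseteq N_2$ (equivalently $N_4\subseteq N_2$) does not follow from ``Bix's theorem plus the partition-of-unity argument,'' nor is it ``precisely the content'' of Theorems \ref{tstab} and \ref{th_zar}. Those arguments show only that a class in $N_4$ is Zariski-locally trivial as a $\bG$--torsor and that $N_2\subseteq N_4$; they say nothing about the image of a class of $N_3$ in $H^1_\fppf(R,\Spin(q_C))$. What must be shown is that every class in $N_3$ lies in the image of the characteristic map of $\Pi$, i.e.\ comes from a point of $\bS_C^2(R)$. The paper does this through the orthogonal variant: by the characteristic map of $\Pi_+$, a class in $N_3$ is $\varphi(x)$ for some $x\in\bigl((\bS_C\times_R\bS_C)/\bmu_2\bigr)(R)$; Lemma \ref{lem_section}(2) lets one move $x$ by $\mathbf{SO}(q_C)(R)$ so that its first projection is $[1]\in\ubS_C(R)$, after which the $\bmu_2$--torsor $\bS_C^2\to(\bS_C\times_R\bS_C)/\bmu_2$ trivializes over $x$ and $x$ lifts to $(a,b)\in\bS_C^2(R)$; the Cartesian square \eqref{**} then identifies the class with $[C^{a,b}]\in N_2$. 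This lifting of rational points, not the local triviality of the torsor, is the key step, and it is missing from your proposal. (Your isogeny argument for $N_5\subseteq N_1$ has the same defect, though there it is harmless since $N_1\subseteq N_5\subseteq N_3$ suffices once $N_1=N_3$ is known.)

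Your argument for $N_6\subseteq N_3$ is incorrect. The obstruction to descending triviality along the central isogeny $\mathbf{SO}(q_C)\to\mathbf{PSO}(q_C)$ is not a class in $H^2_\fppf(R,\bmu)$, and it does not vanish because $\mathrm{G}_2$ has trivial centre and fundamental group: the exact sequence of pointed sets $H^1_\fppf(R,\bmu_2)\to H^1_\fppf(R,\mathbf{SO}(q_C))\to H^1_\fppf(R,\mathbf{PSO}(q_C))$ only shows that the image in $H^1_\fppf(R,\mathbf{SO}(q_C))$ of a class of $N_6$ comes from $\Disc(R)\simeq H^1_\fppf(R,\bmu_2)$, i.e.\ that $q_{C'}$ is isometric to a rank-one twist $(\calL,\theta)\otimes(C,q_C)$, not that it is isometric to $q_C$. (The paper deduces the corollary about such twists \emph{from} Theorem \ref{thm_kernel}, not conversely.) The paper instead uses the orbit description $N_6\simeq\mathbf{PSO}(q_C)(R)\backslash\ubS_C(R)^2$ coming from \eqref{***} and shows by an explicit computation --- moving a point to the form $(x,1)$ via Lemma \ref{lem_section}, acting by the image of $L_{\overline{a}}$ to reach $(1,x^2)$, and noting that squaring factors through a map $\ubS_C\to\bS_C$ --- that every orbit meets the image of $\bS_C(R)^2$. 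Some argument of this kind is indispensable and cannot be replaced by general facts about the centre of $\bG$.
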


\begin{proof}
We have the following inclusions of sets
\[\xymatrix@1{
N_1 \ar[r]\ar[dr]& N_2 \ar[r]& N_3 \ar[r]\ar[dr]\ar[d]& N_4 \ar[r]& N_8 \ar[r]& 
H^1_\fppf(R, \bG)\\
    & N_5\ar[ur]    & N_6 & N_7 \ar[ur]
}\]
and will show the reverse inclusions in three steps.

\emph{The sets $N_1,\ldots,N_5$.} We have already seen along the proof of 
Theorem \ref{th_zar} that $N_4$ consists of Zariski
classes. According to Theorem \ref{th_zar} (resp.\ Th. \ref{th_codiag}), $N_2$ 
(resp.\ $N_1$)
consists of isomorphism classes of isotopes $C^{a,b}$ with $a, b \in  \bS_C(R)$ 
(resp.\
$C^{a, a^{-1}}$ with $a \in  \bS_C(R)$). Then Proposition \ref{prop_formulae} 
shows that $N_1=N_2$. On the other hand, the octonionic involution $\sigma_C$ provides a
splitting of the Dickson homomorphism $\mathbf{O}(q_C) \to  \Z/2\Z$. 
This implies that the map $H^1_\fppf(R, \mathbf{SO}(q_C)) \to H^1_\fppf(R, 
\mathbf{O}(q_C))$
 has trivial kernel, whence $N_3=N_4$. Next we show that the $N_3\subseteq N_2$. 
The characteristic map $\varphi$ of the $\bG$--torsor $\Pi_+$
induces the  bijection
$$
\mathbf{SO}(q_C)(R) \backslash \bigl( ({\bS}_C \times_R  {\bS}_C)/ \bmu_2 
\bigr)(R)
\simlgr N_3.$$
Let $[C'] \in N_3$ and
let $x \in \bigl( ({\bS}_C \times_R  {\bS}_C)/ \bmu_2 \bigr)(R)$ be
such that $\varphi(x) = [C']$. We denote by $x_1$ its first projection on 
$\ubS_C(R)$.
Lemma \ref{lem_section}.(2)  shows that
the group $\mathbf{SO}(q_{C})(R)$ acts transitively on $\bS_C(R)$. Up to 
replacing $x$ by a suitable 
$\mathbf{SO}(q_{C})(R)$--conjugate, we may thus assume that $x_1 = [1] \in  
\ubS_C(R)$. The commutative diagram of
$\bmu_2$--torsors
\[\xymatrix@1{
{\bS}_C \times_R  {\bS}_C  \ar[r]^{p_1} \ar[d] &  \ar[d] \bS_C  \\
 \bigl( {\bS}_C \times_R  {\bS}_C \bigr)/ \bmu_2 \ar[r]^{\qquad \underline{p}_1} 
&  {\ubS}_C.
}\]
shows that $x$ lifts to an element $(a, b) \in  \bS_C^2(R)$. The diagram 
\eqref{**} shows that
$\Pi^{-1}(a,b)= \Pi^{-1}_+(x)$ whence 
$C^{a, b} \simeq C'$.  Thus $[C']\in N_2$ as desired. Finally, the statement for 
$N_5$ follows from the inclusions $N_1\subseteq N_5\subseteq N_3$.

\smallskip

\noindent \emph{The set $N_6$.} To show that $N_6\subseteq N_3$, we consider the bijection
\[\mathbf{PSO}(q_C)(R) \backslash {\ubS}_C(R)^2\simlgr N_6\]
induced by the characteristic map $\varphi'$ of the torsor $\underline{\Pi}'$ 
from \eqref{***}. Let $[C']\in N_6$ and let $(y,z)\in\ubS_C^2$ be such that 
$\varphi'(y,z)=[C']$.
We need show that the $\mathbf{PSO}(q_C)(R)$--orbit of $(y,z)$ contains an 
element in the image of $\bS_C(R)^2$ under the quotient map.
From Lemma \ref{lem_section}, this orbit contains $(x,1)$ for some  
$x\in\ubS_C(R)$. 
We now consider the $R$--map $\bS_C\to\mathbf{SO(C_q)}$ defined by $a\mapsto 
L_{\overline{a}}$ for any $R$--ring $S$ and $a\in\bS_C(S)$. This map commutes 
with the $\bmu_2$--action
since for any $a\in\bS_C(S)$ and any $\eta\in\bmu_2(S)$,
\[\eta a\mapsto L_{\overline{\eta a}}=L_{\eta\overline{a}}=\eta 
L_{\overline{a}},\]
and thus it induces an $R$--map $g:\ubS_C\to \mathbf{PSO}(q_C)$. Following the 
strategy of Lemma \ref{lem_section}, given $x\in\ubS_C(R)$, we pick a flat cover 
$S/R$ such that $x_S=p_S(a)$
for some $a\in\bS_C(S)$, where $p$ denotes the quotient map $\bS_C\to\ubS_C$. 
Then with the notation of the diagram \eqref{***}, 
\[g(x)_S=\underline{f_1}\circ P(L_{\overline{a}},B_a,R_{\overline{a}}).\]
Computing the action of $g(x)$ on $(x,1)$ in $\ubS_C(S)$, we have
\[(g(x).(x,1))_S=(p_S(R_{\overline{a}}(a),p_S(B_a(1)))=(p_S(1),p_S(a^2))=(1,x_S^2).\]
Thus $g(x).(x,1)=(1,x^2)$. Finally, $x^2$ is in the image of $p_R:\bS_C(R)\to 
\ubS_C(R)$. To see this, consider the map $a\mapsto a^2$ on $\bS_C$. Since it is 
the trivial map
on $\bmu_2$, it induces a map $f:\ubS_C\to\bS_C$. Thus $f(x)\in\bS_C(R)$. But 
then $x^2=p\circ f(x)$ since
\[p_S\circ f_S(x_S)=p_S(a^2)=p_S(a)^2=x^2.\]
Thus under the action of $\mathbf{PSO}(q_C)$, the 
element $(y,z)$ is conjugate to the image of some $(a,b)\in\bS_C(R)^2$, whence 
$C'$ is isomorphic to $C^{a,b}$ and thus $[C']\in N_2$ as desired.

\smallskip

\noindent \emph{The sets $N_7$ and $N_8$.} We are given  $[C'] \in  N_8$.
The exact sequence 
\[\mathbf{1} \to \mathbf{O}(q_C) \to \mathbf{GO}(q_C) \to \mathbf{G}_m \to 
\mathbf{1}\]
gives rise to an exact sequence of pointed sets
$$
R^* \to H^1_\fppf(R, \mathbf{O}(q)) \to H^1_\fppf(R, \mathbf{GO}(q))  .
$$
Since $[q_{C'}] \in H^1_\fppf(R, \mathbf{O}(q))$ maps to $1 \in H^1_\fppf(R, 
\mathbf{GO}(q))$, there exists $\lambda \in 
R^*$ and an isomorphism   $f: (C',q_{C'}) \simlgr 
(C, \lambda q_C)$. Since  $q_{C'}$ represents $1$, $q_C$ represents $\lambda$, 
i.e. there exists $x \in C^*$
such that $\lambda = q(x)$. Up to replacing $f$ by $L_x^{-1} \circ f$, we may 
assume that 
$\lambda=1$ since $q_C$ is a multiplicative $R$--form. It follows that the 
quadratic spaces $(C',q_{C'})$ and 
$(C, q_C)$ are isomorphic. Thus $[C']\in N_4$ and is in particular a Zariski 
class.
It follows that $N_8=N_4$, and since $N_4\subseteq N_7\subseteq N_8$ we get 
$N_7=N_4$ as well.
\end{proof}

Our main findings can thus be summarized as follows.

\begin{Cor} Let $C$ and $C'$ be octonion algebras over $R$. The following statements are equivalent.
\begin{enumerate}
\item The quadratic forms $q_C$ and $q_{C'}$ are isometric.
\item The quadratic forms $q_C$ and $q_{C'}$ are similar.
\item There exist $a,b\in \bS_C(R)$ such that $C'$ is isomorphic to $C^{a,b}$.
\item There exists $a\in \bS_C(R)$ such that $C'$ is isomorphic to $C^{a,a^{-1}}=C^{a,\overline{a}}$.
\end{enumerate}
\end{Cor}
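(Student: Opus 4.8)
The plan is to verify the cycle of implications $(1)\Rightarrow(4)\Rightarrow(3)\Rightarrow(1)$ together with the elementary equivalence $(1)\Leftrightarrow(2)$, the real content being imported from Theorem \ref{thm_kernel} and its companions. The formal directions come first: $(4)\Rightarrow(3)$ is trivial and $(1)\Rightarrow(2)$ is a tautology. For $(3)\Rightarrow(1)$, if $C'\simeq C^{a,b}$ with $a,b\in\bS_C(R)$ then by Remark \ref{Rcomp} the form $q_{C'}$ is isometric to $\lambda q_C$ with $\lambda=q_C(ab)=q_C(a)q_C(b)=1$ by multiplicativity, so $q_C\simeq q_{C'}$. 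For $(2)\Rightarrow(1)$, if $(C',q_{C'})\simeq(C,\lambda q_C)$ via an isometry $f$ with $\lambda\in R^*$, then evaluating $q_C\circ f$ at $1_{C'}$ shows $\lambda=q_C(x)$ for some $x\in C^*$; since $L_x$ is a similitude of $q_C$ with multiplier $\lambda$, the composite $L_x^{-1}\circ f$ is an isometry $q_{C'}\simlgr q_C$. (This last argument is the one already used for $N_7,N_8$ in the proof of Theorem \ref{thm_kernel}.)

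The essential implication is $(1)\Rightarrow(4)$. Assume $q_C$ and $q_{C'}$ are isometric. By Bix's theorem \cite[Lemma 1.1]{B} we have $C'\otimes_R R_{\mathfrak P}\simeq C\otimes_R R_{\mathfrak P}$ for every prime $\mathfrak P$ of $R$; since $C$ and $C'$ are finitely presented, such an isomorphism already exists over some $R_{f_{\mathfrak P}}$ with $f_{\mathfrak P}\notin\mathfrak P$, and these elements generate $R$, so $C'$ is a form of $C$ for the Zariski topology. It thus defines a class $[C']\in H^1_{\mathrm{Zar}}(R,\bG)\subseteq H^1_\fppf(R,\bG)$, with $\bG=\bAut(C)$, whose image in $H^1_\fppf(R,\mathbf{O}(q_C))$ is the (trivial) isometry class of $q_{C'}$. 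Hence $[C']$ belongs to the kernel $N_4$ of Theorem \ref{thm_kernel}, which gives $N_4=N_1$; and Theorem \ref{th_codiag} identifies $N_1$ with the set of isomorphism classes of isotopes $C^{a,a^{-1}}$, $a\in\bS_C(R)$. Therefore $C'\simeq C^{a,a^{-1}}$ for some such $a$, and since $q_C(a)=1$ forces $a\overline a=1$, i.e.\ $a^{-1}=\overline a$, we obtain $C^{a,a^{-1}}=C^{a,\overline a}$, which is $(4)$.

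I do not anticipate a genuine obstacle: the weight of the argument is carried by Theorems \ref{thm_main}, \ref{th_zar}, \ref{th_codiag} and \ref{thm_kernel}. The one step requiring care is the passage from ``isometric norm forms'' to ``twisted form of $C$ in the sense of $\bAut(C)$-torsors'', which rests on the local rigidity of octonion algebras over local rings (Bix's theorem); once $[C']$ has been placed inside $H^1_\fppf(R,\bG)$, the remainder is the formal bookkeeping of cohomology kernels recorded above.
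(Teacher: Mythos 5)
Your proof is correct and takes essentially the same route as the paper: all the weight rests on the coincidence of kernels in Theorem \ref{thm_kernel} together with the torsor--isotope dictionary of Theorems \ref{th_zar}, \ref{th_codiag} and \ref{thm_main}, and the remaining implications are the same elementary observations (multiplicativity of the norm for $(3)\Rightarrow(1)$, the similitude-adjustment argument for $(2)\Rightarrow(1)$, and $a^{-1}=\overline a$ on the unit sphere). The one point where you add value is making explicit, via Bix's theorem and finite presentation, the step of first exhibiting $C'$ as a Zariski form of $C$ so that $[C']$ actually defines a class in $H^1_{\mathrm{Zar}}(R,\bAut(C))$ --- the paper's one-line proof leaves this implicit (it is buried in the proof of Theorem \ref{th_zar}) --- while your use of Remark \ref{Rcomp} for $(3)\Rightarrow(1)$ harmlessly replaces the paper's appeal to Proposition \ref{prop_formulae} for $(3)\Leftrightarrow(4)$.
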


\begin{proof} The set $N_4$ classifies the forms of $C$ whose norm is isometric  
to $q_C$, while the set $N_8$ classifies those whose norm is similar  
to $q_C$. The first two items are thus equivalent by Theorem \ref{thm_kernel}.
Theorem \ref{th_zar} gives the equivalence of (2) and (3), while the last two statements are equivalent by 
Proposition \ref{prop_formulae}.
\end{proof}

\begin{Rk} This statement generalizes the result of \cite{KPS} quoted in 
Proposition \ref{prop_KPS} above. It also recovers Remark 
\ref{Rreduce}: given $a,b\in C^*$, there exists $c \in \bS_C(R)$ such that 
$C^{a,b}$ is isomorphic to $C^{1,c}\simeq C^{c,c^{-1}}$. 
Indeed,  we have seen in Remark \ref{Rcomp} that the norm of the isotope $C^{a,b}$ is isometric to 
$q_C$, and the previous Corollary applies.
\end{Rk}

\begin{Cor} Let $C'$ be a $R$-form of $C$ such that
there exists a non-singular quadratic $R$-module $(\calL, \theta)$ of rank one
such that the quadratic $R$-form $(C', q_{C'})$ is isometric
to the quadratic $R$-form   $(\calL, \theta) \otimes (C, q_{C})$.
Then $C'$ is an isotope of $C$. In particular
$(C', q_{C'})$ is isometric to $(C, q_{C})$.
\end{Cor}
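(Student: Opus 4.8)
The plan is to deduce the statement from Theorem~\ref{thm_kernel}: I will show that the class $[C']\in H^1_\fppf(R,\bG)$ of the $\bG$--torsor $\bE$ with $C'\simeq{}^\bE C$ (here $\bG=\bAut(C)$; such $\bE$ exists because $C'$ is an $R$--form of $C$) lies in $N_6=\Ker\bigl(H^1_\fppf(R,\bG)\to H^1_\fppf(R,\mathbf{PSO}(q_C))\bigr)$. Granting this, Theorem~\ref{thm_kernel} gives $N_6=N_2=N_4$; in particular $q_{C'}$ is isometric to $q_C$, and since $N_2$ consists of the isomorphism classes of the isotopes $C^{a,b}$ with $a,b\in\bS_C(R)$ (as shown in the proof of that theorem), $C'$ is isomorphic to such an isotope.

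To prove $[C']\in N_6$, I would first note that the image of $[C']$ under $H^1_\fppf(R,\bG)\to H^1_\fppf(R,\mathbf{O}(q_C))$ is the isometry class of $(C',q_{C'})$, which by hypothesis coincides with the class of $(\calL,\theta)\otimes(C,q_C)$. The crucial observation is that this class lies in the image of the map $H^1_\fppf(R,\bmu_2)\to H^1_\fppf(R,\mathbf{O}(q_C))$ induced by the central embedding $\bmu_2\hookrightarrow\mathbf{O}(q_C)$ by scalar multiplications: under the identification $H^1_\fppf(R,\bmu_2)=\mathbf{Disc}(R)$ recalled in the appendix, twisting $(C,q_C)$ by the discriminant module attached to $(\calL,\theta)$ produces exactly $(\calL\otimes_R C,\theta\otimes q_C)$. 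This last fact is a routine verification --- it is the rank--one, diagonal instance of the tensoring construction for compositions of quadratic forms used earlier in the paper.

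It then remains to transfer this triviality down to $\mathbf{PSO}(q_C)$. Since $\bmu_2$ is central in $\mathbf{O}(q_C)$, the composite $\bmu_2\to\mathbf{O}(q_C)\to\mathbf{O}(q_C)/\bmu_2$ is trivial, so the image of $[C']$ in $H^1_\fppf\bigl(R,\mathbf{O}(q_C)/\bmu_2\bigr)$ vanishes. Because $\bmu_2\subset\mathbf{SO}(q_C)$, one has $\mathbf{PSO}(q_C)=\mathbf{SO}(q_C)/\bmu_2\subset\mathbf{O}(q_C)/\bmu_2$ with quotient $\mathbf{O}(q_C)/\mathbf{SO}(q_C)=\Z/2\Z$, and the octonionic involution $\sigma_C$, which splits the Dickson homomorphism $\mathbf{O}(q_C)\to\Z/2\Z$ (as used in the proof of Theorem~\ref{thm_kernel}), induces a splitting of $\mathbf{O}(q_C)/\bmu_2\to\Z/2\Z$; hence $H^1_\fppf(R,\mathbf{PSO}(q_C))\to H^1_\fppf\bigl(R,\mathbf{O}(q_C)/\bmu_2\bigr)$ has trivial kernel. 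On the other hand $[C']$ already lifts to $H^1_\fppf(R,\mathbf{SO}(q_C))$, hence to $H^1_\fppf(R,\mathbf{PSO}(q_C))$, since $\bG\hookrightarrow\mathbf{SO}(q_C)$. Combining these, the image of $[C']$ in $H^1_\fppf(R,\mathbf{PSO}(q_C))$ becomes trivial in $H^1_\fppf\bigl(R,\mathbf{O}(q_C)/\bmu_2\bigr)$, hence is already trivial, so $[C']\in N_6$. The step I expect to require the most care is precisely this passage through the \emph{central} quotient $\mathbf{O}(q_C)/\bmu_2$: replacing it by $\mathbf{GO}(q_C)/\mathbf{G}_m$ would only detect \emph{scalar} similarity between $q_{C'}$ and $q_C$ and would be insufficient when $\calL$ is non-trivial in $\Pic(R)$.
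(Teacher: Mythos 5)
Your proof is correct and follows essentially the same strategy as the paper's: identify the twist of $(C,q_C)$ by the discriminant module $(\calL,\theta)$ with the tensor product, via the central $\bmu_2\subset\mathbf{SO}(q_C)$, deduce that $[C']\in N_6$, and invoke Theorem \ref{thm_kernel}. Your detour through the auxiliary central quotient $\mathbf{O}(q_C)/\bmu_2$ and the splitting of its Dickson homomorphism is a sound --- and in fact somewhat more careful --- way of executing the middle step (passing from knowledge of the class in $H^1_\fppf(R,\mathbf{O}(q_C))$ to triviality in $H^1_\fppf(R,\mathbf{PSO}(q_C))$), which the paper handles more tersely via the exact sequence $1\to\bmu_2\to\mathbf{SO}(q_C)\to\mathbf{PSO}(q_C)\to1$.
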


\begin{proof}
We consider the exact sequence of $R$-group schemes
$$
1 \to \bmu_2  \xrightarrow{i}  \mathbf{SO}(q_C) \to \mathbf{PSO}(q_C)
\to 1.
$$
It gives rise to the exact sequence of pointed sets
$$
H^1_\fppf(R, \bmu_2)  \xrightarrow{i_*} 
H^1_\fppf(R,\mathbf{SO}(q_C)) \to H^1_\fppf(R,\mathbf{PSO}(q_C)).
$$
On the other hand, the  exact sequence $ 1  \to \mathbf{SO}(q_C) \xrightarrow{j_*}  \mathbf{O}(q_C) \to \Z/2\Z \to 1$
(which is split by $\kappa$) induces a map $H^1_\fppf(R,\mathbf{SO}(q_C) \bigr) \to H^1_\fppf(R, \mathbf{O}(q_C)\bigr)$.
As explained in the appendix, there is a group isomorphism $\Disc(R) \simeq  H^1_\fppf(R, \bmu_2)$ between
the group of isomorphism class of non-singular quadratic forms of  rank one and   $H^1_\fppf(R, \bmu_2)$.
The composite map 
\[\Disc(R) \simeq  H^1_\fppf(R, \bmu_2) \xrightarrow{i_*} 
H^1_\fppf(R,\mathbf{SO}(q_C)) \xrightarrow{j_*}  H^1_\fppf(R,\mathbf{O}(q_C))\]
maps the class of a quadratic $R$--form 
$(\calL, \theta)$ of rank one to the class of the quadratic form
$(\calL, \theta) \otimes (C, q_{C})$.
The  hypothesis  that $(C', q_{C'})$ is similar
to $(C, q_{C})$ implies   that $[C']$  belongs to the kernel of the map
 $$
 H^1_\fppf(R, \bG) \to H^1_\fppf(R, \mathbf{PSO}(q_C))
 $$
 Theorem \ref{thm_kernel} states that $[C']$  belongs to all
 relevant kernels and therefore is an isotope.
\end{proof}

A complement is the following. 

\begin{Prp}\label{prop_marlin}  With the notations of Theorem \ref{thm_kernel}, 
each flat quotient $\bH_i / \bG$ is representable by an affine $R$-scheme of finite presentation ($i=1,...,8$).
Furthermore the quotient map $\bH_i \to \bH_i/\bG$ is a $\bG$--torsor locally trivial 
for the Zariski topology. If $R \not 0$, the $\bG$--torsor $\bH_i \to \bH_i/\bG$ is non trivial ($i=1,...,8$), that is 
does not admit a section. 
\end{Prp}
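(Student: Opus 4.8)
The plan is to treat the three assertions in turn; the non-triviality is the substantial one.

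\emph{Representability, affineness, and Zariski-local triviality.} For $i=1,\dots ,6$ the quotient $\bH_i/\bG$ has already been identified, in Section~\ref{sec_variants}, with one of the affine $R$-schemes of finite presentation $\bS_C$, $\bS_C^2$, $\ubS_C$, $\ubS_C^2$ or $(\bS_C\times_R\bS_C)/\bmu_2$; for $i=4$ the octonionic involution splits the Dickson morphism, so that $\mathbf{O}(q_C)\simeq\mathbf{SO}(q_C)\times(\Z/2\Z)$ as $R$-schemes and hence $\mathbf{O}(q_C)/\bG\simeq(\mathbf{SO}(q_C)/\bG)\times(\Z/2\Z)$; for $i=7,8$ one argues similarly with the multiplier morphism $\mathbf{GO}(q_C)\to\mathbf{G}_m$, using \cite{SGA3} for the representability of the fppf quotient and the reductivity of $\bG$ for its affineness. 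For Zariski-local triviality, write $A_i=R[\bH_i/\bG]$ and let $\xi_i\in H^1_\fppf(A_i,\bG)$ be the class of the torsor $\bH_i\to\bH_i/\bG$. The multiplication $\bH_i\times\bH_i\to\bH_i$, $(h,h')\mapsto hh'$, is invariant under the action $g\cdot(h,h')=(hg,g^{-1}h')$ and identifies $\bH_i\wedge^{\bG}\bH_i$ with the trivial $\bH_i$-torsor over $\bH_i/\bG$; thus $\xi_i$ maps to $1$ in $H^1_\fppf(A_i,\bH_i)$, i.e.\ $\xi_i\in N_i$. Theorem~\ref{thm_kernel}, whose proof is valid over any commutative ring, applied over $A_i$ then gives $\xi_i\in H^1_{\mathrm{Zar}}(A_i,\bG)$; that is, $\bH_i\to\bH_i/\bG$ splits Zariski-locally.

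\emph{Non-triviality: reduction.} Assume $R\neq0$ and that $\bH_i\to\bH_i/\bG$ has a section. Choosing a point of $\Spec(R)$, passing to an algebraic closure $k$ of its residue field, and base-changing, we reduce to $R=k$ algebraically closed; then $C$, and with it $\bG$ and all the $\bH_i$, are split. A trivialisation of the $\bG$-torsor yields (after translating by a point of $\bG(k)$) a retraction $r\colon\bH_i\to\bG$ of the closed immersion $j\colon\bG\hookrightarrow\bH_i$ in the category of $k$-schemes. The arrows $N_1\to N_i$ in the diagram of Theorem~\ref{thm_kernel} are induced by group homomorphisms $\varphi_i\colon\bH_1\to\bH_i$ restricting, on the subgroup $\bG\subset\bH_1$, to the inclusion $\bG\hookrightarrow\bH_i$ — namely the closed immersion $\RT(C)^\nabla\hookrightarrow\RT(C)$, the cover $\mathbf{Spin}(q_{C_1})\to\mathbf{SO}(q_{C_1})$, and the maps $\mathbf{SO}(q_{C_1})\hookrightarrow\mathbf{SO}(q_C)\hookrightarrow\mathbf{O}(q_C)\hookrightarrow\mathbf{GO}(q_C)$, $\mathbf{SO}(q_C)\to\mathbf{PSO}(q_C)$, $\mathbf{SO}(q_C)\hookrightarrow\mathbf{GO^+}(q_C)$, all compatible with the embeddings of $\bG$. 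Hence $r\circ\varphi_i$ is a retraction of $\bG\hookrightarrow\bH_1$, and it suffices to rule out a retraction of $\bG=\mathrm{G}_2\hookrightarrow\bH_1=\mathbf{Spin}(q_{C_1})=\Spin_7$ over $k$.

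\emph{Non-triviality: the Chow obstruction.} This is where Marlin's computation of Chow rings enters. A retraction $r\colon\Spin_7\to\mathrm{G}_2$ of $j$ would make $r^*\colon\mathrm{CH}^*(\mathrm{G}_2)\to\mathrm{CH}^*(\Spin_7)$ a split monomorphism (split by $j^*$), in particular injective. But $\mathrm{CH}^*(\mathrm{G}_2)=\Z[\gamma]/(2\gamma,\gamma^2)$ with $\gamma$ of codimension $3$, so $\mathrm{CH}^3(\mathrm{G}_2)\cong\Z/2\Z$; and Marlin's computation of $\mathrm{CH}^*(\Spin_7)$ shows that $\mathrm{CH}^3(\Spin_7)$ is torsion-free, into which $\Z/2\Z$ cannot inject. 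This contradiction shows that $\bG\hookrightarrow\bH_1$ has no retraction, hence neither does $\bG\hookrightarrow\bH_i$, hence the torsor $\bH_i\to\bH_i/\bG$ is non-trivial over $k$, and therefore over every non-zero $R$.

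The principal obstacle is this last step. In contrast to the topological argument of Example~\ref{ex_mimura}, which is confined to base fields of characteristic $0$, the Chow-theoretic argument is characteristic-free but rests entirely on the explicit integral computations of Marlin for $\Spin_7$ and $\mathrm{G}_2$ — concretely, the existence of the $2$-torsion class in $\mathrm{CH}^3(\mathrm{G}_2)$ and the torsion-freeness of $\mathrm{CH}^3(\Spin_7)$. A subsidiary technical point is the affineness of $\bH_7/\bG$ and $\bH_8/\bG$, where the presence of the similitude factor $\mathbf{G}_m$ makes the direct identification with a quadric-type scheme less immediate.
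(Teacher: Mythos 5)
Your treatment of representability, affineness and Zariski-local triviality is essentially sound and close in spirit to the paper's (which invokes Anantharaman and Colliot-Th\'el\`ene--Sansuc for representability and affineness, and, exactly as you do, applies Theorem \ref{thm_kernel} to the coordinate rings $R[\bH_i/\bG]$ for local triviality). The reduction of non-triviality to a single $i$ is also legitimate. The problem is the last step.

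Your Chow-theoretic obstruction does not exist. For a split simply connected group, $\mathrm{CH}^1$ and $\mathrm{CH}^2$ vanish and the first torsion appears in codimension $3$; Marlin's computation gives $\mathrm{CH}^*(\mathbf{Spin}_7)\simeq\Z[x_3]/(x_3^2,2x_3)$, the same answer as for $\mathrm{G}_2$. So $\mathrm{CH}^3(\mathbf{Spin}_7)\simeq\Z/2\Z$ is \emph{not} torsion-free, and the restriction $\mathrm{CH}^3(\mathbf{Spin}_7)\to\mathrm{CH}^3(\mathrm{G}_2)$ is an isomorphism of groups of order $2$. Since $\mathrm{CH}^*(\mathrm{G}_2)$ and $\mathrm{CH}^*(\mathbf{Spin}_7)$ are isomorphic rings, no comparison of Chow groups of the two \emph{groups} can rule out a retraction of $\mathrm{G}_2\hookrightarrow\mathbf{Spin}_7$; the strategy, not just the computation, fails. (The $3$-torsion in $\pi_6$ used in Example \ref{ex_mimura} is invisible to Chow groups.)

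The paper gets around this by applying functoriality to the \emph{torsor projection} rather than to the subgroup inclusion: it takes $i=5$, identifies the base $\mathbf{SO}(q_{C_1})/\bG\simeq\ubS_{q}$ with the complement of the projective quadric $Q\subset\mathbb{P}(C^\vee)$ (Lemma \ref{lem_quadric}), and computes $\mathrm{CH}^2(\ubS_q)\simeq\Z/2\Z$ from the localization sequence, using $[Q]=2h$. A section $s$ of $\Pi_5:\mathbf{SO}(q_{C_1})\to\ubS_q$ would force $\Pi_5^*$ to be injective on $\mathrm{CH}^2$, but Marlin's computation for the \emph{adjoint} group of type $B_3$ gives $\mathrm{CH}^2(\mathbf{SO}(q_{C_1}))=0$, a contradiction. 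Note the two essential differences from your argument: the nonzero torsion class lives on the base $\ubS_q$ (not on $\mathrm{G}_2$), and the vanishing is that of $\mathrm{CH}^2$ of $\mathbf{SO}_7$ (not of $\mathrm{CH}^3$ of $\mathbf{Spin}_7$, which does not vanish). To repair your proof you would have to replace the whole third step by an argument of this shape.
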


Note that we have already seen the first part of the statement in several cases. 

\begin{proof}
The representability item boils down to the case of $\Z$ and of the Zorn algebra.
Representability then follows from Anantharaman's theorem \cite[Th. 4C page 53]{An};
the fact that the quotient is affine of finite presentation and also the fact that the 
quotient map is a $\bG$-torsor are due to Colliot-Th\'el\`ene and Sansuc \cite[Cor. 6.12]{CTS}
for the connected $\bH_i$'s. This generalizes easily to the non-connected cases (i.e. $\bH_4$ and $\bH_8$)
by embedding  $\bH_i$ in some $\mathbf{GL}_{n_i}$.  The fact that those $\bG$--torsors are trivial locally for the Zariski topology
follows  of Theorem \ref{thm_kernel} applied to the coordinate rings $R[\bH_i/\bG]$.

Assuming that $R$ is a non-zero ring, we want to  show that the $\bG$-torsor \break $\bH_i \to \bH_i/ \bG$ is non-trivial.
We can then assume that $R=k$ is an algebraically closed field and would like to find a $k$-ring $S$
such that $H^1_\fppf(S, \bG) \to H^1_\fppf(S, \bH_i)$ is non-trivial. 
 Theorem \ref{thm_kernel} enables us to choose one of the $\bH_i$'s and we consider then the case
 of $\bH_5= \mathbf{SO}(q_{C_1})$. Using the beginning of Section \ref{scodiag}, 
 we know that $\mathbf{SO}(q_{C_1})/ \bG \simlgr \ubS_q$. We take $S=k[\ubS_q]$, i.e.\ the coordinate ring of $\ubS_q$, and it is enough to 
 show, using the universal point $\eta \in \ubS_q(S)$, that 
 the $\bG$-torsor  $\mathbf{SO}(q_{C_1}) \to  \ubS_q$ is non-trivial.
 
 According to Lemma  \ref{lem_quadric}, $\ubS_q$ is the complement in $\mathbb{P}(C^\vee)$ of the projective
 quadric $Q$ of equation $q_C=0$. We now use the theory of cycles and Chow groups.
 We have a localization  exact sequence
 \cite[\S 1.8]{F}
 $$
  \Pic(Q)    \xrightarrow{i_*}   \mathrm{CH}^2( \mathbb{P}(C^\vee) ) \to    \mathrm{CH}^2(\ubS_q) \to 0 .
 $$
 Let $h$ be a hyperplane section of $\mathbb{P}(C^\vee)$.
We have $\mathrm{CH}^2( \mathbb{P}(C^\vee) ) = \Z h^2$ and we know that the hyperplane section $h \cap Q$ generates 
$\Pic(Q)$. But $i_*i^*(h)= h . [Q]$ where $[Q] \in \Pic( \mathbb{P}(C^\vee) )$ ({\it ibid}, prop. 2.6.(b)).
Finally we have $[Q]= 2h$ ({\it ibid}, Example 1.9.4)  which allows us to conclude that $\mathrm{CH}^2(\ubS_q)$ is isomorphic to $\Z/2\Z$.
On the other hand, Marlin showed that $\mathrm{CH}^2(\mathbf{SO}(q_{C_1}))=0$ \cite{Ma} since $\mathbf{SO}(q_{C_1})$ is an
adjoint semisimple group of type $B_3$. 
The functoriality of Chow groups yields that the map $\mathbf{SO}(q_{C_1}) \to \ubS_q$ cannot admit a splitting.
\end{proof}

 \begin{Rk} The advantage of this proposition compared to Example \ref{ex_mimura} is that it is algebraic
and works, therefore, in any characteristic. Also it shows that some 2-torsion phenomenon is involved
while   Example \ref{ex_mimura} was based on 3-torsion. In their paper \cite{AHW}, Asok--Hoyois--Wendt
 investigate the 2-torsion and 3-torsion phenomena in the split case by means of 
 cohomological invariants.
\end{Rk}

\section{Particular cases}\label{sec_particular}
We have seen above that given an octonion algebra $C$ over $R$, any octonion 
algebra with quadratic form equivalent to that of $C$ is isomorphic to 
$C^{1,a}\simeq C^{a,\overline{a}}$
for some $a\in \bS_C(R)$. Two natural questions arise. On the one hand, one may 
ask for criteria on $a$, independent of the base ring, in order that 
$C^{1,a}\simeq C$. This supplements the relations among isotopes of Section 
\ref{sec_background} and Remark \ref{rk_rel}. On the other
hand one may wish to understand the situation over rings of particular interest. 
The aim of this section is to discuss these questions.

\subsection{Trivial isotopes}\label{striv}
 A question that arises in this context is whether one can characterize those 
$a$ for which $C^{1,a}\simeq C$. In this section we give sufficient conditions
for this to hold. They are consequences of the second item of the following proposition, where, for 
any $c\in C^*$, we define the map $\tau^+_c:C\to C$ by $x\mapsto cxc^2$ and 
$\tau^-_c:C\to C$ by $x\mapsto c^{-2}xc^{-1}$. The first item, while not necessary for the sequel, is included 
for completeness. 

\begin{Prp} Let $a\in\bS_C(R)$.

\begin{enumerate}
\item If there exist $c_1,\ldots,c_r\in C^*$ such that $L_{c_r}\cdots 
L_{c_1}(1)=L_{\overline{c_r}}\cdots L_{\overline{c_1}}(1)=a$, then
\[B_{\overline{c_r}}\cdots B_{\overline{c_1}}: C\to C^{a,\overline{a}}\]
is an isomorphism of algebras.
\item If there exist $c_1,\ldots,c_r\in C^*$ such that 
$a=\tau^+_{c_r}\cdots\tau^+_{c_1}(1)$, then
\[L_{c_r}R_{c_r}^{-1}\cdots L_{c_1}R_{c_1}^{-1}: C\to C^{a,\overline{a}}\]
is an isomorphism of algebras. 
\end{enumerate} \end{Prp}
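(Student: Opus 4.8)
The plan is to exhibit each of the two maps as the first component of a componentwise product of explicit triples obeying the related‑triple relation, and then to read off the target isotope. The device is the following variant of Proposition \ref{Ptrialiso}, which I would prove first: if $(t_1,t_2,t_3)$ is a triple of \emph{similitudes} of $(C,q_C)$ with $t_1(xy)=\overline{t_2(\overline x)}\,\overline{t_3(\overline y)}$ for all $x,y\in C$, and if in addition $t_1$ is an isometry, then $t_1\colon C\to C^{t_3(1),t_2(1)}$ is an isomorphism of algebras. The proof mimics that of Proposition \ref{Ptrialiso}: writing $a=t_3(1)$ and $b=t_2(1)$ (both invertible, being values of similitudes at $1$), specializing the identity to $y=1$ and to $x=1$ gives $\overline{t_2(\overline x)}=q_C(a)^{-1}\,t_1(x)\,a$ and $\overline{t_3(\overline x)}=q_C(b)^{-1}\,b\,t_1(x)$; substituting these back yields $t_1(xy)=q_C(a)^{-1}q_C(b)^{-1}\bigl(t_1(x)*_{a,b}t_1(y)\bigr)$, and $q_C(a)q_C(b)$ is the product of the multipliers of $t_3$ and $t_2$, hence equals the multiplier of $t_1$, namely $1$. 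Moreover such triples are stable under componentwise composition, and two families of them come from the Moufang identities: for every $c\in C^*$ one has
\[\mathbf P_c=\bigl(B_{\overline c},\,R_c,\,L_c\bigr)\qquad\text{and}\qquad \mathbf Q_c=\bigl(L_cR_c^{-1},\,\tau^-_c,\,\tau^+_c\bigr),\]
the first because of the middle Moufang identity $\overline c(xy)\overline c=(\overline c x)(y\overline c)$ after rewriting $\overline c x=\overline{R_c(\overline x)}$ and $y\overline c=\overline{L_c(\overline y)}$ (cf.\ Example \ref{extriple}), the second because (using $\overline c=q_C(c)c^{-1}$) its relation unwinds to the identity $(cxc^2)(c^{-2}yc^{-1})=c(xy)c^{-1}$.

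For (1), put $\mathbf T=\mathbf P_{c_r}\cdots\mathbf P_{c_1}$, the componentwise composite. Its components are $T_1=B_{\overline{c_r}}\cdots B_{\overline{c_1}}$, $T_2=R_{c_r}\cdots R_{c_1}$ and $T_3=L_{c_r}\cdots L_{c_1}$. By hypothesis $T_3(1)=a$; a short induction using that conjugation is an anti‑automorphism shows $T_2(1)=\overline{L_{\overline{c_r}}\cdots L_{\overline{c_1}}(1)}$, which by the \emph{second} hypothesis equals $\overline a$. The multiplier of $T_3$ is $\prod_i q_C(c_i)=q_C(T_3(1))=q_C(a)=1$, so the multiplier of $T_1$ is $\bigl(\prod_i q_C(c_i)\bigr)^2=1$; hence $T_1$ is an isometry, and the lemma above (equivalently, since then all components are isometries, Corollary \ref{cor_RT'} gives $\mathbf T\in\RT(C)(R)$ with $\Pi(\mathbf T)=(a,\overline a)$ and one invokes Proposition \ref{Ptrialiso}) shows that $T_1=B_{\overline{c_r}}\cdots B_{\overline{c_1}}\colon C\to C^{a,\overline a}$ is an isomorphism.

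For (2), put $\mathbf T=\mathbf Q_{c_r}\cdots\mathbf Q_{c_1}$. Then $T_1=L_{c_r}R_{c_r}^{-1}\cdots L_{c_1}R_{c_1}^{-1}$ is a composite of isometries (each $L_cR_c^{-1}$ has multiplier $q_C(c)q_C(c)^{-1}=1$) and fixes $1$, while $T_3=\tau^+_{c_r}\cdots\tau^+_{c_1}$, so $T_3(1)=a$ by hypothesis. From $T_1(1)=1$ and the relation at $x=y=1$ one gets $1=\overline{T_2(1)}\,\overline{T_3(1)}=\overline{T_3(1)T_2(1)}$, whence $T_2(1)=T_3(1)^{-1}=a^{-1}=\overline a$ (here $q_C(a)=1$). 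Since $T_1$ is an isometry, the lemma applies and yields that $T_1=L_{c_r}R_{c_r}^{-1}\cdots L_{c_1}R_{c_1}^{-1}\colon C\to C^{a,\overline a}$ is an isomorphism of algebras, as asserted.

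The only step that is not bookkeeping is the identity $(cxc^2)(c^{-2}yc^{-1})=c(xy)c^{-1}$ underlying $\mathbf Q_c$ (equivalently, that $(L_cR_c^{-1},\tau^-_c,\tau^+_c)$ obeys the related‑triple relation). I would establish it either by a direct manipulation of the Moufang identities, or, more cheaply, by observing that it is an identity among structure constants: it suffices to check it for the split octonion algebra, which is defined and free over $\Z$, hence it suffices to check it over a field, where it is one of the classical triality relations.
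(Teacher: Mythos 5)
Your proposal is correct and follows essentially the same route as the paper: both parts are proved by composing the elementary similitude triples $(B_{\overline{c}},R_c,L_c)$ (from the middle Moufang law) and $(L_cR_c^{-1},\tau^-_c,\tau^+_c)$ (the paper's identity \eqref{conjug}), observing that the norm conditions force the composite to land in $\RT(C)(R)$ with $\Pi$-image $(a,\overline{a})$, and concluding with Proposition \ref{Ptrialiso}. Your intermediate "similitude" variant of that proposition is a harmless repackaging of the same computation, and all the bookkeeping (in particular $T_2(1)=\overline{L_{\overline{c_r}}\cdots L_{\overline{c_1}}(1)}=\overline{a}$ in (1) and $T_2(1)=a^{-1}=\overline{a}$ in (2)) checks out.
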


\begin{Rk} In \cite{Eld}, Elduque proves that over a field $K$, the image of the 
map $f_1:\RT(C)(K) \to \mathbf{SO}(q_C)(K)$ is precisely
\[\left\{\prod_{i=1}^r B_{d_i} | r\in\mathbb N, d_i\in C^*, \prod_{i=1}^r 
q(d_i)=1\right\}.\]
When this holds, the first item above gives, together with Proposition 
\ref{Ptrialiso}, a precise condition for the existence of an isomorphism from 
$C$ to $C^{a,\overline{a}}$. 
\end{Rk}

\begin{proof} 
(1) (See also \cite{Pe2}.) The hypothesis implies that $R_{c_r}\cdots R_{c_1}(1)=\overline{a}$ and that 
$q(c_1)\cdots q(c_r)=1$. Thus 
\[(B_{\overline{c_r}}\cdots B_{\overline{c_1}},R_{c_r}\cdots 
R_{c_1},L_{c_r}\cdots L_{c_1})\in\RT(C),\]
and the claim follows from Proposition \ref{Ptrialiso}.

\smallskip

(2) Using the Moufang laws, one verifies that for each $c\in C^*$ we have 
\begin{equation}\label{conjug}
L_cR_c^{-1}(\overline{x}\ \overline{y})=\tau^+_c(\overline{x})\tau^-_c(\overline{y})=\overline{\tau^-_c(x)}\ \overline{\tau^+_c(y)} .  
\end{equation}
If $c_1,\ldots,c_r\in C^*$ are such that 
$\tau^+_{c_r}\cdots\tau^+_{c_1}(1)\in\bS_C(R)$, then $q(c_1)^3\cdots 
q(c_r)^3=1$, whence \eqref{conjug} implies that 
\[(L_{c_r}R_{c_r}^{-1}\cdots 
L_{c_1}R_{c_1}^{-1},\tau^-_{c_r}\cdots\tau^-_{c_1},\tau^+_{c_r}\cdots\tau^+_{c_1
})\in\RT(C)(R).\]
We then conclude with Proposition \ref{Ptrialiso}.
\end{proof}

This has the following consequences.

\begin{Cor}\label{cor_cube} Let $a\in \bS_C(R)$. If either
\begin{enumerate}
\item there exists $c\in C^*$ with $a=c^3$, or
\item there exists $u\in C^*$ such that $b_q(u,1)=b_q(u,a)=0$, or
\item $b_q(a,1)=0$,
\end{enumerate}
then $C^{a,\overline{a}}\simeq C$.
\end{Cor}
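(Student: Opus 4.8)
The plan is to deduce all three cases from the preceding Proposition, reducing (3) to (1) and treating (2) as the substantive case. Case (1) is essentially a restatement: if $a=c^3$ with $c\in C^*$, then, powers of a single element in an alternative algebra being unambiguous, $\tau^+_c(1)=c\cdot 1\cdot c^2=c^3=a$, so item (2) of the preceding Proposition (with $r=1$, $c_1=c$) applies and $L_cR_c^{-1}\colon C\to C^{a,\overline a}$ is an isomorphism of algebras.

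For (3), I would first observe that $b_q(a,1)=0$ together with $q(a)=1$ forces $a^2=-1$: indeed $\overline a=b_q(a,1)1-a=-a$, so $a^2=-a\overline a=-q(a)1=-1$. Hence $(-a)^3=(-a)^2(-a)=(-1)(-a)=a$, and $q(-a)=q(a)=1\in R^*$ shows $-a\in C^*$. Thus case (3) is the special instance $c=-a$ of case (1).

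The real content is case (2). Set $\mu=q(u)\in R^*$ (a unit since $u\in C^*$). From $b_q(u,1)=0$ one gets $\overline u=-u$ and hence $u^2=-u\overline u=-\mu 1$. Using the identity $b_q(xy,z)=b_q(x,z\overline y)$ and $b_q(u,a)=0$ one computes $b_q(ua,1)=b_q(a,\overline u)=-b_q(a,u)=0$, so $\overline{ua}=-ua$; since also $\overline{ua}=\overline a\,\overline u=-\overline a u$, this yields the key identity $ua=\overline a u$. I would then take $c_1=u$ and $c_2=-\mu^{-1}(au)$, noting $q(c_2)=\mu^{-2}q(au)=\mu^{-1}\in R^*$, so that $c_1,c_2\in C^*$, and check, using only the right alternative law $(xy)y=x(yy)$, the flexible law $x(yx)=(xy)x$, and the identity $ua=\overline a u$, that
\[c_2c_1=-\mu^{-1}\bigl((au)u\bigr)=-\mu^{-1}\bigl(a(uu)\bigr)=-\mu^{-1}(-\mu a)=a,\]
\[c_1c_2=-\mu^{-1}\bigl(u(au)\bigr)=-\mu^{-1}\bigl((ua)u\bigr)=-\mu^{-1}\bigl((\overline a u)u\bigr)=-\mu^{-1}\bigl(\overline a(uu)\bigr)=\overline a.\]
In particular $\overline{c_2}\,\overline{c_1}=\overline{c_1c_2}=\overline{\overline a}=a$ as well, so $L_{c_2}L_{c_1}(1)=L_{\overline{c_2}}L_{\overline{c_1}}(1)=a$, and item (1) of the preceding Proposition (with $r=2$) produces the isomorphism $B_{\overline{c_2}}B_{\overline{c_1}}\colon C\to C^{a,\overline a}$.

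The only delicate point is the discovery and verification of the pair $(c_1,c_2)$ in case (2): one must carry out the octonion manipulations purely via the alternative identities, so that the argument remains valid over an arbitrary commutative ring $R$ and over the whole octonion algebra rather than merely a two-generated (associative) subalgebra. Everything else is routine, and cases (1) and (3) require nothing beyond the basic relation $x^2=b_q(x,1)x-q(x)1$.
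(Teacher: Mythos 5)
Your proof is correct. Cases (1) and (3) are handled exactly as in the paper: item (2) of the preceding Proposition with $r=1$, $c_1=c$, and the reduction $a=(-a)^3$ via $a^2=-q(a)1=-1$. For case (2), however, you take a genuinely different (if closely parallel) route: the paper applies item (2) of the Proposition with $r=2$, $c_1=u^{-1}$, $c_2=au$, computing $\tau^+_{c_2}\tau^+_{c_1}(1)=(au)(u^{-3}(au)^2)=(au)u^{-1}=a$ via $c_2^2=u^2$, whereas you invoke item (1) of the Proposition with $c_1=u$, $c_2=-\mu^{-1}(au)$, verifying $c_2c_1=a$ and $\overline{c_1c_2}=a$ by right alternativity and flexibility. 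Both arguments hinge on the same key identity ($ua=\overline a\,u$, equivalently $u\overline a=au$), which the paper asserts without proof and you derive cleanly from $\overline u=-u$ and $b_q(ua,1)=0$; your computations are checked and valid over any $R$. The one structural difference worth noting: the paper deliberately arranges matters so that item (1) of the Proposition is, in its own words, ``not necessary for the sequel,'' and your argument makes it load-bearing. That is perfectly legitimate since item (1) is proved in the paper, but it means your case (2) rests on the triple $(B_{\overline{c_2}}B_{\overline{c_1}},R_{c_2}R_{c_1},L_{c_2}L_{c_1})$ rather than on the conjugation triple built from the $\tau^\pm_c$; the payoff of your version is that the verification uses only the alternative laws, while the paper's version needs the slightly subtler Moufang-type identity \eqref{conjug}.
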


\begin{proof} The first item follows from item (2) of the above proposition upon 
setting $r=1$ and $c_1=c$. For the second, we set instead $r=2$, $c_1=u^{-1}$ and 
$c_2=au$, and note that $u\overline{a}=au$. Then 
$c_2^2=(u\overline{a})(au)=u^2$ implies that
\[\tau^+_{c_2}\tau^+_{c_1}(1)=(au)(u^{-3}(au)^2)=(au)(u^{-3}u^2)=(au)u^{-1}=a.\]
The third item follows from the first since if $b_q(a,1)=0$, then 
$a^2=-q(a)1=-1$ and $a=(-a)^3$. 
\end{proof}

As a consequence of the second item, we easily obtain the well-known fact that over fields, octonion algebras with equivalent quadratic forms are isomorphic.

\begin{Cor} Let $k$ be a field and let $C$ and $C'$ be two octonion algebras
over $k$ with isometric norms. Then $C\simeq C'$.
\end{Cor}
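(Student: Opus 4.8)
The plan is to combine the structural results of Sections~\ref{sec_twist}--\ref{sec_variants} with Corollary~\ref{cor_cube}(2). Since $q_C$ and $q_{C'}$ are isometric, the class of $C'$ lies in $N_4=\Ker\bigl(H^1_\fppf(k,\bG)\to H^1_\fppf(k,\mathbf{O}(q_C))\bigr)$, because the map $H^1_\fppf(k,\bG)\to H^1_\fppf(k,\mathbf{O}(q_C))$ sends $[C']$ to the isometry class of $q_{C'}$. By Theorem~\ref{thm_kernel} we have $N_4=N_2$, and by Theorem~\ref{th_zar} together with Theorem~\ref{thm_main} every element of $N_2$ is the isomorphism class of an isotope $C^{a,b}$ with $a,b\in\bS_C(k)$; moreover by Proposition~\ref{prop_formulae} one may take $b=a^{-1}=\overline a$. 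Hence it suffices to prove that $C^{a,\overline a}\simeq C$ for every $a\in\bS_C(k)$, and then $C'\simeq C^{a,\overline a}\simeq C$.

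To prove this, I would distinguish two cases according to whether $a$ is a scalar. If $a\in k\cdot 1$, then $q_C(a)=1$ forces $a=\pm 1$, so $\overline a=a$ and $C^{a,\overline a}=C^{a,a}\simeq C$ by Proposition~\ref{prop_formulae}(3). If $a\notin k\cdot 1$, put $W=k\cdot 1+k\cdot a$, a $2$-dimensional subspace of $C$. Since $b_{q_C}$ is non-degenerate, $W^\perp$ has dimension $6$. On the other hand the Witt index of the $8$-dimensional quadratic space $(C,q_C)$ is at most $4$, so the $6$-dimensional subspace $W^\perp$ cannot be totally isotropic; hence it contains an element $u$ with $q_C(u)\neq 0$, i.e.\ $u\in C^*$. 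By construction $b_{q_C}(u,1)=b_{q_C}(u,a)=0$, so Corollary~\ref{cor_cube}(2) yields $C^{a,\overline a}\simeq C$, as desired.

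There is no real obstacle here; the argument is essentially bookkeeping, and the only point worth a line of care is that it is uniform in the characteristic. In particular, in characteristic $2$ the polar form $b_{q_C}$ is alternating, so that $1\in 1^\perp$; but this does not affect the identity $\dim W^\perp=8-\dim W$, which uses only the non-degeneracy of $b_{q_C}$, nor the bound $\le 4$ on the Witt index, which is characteristic-free, nor the implication ``every vector of $W^\perp$ isotropic'' $\Rightarrow$ ``$W^\perp$ totally isotropic''. In characteristic $2$ the scalar case simply reads $a=1$, and the conclusion is unchanged.
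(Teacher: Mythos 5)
Your proposal is correct and follows essentially the same route as the paper: reduce via the kernel identifications to showing $C^{a,\overline a}\simeq C$ for $a\in\bS_C(k)$, then produce an invertible $u\in 1^\perp\cap a^\perp$ (which exists since this subspace has dimension at least $6$ and a non-singular $8$-dimensional form has Witt index at most $4$) and apply Corollary \ref{cor_cube}(2). Your explicit treatment of the scalar case and of characteristic $2$ is careful but not needed beyond what the paper's argument already covers, since $a^\perp\cap 1^\perp$ has dimension at least $6$ in all cases.
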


\begin{proof} It suffices to prove that $C^{a,a^{-1}}\simeq C$ for any
$a\in\bS_C(k)$. By the above corollary, it moreover suffices to show that there exists an invertible
$u\in a^\bot\cap 1^\bot$. But Lemma \ref{lem_summand} implies that $a^\bot\cap 1^\bot$ is a subspace of dimension at least 6. By Witt's theorem, 
the regularity of $q_C$ implies the existence of $u\in a^\bot\cap 1^\bot$ with $q_C(x)\neq 0$, and the claim follows.
\end{proof} 

\begin{Rk} It is possible to extend this argument to cover the case of octonion algebras over local rings. This requires some technical work,
including a generalization of Corollary \ref{cor_cube}. We therefore refrain from carrying it out, referring instead to the proof found in \cite{B}.
\end{Rk}

\subsection{Isotopes  of the Zorn algebra}

We shall now slightly generalize a result by Asok--Hoyois--Wendt \cite[Prop. 4.3.4]{AHW}.
We assume in this subsection that $C$ is the split octonion algebra.
It admits a subalgebra $R \times R$ and we know that 
its centralizer in $\bG$  is the standard $R$--subgroup $\mathbf{SL}_3 \subset \bG$ \cite[Theorem 5.9]{LPR}.

\begin{Prp}\label{prop_zorn} Assume that $C$ is the split octonion algebra. We have
$$
\ker\Bigl( H^1_{Zar}( R, \bG) \to H^1_{Zar}( R, \mathbf{Spin}(q_C) )\Bigr)
\subseteq \mathrm{Im}\Bigl( H^1_{Zar}( R, \mathbf{SL}_3) \to H^1_{Zar}( R, \bG )\Bigr)
$$
\end{Prp}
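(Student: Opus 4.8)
The plan is to reduce, via Theorems \ref{thm_kernel} and \ref{th_codiag}, to a statement about a single isotope, and then to produce the required reduction of structure group geometrically. The left-hand kernel of the proposition maps, under $H^1_{\mathrm{Zar}}(R,-)\to H^1_\fppf(R,-)$, into $N_2=\ker\bigl(H^1_\fppf(R,\bG)\to H^1_\fppf(R,\mathbf{Spin}(q_C))\bigr)$ (a torsor trivial Zariski-locally over $\mathbf{Spin}(q_C)$ is trivial there), and by Theorems \ref{thm_kernel} and \ref{th_codiag} the set $N_2=N_1$ consists precisely of the classes $[C^{a,a^{-1}}]=[\bE^{a,a^{-1}}]$ with $a\in\bS_C(R)$, where $\bE^{a,a^{-1}}=\Pi^{-1}(a,a^{-1})\subseteq\RT(C)$. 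So it suffices to show that for each $a\in\bS_C(R)$ the $\bG$--torsor $\bE^{a,a^{-1}}$ admits a reduction of structure group to $\mathbf{SL}_3$. Recall from \cite[Theorem 5.9]{LPR} that $\mathbf{SL}_3\subset\bG$ is the centralizer of the split subalgebra $R\times R$, so that $\mathbf{SL}_3=\Stab_\bG(e_1)$ for the idempotent $e_1\in R\times R$ with $e_1+\overline{e_1}=1$. This element lies in the $\bG$--stable closed subscheme
\[V=\bigl\{\,x\in\mathbf{W}(C)\mid q_C(x)=0,\ b_{q_C}(x,1)=1\,\bigr\}\]
of $\mathbf{W}(C)$; since on $V$ the degree-two identity $x^2=b_{q_C}(x,1)\,x-q_C(x)\,1$ forces $x^2=x$, the scheme $V$ is exactly the scheme of norm-zero, trace-one idempotents of $C$.

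The key input — and the step I expect to be the main obstacle — is the identification $V\simeq\bG/\mathbf{SL}_3$, i.e.\ that the orbit morphism $\bG\to V$, $g\mapsto g(e_1)$, is smooth and faithfully flat. Over a field this is classical (cf.\ \cite{SV}): $\mathbf{G}_2$ acts transitively on the six-dimensional smooth affine quadric of norm-zero trace-one idempotents, with stabiliser $\mathrm{SL}_3$. For $R=\Z$ and the Zorn algebra $C_0$ one then argues as in the proof of Theorem \ref{tcover}: $\bG$ and $V$ are smooth over $\Z$, the orbit map is smooth fibrewise over $\Spec(\Z)$ by the field case, hence smooth by Lemma \ref{lem_smooth}, and it is surjective because it is so on closed points; the case of a general $C$ over a general $R$ follows from $C_0\otimes_\Z S\simeq C\otimes_R S$ by faithfully flat descent. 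Carrying out this bootstrap carefully, including the precise field statement with its characteristic-two subtleties, is where the real work lies.

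Granting this, fix $a\in\bS_C(R)$ and consider the $R$--morphism
\[\phi\colon\bE^{a,a^{-1}}\longrightarrow V,\qquad (t_1,t_2,t_3)\longmapsto t_1^{-1}(e_1).\]
Since $t_1\in\mathbf{SO}(q_C)$ is an isometry, $q_C\bigl(t_1^{-1}(e_1)\bigr)=q_C(e_1)=0$; and since $(t_1,t_2,t_3)\in\bE^{a,a^{-1}}$ satisfies $t_2(1)=a^{-1}$, $t_3(1)=a$, Proposition \ref{Ptrialiso} shows that $t_1\colon C\to C^{a,a^{-1}}$ is an algebra isomorphism, whence $t_1(1)=1_{C^{a,a^{-1}}}=(a\,a^{-1})^{-1}=1$ and therefore $b_{q_C}\bigl(t_1^{-1}(e_1),1\bigr)=b_{q_C}\bigl(e_1,t_1(1)\bigr)=b_{q_C}(e_1,1)=1$; so $\phi$ indeed lands in $V$. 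Moreover $\phi$ is $\bG$--equivariant for the right action $(t_1,t_2,t_3)\cdot g=(t_1g,t_2g,t_3g)$ on $\bE^{a,a^{-1}}$ and the natural left action on $V$, because $\phi\bigl((t_1,t_2,t_3)\cdot g\bigr)=(t_1g)^{-1}(e_1)=g^{-1}\bigl(t_1^{-1}(e_1)\bigr)$. A $\bG$--equivariant map from the $\bG$--torsor $\bE^{a,a^{-1}}$ to $\bG/\mathbf{SL}_3\simeq V$ is the same datum as a section of $\bE^{a,a^{-1}}\wedge^{\bG}(\bG/\mathbf{SL}_3)\to\Spec(R)$, that is, a reduction of structure group of $\bE^{a,a^{-1}}$ to $\mathbf{SL}_3$; hence $[\bE^{a,a^{-1}}]$ lies in the image of $H^1_\fppf(R,\mathbf{SL}_3)\to H^1_\fppf(R,\bG)$.

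It remains to descend to Zariski cohomology. On one hand, $\mathbf{SL}_3$--torsors over $R$ are classified by pairs consisting of a projective module of rank $3$ together with a trivialisation of its determinant, hence are Zariski-locally trivial, so $H^1_{\mathrm{Zar}}(R,\mathbf{SL}_3)=H^1_\fppf(R,\mathbf{SL}_3)$. On the other hand, $\bE^{a,a^{-1}}$ is Zariski-locally trivial by Theorem \ref{th_zar} (equivalently Theorem \ref{th_codiag}). Combining these with the previous paragraph gives $[\bE^{a,a^{-1}}]\in\mathrm{Im}\bigl(H^1_{\mathrm{Zar}}(R,\mathbf{SL}_3)\to H^1_{\mathrm{Zar}}(R,\bG)\bigr)$, which completes the argument.
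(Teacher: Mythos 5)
Your argument is correct, but it takes a genuinely different route from the paper's. The paper argues directly with algebras: given $[C']$ in the kernel, it picks an isometry $f\colon(C,q_C)\simlgr(C',q_{C'})$ with $f(1_C)=1_{C'}$, observes that $A'=f(R\times R)$ is a composition subalgebra of $C'$ (which rests on exactly the idempotent computation $x^2=b_q(x,1)x-q(x)1$ that you make explicit), and then exhibits the reducing $\mathbf{SL}_3$--torsor concretely as the sheaf $\bY$ of isomorphisms $C_S\simlgr C'_S$ restricting to $f_S$ on $(R\times R)_S$; Zariski-local triviality of $\bY$ is extracted from Bix's lemma, and $\mathrm{Isom}(C,C')$ is recovered as the contracted product $\bY\wedge^{\mathbf{SL}_3}\bG$. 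You instead work on the torsor side: you identify $\mathbf{SL}_3$ as $\Stab_{\bG}(e_1)$, identify $\bG/\mathbf{SL}_3$ with the scheme $V$ of norm-zero trace-one idempotents, and produce the reduction as the $\bG$--equivariant map $\bE^{a,a^{-1}}\to V$, $\mathbf{t}\mapsto t_1^{-1}(e_1)$ (your verification that this lands in $V$, using $t_1(1)=1$ via Proposition \ref{Ptrialiso}, is correct, as is the equivariance convention matching sections of $\bE^{a,a^{-1}}\wedge^{\bG}(\bG/\mathbf{SL}_3)$). The trade-off is where the hard input sits: the paper needs Bix's Zariski-local transitivity statement, while you need the scheme-theoretic identification $V\simeq\bG/\mathbf{SL}_3$, i.e.\ flatness and surjectivity of the orbit map — you rightly flag this as the main obstacle, and your proposed bootstrap (field case from \cite{SV}, smoothness over $\Z$ via Lemma \ref{lem_smooth} and the dimension count $14-8=6$, surjectivity on closed points, then faithfully flat descent) is exactly the technique the paper uses for Theorems \ref{tcover} and \ref{tstab}, so it goes through; it is, however, a piece of work the paper's proof avoids. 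Your reduction of the statement to the single classes $[\bE^{a,a^{-1}}]$ via Theorems \ref{thm_kernel} and \ref{th_codiag}, and the final descent from fppf to Zariski cohomology using $H^1_{\mathrm{Zar}}(R,\mathbf{SL}_3)=H^1_{\fppf}(R,\mathbf{SL}_3)$ together with injectivity of $H^1_{\mathrm{Zar}}(R,\bG)\to H^1_{\fppf}(R,\bG)$, are both sound.
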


\begin{proof} We reason in terms of octonion algebras. Let $[C']$ be an element of the 
leftmost kernel, i.e.\ the class of an isotope $C'$ of $C$.
There exists an isometry \break $f: (C, q_C) \simlgr (C',q_{C'})$ mapping $1_C \to 1_{C'}$.
We put $A'=f(R \times R)$. Then $A'$ is a composition $R$--subalgebra of $C'$. 
We consider now the flat sheaf defined by
$$
F(S)= \Bigl\{  g: C_S \simlgr C'_S  \, \mid \, \hbox{$g$ induces $f_S: (R \times R)_S \simlgr A'_S$ }\Bigr\} 
$$
for each $R$-ring $S$. It is clearly representable by an affine $R$--scheme $\bY$ and 
comes equipped with a right action of $\mathbf{SL}_3$
so that the $R$--map $\bY \times_R \mathbf{SL}_3 \to \bY \times_R \bY$, $(y,h) \mapsto (h, y.h)$ is an isomorphism. 
Inspection of the proof of Bix's theorem \cite[lemma 1.1]{B} shows that $\bY(R)$ is non empty when $R$ is local.
Altogether it follows that  $\bY$ is a $\mathbf{SL}_3$-torsor locally trivial for the Zariski topology. 
The $\bG$--torsor $\mathrm{Isom}(C,C')$ then arises as the contracted product 
of the $\mathbf{SL}_3$-torsor $\bY$ with respect to the embedding $\mathbf{SL}_3 \hookrightarrow \bG$. 
\end{proof}

\medskip

The map $H^1_\fppf( R, \mathbf{SL}_3) \to H^1_{\fppf}( R, \bG )$ is well understood
in terms of octonion algebras. The set  $H^1_\fppf( R, \mathbf{SL}_3)$ classifies
 the pairs $(P, \theta)$ where $P$ is a locally free $R$-module of rank $3$
 and $\theta:R \simlgr \Lambda^3(P)$ a trivialization of its determinant.  
 Petersson \cite[\S 3]{Pe} explicitly constructed the $R$--algebra $\mathrm{Zorn}(P, \theta)$ associated to 
 $(P, \theta)$  and its norm form is the hyperbolic form attached to the $R$--module
 $R \oplus P$. In conclusion, we have shown that an isotope of $C$ is an $R$--algebra $\mathrm{Zorn}(P, \theta)$ 
 such that the hyperbolic form attached to  $R \oplus P$ is isomorphic to that of $R^4$.
 
 \begin{Rk}
  (a) The original statement \cite[Prop. 4.3.4]{AHW} deals with the case of a smooth ring over an 
  infinite field and uses $\mathbf{A}^1$-homotopy theory. Our proof is then in a sense much simpler. 
 
 \smallskip
 
 \noindent (b) The map  $H^1_\fppf( R, \mathbf{SL}_3) \to H^1_{\fppf}( R, \mathbf{GL}_3 )$
 has trivial kernel. If $P$ is free, it follows that $(P, \theta)$ encodes the trivial class of
 $H^1_\fppf( R, \mathbf{SL}_3)$ so that  $\mathrm{Zorn}(P, \theta)$ is the split octonion
 $R$--algebra. In particular, if all projective modules of rank 3 are free, 
 Proposition \ref{prop_zorn} implies that all isotopes of the Zorn algebra are trivial.
 \end{Rk}

 Using Swan's extension of Quillen--Suslin's theorem \cite[Cor. 1.4]{Sw2}, 
 we get the following fact. (In the polynomial case, this was already proved in \cite[Prop.\ 7.4]{KPS}.) 
 
 \begin{Cor}\label{cor_zorn} If $k$ is a field and $R=k[t_1^{\pm 1},...,t_d^{\pm 1}, t_{d+1}, \dots ,t_{n}]$, then 
 the isotopes of the split octonion algebra are split. 
 \end{Cor}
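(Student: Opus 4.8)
The plan is to combine the description of isotopes of the split algebra obtained just above with Swan's solution of the freeness problem over Laurent polynomial rings. Write $C$ for the split octonion $R$-algebra and let $C'$ be an arbitrary isotope of $C$. Since by Remark \ref{Rcomp} the norm $q_{C'}$ is isometric to $q_C$, the class $[C']$ lies in the kernel $N_2=\ker\bigl(H^1_\fppf(R,\bG)\to H^1_\fppf(R,\mathbf{Spin}(q_C))\bigr)$ of Theorem \ref{thm_kernel}, which consists of Zariski classes. Hence Proposition \ref{prop_zorn} applies to $[C']$, and together with Petersson's $\mathrm{Zorn}(-,-)$ construction recalled above it yields $C'\simeq\mathrm{Zorn}(P,\theta)$ for some locally free $R$-module $P$ of rank $3$ equipped with a trivialization $\theta\colon R\simlgr\Lambda^3 P$ of its determinant.

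Next I would invoke Swan's extension of the Quillen--Suslin theorem \cite[Cor. 1.4]{Sw2}: over $R=k[t_1^{\pm 1},\dots,t_d^{\pm 1},t_{d+1},\dots,t_n]$ every finitely generated projective module is free; in particular $P$ is free. By the remark following Proposition \ref{prop_zorn} (using that $H^1_\fppf(R,\mathbf{SL}_3)\to H^1_\fppf(R,\mathbf{GL}_3)$ has trivial kernel), the pair $(P,\theta)$ then represents the trivial class in $H^1_\fppf(R,\mathbf{SL}_3)$, so $\mathrm{Zorn}(P,\theta)$ is the split octonion $R$-algebra. Therefore $C'\simeq C$, which is the claim.

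I do not anticipate any genuine obstacle: granted Proposition \ref{prop_zorn} and Swan's theorem, the argument is a direct deduction. The only point that deserves an explicit sentence is why an arbitrary isotope $C'$ lands in the Zariski kernel feeding Proposition \ref{prop_zorn}; this is exactly Theorem \ref{thm_kernel}, which identifies all the kernels $N_i$ with the set of isomorphism classes of isotopes of $C$.
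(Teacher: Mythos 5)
Your argument is correct and follows essentially the same route as the paper: Proposition \ref{prop_zorn} together with Petersson's construction identifies any isotope of the split algebra with some $\mathrm{Zorn}(P,\theta)$, Swan's theorem forces $P$ to be free over $R=k[t_1^{\pm 1},\dots,t_d^{\pm 1},t_{d+1},\dots,t_n]$, and the triviality of the kernel of $H^1_\fppf(R,\mathbf{SL}_3)\to H^1_\fppf(R,\mathbf{GL}_3)$ then gives the split algebra. This is exactly the deduction the paper makes in the remark following Proposition \ref{prop_zorn}.
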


\subsection{Polynomial and Laurent polynomial rings} 

We will now consider octonion algebras over polynomial and Laurent polynomial rings. We start with a preliminary fact about unit spheres.

\begin{Lma}\label{lem_aniso1} Assume that $R$ is  an integral domain with fraction field $K$.
Let $q$ be a non-singular  quadratic $R$--form such that $q_K$ is anisotropic.
Then  
$$
\bS_q(R)= \bS_q( R[t])= \bS_q(R[t^{\pm 1}]).
$$
\end{Lma}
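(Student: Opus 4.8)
The plan is to prove only the inclusion $\bS_q(R[t^{\pm 1}]) \subseteq \bS_q(R)$: the ring homomorphisms $R \hookrightarrow R[t] \hookrightarrow R[t^{\pm 1}]$ induce, by functoriality of $\bS_q$, the reverse chain of inclusions $\bS_q(R) \subseteq \bS_q(R[t]) \subseteq \bS_q(R[t^{\pm 1}])$, so all three desired equalities follow at once. Write $(M,q)$ for the quadratic module in question; $M$ is finitely generated projective, hence torsion-free over the domain $R$, so the canonical map $M \to M_K := M\otimes_R K$ is injective. Since $q_K$ is anisotropic, $q(m)=0$ with $m\in M$ forces $m=0$. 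Moreover, as tensor products commute with direct sums, $M\otimes_R R[t^{\pm 1}]=\bigoplus_{i\in\Z} M\,t^i$, so every $v\in\bS_q(R[t^{\pm 1}])$ admits a unique expansion $v=\sum_i v_i t^i$ with $v_i\in M$ and $v_i=0$ for all but finitely many $i$.

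Next I would run the standard leading-coefficient argument. Fix such a $v$; since $q(v)=1\neq 0$ we have $v\neq 0$, so $m:=\min\{i\mid v_i\neq 0\}$ and $n:=\max\{i\mid v_i\neq 0\}$ are well defined. Expanding
\[q(v)=\sum_i q(v_i)\,t^{2i}+\sum_{i<j}b_q(v_i,v_j)\,t^{i+j},\]
one checks on exponents that no cross term $b_q(v_i,v_j)t^{i+j}$ with $i<j$ can contribute to $t^{2n}$ or to $t^{2m}$ (indeed $i+j<2j\le 2n$ in the first case and $i+j>2i\ge 2m$ in the second), so the coefficient of $t^{2n}$ in $q(v)$ is exactly $q(v_n)$ and that of $t^{2m}$ is exactly $q(v_m)$. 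Assume $m<n$. If $n\ge 1$, then $2n\ne 0$, so comparing with $q(v)=1$ gives $q(v_n)=0$, hence $v_n=0$ by anisotropy, contradicting the choice of $n$. If $n\le 0$, then $m<0$, so $2m\ne 0$, giving $q(v_m)=0$, hence $v_m=0$, again a contradiction. Therefore $m=n$ and $v=v_n t^n$ is a monomial; then $q(v_n)t^{2n}=q(v)=1$ in $R[t^{\pm 1}]$ forces $2n=0$, i.e.\ $n=0$, and $q(v_0)=1$. Thus $v=v_0\in\bS_q(R)$, as wanted.

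There is no serious obstacle here; the only point requiring a little care is that $M$ need not be free, which is why I phrase everything through the decomposition $M\otimes_R R[t^{\pm 1}]=\bigoplus_i M t^i$ and the embedding $M\hookrightarrow M_K$ afforded by projectivity of $M$ over the domain $R$, rather than by choosing coordinates.
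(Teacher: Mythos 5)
Your proof is correct, but it follows a genuinely different and more elementary route than the paper. The paper passes to the generic fibre and works with the complete fields $K((t^{-1}))$ and $K((t))$: anisotropy of $\mathbf{SO}(q_K)$ gives $\mathbf{SO}(q)(K[[t^{-1}]])=\mathbf{SO}(q)(K((t^{-1})))$ by the Bruhat--Tits--Rousseau theorem, transitivity of the orthogonal group on the unit sphere then yields $\bS_q(K[[t^{-1}]])=\bS_q(K((t^{-1})))$, so a point of $\bS_q(R[t])$ extends to a morphism $\mathbb{P}^1_K\to \bS_q\times_RK$, which is constant because the target is affine; finally $K\cap R[t]=R$ concludes. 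You instead expand $v=\sum v_it^i$ in $M\otimes_R R[t^{\pm1}]=\bigoplus_i Mt^i$ and compare the extreme coefficients $t^{2m}$, $t^{2n}$ of $q(v)=1$, killing $v_m$ or $v_n$ by anisotropy (via the embedding $M\hookrightarrow M_K$, which you correctly justify by torsion-freeness of the projective module $M$); the sandwich $\bS_q(R)\subseteq\bS_q(R[t])\subseteq\bS_q(R[t^{\pm1}])$ then gives both equalities at once. Your degree-comparison is in fact the same technique the paper itself uses later in Lemma \ref{lem_tony} for the form $q_1\perp tq_2$ over formal series rings, so it is entirely in the spirit of the text; what it buys is a self-contained argument avoiding Bruhat--Tits theory, Witt's theorem and the properness/affineness argument, while the paper's method is the one that generalizes to other homogeneous spaces under anisotropic groups. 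All steps check out: the identification of the coefficients of $t^{2n}$ and $t^{2m}$ as exactly $q(v_n)$ and $q(v_m)$, the case analysis forcing $m=n=0$, and the functoriality giving the easy inclusions.
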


\begin{proof}  
The assumption implies that the semisimple $K$--group $\mathbf{SO}(q_K)$ is anisotropic. Thus
$\mathbf{SO}(q)\bigl( K[[t^{-1}]] \bigr)=\mathbf{SO}(q)\bigl(K((t^{-1})) \bigr)$
by Bruhat--Tits--Rousseau's theorem (see \cite[Th. 1.2]{G1}).
Since $\mathbf{SO}(q)\bigl(K((t^{-1})) \bigr)$ acts transitively on $\bS_q\bigl(K((t^{-1})) \bigr)$, we obtain $\bS_q\bigl(K[[t^{-1}]] \bigr)= \bS_q\bigl(K((t^{-1})) \bigr)$.

We are given a point $x \in \bS_q(R[t])$; its image in $\bS_q\bigl(K((t^{-1})) \bigr)$
then belongs to $\bS_q\bigl(K[[\frac{1}{t}]] \bigr)$, so that the image $x_K$ of 
$x$ in $\bS_q(K[t])$ extends to a morphism $\mathbb{P}^1_K \to \bS_q \times_R K$.
It follows that $x_K$ belongs to $\bS_q(K)$. But $K \cap  R[t]= R$, whence
$x \in \bS_q(R)$. Similarly, working with $K((t))$, we find $\bS_q(R)= \bS_\varphi(R[t^{\pm 1}])$.
\end{proof}

\begin{Lma} \label{lem_aniso2} Assume that $R$ is  an integral domain  with fraction field $K$.
Let $C$ be an octonion $R$--algebra such that $C_K$ is not split.
Let $C'$ be a $R[t]$-isotope   of $C \otimes_R R[t]$.
Then $C' \simeq  B \otimes_R R[t]$ where $B$ is an $R$-isotope of $C$.  
The same holds with $R[t^{\pm 1}]$. 
\end{Lma}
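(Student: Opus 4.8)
The plan is to leverage Lemma \ref{lem_aniso1} after rephrasing the hypothesis ``$C_K$ is not split'' in terms of the norm form. First I would recall the classical fact that an octonion algebra over a field is split precisely when its norm form is isotropic; since $C_K$ is not split, the form $q_{C_K} = q_C \otimes_R K$ is therefore anisotropic. As $q_C$ is a non-singular quadratic $R$--form and $R$ is a domain with fraction field $K$, Lemma \ref{lem_aniso1} applies directly to $q = q_C$ and yields
\[\bS_C(R) = \bS_C(R[t]) = \bS_C(R[t^{\pm 1}]).\]

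The second step is to put $C'$ into standard isotope form. Being an $R[t]$--isotope of $C \otimes_R R[t]$, the algebra $C'$ is a composition $R[t]$--algebra isotopic to $C \otimes_R R[t]$; by Remark \ref{Rreduce}, applied over the base ring $R[t]$, there is an element $a$ of $C \otimes_R R[t]$ with $q_{C \otimes_R R[t]}(a) = 1$ and $C' \simeq (C \otimes_R R[t])^{1,a}$. The point here is simply that a norm-one element of $C \otimes_R R[t]$ is exactly an $R[t]$--point of $\bS_C$, so $a \in \bS_C(R[t])$, and the first step upgrades this to $a \in \bS_C(R)$.

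Finally I would set $B = C^{1,a}$, which is an octonion $R$--algebra and, by construction, an $R$--isotope of $C$. Since the formation of the isotope $C^{1,a}$ commutes with base change (Remark \ref{Rbase}), one gets
\[C' \simeq (C \otimes_R R[t])^{1,a} = (C^{1,a}) \otimes_R R[t] = B \otimes_R R[t],\]
as desired; the Laurent polynomial case is identical, replacing $\bS_C(R) = \bS_C(R[t])$ by $\bS_C(R) = \bS_C(R[t^{\pm 1}])$. The only ingredient requiring a moment's thought is the translation between ``not split over $K$'' and ``anisotropic norm over $K$'', which is what makes Lemma \ref{lem_aniso1} available; granting that, I do not expect any genuine obstacle, since the rest is just the already-established dictionary between isotopes and points of the unit sphere.
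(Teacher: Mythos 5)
Your proof is correct and follows the same route as the paper's: the paper likewise translates ``$C_K$ not split'' into anisotropy of $q_C\otimes_R K$, invokes Lemma \ref{lem_aniso1}, and uses the parametrization of isotopes by $\bS_C(R[t])$. You have merely spelled out the normalization via Remark \ref{Rreduce} and the base-change compatibility via Remark \ref{Rbase}, which the paper leaves implicit.
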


\begin{proof} 
Our assumption is that $C_K$ is not split, equivalently the quadratic form
$q_C \otimes_R K$ is anisotropic. 
 Since isotopes of $C \otimes_R R[t]$ are parametrized by 
$\bS_C(R[t])$, the statement follows from
Lemma \ref{lem_aniso1}.
\end{proof}

Since anisotropy is preserved by purely transcendental extensions, we get by induction the 
following fact which  recovers and slightly extends a result by \break Knus--Parimala--Sridharan \cite[Cor. 7.2]{KPS}.

\begin{Cor}\label{cor_aniso} Let  $k$ be a field and assume that  $R=k[t_1^{\pm 1},...,t_d^{\pm 1}, t_{d+1}, \dots ,t_{n}]$,
Let $C$ be a non-split octonion $k$--algebra. 
Then all $R$--isotopes of $C\otimes_k R$ are trivial. 
\end{Cor}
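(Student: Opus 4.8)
The plan is to induct on the number of variables, peeling them off one at a time and invoking Lemma \ref{lem_aniso2} at each stage. Write $R_0=k$ and, for $1\le j\le n$, set $R_j=R_{j-1}[t_j^{\pm 1}]$ if $j\le d$ and $R_j=R_{j-1}[t_j]$ if $j>d$, so that $R_n=R$. Each $R_j$ is an integral domain with fraction field $K_j=k(t_1,\dots,t_j)$, and associativity of the tensor product gives the identification $C\otimes_k R_j=(C\otimes_k R_{j-1})\otimes_{R_{j-1}}R_j$.

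The statement I would prove by induction on $j$ is: every $R_j$--isotope of $C\otimes_k R_j$ is isomorphic to $C\otimes_k R_j$. For $j=0$ this asserts that every $k$--isotope of $C$ is isomorphic to $C$; since such an isotope is an octonion $k$--algebra whose norm is isometric to $q_C$ by Remark \ref{Rcomp}, this is exactly the Corollary proved above for octonion algebras over a field. For the inductive step, assume the statement for $j-1$. First I would record that $q_C$ remains anisotropic over $K_{j-1}=k(t_1,\dots,t_{j-1})$: this is the classical fact that anisotropy of a quadratic form is preserved under purely transcendental field extensions. Equivalently, $C\otimes_k K_{j-1}$ is not split. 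Hence Lemma \ref{lem_aniso2}, applied with base ring $R_{j-1}$ and octonion algebra $C\otimes_k R_{j-1}$, shows that any $R_j$--isotope $C'$ of $(C\otimes_k R_{j-1})\otimes_{R_{j-1}}R_j=C\otimes_k R_j$ is of the form $C'\simeq B\otimes_{R_{j-1}}R_j$ for some $R_{j-1}$--isotope $B$ of $C\otimes_k R_{j-1}$. By the induction hypothesis $B\simeq C\otimes_k R_{j-1}$, and therefore $C'\simeq(C\otimes_k R_{j-1})\otimes_{R_{j-1}}R_j=C\otimes_k R_j$, completing the induction. Taking $j=n$ yields the Corollary.

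I do not expect a serious obstacle: once Lemma \ref{lem_aniso2} is available, the argument is a routine d\'evissage. The point that needs care in the inductive step is the hypothesis check, namely that $C$ stays non-split after each purely transcendental extension of the base field; this is precisely what allows Lemma \ref{lem_aniso2} to be re-applied at every stage, not only at the first. One should also make sure that the tower $R_0\subset R_1\subset\cdots\subset R_n$ is arranged so that each $R_j$ is $R_{j-1}[t_j]$ or $R_{j-1}[t_j^{\pm 1}]$ — exactly the shape of ring to which Lemma \ref{lem_aniso2} applies — and that an ``isotope of $C\otimes_k R_j$'' is literally an ``isotope of $(C\otimes_k R_{j-1})\otimes_{R_{j-1}}R_j$'' under the identification above, so that the lemma applies verbatim.
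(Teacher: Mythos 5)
Your proposal is correct and is exactly the argument the paper intends: the paper derives the Corollary "by induction" from Lemma \ref{lem_aniso2} together with the preservation of anisotropy under purely transcendental extensions, which is precisely the d\'evissage you carry out, with the base case handled by the field case as you note.
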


In the case of Laurent polynomial rings over fields, we can generalize this statement as follows.

\begin{Prp}\label{prop_laurent} Assume that $R=k[t_1^{\pm 1}, \dots ,t_n^{\pm 1}]$ is 
a Laurent polynomial ring over a field $k$ of 
characteristic $0$. Let $C$ be an octonion  $R$--algebra such 
that $q_C$ is isometric to a Pfister quadratic form.
Then all $R$--isotopes of $C$ are trivial. 
\end{Prp}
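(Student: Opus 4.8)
The plan is to normalise both $q_C$ and $C$ as far as possible, and then to win by a degree argument in the Laurent variables combined with Corollary \ref{cor_cube}. By Theorem \ref{thm_kernel} together with Theorem \ref{th_codiag}, the classes $[C^{a,a^{-1}}]$, $a\in\bS_C(R)$, fill out $N_1=N_4$, which is the set of isomorphism classes of octonion $R$--algebras whose norm is isometric to $q_C$; hence the statement ``all isotopes of $C$ are trivial'' is equivalent to ``every octonion $R$--algebra with norm isometric to $q_C$ is $\simeq C$'', and it is enough to prove it for one such algebra. Since $R^{*}=k^{*}\times\Z^{n}$ and $\GL_n(\Z)\to\GL_n(\Z/2\Z)$ is surjective, a monomial $k$--automorphism of $R$ together with the standard slot relation $\langle\langle a,b\rangle\rangle\simeq\langle\langle a,ab\rangle\rangle$ for Pfister forms (and clearing squares) brings $q_C$ to the form $\langle\langle c_1t_1,\dots,c_\mu t_\mu,\,c_{\mu+1},\dots,c_3\rangle\rangle$ with all $c_i\in k^{*}$ and $0\le\mu\le\min(n,3)$. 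Replacing $C$ by the corresponding Cayley--Dickson algebra, we may assume $C=\mathrm{Cay}(R;-c_1t_1,\dots,-c_\mu t_\mu,-c_{\mu+1},\dots,-c_3)$.

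If the constant part $\langle\langle c_{\mu+1},\dots,c_3\rangle\rangle$ is isotropic over $k$ it is hyperbolic over $k$, so $q_C$, a multiple of it, is hyperbolic over $R$; then $C$ has the norm of the split octonion algebra, hence is an isotope of it, and Corollary \ref{cor_zorn} forces $C$ and all its isotopes to be split. So assume $\langle\langle c_{\mu+1},\dots,c_3\rangle\rangle$ anisotropic over $k$; Springer's theorem then shows $q_C$ is anisotropic over $K=\mathrm{Frac}(R)=k(t_1,\dots,t_n)$. If $\mu<n$, the parameters of $C$ lie in $\widetilde R:=k[t_1^{\pm1},\dots,t_\mu^{\pm1}]$, so $C=\widetilde C\otimes_{\widetilde R}R$ with $q_{\widetilde C}$ anisotropic over $\mathrm{Frac}(\widetilde R)$; Lemma \ref{lem_aniso1} iterated gives $\bS_C(R)=\bS_{\widetilde C}(\widetilde R)$, and by Remark \ref{Rbase} each isotope $C^{a,a^{-1}}$ is $\widetilde C^{\,a,a^{-1}}\otimes_{\widetilde R}R$, so it suffices to treat $\widetilde C$ over $\widetilde R$; thus we may take $n=\mu$. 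For $\mu=0$ this is an octonion algebra over a field with anisotropic norm, and the conclusion is the already established field case.

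It remains to treat $\mu\ge1$. Fix the orthogonal Cayley--Dickson basis $(e_\epsilon)_{\epsilon\in\{0,1\}^3}$ of $C$, so that $q(e_\epsilon)=\pm\bigl(\prod_i c_i^{\epsilon_i}\bigr)t_1^{\epsilon_1}\cdots t_\mu^{\epsilon_\mu}$; let $\Lambda\subset C$ be the $k$--span of those $e_\epsilon$ with $\epsilon_1=\dots=\epsilon_\mu=0$, so $1\in\Lambda$, $q|_{\Lambda}\simeq\langle\langle c_{\mu+1},\dots,c_3\rangle\rangle$ is anisotropic, and $e_{(1,0,0)}\perp\Lambda$ with $q(e_{(1,0,0)})=\pm c_1t_1\in R^{*}$. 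I claim $\bS_C(R)\subseteq\Lambda$. Order the three Cayley--Dickson parameters so that the $t$--parameters come last and peel off $-c_\mu t_\mu$: then $C=C'\oplus C'\ell$ with $\ell^2=\pm c_\mu t_\mu$ and $C'$ a quaternion subalgebra whose parameters lie in $S:=k[t_1^{\pm1},\dots,t_{\mu-1}^{\pm1}]$, so $C'=C''\otimes_S R$ with $R=S[t_\mu^{\pm1}]$ and, by Springer, $q_{C''}$ anisotropic over $\mathrm{Frac}(S)$. For $x=y+z\ell$ with $q(x)=1$ one has $q_{C'}(y)-c_\mu t_\mu\,q_{C'}(z)=1$; writing $y=\sum_i y_it_\mu^i$ and $z=\sum_i z_it_\mu^i$ with $y_i,z_i\in C''$, anisotropy of $q_{C''}$ makes the leading and trailing $t_\mu$--coefficients of $q_{C'}(y)$ nonzero and of even $t_\mu$--degree, while those of $c_\mu t_\mu\,q_{C'}(z)$ have odd $t_\mu$--degree; comparing the top and bottom $t_\mu$--degrees with those of $1$ forces $z=0$ and $y\in C''$ with $q_{C''}(y)=1$. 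Iterating over $t_{\mu-1},\dots,t_1$ yields $\bS_C(R)=\bS_{\Lambda}(k)\subseteq\Lambda$. Finally, for any $a\in\bS_C(R)$ we have $1,a\in\Lambda$, hence $u:=e_{(1,0,0)}\in\{1,a\}^{\perp}$ is invertible, and Corollary \ref{cor_cube}(2) gives $C^{a,\overline a}\simeq C$; since $C^{1,a}\simeq C^{a,\overline a}$ by Proposition \ref{prop_formulae}, every isotope of $C$ is trivial.

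The main obstacle is the inclusion $\bS_C(R)\subseteq\Lambda$ of the last step: it is precisely what turns the unit sphere, over which isotopes are parametrised, into a bounded set of $k$--points in a fixed anisotropic subspace. Making it rigorous demands care on three points: (i) arranging in the first step, by a change of Laurent coordinates, that the Cayley--Dickson parameters involve pairwise disjoint blocks of the $t_i$, so that the single-variable peeling is legitimate; (ii) the Springer-type bookkeeping showing the residual quaternionic norms remain anisotropic after each peel; and (iii) the parity argument on the extreme $t_j$--degrees. Everything else — the reduction via Theorems \ref{thm_kernel} and \ref{th_codiag}, Corollaries \ref{cor_zorn} and \ref{cor_cube}, Lemma \ref{lem_aniso1}, and the field case — is already available.
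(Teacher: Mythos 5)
Your proposal is correct and follows essentially the same route as the paper: normalise the Pfister form using $\GL_n(\Z)$ and slot operations, dispose of the hyperbolic/constant cases via Corollary \ref{cor_zorn} and the field case, show by a degree (valuation) argument in the Laurent variables that $\bS_{q_C}(R)$ lies in a proper anisotropic summand containing $1$, and conclude with Corollary \ref{cor_cube}(2). Your iterated ``peeling'' is exactly the paper's Lemma \ref{lem_tony} applied repeatedly, where the paper only needs one application (peeling off $t_n$ already yields an invertible vector orthogonal to $1$ and $a$); the extra iterations are harmless but unnecessary.
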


The proof requires a preliminary statement  of the same flavour as Lemma \ref{lem_aniso1}.

\begin{Lma}\label{lem_tony} Let $A$ be an integral domain with fraction field
$K$. Assume that  $2$ is invertible
in $A$. Let $(M_1,q_1)$ and $(M_2,q_2)$ be regular quadratic $A$--forms
and consider
the $A[t^{\pm 1}]$--quadratic form $q= q_1 \perp t q_2$ defined on
$M_1 \otimes_A A[t^{\pm 1}] \oplus M_2 \otimes_A A[t^{\pm 1}]$.
We assume that $q_1$ and $q_2$ are $K$--anisotropic.
Then
$$
\bS_{q_1}(K)= \bS_q\bigl( K[t^{\pm 1}] \bigr) \hbox{\enskip and \enskip}
\bS_{q_1}(A)= \bS_q\bigl( A[t^{\pm 1}] \bigr).
$$
\end{Lma}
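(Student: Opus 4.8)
The plan is to mimic the structure of the proof of Lemma \ref{lem_aniso1}, but now working with the $t$-adic valuation at both $t=0$ and $t=\infty$, exploiting that the quadratic form $q=q_1\perp t q_2$ degenerates in a controlled way at each end. First I would establish the identity $\bS_{q_1}(K)=\bS_q\bigl(K[t^{\pm 1}]\bigr)$, from which the statement over $A$ follows by intersecting with $A[t^{\pm 1}]$ exactly as in Lemma \ref{lem_aniso1}, using $K\cap A[t^{\pm 1}]=A$ (valid since $A$ is integrally closed, or at least since a Laurent polynomial lying in $K$ has no poles and no positive-degree terms, hence lies in $A$).

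For the key identity, consider a point $x\in\bS_q\bigl(K[t^{\pm 1}]\bigr)$, and write $x=(x_1,x_2)$ with $x_i\in M_i\otimes_A K[t^{\pm 1}]$, so that $q_1(x_1)+t\,q_2(x_2)=1$ in $K[t^{\pm 1}]$. The natural move is to pass to the completion $K((t^{-1}))$ and study the behaviour at $t=\infty$. Since $q_1$ is $K$-anisotropic, the group $\mathbf{SO}(q_1)_K$ is anisotropic, and Bruhat--Tits--Rousseau's theorem (as invoked via \cite[Th.\ 1.2]{G1} in Lemma \ref{lem_aniso1}) gives $\mathbf{SO}(q_1)\bigl(K[[t^{-1}]]\bigr)=\mathbf{SO}(q_1)\bigl(K((t^{-1}))\bigr)$, hence $\bS_{q_1}\bigl(K[[t^{-1}]]\bigr)=\bS_{q_1}\bigl(K((t^{-1}))\bigr)$. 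But here we must be careful: the relevant form over $K((t^{-1}))$ is not $q_1$ but $q_1\perp tq_2$. Rescaling the second summand by $t^{-1}$ (permissible over $K((t^{-1}))$, and over $K((t))$ respectively, since $t$ is a unit there) transforms $q_1\perp tq_2$ into $q_1\perp q_2$ up to a change of coordinates on $M_2\otimes K((t^{-1}))$; thus $\mathbf{SO}(q)_{K((t^{-1}))}\cong\mathbf{SO}(q_1\perp q_2)_{K((t^{-1}))}$, which is anisotropic because $q_1\perp q_2$ is $K$-anisotropic. Applying Bruhat--Tits--Rousseau to this group gives $\bS_q\bigl(K[[t^{-1}]]\bigr)=\bS_q\bigl(K((t^{-1}))\bigr)$, and symmetrically at $t=0$, $\bS_q\bigl(K[[t]]\bigr)=\bS_q\bigl(K((t))\bigr)$ using the coordinate change $q\mapsto q_1\perp q_2$ valid over $K((t))$.

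Now, given $x\in\bS_q\bigl(K[t^{\pm 1}]\bigr)$, its image in $\bS_q\bigl(K((t^{-1}))\bigr)$ lies in $\bS_q\bigl(K[[t^{-1}]]\bigr)$, and its image in $\bS_q\bigl(K((t))\bigr)$ lies in $\bS_q\bigl(K[[t]]\bigr)$. Therefore $x$, viewed as a map $\mathrm{Spec}\,K[t^{\pm 1}]\to\bS_q$, extends over both $t=0$ and $t=\infty$, giving a morphism $\mathbb{P}^1_K\to\bS_q\times_A K$. Since $\mathbb{P}^1_K$ is proper and connected and $\bS_q\times_A K$ is affine, this morphism is constant, so $x\in\bS_q(K)$; moreover the constant value has second coordinate forced to satisfy $t\cdot q_2(x_2)\in K$, i.e.\ $q_2(x_2)=0$, so $x_2=0$ by anisotropy of $q_2$, and $x=(x_1,0)$ with $q_1(x_1)=1$, i.e.\ $x\in\bS_{q_1}(K)$. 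The reverse inclusion $\bS_{q_1}(K)\subseteq\bS_q\bigl(K[t^{\pm 1}]\bigr)$ is obvious. This proves the first identity, and the $A$-version follows by the intersection argument.

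The main obstacle I anticipate is the bookkeeping around the two distinct coordinate changes needed at $t=0$ versus $t=\infty$: over $K[[t^{-1}]]$ one rescales $M_2$ by $t^{-1/2}$-type substitutions (formally, one uses that $t\in K[[t^{-1}]]$ is \emph{not} invertible, so the rescaling must be done more carefully — one instead argues directly that $q_1\perp tq_2$ over $K[[t^{-1}]]$ has, after the substitution $s=t^{-1}$, the shape $q_1\perp s^{-1}q_2$, whose anisotropic reduction requires an argument, or better, one works with $\mathbf{SO}(q)$ over the \emph{fraction field} $K((t^{-1}))$ where $t$ is a unit and the isometry $q\cong q_1\perp q_2$ is transparent, then invokes that anisotropy of $q_1\perp q_2$ over $K((t^{-1}))$ — which holds by Springer's theorem since $q_1\perp q_2$ is anisotropic over $K$ and $K((t^{-1}))/K$ is an extension of complete discretely valued fields with residue field $K$). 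Getting this anisotropy-over-$K((t^{-1}))$ statement cleanly, in characteristic $0$ as assumed, via Springer's theorem on the unramified part, is the one place where care is genuinely required; everything else is the now-familiar "extend to $\mathbb{P}^1$ and conclude constancy" pattern from Lemma \ref{lem_aniso1}.
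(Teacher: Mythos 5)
There is a genuine gap at the central step. Your argument hinges on the claim that over $K((t))$ and $K((t^{-1}))$, where $t$ is a unit, the form $q=q_1\perp tq_2$ becomes isometric to $q_1\perp q_2$, whose anisotropy you then deduce from anisotropy of $q_1\perp q_2$ over $K$. Both halves fail. Rescaling the second summand by $t^{-1}$ turns $tq_2$ into $t^{-1}q_2$, not $q_2$; to remove the factor $t$ you would need a square root of $t$, and $t$ has odd valuation in both $K((t))$ and $K((t^{-1}))$, so it is not a square. More importantly, the lemma does \emph{not} assume that $q_1\perp q_2$ is $K$-anisotropic — only that $q_1$ and $q_2$ are separately anisotropic (e.g.\ $q_1=\langle 1\rangle$, $q_2=\langle -1\rangle$ over $\Q$ satisfies the hypotheses while $q_1\perp q_2$ is hyperbolic). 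The correct statement, which is what is actually needed, is Springer's theorem applied to $q_1\perp tq_2$ itself over the complete discretely valued field: this form is anisotropic over $K((t))$ precisely because its two residue forms $q_1$ and $q_2$ are each anisotropic over the residue field $K$. The paper's proof is exactly the elementary core of that theorem: writing $m_i=t^{n_i}\underline{m}_i$ with $\underline{m}_i$ a unit vector, the identity $1=t^{2n_1}q_1(\underline{m}_1)+t^{2n_2+1}q_2(\underline{m}_2)$ has summands of valuations of opposite parity, forcing $n_1=0$ and $n_2\geq 0$; the symmetric computation at $t^{-1}$ then pins the coordinates down to $M_i\otimes_A K$, and the equation $1=q_1(m_1)+tq_2(m_2)$ kills $m_2$.

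Even with the anisotropy corrected, the Bruhat--Tits--Rousseau route needs more care than in Lemma \ref{lem_aniso1}: there the form is defined over $R$ and the sphere is an $R$-scheme, so $G(K[[t^{-1}]])=G(K((t^{-1})))$ plus transitivity transports integrality from a base point in $\bS_q(K)$. Here $q$ is only defined over $A[t^{\pm 1}]$, its natural integral model over $K[[t]]$ degenerates modulo $t$ (so the group scheme is not reductive over $K[[t]]$ and ``$G(O)=G(F)$'' is not the off-the-shelf statement), a base point in $\bS_q(K[[t]])$ is not available a priori, and $\bS_q$ is not a $K$-scheme, so ``a morphism $\mathbb{P}^1_K\to\bS_q\times_A K$ is constant'' does not parse as stated; what one actually concludes is that each coordinate is a Laurent polynomial regular at both $0$ and $\infty$, hence constant. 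The direct valuation argument bypasses all of these issues, which is presumably why the paper uses it.
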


 \begin{proof}
 We claim that we have
 $$
(I) \qquad \qquad \qquad \bS_q\bigl( K((t)) \bigr) \subseteq M_1
\otimes_A K[[t]] \oplus  M_2 \otimes_A K[[t]]
 $$
 and
 $$
(II)  \qquad \qquad  \bS_{q}\bigl(K((t^{-1})) \bigr)\subseteq M_1
\otimes_A K[[t^{-1}]] \oplus
  M_2 \otimes_A K[[t^{-1}]] .
 $$
 
 We start with (I). Let $m=m_1+m_2 \in M\otimes_A K((t))$ such that
 $q(m)=1$. For $i=1,2$, we write $m_i= t^{n_i} \underline{m}_i$ with $\underline{m}_i \in M_i \otimes_A K[[t]]^*$. (Recall that
 $K[[t]]^*= K[[t]] \setminus tK[[t]]$.)
 
 If $m_1=0$, then $1= q( t^{n_2} \underline{m}_2)=  t^{2 n_2+1}
q_2(\underline{m}_2)$.
 Since $q_2$ is $K$-anisotropic, $q_2(\underline{m}_2)$ is a unit and we
get a contradiction. Thus $m_1$ is non-zero.
 If $m_2=0$, then $1= q( t^{n_1} \underline{m}_1)=  t^{2 n_1} q_1(\underline{m}_1)$.
  Since $q_1$ is $K$-anisotropic, $q_1(\underline{m}_1)$ is a unit and
we get $n_1=0$, whence $m_1 \in M_1 \otimes_A K[[t]]^*$.
  If $m_1$ and $m_2$ are both non-zero, then
 \[1= q( t^{n_1}m_1+t^{n_2} m_2)= t^{2 n_1} q_1(\underline{m}_1) +  t^{2
n_2+1} q_2(\underline{m}_2)\]
 where $q_1(\underline{m}_1)$ and $q_2(\underline{m}_2)$ are both units
(for the evaluation at $t=0$).
 For evaluation reasons the only possibility is that $n_1=0$ and $n_2 \geq 0$.
In all cases, the inclusion (I) holds.

 To show (II), the case $m_1=0$ and $m_2=0$ are similar to the corresponding cases in (I).
 Assume that $m_1$, $m_2$ are both non-zero and write
 $m_i= t^{-n_i} \underline{m}_i$ with
  $\underline{m}_i \in M_i \otimes_A K[[t^{-1}]]^*$
  for $i=1,2$. We have  $$
 1= q( t^{-n_1}m_1+t^{-n_2} m_2)= t^{-2 n_1} q_1(\underline{m}_1) +
t^{-2 n_2+1} q_2(\underline{m}_2)
 $$
  where $q_1(\underline{m}_1)$ and $q_2(\underline{m}_2)$ are both units
(for the evaluation at $\infty$).
 Once again for evaluation reasons, we have $n_1 = 0$ and $2n_2 -1 \geq 0$
so that $n_2 \geq 0$.
 The inclusion (II) holds.

 Consider now an element $x= m_1 + m_2 \in \bS_q\bigl( K[t^{\pm 1}]
\bigr)$
 where
 $m_i \in {M_i \otimes_A K[t^{\pm 1}]}$ for $i=1,2$. The inclusions (I) and (II)
 yield that $m_i \in M_i \otimes_A K$ for $i=1,2$.
 Furthermore the equation $1= q_1(m_1)+t q_2(m_2)$ implies that
 $q_1(m_1)=1$ and $q_2(m_2)=0$. The anisotropy of $q_{2,K}$ ensures
 that $m_2=0$. Thus $m=m_1$ comes from $\bS_{q_1}(K)$ as desired.

 The first statement is then established and the second statement
 readily follows.
 \end{proof}

 \begin{proof}[Proof of Proposition \ref{prop_laurent}]
 Our assumption is that $q_C$ is a Pfister form with entries $\langle\langle u,v,w \rangle \rangle$
 where $u,v, w\in R^*= k^* t_1^\Z \dots t_n^\Z$. Up to using the multiplicativity properties of $q_C$ and 
 the action of $\GL_n(\Z)$ on $R$, we may assume that 
 $q_C$ is isometric to a quadratic form of one of the following shapes:
 
 \smallskip
 
(i) $u,v,w \in k^*$.

 \smallskip

(ii) $u,v \in k^*$ and $w= w_n t_n$ with $w_n \in k^*$.

 \smallskip
 
  (iii) $u \in k^*$, $v=v_{n-1} t_{n-1}$, $w=w_{n} t_{n}$ with $v_{n-1}, v_{n} \in k^*$. 
 
\smallskip

  (iv) $u=u_{n-2} t_{n-2}$, $v=v_{n-1} t_{n-1}$, $w=w_{n} t_{n}$ with $u_{n-2}, v_{n-1}, v_{n} \in k^*$.

 \smallskip
 
Since (i) includes the hyperbolic case, we may assume in cases (ii), (iii) and (iv)  that 
$q_C$ is anisotropic over the field $k(t_1, \dots, t_n)$, and even over $k((t_1))\dots ((t_n))$ 
by  \cite[Cor. 7.4.3]{GP}.

 \smallskip

\noindent{\it Case (i):} Let $p$ be a $3$-Pfister form such that 
$p \otimes_k R \simeq q_C$. Let $C_p$ be an octonion
$k$--algebra whose norm is $p$. Such an algebra is unique up to isomorphism.
Then $C$ is a an isotope of the octonion $R$--algebra $C_p \otimes_k R$.
If $p$ is split (resp. anisotropic),  Corollary \ref{cor_zorn} (resp. Cor. \ref{cor_aniso}) shows that
$C$ is isomorphic to $C_p \otimes_k R$.

\smallskip

\noindent{\it Case (ii), (iii) and (iv) :} We apply Lemma \ref{lem_tony} with $A= k[t_1^{\pm 1},\dots, t_{n-1}^{\pm 1}]$, $t=t_n$ and
$q_1= \langle \langle u, v\rangle \rangle$, $q_2= w_n \, \langle \langle u,v \rangle\rangle$. It follows that
$$
\bS_{\langle \langle u, v\rangle \rangle}(k[t_1^{\pm 1},\dots, t_{n-1}^{\pm 1}])= \bS_{q_C}(R).
$$
Let $a \in \bS_{\langle \langle u, v\rangle \rangle}(k[t_1^{\pm 1},\dots, t_{n-1}^{\pm 1}])$. Then the isotope $C^{a,\overline{a}}$ is trivial by Corollary \ref{cor_cube}.(2).
We conclude that all isotopes of $C$ are trivial.
 \end{proof}

 \begin{Rk}
  (a) The characteristic zero hypothesis is probably superfluous, it should work
  in odd characteristic as well.

  \smallskip
  
  \noindent (b) Over $k[t^{\pm 1}]$, quadratic forms are classified and in particular diagonalizable \cite[Lemma 1.2]{Pa}.
  Our statement permits then to classify  octonion  $k[t^{\pm 1}]$--algebras and to recover Pumpl\"un's classification 
  \cite{Pu}. 
  
  \smallskip
  
  \noindent (c) Assume that the field $k$ is algebraically closed of characteristic zero.
  In this case, the so-called ``loop octonion algebras'' (that is whose underlying torsor is
  a loop torsor) over  $R=k[t_1^{\pm 1}, \dots ,t_n^{\pm 1}]$
  are classified \cite[Cor. 11.2]{GP} and their norms are Pfister forms.
  The statement shows that those octonion algebras have no non-trivial isotopes.
 \end{Rk}

\appendix

\section{Review on quadratic forms}

\subsection{Definition} In this work a quadratic module is a pair $(M,q)$ 
consisting of a
finitely generated and projective $R$--module $M$ and a quadratic form $q\co M
\to R$. For a quadratic module $(M, q)$ we will use the following notation
and terminology: $b_q \co M \times M \to R$ is the polar form of $q$, defined
 by $b_q (m_1, m_2) = q(m_1 + m_2) - q(m_1) - q(m_2)$ and often
    abbreviated by $b=b_q$. It gives rise to the $R$--linear map
\begin{equation} \label{quadfo-ch}
 b_q \chv \co M \to M\chv= \Hom_R(M,R), \quad b_q\chv(m)\, (n)= b_q(m,n). 
\end{equation}
The radicals of $b_q$ and $q$ are \begin{align*}
       \rad(b_q) & = \big\{m \in M : b_q(m,M) = 0 \big \}= \Ker (b_q\chv), \\
       \rad(q) &= \big\{ m\in M : q(m) = 0 = b_q(m, M) \big\} \, \subseteq \, 
\rad(b_q).
\end{align*}
For $S\in \Ralg$ the quadratic form $q_S \co M_S \to S$ is the  unique
quadratic form satisfying $q_S(m\ot 1_S) = q(m) \ot 1_S$. 

We call a quadratic form $q$ {\em regular\/} if $b_q\chv$ is an isomorphism of
$R$--modules, and {\em non-singular} if $\rad(q_K) = \{0\}$ for all fields
$K\in \Ralg$. We list some useful facts.

\smallskip

$\bullet$ Let $a\in R$. The ``$1$--dimensional'' form $\langle a
    \rangle \co R \to R$, defined by $r \mapsto ar^2$, is non-singular if and 
only if $
    a\in R^*$; it is regular if and only if $2a\in R^*$. 

\smallskip

$\bullet$ A regular quadratic form is non-singular, and if
    $2 \cdot 1_R \in R^*$ then these two properties coincide.

\smallskip

$\bullet$ If $q$ is regular resp.\ non-singular, then so is $q_S$
    for all $S\in \Ralg$.

\smallskip

$\bullet$ The hyperbolic form on $R^{2n}$, defined by $q_{\rm
    hyp} (r_{-n}, \ldots, r_{n}) = \sum_{i=1}^n r_i r_{-i}$, is regular and
    non-singular.

\smallskip

$\bullet$ If $(M,q)$ is a quadratic module where $M$ has constant
    even rank, then $q$ is regular if and only if $q$ is non-singular. 

\medskip

Following Knus \cite[III.3]{Kn}, we denote by $\Disc(R)$ the (commutative) group 
of isometry classes of non-singular quadratic forms of rank one over $R$ (also 
called discriminant modules) where the product arises from the tensor product. 
Since $\bmu_2$ is
the orthogonal group of the form $x \to  x^2$, the yoga of descent shows that 
the group $\Disc(R)$ is isomorphic to $H^1_\fppf(R, \bmu_2)$ (ibid, III.3.2).

\subsection{Quotients of unit spheres by the antipodal relation}
We consider the unit sphere $\bS_q$ of $q$. This is an affine  
$R$--scheme such  that for any $R$--ring $S$,
$\bS_q(S) = \bigl\{ m \in M_S \, \mid \,  q(m) = 1 \bigr\}$.
 We are also interested in the affine $R$--scheme $\widehat{\bS}_q$ defined by 
 $\widehat{\bS}_q(S) = \bigl\{ m \in M_S \, \mid \,  q(m) \in S^* \bigr\}$.
 We have a closed immersion $\bS_q \subset \widehat{\bS}_q$.

\begin{Lma}\label{lem_sphere} 

\begin{enumerate}
 \item The sphere ${\bS}_q$ is faithfully flat over $R$ 
and $\widehat{\bS}_q$ is smooth over $R$. 
 \item If $M$ is of constant even  rank $2r \geq  2$, then ${\bS}_q$ is smooth 
over $R$ of relative dimension
$2r -1$.
\end{enumerate}
\end{Lma}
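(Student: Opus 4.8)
The plan is to prove Lemma \ref{lem_sphere} by reducing to local questions on $R$, then to explicit affine coordinates.

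\textbf{Step 1: Faithful flatness of $\bS_q$.} First I would treat flatness. By a partition-of-unity argument we may assume $R$ local, and since $M$ is projective we may assume $M$ is free, say $M\simeq R^n$ with $q(x_1,\dots,x_n)$ a quadratic polynomial. Then $\bS_q=\Spec R[x_1,\dots,x_n]/(q-1)$. Because $q$ is non-singular, after a faithfully flat base change (or even Zariski-locally on $R$, using that $R$ is local and $b_q^\vee$ has a unimodular image since $q$ is non-singular in the relevant ranks — more carefully one argues fibrewise) the polynomial $q-1$ is a non-zerodivisor in $R[x_1,\dots,x_n]$; indeed over each residue field the hypersurface $\{q=1\}$ is a nonempty proper closed subscheme, so $q-1$ does not vanish identically on any fibre, hence is $R$-regular on the (flat, even free) polynomial ring. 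A single non-zerodivisor cut out of a flat algebra yields a flat quotient. Surjectivity of $\bS_q\to\Spec R$ is the statement that $\bS_q$ has a point over every residue field $k$, which holds because a non-singular quadratic form of positive rank over a field represents every element of $k^*$ (it represents some nonzero value, and then by scaling one gets $1$ using isometries, or directly: a non-singular form in $\geq 1$ variables that is not the zero form represents $1$ after suitable normalisation — in even rank $\geq 2$ it is universal). Hence $\bS_q$ is faithfully flat over $R$.

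\textbf{Step 2: Smoothness of $\widehat{\bS}_q$.} For $\widehat{\bS}_q$ I would use the Jacobian criterion. Again reduce to $R$ local, $M=R^n$. The scheme $\widehat{\bS}_q$ is the open subscheme of $\mathbb{A}^n_R$ where $q$ is invertible; more precisely $\widehat{\bS}_q=\Spec R[x_1,\dots,x_n,y]/(q\cdot y-1)$, which is an open subscheme of $\mathbb{A}^n_R$ (the non-vanishing locus of $q$). An open subscheme of the smooth $R$-scheme $\mathbb{A}^n_R$ is smooth over $R$. This gives (1).

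\textbf{Step 3: Smoothness and relative dimension of $\bS_q$ in even rank.} For (2), assume $M$ has constant even rank $2r\geq 2$. Smoothness is fppf-local on $R$, so I would pass to a faithfully flat cover over which $q$ becomes diagonalised, or simply check the Jacobian criterion fibre by fibre together with the flatness from Step 1. Concretely: $\bS_q$ is flat of finite presentation over $R$ by Step 1, so by the fibrewise smoothness criterion \cite[$_4$.17.8.2]{EGA4} it suffices to show each geometric fibre $\bS_{q_k}$ over a field $k$ is smooth of dimension $2r-1$. Over $k$ the form $q_k$ is non-singular of even rank $2r$, hence (being regular in even rank, by the bulleted facts in the appendix) its polar form is non-degenerate; the gradient of $q_k$ at a point $m$ is $b_{q_k}^\vee(m)$, which vanishes only at $m=0$, and $0\notin\bS_{q_k}$ since $q_k(0)=0\neq 1$. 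Thus the Jacobian $(1\times 2r)$-matrix $dq_k$ is everywhere nonzero on $\bS_{q_k}$, so the hypersurface $\{q_k=1\}\subset\mathbb{A}^{2r}_k$ is smooth of pure dimension $2r-1$ (in characteristic $2$ one must be slightly careful, but non-degeneracy of $b_{q_k}$ is exactly what guarantees the differential $dq_k=b_{q_k}(m,-)$ is nonzero, which is the point where even rank is used — in odd rank over a field of characteristic $2$ the form is only non-singular, not regular, and this can fail). Combining fibrewise smoothness of pure dimension $2r-1$ with flatness and finite presentation over $R$ yields that $\bS_q\to\Spec R$ is smooth of relative dimension $2r-1$.

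\textbf{Main obstacle.} The routine part is the Jacobian computation; the delicate point is the characteristic $2$ behaviour in Step 3, where one must invoke that a non-singular quadratic form of \emph{even} rank is automatically regular (so $b_q^\vee$ is an isomorphism), which is exactly the hypothesis that makes the differential of $q$ nonvanishing away from the origin. A secondary subtlety is making the ``$q-1$ is a non-zerodivisor'' argument in Step 1 clean over a general base rather than just over a field; the safe route there is to first establish flatness after an fppf cover trivialising $q$ (reducing to the hyperbolic or a diagonal form where the claim is transparent) and then descend flatness along the faithfully flat cover.
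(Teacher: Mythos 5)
Your proposal is correct in substance and, for part (2), takes essentially the same route as the paper: flatness plus the fibre-wise smoothness criterion \cite[$_4$.17.8.2]{EGA4} reduces everything to a Jacobian computation over a field (the paper diagonalises to the hyperbolic form $\sum x_iy_i=1$ over an algebraically closed field, you instead argue directly that $dq_k(m)=b_{q_k}(m,\cdot)$ vanishes only at $m=0$ because even rank plus non-singularity forces regularity; these are the same point). For part (1) your route is genuinely different and more computational: the paper disposes of $\widehat{\bS}_q$ by the infinitesimal lifting criterion, whereas you exhibit it as the open locus $\{q\in\mathbb{G}_m\}$ in the smooth scheme $\mathbf{W}(M)$; and for $\bS_q$ the paper only remarks that the structure map admits a section (which gives surjectivity, the flatness being left implicit), whereas you actually prove flatness by slicing the hypersurface $q-1=0$ out of affine space. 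Your treatment is therefore more complete than the paper's on this point.

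Two justifications in Step 1 should, however, be repaired. First, the principle ``a single non-zerodivisor cut out of a flat algebra yields a flat quotient'' is false as stated: $t$ is a non-zerodivisor in $k[t][x]$, yet $k[t][x]/(t)$ is not flat over $k[t]$. What you need, and what your preceding sentence actually verifies, is the fibre-wise hypothesis: $q-1$ has constant term $-1$, hence is a nonzero element of the domain $k(\mathfrak{p})[x_1,\dots,x_n]$ for every prime $\mathfrak{p}$, and the slicing criterion for flatness (e.g.\ \cite[$_3$.11.3.8]{EGA4}, after the usual reduction to the Noetherian local case) then gives flatness of the quotient. So the conclusion stands, but the cited principle must be replaced by the correct one. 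Second, the claim that a non-singular quadratic form of positive rank over a field ``represents every element of $k^*$'' is false (e.g.\ $x^2+y^2$ over $\mathbb{R}$ does not represent $-1$), and scaling a represented value $a$ to $1$ requires $a$ to be a square. This does not damage the argument, because surjectivity of $\bS_q\to\Spec(R)$ only requires each fibre to be nonempty as a scheme, i.e.\ to have a point over the algebraic closure of the residue field, where the scaling argument is valid; but the sentence should be phrased that way.
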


\begin{proof}
 (1) The map ${\bS}_q \to \Spec(R)$ admits a splitting and hence is faithfully 
flat.
 The $R$--scheme $\widehat{\bS}_q$ clearly satisfies the lifting criterion of 
smoothness; it is
 then formally smooth, hence smooth since $\widehat {\bS}_q$ is of finite 
presentation.

 \smallskip
 
 \noindent (2) Using (1), the fibre-wise smoothness criterion 
\cite[$_4$.17.8.2]{EGA4} reduces the proof to the case
 of an algebraically closed field $k$. In this case
 $\bS_q$ is isomorphic  to the affine hypersurface $\sum_{i=1}^r x_iy_i=1$, 
 which is smooth by the Jacobian criterion. 
\end{proof}

 The $R$--group scheme $\bmu_2$  (resp.\ $\mathbb{G}_m$) acts on $\bS_q$ (resp.\ 
$\widehat\bS_q$) by homothety.
 We come to  the quotient scheme $\ubS_q = \bS_q/ \bmu_2 = \widehat{\bS}_q/ 
\mathbb{G}_m$,
 that is the quotient of the unit sphere by the antipodal relation \cite[ 
VIII.5.1]{SGA3}. We remind the reader
that the quotient map $f_q :  \bS_q \to  \ubS_q$ is finite, faithfully flat and 
that $\bS_q$ is an affine
$R$--scheme. Since the 
morphism $\widehat f_q : \widehat{\bS}_q \to {\ubS}_q$ is smooth,
we see that $ \ubS_q$ is smooth over $R$. We
are interested in the characteristic map $\varphi: \bS_q(R) \to  H^1_\fppf( R, 
\bmu_2) \simeq  \Disc(R)$.
For each $x \in \ubS_q(R)$,  $\widehat f_q^{-1}(x)$ defines an invertible 
$R$--module $\calL_x$
which is an $R$--submodule of $M$.    
    
\begin{Lma}
(1)   Let $x \in  \ubS_q(R)$. Let $ S/R$  be a flat cover such that $x$
lifts to some element $m \in \bS_q(S)$. Then $\calL_x \otimes_R S=  Sm$.

\smallskip

\noindent (2) The restriction of $q$ to $\calL_x$ is non-singular and 
$\varphi(x) = [( \calL_x, q)] \in \Disc(R)$.

\end{Lma}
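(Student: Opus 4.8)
The plan is to reduce both assertions to the flat cover $S$ and the lift $m\in\bS_q(S)$ provided in the statement. Such an $S$ and $m$ exist: since $f_q\colon\bS_q\to\ubS_q$ is finite and faithfully flat it is an fppf cover, so $f_q^{-1}(x)$ is finite and faithfully flat over $\Spec R$ and hence admits a section over a flat cover $S$ of $R$. The structural facts I would use are that $\bS_q\subseteq\widehat{\bS}_q$, that $\widehat f_q\colon\widehat{\bS}_q\to\ubS_q$ is a $\mathbb{G}_m$--torsor for the homothety action (with $\widehat{\bS}_q=\bS_q\wedge^{\bmu_2}\mathbb{G}_m$, since after fppf sheafification $[(m,t)]\mapsto tm$ is a $\mathbb{G}_m$--equivariant isomorphism), and that $\calL_x$ is by construction the invertible $R$--submodule of $M$ obtained from the $\mathbb{G}_m$--torsor $\widehat f_q^{-1}(x)$ together with its tautological $\mathbb{G}_m$--equivariant embedding $\widehat f_q^{-1}(x)\hookrightarrow\widehat{\bS}_q\subseteq\mathbf{W}(M)$; its formation commutes with flat base change.

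For (1): forming the quotient by the flat group scheme $\mathbb{G}_m$ commutes with flat base change, so $\widehat f_q^{-1}(x)\times_R S=\widehat f_q^{-1}(x_S)$, and $m\in\bS_q(S)\subseteq\widehat{\bS}_q(S)$ is a section of this $\mathbb{G}_m$--torsor over $S$, hence trivializes it with distinguished point $m$. Passing to the associated embedded line bundle and using base-change compatibility, the trivial $\mathbb{G}_m$--torsor with chosen point $m$ yields the free rank-one submodule $Sm\subseteq M_S$, so $\calL_x\otimes_R S=Sm$. To make the descent explicit one also picks a flat cover $S_0/R$ trivializing $\calL_x$ and lifting $x$ to some $m_0\in\widehat{\bS}_q(S_0)$; over $S_0\otimes_R S$ the sections $m_0\otimes1$ and $1\otimes m$ of the $\mathbb{G}_m$--torsor differ by a unique unit, this unit is the descent datum defining $\calL_x$, and faithfully flat descent of submodules of $M_S$ then gives $\calL_x\otimes_R S=Sm$ inside $M_S$.

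For (2): since $q(m)=1\in S^*$, part (1) gives $(\calL_x,q|_{\calL_x})\otimes_R S\simeq(Sm,q|_{Sm})=\langle q(m)\rangle=\langle1\rangle$, which is non-singular; as a rank-one form $\langle a\rangle$ is non-singular exactly when $a\in R^*$, a property insensitive to the faithfully flat base change $R\to S$, so $q|_{\calL_x}$ is non-singular (equivalently, for any $R$--field $K$ one dominates $S\otimes_R K$ by a field $L$, over which the form is $\langle1\rangle$, forcing $\rad(q_K)=0$). For the equality $\varphi(x)=[(\calL_x,q|_{\calL_x})]$, I would unwind the characteristic map: $\varphi$ sends $x$ to the class in $H^1_\fppf(R,\bmu_2)$ of the fibre $f_q^{-1}(x)$ of the $\bmu_2$--torsor $f_q\colon\bS_q\to\ubS_q$, and under the isomorphism $H^1_\fppf(R,\bmu_2)\simeq\Disc(R)$ — twisting the rank-one form $\langle1\rangle$, whose orthogonal group is $\bmu_2$ — this is the discriminant module $f_q^{-1}(x)\wedge^{\bmu_2}(R,\langle1\rangle)$. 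Since $\calL_x$ is precisely the line bundle of the push-out torsor $f_q^{-1}(x)\wedge^{\bmu_2}\mathbb{G}_m$ embedded in $M$, the assignment $[(n,r)]\mapsto r\,n$ is a well-defined $R$--module isomorphism onto $\calL_x$ (invariant under $(n,r)\sim(\eta n,\eta r)$ for $\eta\in\bmu_2$), and it is an isometry because $q(r\,n)=r^2q(n)=r^2$; hence $\varphi(x)=[(\calL_x,q|_{\calL_x})]$.

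Most of this is routine descent bookkeeping. The one point needing genuine care is verifying that the tautological embedding of $\widehat f_q^{-1}(x)$ into $\mathbf{W}(M)$ is compatible with the trivialization by $m$ — i.e.\ that the \emph{embedded} line bundle $\calL_x$ base-changes to $Sm$ on the nose rather than to an abstractly isomorphic module — together with the parallel cohomological point of matching the normalization of $\varphi$ (as the characteristic map of the $\bmu_2$--torsor $\bS_q\to\ubS_q$) with the stated formula through the identity $\widehat{\bS}_q=\bS_q\wedge^{\bmu_2}\mathbb{G}_m$. Neither is deep, but both must be spelled out rather than asserted.
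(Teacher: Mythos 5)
Your argument is correct and follows essentially the same route as the paper: part (1) by base-change compatibility of the line bundle $\calL_x$ and the trivialization given by $m$ (which the paper dismisses as obvious), and part (2) by descending the $\bmu_2$--equivariant isometry $1\mapsto m$ from $(R,\langle 1\rangle)_S$ to $(\calL_x,q)_S$ — your contracted-product map $[(n,r)]\mapsto rn$ is just the twist-side formulation of the paper's isomorphism of $\bmu_2$--torsors $\bE_x\simlgr \mathrm{Isom}\bigl((R,\langle 1\rangle),(\calL_x,q)\bigr)$.
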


\begin{proof}
 (1) Obvious.
 
 \smallskip
 
\noindent (2) We have to compare the $\bmu_2$--torsor $\bE_x= f^{-1}(x)$  with 
the 
$\bmu_2$--torsor $\bE$ defined
by $\bE(S) = \mathrm{Isom}_S\bigl( (R; 1)S, (\calL_x, q )\bigr)$
for each $R$--ring $S$.
Let $S$ be a flat cover of $R$ such that $\bE_x(S) \not = \emptyset$. There 
exists
$m \in  \bS_q(S)$ such that $f_q(m) = x_S$. By (1), we have $\calL_x  \otimes_R 
S = S m$
and an isomorphism $u_m : (R, 1)_S \simlgr (\calL_x, q )$, $1 \mapsto m$.
 This isomorphism is $\bmu_2$--equivariant; by faithfully flat descent it gives 
rise to an
 isomorphism $\bE_x \simlgr \bE$ of $\bmu_2$--torsors over $R$.
\end{proof}

\section{A Smoothness Condition}

\begin{Lma}\label{lem_smooth} Let $R$ be a Dedekind ring. Let $\gX$ be a
$R$--group scheme of finite presentation,
equidimensional with smooth connected fibres. Then $\gX$ is smooth.
\end{Lma}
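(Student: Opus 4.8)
The plan is to reduce immediately to the case where $R$ is a discrete valuation ring and then to prove that $\gX$ is \emph{flat} over $R$; smoothness will then follow from the fibre-wise criterion \cite[$_4$.17.8.2]{EGA4}, since $\gX$ is of finite presentation with smooth fibres. Flatness is detected on the localizations $R_{\mathfrak{M}}$ at maximal ideals and the fibres are unchanged by such localization, so one may assume $R$ is a discrete valuation ring with uniformizer $\pi$, fraction field $K$ and residue field $k$ (if $R$ is a field the statement is trivial). Then $\Spec(R)=\{\eta,s\}$ with $\eta$ generic and $s$ closed, and $\gX=\gX_\eta\cup\gX_s$.

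First I would pin down the relevant closed subscheme. Since $\gX$ is a group scheme it carries the unit section, so $\gX\to\Spec(R)$ is surjective and $\gX_\eta\neq\emptyset$; write $d=\dim\gX_\eta$. The reduced closure $Z:=\overline{\gX_\eta}$ is $R$-flat: each of its irreducible components is integral and dominates $\Spec(R)$, hence has torsion-free, thus flat, coordinate ring over the valuation ring $R$; and $\dim Z=d+1$. Because $\gX$ is equidimensional and contains $Z$, every component of $\gX$ has dimension $d+1$, whereas $\dim\gX_s=d$; therefore no component of $\gX$ is contained in $\gX_s$. Consequently $Z=\gX_{\mathrm{red}}$ as topological spaces, no minimal prime of $\mathcal{O}_\gX$ lies over $s$, and (the ring of $\gX_{\mathrm{red}}$ being reduced, so $\mathrm{Ass}=\mathrm{Min}$) $\pi$ is a non-zero-divisor on $\gX_{\mathrm{red}}$, i.e. $\gX_{\mathrm{red}}$ is $R$-flat. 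Next I would note that $(\gX_{\mathrm{red}})_s\hookrightarrow\gX_s$ is a closed immersion which is surjective on points (the nilradical lies in every prime), hence a homeomorphism; since $\gX_s$ is smooth over $k$ it is reduced, so $(\gX_{\mathrm{red}})_s=\gX_s$, and likewise $(\gX_{\mathrm{red}})_\eta=\gX_\eta$. Thus $\gX_{\mathrm{red}}\to\Spec(R)$ is flat of finite presentation with smooth fibres, hence smooth by \cite[$_4$.17.8.2]{EGA4}; in particular $\gX_{\mathrm{red}}$ is regular.

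It then remains to show that the closed immersion $\gX_{\mathrm{red}}\hookrightarrow\gX$ is an isomorphism, i.e. that the nilradical ideal sheaf $\mathcal{N}$ vanishes. Tensoring $0\to\mathcal{N}\to\mathcal{O}_\gX\to\mathcal{O}_{\gX_{\mathrm{red}}}\to 0$ with $k$ over $R$ and using $\mathrm{Tor}_1^R(\mathcal{O}_{\gX_{\mathrm{red}}},k)=0$ produces an injection $\mathcal{N}\otimes_R k\hookrightarrow\mathcal{O}_{\gX_s}$ whose image is the kernel of the identity $\mathcal{O}_{\gX_s}\to\mathcal{O}_{(\gX_{\mathrm{red}})_s}$, hence is $0$; so $\mathcal{N}=\pi\mathcal{N}$. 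For $x\in\gX_s$ the stalk $\mathcal{N}_x$ is finitely generated over $\mathcal{O}_{\gX,x}$ with $\pi$ in the maximal ideal, so Nakayama yields $\mathcal{N}_x=0$; for $x\in\gX_\eta$ one has $\mathcal{O}_{\gX,x}=\mathcal{O}_{\gX_\eta,x}$, which is reduced because $\gX_\eta$ is smooth over $K$, so again $\mathcal{N}_x=0$. Since $\gX=\gX_\eta\cup\gX_s$ this gives $\mathcal{N}=0$, whence $\gX=\gX_{\mathrm{red}}$ is smooth over $R$.

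The step I expect to be the crux is showing that $\gX_{\mathrm{red}}$ is $R$-flat with the prescribed smooth fibres: this is exactly where the equidimensionality hypothesis is used, to rule out an irreducible component of $\gX$ living inside a closed fibre (which would create $\pi$-torsion), and where the smoothness — hence reducedness — of the fibres is used to identify the fibres of $\gX_{\mathrm{red}}$ with those of $\gX$. Once these are in place, the comparison of $\gX$ with $\gX_{\mathrm{red}}$ is the short Nakayama argument above.
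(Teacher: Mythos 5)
Your overall architecture --- pass to the closure of the generic fibre, prove it is $R$--flat with the same fibres as $\gX$, then kill the ideal sheaf by a Tor/Nakayama argument --- is essentially the paper's, which uses the schematic closure $\widetilde\gX$ of $\gX_K$ in place of $\gX_{\mathrm{red}}$; your reduction to a DVR and your concluding Nakayama step are fine. The gap is the pivotal sentence ``because $\gX$ is equidimensional and contains $Z$, every component of $\gX$ has dimension $d+1$''. You read ``equidimensional'' as a condition on the irreducible components of the total space; but the hypothesis, as the paper both uses it in its proof (``let $d$ be the dimension of the fibres of $\gX/R$'') and verifies it in the two applications (fibres of common dimension $28$, resp.\ $21$), is that the \emph{fibres} of $\gX\to\Spec(R)$ have constant dimension $d$. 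Equidimensionality of the total space is part of what the lemma is supposed to deliver, not an input. Under the correct reading, your sentence is exactly the crux: one must exclude an irreducible component of $\gX$ lying inside $\gX_s$ (such a component would necessarily be all of $\gX_s$, since $\gX_s$ is integral of dimension $d$), and nothing in your argument does so. This cannot be done without using the group structure beyond the bare existence of a unit section: for plain schemes the lemma is false, e.g.\ $\Spec(K[y]\times k[x])$ over a DVR $R$ is of finite presentation with smooth connected fibres of constant dimension $1$ (and is even equidimensional as a total space), yet is not flat. The paper supplies the missing input via SGA3: the schematic closure $\widetilde\gX$ is a flat closed sub\emph{group} scheme, so the semicontinuity of fibre dimension for group schemes (SGA3, VI$_B$.4.1) gives $\dim\widetilde\gX_s\geq d$, forcing $\widetilde\gX_s=\gX_s$ and hence $\gX_s\subseteq\overline{\gX_\eta}$. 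Some argument of this kind must appear in your proof.

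A second, smaller omission: $\dim Z=d+1$ is asserted but requires $Z_s\neq\emptyset$, which is not automatic (a priori $\gX_\eta$ could be closed in $\gX$, as in the example above, giving $\dim Z=d$). Here the unit section saves you, since $e(s)\in\overline{\{e(\eta)\}}\subseteq Z$; this should be said, as it is the only place the group structure enters your argument at all.
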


\begin{proof} 
We denote by $K$ field of fractions of $R$. 
Let $\widetilde \gX$ be the schematic closure of $\gX_K$ in 
$\gX$ \cite[$_2$.2.8.1]{EGA4}. This is a closed $R$--group scheme of $\gX$ which is 
flat. Let $d$ be the dimension of the fibres of $\gX/R$.
Let $s \in \Spec(R)$. According to the upper semi-continuity of dimensions of fibers 
\cite[VI$_B$.4.1]{SGA3},
the $\kappa(s)$--scheme $\widetilde \gX_s$ has dimension $\geq d$.
On the other hand,    $\widetilde \gX_s$  is a closed subscheme of
$\gX_s$, whereby  $\widetilde \gX_s$ is of dimension $d$. But
 $\gX_s$ is smooth connected so that  $\widetilde \gX_s = \gX_s$.
 It follows that $\gX= \widetilde \gX$, as $\widetilde \gX$ is flat.
The fibre-wise smoothness criterion \cite[$_4$.17.8.2]{EGA4} enables us to 
conclude that $\gX$ is smooth over $R$.  
\end{proof}

\end{document}